\numberwithin{equation}{section}
\theoremstyle{plain}
\newtheorem{theorem}[equation]{Theorem}
\newtheorem{lemma}[equation]{Lemma}
\newtheorem{proposition}[equation]{Proposition}
\newtheorem{deflem}[equation]{Definition and Lemma}
\newtheorem{corollary}[equation]{Corollary}
\theoremstyle{definition}
\newtheorem{definition}[equation]{Definition}
\theoremstyle{remark} \newtheorem{remark}[equation]{Remark}
\newtheorem{remarks}[equation]{Remarks} \newtheorem*{remark*}{Remark}
\newtheorem*{remarks*}{Remark} \newtheorem{example}[equation]{Example}
\newtheorem{obs}[equation]{Observation}
\newcommand*{\Cont}{\mathrm C}
\newcommand*{\Contc}{\mathrm{C_c}}
\newcommand*{\Cc}{\mathrm{C_c}}
\newcommand{\Contz}{\mathrm{C_0}}
\newcommand{\base}[1][G]{{#1}^{(0)}}
\newcommand{\dd}{\mathrm{d}}
\newcommand{\7}{\backslash}
\newcommand{\Id}{\textup{Id}}
\newcommand{\supp}{{\textup{supp}}}
\newcommand*{\Hilm}[1][H]{\mathcal #1}
\newcommand*{\Hils}[1][M]{\mathcal #1}
\newcommand{\Bound}{\mathbb B}
\newcommand*{\Cst}{\mathrm{C}^*}
\newcommand{\A}{\mathcal{A}}
\newcommand{\KMS}[1][\beta]{\textup{KMS}\textsubscript{\(#1\)}}
\newcommand{\Ind}{\mathrm{Ind}}
\newcommand{\IndP}[1][x]{\mathrm{Ind}_{\Cst(G_{#1}^{#1};\A(#1))}^{\Cst(G;\A)}}
\newcommand*{\inpro}[2]{\langle#1, #2\rangle}
\newcommand*{\binpro}[2]{\bigl\langle#1, #2\bigr\rangle}
\newcommand*{\Binpro}[2]{\Bigl\langle#1, #2\Bigr\rangle}
\newcommand*{\Linpro}[2]{\langle\!\langle#1, #2\rangle\!\rangle}
\newcommand{\abs}[1]{\lvert #1\rvert}
\newcommand{\norm}[1]{\lvert\!\lvert #1\rvert\!\rvert}
\newcommand{\C}{\mathbb{C}}
\newcommand{\R}{\mathbb{R}}
\newcommand{\N}{\mathbb{N}}
\newcommand{\iso}{\simeq}
\newcommand{\e}{\mathrm{e}}
\newcommand{\Mat}{\mathrm{M}}
\newcommand*{\Star}{$^*$}
\newcommand*{\nb}{\nobreakdash}
\newcommand*{\defeq}{\mathrel{\vcentcolon=}}
\newcommand{\inverse}{^{-1}}
\newcommand{\mi}{\mathrm{i}}
\newcommand{\etale}{{\'e}tale}
\title[KMS states on Fell bundle \(\mathrm{C}^*\)-algebras]{KMS states
  on the \(\mathrm{C}^*\)-algebras of Fell bundles over étale
  groupoids}
\author{Rohit Dilip Holkar} \email{rohit.d.holkar@gamil.com}
\author{Md Amir Hossain}
\email{mdamirhossain18@gmail.com}
\address{Department
	of Mathematics, Indian Institute of Science Education and Research
	Bhopal, Bhopal Bypass Road, Bhauri, Bhopal 462 066, Madhya Pradesh,
	India.}
\keywords{KMS states; Fell bundles; étale groupoids; groupoid crossed products.} 
\thanks{\emph{Subject class.} 46L55, 46L30, 47L65, 46B22, 22A22.}
\begin{document}
\begin{abstract}
  Let \(p\colon \mathcal{A} \to G\) be a saturated Fell bundle over a
  locally compact, Hausdorff, second countable, {\'e}tale
  groupoid~\(G\), and let \(\mathrm{C}^*(G;\mathcal{A})\) denote its
  full \(\mathrm{C}^*\)-algebra. We prove an
  integration-disintegration theorem for KMS states on
  \(\mathrm{C}^*(G;\mathcal{A})\) by establishing a one-to-one
  correspondence between such states and fields of measurable states
  on the \(\mathrm{C}^*\)-algebras of the Fell bundles over the
  isotropy groups. This correspondence is established for certain
  states on \(\mathrm{C}^*(G;\mathcal{A})\) also. While proving this
  main result, we construct an induction
  \(\mathrm{C}^*\)-correspondence
  between~\(\mathrm{C}^*(G;\mathcal{A})\) and the
  \(\mathrm{C}^*\)\nb-algebra of an isotropy Fell bundle.  We
  demonstrate our results through many examples such as groupoid
  crossed products, twisted groupoid crossed products, \(G\)-spaces
  and matrix algebras~\(\mathrm{M}_n(\mathrm{C}(X))\otimes A\).  While
  studying the matrix algebra~\(\mathrm{M}_n(\mathrm{C}(X))\), we
  propose a groupoid model for it. While demonstrating our main result
  for this groupoid model, we provide a solution to the Radon--Nikodym
  problem for the groupoid used in this model.
\end{abstract}

\maketitle
\setcounter{tocdepth}{1}
\tableofcontents

\section*{Introduction}
\label{sec:intro}

The KMS, Kubo--Martin--Schwinger, state is the \(\Cst\)\nb-algebraic
formulation of Gibbs equilibrium
state~\cite{Bratteli-Robinson1981Oper-alg-Quan-sta-mech-part-2} which,
mathematically, makes sense for any real \(\Cst\)\nb-dynamics whether
the dynamics induced from a physical system or not. In recent times,
Mathematicians seem interested in describing the KMS states because
KMS states encode information about the dynamics, and they are useful
while studying the traces on the \(\Cst\)\nb-algebras, for
example~\cite{Neshveyev2010Traces-on-crossed-product}~\cite{Christensen-thesis},\cite{Christensen-Klaus2016Finite-digraphs-and-KMS-states},\cite{Christensen2018Symmetries-of-KMS-symplex}.

In his thesis~\cite{Renault1980Gpd-Cst-Alg}, Renault described the KMS
state of the \(\Cst\)\nb-algebras of a \emph{principal} \etale\
groupoid \(G\) with certain quasi-invariant probability measure on
\(G^{(0)}\) where the dynamics comes from a one-cocycle \(c\). Quite
later, Neshveyev~\cite{Neshveyev2013KMS-states} noticed that for a
non-principal \etale\ groupoid \(G\), not all KMS states could be
described by the quasi-invariant probability measure on \(G^{(0)}\).
Neshveyev~\cite{Neshveyev2013KMS-states} generalised the result of
Renault as follows: there is a one-to-one correspondence between KMS
state on \(\Cst(G)\) for an \etale\ groupoid~\(G\) with pairs
\((\mu,[\phi])\) where \(\mu\) is a quasi-invariant measure
on~\(G^{(0)}\) and \(\phi=\{\phi_u\}_{u\in\base[G]}\) is a
\(\mu\)\nb-measurable field of states on the \(\Cst\)\nb-algebras of
the isotropy groups. The KMS state and the modular function of~\(G\)
for the quasi-invariant measure~\(\mu\) are related. Using this
characterisation, Neshveyes~\cite{Neshveyev2013KMS-states} studies the
KMS states on Toeplitz algebra of \(ax+b\) semigroup and tracial
states on the \(\Cst\)\nb-algebras of transformation groupoid.

In~\cite{Afsar-Sims2021KMS-states-on-Fell-bundle-Cst-alg}, Afsar and
Sims generalise the result of Neshveyev to a class of Fell bundles
over \etale\ groupoid. They consider the Fell bundles whose fibres are
\emph{singly generated}; briefly speaking, each fibre of such a Fell
bundle, including the Hilbert modules, is \emph{unital}, in an
appropriate sense. They use their result to study the KMS states of
the \(\Cst\)\nb-algebra of a twisted
\(k\)\nb-graph~\cite{Kumjian-Pask_Sims2015twisted-higher-rank-graph-alg}. Afsar
and Sims also established Neshveyev's theorem for the twisted
\(\Cst\)\nb-algebra of an \etale\ groupoid.

In present article, we generalise the above results of Neshveyev or
Afsar and Sims to \emph{general}, saturated Fell bundles over \etale\
groupoids. We also gives many illustrations and examples of this
result. Our proof closely follows the chief subtext idea in
Neshveyev's~\cite{Neshveyev2013KMS-states} article and some techniques
of Afsar and
Sims'~\cite{Afsar-Sims2021KMS-states-on-Fell-bundle-Cst-alg}. To chalk
out the details, assume that \(G\) is an \etale\ groupoid and
\(u\in \base[G]\). One of the chief idea of Neshveyev is to induce a
state of the \(\Cst\)\nb-algebra~\(\Cst(G_u^u)\) of the
isotropy~\(G_u^u\) group to a state of the groupoid
\(\Cst\)\nb-algebra. And the other main idea is to use Renault's
disintegration theorem for getting a state of~\(\Cst(G_u^u)\) from a
state of the groupoid \(\Cst\)\nb-algebra. Afsar and Sims, and we too,
adopt these ideas. However, we formalise the process of inducing
states by using a \(\Cst\)\nb-correspondences---we call this
correspondence \emph{the induction correspondence} see
Section~\ref{sec:inducing-states}. This correspondence appears in
Example~3.14 of~\cite{Holkar2017Construction-of-Corr} in an
undecorated basic form; and can be traced back back to Mackey's
induction process. The mathematical techniques for constructing
induction correspondence are inspired from GNS construction and the
induction of representation proposed by
Rieffel~\cite{Rieffel1974Induced-rep}. The bundle over groupoid, in
other word the coffecients in \(\Cc(G)\), make this processes
nontrivial.

Next, we achieve the generalisation of the result of Afsar and Sims by
\emph{removing} the condition of singly generated Fell bundle. One of
our main task was to solve the many technical issues raised after
removing this condition.

Once the induction and `restriction' of states is established
(Theorem~\ref{prop:bij-betwen-states-and-fields-of-states}), the
relation for \KMS~states become clear. For our main result,
Theorem~\ref{thm:KMS-state}, we consider a Fell bundle
\(p\colon \A \to G\) over an \etale\ groupoid~\(G\) and the
\KMS~states on \(\Cst(G;\A)\) in which the dynamics is induces from a
real-valued 1-cocycle on~\(G\). Then our Theorem~\ref{thm:KMS-state}
establishes a one-to-one correspondence between \KMS~states on
\(\Cst(G,\A)\) and pairs \(([\mu],\phi)\) as in Neshveyev's
theorem~\cite{Neshveyev2013KMS-states}.

For the inverse temperature \(\beta =0\) the
Theorem~\ref{thm:KMS-state} gives a description of the invariant
traces of \(\Cst(G;\A)\) under the dynamics, which has particular
importance in Elliott classification program.

A noteworthy observation here is Condition~\eqref{Cond:II} of
Theorem~\ref{thm:KMS-state}. This condition may seem technical,
however, if the Fell bundle \(p\colon \A\to G\) is trivial Fell bundle
over a group, this condition is essentially a trace condition, namely,
\(\tau(yx^*x) = \tau(xyx^*)\). In fact, even if \(\A\) is not the
trivial bundle, still this condition is a trace condition for a
state~\(\tau\) on~\(\Cst(G^x_x;\A(x))\) where \(x\in \base\). As this
condition shows up naturally in the computation, it should be regarded
as a generalisation of the trace condition.

On a different note, Cuntz and Vershik mention in the proof of
Lemma~2.5 of~\cite{Cuntz-Vershik2013Cst-Agl-asso-with-endomorphisms}
that for trivial stabilisers, invariant measures on the space of units
of a transformation groupoid extend uniquely to a trace on the
\(\Cst\)\nb-algebra of the groupoid.  In their words,
\begin{quotation}
  ``The fact that an invariant measure on \(K\) extends uniquely to a
  trace on the crossed product, if all the stabiliser groups are
  trivial, is well-known but not easy to pin down in the literature''.
\end{quotation}
They provide a quick proof of this fact in this Lemma~2.5
of~\cite{Cuntz-Vershik2013Cst-Agl-asso-with-endomorphisms}. The above
mentioned works of Renault, Neshveyev, Afsar and Sims are general
answers to this question. Our Theorem~\ref{thm:KMS-state} offers an
answer to this fact in a much more general situation.

An immediate application of our result is that: we characterise the
\KMS~states and traces on the groupoid (twisted) crossed product
(Corollary~\ref{kms-state-gpd:cros}) for an \etale\ groupoid.

One more result that we prove is
Proposition~\ref{int-dis-state-upp-semi}, wherein we characterise the
states and traces on a \(\Contz(X)\)\nb-algebra. In this case, we
remove the \emph{centraliser hypothesis} and hence this result is not
a particular case of
Theorem~\ref{prop:bij-betwen-states-and-fields-of-states}. Using
Proposition~\ref{int-dis-state-upp-semi}, we give a characterisation
of states and traces on the \(\Cst\)\nb-algebras of locally compact,
Hausdorff, second countable group bundle and the \(\Cst\)\nb-algebras
of Fell bundle over locally compact, Hausdorff, second countable group
bundle; this is essentially a generalisation of Neshveyev's theorem
for \emph{non-\etale} group bundle.

It is well-known that the groupoid model for the matrix algebra
\(\Mat_n(\C)\) is the groupoid of the trivial equivalence on
\(\{1,2,3,\cdots, n\}\) for \(n\in \N\). A physical interpretation of
this groupoid is that it represents the jumps of electron in the
hydrogen atom. For a compact, Hausdorff space~\(X\), we model the
\(\Cont(X)\)\nb-algebra \(\Mat_n(\Cont(X))\) as a groupoid. This
interesting model can be thought of as atomic orbits `extended' by the
space~\emph{X}, see Section~\ref{sec:kms-stat-text}. Using this model
and our earlier result, we illustrate the \KMS~states on the
\(\Cst\)\nb-algebra \(\Mat_n(\Cont(X)) \otimes A\). While studying the
KMS states, we investigate the quasi-invariant measures on the unit
space of the underlying groupoid of \(\Mat_n(\Cont(X))\), and we offer
an answer to the Radon--Nikodym
problem~\cite{Renault2005The-R-N-problem-for-appx-proper-equiv-relation,
  Renault2001AF-equiv-rela-and-cocycle} for this groupoid.

Using a locally compact, Hausdorff \(G\)\nb-space \(X\) one can
associate a Fell bundle \(\A\) over \(G\) and this assignment is
funtorial. Using this identification we can think the isotropy
\(\Cst\)\nb-algebra of \(\Cst(X\rtimes G)\) as a \(\Cst\)\nb-algebra
of certain transformation group. Hence, using our
Theorem~\ref{thm:KMS-state} and Neshveyev's Theorem~1.1 of
\cite{Neshveyev2013KMS-states}, we obtain a get a relationship between
\KMS~states on \(\Cst(X\rtimes G)\) and a triple satisfying some
conditions (see Proposition~\ref{Prop:kms-state:G-space}).
 
\medskip

\paragraph{\itshape Structure of the article:}

Section~\ref{sec:prelim} contains a background of Fell bundles and
it's \(\Cst\)\nb-algebras, some results about approximate identity
which we shall use in Section~\ref{sec:an-example-corr}. In
Section~\ref{sec:an-example-corr}, we construct an induction
correspondence from the \(\Cst\)\nb-algebra of a Fell bundle to the
\(\Cst\)\nb-algebras of Fell bundle over isotropy groups, and we also
show that this induction correspondence possesses \emph{a unit cyclic
  vector}. Section~\ref{sec:int-dis-thm-state} contains the
integration-disintegration theorm for certain states on \(\Cst(G;\A)\)
(Theorem~\ref{prop:bij-betwen-states-and-fields-of-states}). Section~\ref{sec:main-result-KMS-state}
contains the main result of this article: the
integration-disintegration theorem for KMS states on \(\Cst(G;\A)\)
(Theorem~\ref{thm:KMS-state}).  Section~\ref{sec:gpd-crossed:product}
contains an application of our result in groupoid crossed product and
groupoid twisted crossed product.

In section~\ref{sec:state-upp-semi-con-bundle}, we prove the
integration-disintegration theorems for states and traces on the
section algebra of an upper semicontinuous bundle of
\(\Cst\)\nb-algebra and few applications.

In Sections~\ref{sec:kms-stat-text} and~\ref{sec:kms-state-G-space},
we study \(\KMS\) states on \(\Mat_n(\mathrm{C}(X))\otimes A\) and
\(G\)\nb-spaces, respectively.
\section{Preliminaries}
\label{sec:prelim}

We follow~\cite{Renault1980Gpd-Cst-Alg,Muhly1999Coordinates} for the
basic theory of groupoids. The groupoids and spaces considered in this
article are Hausdorff, locally compact and second countable in the
sense of Munkres~\cite{Munkress1975Topology-book}. Our references for
\(\Cst\)\nb-algebras are~Davidson's~\cite{Davidson1996Cst-by-Examples}
and Murhpy's~\cite{Murphy-Book} books; for Hilbert modules we refer to
Lance's monograph~\cite{Lance1995Hilbert-modules}; finally, Bratteli
and Robinson~\cite{Bratteli-Robinson1981Oper-alg-Quan-sta-mech-part-2}
is the reference for KMS states. We assume that the reader is familiar
with basics of groupoids and Haar systems on them, to be precise, the
standard material in Chapter~I in~\cite{Renault1980Gpd-Cst-Alg}.

For a groupoid topological~\(G\), we denote its space of units by
\(G^{(0)}\); the range and the source maps by~\(r\) and~\(s\),
respectively; and the set of composable pairs,
\(\{(\gamma,\eta)\colon G\times G : s(\gamma)=r(\eta)\}\), by
\(G^{(2)}\). For \(\gamma\in G\), \(\gamma\inverse\) denotes its
inverse. Let \(U\subseteq G\). Then for subsets
\(A,B\subseteq \base\), \(U^A,U_B\) and \(U^A_B\) have the standard
meanings, namely,
\(U^A=\{\gamma\in U : r(\gamma)\in A\}, U_B=\{\gamma\in U :
s(\gamma)\in B\}\) and \(U^A_B=U^A\cap U_B\). If \(A=\{x\}, B=\{y\}\)
are singletons, then we simply write \(U^x,U_y\) and \(U^x_y\). For
\(x\in \base\), the group \(G^x_x\subseteq G\) is called the isotropy
at \(x\). Due to the continuity of the range and source maps, the
bundle of groups \(\cup_{x\in \base} G_x^x\subseteq G\) is a closed
subgroupoid of~\(G\)---this bundle is the inverse image of the
diagonal in \(\base\times\base\) under the mapping
\(G \to \base\times \base, \gamma \mapsto (r(\gamma),s(\gamma))\).

\begin{remark}\label{rem:obs-of-bise-out-iso}
  Let \(G\) be a locally compact, Hausdorff, second countable, \etale\
  groupoid and \(\textup{Ib}(G) = \cup_{x\in \base} G_x^x\) denotes
  the isotropy bundle of \(G\). Then the set
  \(G\setminus \textup{Ib}(G)\) is open in \(G\) and has a basis of
  bisections \(U\subseteq G\) such that \(r(U)\cap s(U)
  =\emptyset\). Moreover, for such \(U\), \(U\cap \base =\emptyset\).
\end{remark}

If the range (or, equivalently, source) map of a topological groupoid
is a local homeomorphism, then we call the groupoid \emph{\etale.} Let
\(G\) be an {\'e}tale groupoid. An open subset \(U\subseteq G\) is
called a bisection if \(r(U), s(U) \subset \base\) are open and
\(r|_U\colon U\to r(U)\) and \(s|_U\colon U\to s(U)\) are
homeomorphisms; here note that since \(r\) (or \(s\)) are local
homeomorphisms, \(r(U)\subseteq \base[G]\)
(\(s(U)\subseteq \base[G]\), respectively) is an open set. An \etale\
groupoid has a basis consisting of bisections.

Continuing the last discussion, since \(r\) is a local homeomorphism,
it follows that \(G^x\) is discrete for all \(x\in G^{(0)}\). Let
\(\lambda_x\) be a counting measure on \(G^x\). The family of measure
\(\{\lambda_x\}_{x\in G^{0}}\) constitutes a Haar system for the
{\'e}tale groupoid \(G\),
see~\cite[Defintion~I.2.2]{Renault1980Gpd-Cst-Alg}.

For~\(G\) as above, let \(\mu\) be a measure on the sapce of units
of~\(G\). Then~\(\mu\) induces measures~\(\nu\) and~\(\nu\inverse\)
on~\(G\) as follows:
\[
  \nu(f) = \int_{G^{(0)}} \biggl(\sum_{\gamma\in G^x} f(\gamma)
  \biggr) \dd\mu(x) \quad\text{ and }\quad\nu^{-1}(f) = \int_{G^{(0)}}
  \biggl( \sum_{\gamma\in G_x}f(\gamma) \biggr)\mathrm{d}\mu
\]
where \(f\in \Contc(G)\).  The measure \(\mu\)~on \(\base\) is called
\emph{quasi-invariant} if \(\nu\) and \(\nu\inverse\) are equivalent
measures. The Radon--Nikodym derivative \(\dd\nu/\dd\nu\inverse\) is a
1-cocycle on \(G\) and it's called a modular function of
\(G\)~(cf.\cite[Page~23.]{Renault1980Gpd-Cst-Alg}). If the modular
function \(\dd\nu/\dd\nu\inverse\) is \(1\), then the measure \(\mu\)
is called invariant.

Following is an alternate and handy way to define a quasi-invariant
measure on an \etale\ groupoid: let \(U\subseteq G\) be a
bisection. For \(x\in s(U)\), let \(u_x\in U\) denote the unique arrow
starting at \(x\).  Define the homeomorphism
\(T_U\colon r(U)\to s(U)\) by \(T_U = s|_U \circ r|_U\inverse\). A
measure~\(\mu\) on~\(\base[G]\) is quasi-invariant \emph{iff} there is
a measurable function \(\Delta_\mu\colon G\to (0,\infty)\) such that
\begin{equation}\label{eq:quasi-inv-etale}
  \int_{r(U)} k(T_U(x))\mathrm{d}\mu(x) =\int_{s(U)}
  k(x)\Delta_{\mu}(u_x) \mathrm{d} \mu(x)
\end{equation}
for all bisection \(U\) and for all functions \(k\in \Contc(G)\) which
are supported in~\(s(U)\).

\begin{remark}\label{remk:quasi-inv-positive-fun}
  Given \(k\in \Cc(G)\) can be written as a linear combination of four
  non-negative compactly supported functions on \(G\). Thus, if we can
  show that the measure~\(\mu\) satisfies
  Equation~\eqref{eq:quasi-inv-etale} for all non-negative compactly
  supported function on \(G\), then by linearity of the integral,
  \(\mu\) is a quasi-invariant with modular function \(\Delta_{\mu}\).
\end{remark}

\begin{definition}[Inner product \(A\)-module]
  Let \(A\) be a \(\Cst\)\nb-algebra.  A (right) inner product
  \(A\)\nb-module is a (right) \(A\)\nb-module \(X\) with a map
  \(\inpro{\cdot}{\cdot}_A \colon X \times X \to A\) satisfying the
  following properties:
  \begin{enumerate}
  \item
    \(\inpro{x}{\lambda y+\mu z}_A = \lambda \inpro{x}{y}_A + \mu
    \inpro{x}{z}_A\),
  \item \( \inpro{x}{y\cdot a}_A = \inpro{x}{y}_A\; a\) ,
  \item \(\inpro{x}{y}^*_A =\inpro{y}{x}_A\),
  \item \(\inpro{x}{x}_A \geq 0 \),
  \item \(\inpro{x}{x}_A =0\) if and only if \(x=0\)
  \end{enumerate}
  for given vectors \(x,y,z \in X\), scalars \(\lambda, \mu \in \C\)
  and~\(a \in A\).
\end{definition}

For a vector \(x\) in an inner product \(A\)\nb-module \(X\),
\begin{equation}
  \norm{x} = \norm{\inpro{x}{x}_A}^{\frac{1}{2}}\label{eq:Hilb-mod-norm-general}
\end{equation}
defines a norm making \(X\) a normed linear \(\C\)\nb-vector space. We
often ignore the subscript of~\(\inpro{\cdot}{\cdot}_A\), unless
required, and simply write~\(\inpro{\cdot}{\cdot}\). Moreover, instead
of writing the pair \((A, \inpro{\cdot}{\cdot})\) as an inner product
module, we abuse the language and call the module~\(A\) itself the
inner product module.

\begin{definition}[Hilbert module] A Hilbert module over a
  \(\Cst\)\nb-algebra \(A\) is an inner product \(A\)\nb-module which
  is complete in the norm defined by
  Equation~\eqref{eq:Hilb-mod-norm-general}.
\end{definition}

\noindent A left Hilbert \(A\)\nb-module can be defined analogously;
the inner product in this case is linear in the first variable and
conjugate linear in the second one; we denote \emph{a left inner
  product} by \(\Linpro{\cdot}{\cdot}\). We reserve the phrase
``Hilbert module'' for a right Hilbert module where as explicitly
mention if the module is a \emph{left} module. A (left or right)
Hilbert \(A\)\nb-module is called \emph{full} if the ideal
\(I \defeq \mathrm{span}\{\inpro{x}{y} : x, y \in X\}\) dense in
\(A\). All useful Hilbert modules in this article are full.  We denote
\(\Bound(X)\) the set of all adjointable operator on the Hilbert
module \(X\).
\begin{definition}[Hilbert bimodule]\label{def:Hilbert-bimod}
  Let \(A\) and \(B\) be two \(\Cst\)\nb-algebras. A Hilbert
  \(A\)-\(B\)\nb-bimodule is an \(A\)-\(B\)\nb-bimodule~\(X\) equipped
  with an \(A\)\nb-valued inner
  product~\(\prescript{}{A}{\Linpro{\cdot}{\cdot}}\) and a
  \(B\)\nb-valued inner product~\(\inpro{\cdot}{\cdot}_B\) such that:
  \begin{enumerate}
  \item \((X, \prescript{}{A}{\Linpro{\cdot}{\cdot}})\) and
    \((X, \inpro{\cdot}{\cdot}_B)\) are left and right Hilbert modules
    over \(A\) and \(B\), respectively;
  \item \(\inpro{a\cdot x}{y}_B =\inpro{x}{a^*\cdot y}_B\);
  \item
    \( \prescript{}{A}{\Linpro{x\cdot b}{y}} =
    \prescript{}{A}{\Linpro{x}{y\cdot b^*}}\);
  \item
    \(\prescript{}{A}{\Linpro{y}{z}} \cdot x = z \cdot \inpro{x}{y}_B
    \)
  \end{enumerate}
  for \(x,y \in A\), \(a\in A\) and \(b\in B\).
\end{definition}

An \emph{imprimitivity bimodule} is a Hilbert bimodule that is full on
both sides.

\begin{definition}[\(\Cst\)-correspondence]
  A \(\Cst\)\nb-correspondence from a \(\Cst\)\nb-algebra \(A\) to
  another one, \(B\), is a pair \((\Hilm, \phi)\) where \(\Hilm\) is a
  Hilbert \(B\)\nb-module and \(\phi\colon A\to \Bound(\Hilm)\) is a
  nondegenerate representation.
\end{definition}

\noindent We often skip writing the representation~\(\phi\) in a
\(\Cst\)\nb-correspondence and simply refer the Hilbert module itself
as a \(\Cst\)\nb-correspondence. A vector \(\xi\in \Hilm\) is called
\emph{cyclic} for the representation~\(\phi\) if the closed linear
span of \(\{\phi(a)\xi:a\in A\}\) equals~\(\Hilm\). In
Section~\ref{sec:an-example-corr}, we show that a Fell bundle over
\etale\ groupoid has a canonical \(\Cst\)\nb-correspondence associated
with it called \emph{the induction correspondence}; in
Section~\ref{sec:inducing-states} we prove that this induction
correspondence possesses a unit cyclic vector.

\subsection{Some results about approximate identities}
\label{sec:some-results-about}

In this section, we collect or prove some discrete results involving
approximate identities and Hilbert modules.

\begin{lemma}
  \label{lem:appx-unit-for-full-Hilbert-mod}
  Assume that \(X\) is a Hilbert \(A\)\nb-module (not necessarily
  full), \(I\subseteq A\) is the closed ideal generated by
  \(\{\inpro{x}{y} \in A : x,y\in X\}\), and \((u_\lambda)_{\lambda}\)
  is an approximate identity of~\(I\). Then
  \(\norm{ x u_\lambda- x }\to 0\) for \(x\in X\).
\end{lemma}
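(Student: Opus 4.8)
The plan is to reduce to the statement $\norm{xu_\lambda - x}^2 \to 0$ and use the $C^*$-identity for Hilbert module norms. Since
\[
  \norm{xu_\lambda - x}^2 = \norm{\inpro{xu_\lambda - x}{xu_\lambda - x}},
\]
I would first expand the inner product using its module-linearity properties. Writing $e_\lambda \defeq u_\lambda$, bilinearity and the relation $\inpro{x}{y\cdot a} = \inpro{x}{y}a$ (together with its adjoint counterpart $\inpro{x\cdot a}{y} = a^*\inpro{x}{y}$) give
\[
  \inpro{xe_\lambda - x}{xe_\lambda - x}
  = e_\lambda^* \inpro{x}{x} e_\lambda - e_\lambda^*\inpro{x}{x} - \inpro{x}{x}e_\lambda + \inpro{x}{x}.
\]
Setting $a \defeq \inpro{x}{x}\in I$ (which lies in $I$ by definition of the generating set), the task becomes showing $\norm{e_\lambda^* a e_\lambda - e_\lambda^* a - a e_\lambda + a} \to 0$ in $A$.

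The key point is that $a = \inpro{x}{x}\in I$, so the approximate identity of $I$ acts on it correctly: $\norm{ae_\lambda - a}\to 0$ and $\norm{e_\lambda a - a}\to 0$. First I would handle the cross terms: $\norm{ae_\lambda - a}\to 0$ directly, and $\norm{e_\lambda^* a - a} = \norm{(a e_\lambda - a)^*}\to 0$ using that $a=a^*\geq 0$ and that the $e_\lambda$ may be taken self-adjoint in an approximate identity of a $C^*$-algebra. For the quadratic term $e_\lambda^* a e_\lambda$, I would insert and subtract a term: write
\[
  e_\lambda^* a e_\lambda - a
  = e_\lambda^*(a e_\lambda - a) + (e_\lambda^* a - a),
\]
and bound $\norm{e_\lambda^*(ae_\lambda - a)} \leq \norm{e_\lambda}\,\norm{ae_\lambda - a} \leq \norm{ae_\lambda - a}$, since $\norm{e_\lambda}\leq 1$ for an approximate identity. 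Both summands then tend to $0$ by the preceding observations.

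Combining, the four-term expression telescopes: $\norm{e_\lambda^* a e_\lambda - e_\lambda^* a - a e_\lambda + a} \to 0$, whence $\norm{xe_\lambda - x}^2\to 0$ and therefore $\norm{xu_\lambda - x}\to 0$, as claimed. The only mild subtlety, and the step I expect to require the most care, is justifying that $a=\inpro{x}{x}$ genuinely lies in the closed ideal $I$ generated by all inner products rather than merely in its closure as a subspace; but this is immediate since $\inpro{x}{x}$ is itself one of the generators of $I$. Everything else is a routine manipulation of the Hilbert-module axioms together with the standard facts $\norm{e_\lambda}\leq 1$ and $\norm{ae_\lambda - a}\to 0$ for $a\in I$ that characterize an approximate identity.
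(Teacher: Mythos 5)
Your proposal is correct and follows essentially the same route as the paper's proof: expand \(\norm{xu_\lambda-x}^2 = \norm{\inpro{xu_\lambda-x}{xu_\lambda-x}}\) via the Hilbert-module \(\Cst\)\nb-identity, observe that \(a=\inpro{x}{x}\) lies in \(I\), and let the approximate identity of \(I\) kill all four resulting terms. If anything, your version is written more carefully than the paper's (whose displayed expansion contains sign typos and disposes of the quadratic term \(u_\lambda\inpro{x}{x}u_\lambda\) without the explicit \(\norm{u_\lambda}\leq 1\) estimate you supply).
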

\begin{proof}
  For any \(x\in X\), we have
  \begin{multline*}
    \norm{xu_\lambda-x}^2=\norm{\inpro{xu_\lambda-x}{xu_\lambda-x}} =
    \norm{\inpro{xu_\lambda}{x}u_\lambda-\inpro{xu_\lambda}{x}-\inpro{x}{x}u_\lambda-\inpro{x}{x}}
    = \\
    \norm{(\inpro{x}{x}u_\lambda)^*u_\lambda-(\inpro{x}{x}u_\lambda)^*-\inpro{x}{x}u_\lambda-\inpro{x}{x}}
    =
    \norm{u_\lambda\inpro{x}{x}u_\lambda-u_\lambda\inpro{x}{x}-\inpro{x}{x}u_\lambda-\inpro{x}{x}}.
  \end{multline*}
  Since \((u_\lambda)_\lambda\) is an approximate identity of \(I\)
  and the norm is a continuous function, the last term of the above
  equation \(\to 0\) as \(\lambda\to \infty\).
\end{proof}

\begin{lemma}\label{lem-squ-of app-idenetity}
  Let \(A\) be a \(\Cst\)\nb-algebra and \((u_\lambda)_\lambda\) an
  approximate identity of \(A\).
  \begin{enumerate}[leftmargin=*]
  \item Then \(\lim_{\lambda}au^2_\lambda=a\) and
    \(\lim_{\lambda}u^2_\lambda a=a\) for all \(a\in A\).
  \item The iterated limits,
    \(\lim_{\alpha}\lim_{\lambda} a u_\lambda u_\alpha = a\) and
    \( \lim_{\lambda} \lim_{\alpha} u_\alpha u_\lambda a=a\) for all
    \(a\in A\).
  \end{enumerate}
\end{lemma}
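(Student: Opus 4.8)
The plan is to use only the defining properties of an approximate identity—namely \(\lim_{\lambda} a u_\lambda = a\) and \(\lim_{\lambda} u_\lambda a = a\) for every \(a \in A\)—together with the standard normalisation \(\norm{u_\lambda} \le 1\) and the joint continuity of multiplication in a \(\Cst\)\nb-algebra.

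For part~(1), I would establish \(\lim_{\lambda} a u_\lambda^2 = a\) by a telescoping estimate. Writing \(a u_\lambda^2 - a u_\lambda = (a u_\lambda - a) u_\lambda\), one has
\[
  \norm{a u_\lambda^2 - a} \le \norm{(a u_\lambda - a) u_\lambda} + \norm{a u_\lambda - a} \le (\norm{u_\lambda} + 1)\,\norm{a u_\lambda - a} \le 2\,\norm{a u_\lambda - a},
\]
using \(\norm{u_\lambda} \le 1\); the right-hand side tends to \(0\) by the approximate identity property. The identity \(\lim_{\lambda} u_\lambda^2 a = a\) follows symmetrically by factoring \(u_\lambda^2 a - u_\lambda a = u_\lambda(u_\lambda a - a)\).

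For part~(2), I would treat each iterated limit by computing the inner limit first, for a fixed value of the outer index, and then passing to the outer limit. For \(\lim_{\alpha}\lim_{\lambda} a u_\lambda u_\alpha\), fixing \(\alpha\) and using that right multiplication by \(u_\alpha\) is continuous, the inner limit equals \(\bigl(\lim_{\lambda} a u_\lambda\bigr) u_\alpha = a u_\alpha\); the outer limit then gives \(\lim_{\alpha} a u_\alpha = a\). For \(\lim_{\lambda}\lim_{\alpha} u_\alpha u_\lambda a\), fixing \(\lambda\) and applying the left approximate identity property to the single element \(u_\lambda a\) yields \(\lim_{\alpha} u_\alpha (u_\lambda a) = u_\lambda a\), after which \(\lim_{\lambda} u_\lambda a = a\).

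There is no genuine obstacle in this lemma; the only point that requires care is the order of limits in part~(2), where one must check that the inner net converges for each fixed outer index before the outer limit is meaningful. This is immediate, since for a fixed outer index the inner expression is a continuous image of the convergent net \((u_\lambda)_\lambda\) (or of \((a u_\lambda)_\lambda\)), so the inner limit exists and the stated iterated value is obtained by one final application of the approximate identity property.
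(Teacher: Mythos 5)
Your proposal is correct, and part~(1) is essentially the paper's own argument: the same telescoping bound
\(\norm{a u_\lambda^2 - a}\le \norm{a u_\lambda - a}\,\norm{u_\lambda} + \norm{a u_\lambda - a}\le 2\norm{a u_\lambda - a}\),
using \(\norm{u_\lambda}\le 1\). In part~(2) you take a genuinely different route. You evaluate each iterated limit literally in the stated order: for fixed \(\alpha\), continuity of right multiplication gives \(\lim_\lambda a u_\lambda u_\alpha = a u_\alpha\), and the outer limit then gives \(a\); symmetrically, for fixed \(\lambda\) the inner limit \(\lim_\alpha u_\alpha (u_\lambda a) = u_\lambda a\) is just the approximate identity property applied to the single element \(u_\lambda a\). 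The paper instead proves the joint estimate \(\norm{a - a u_\lambda u_\alpha}\le \norm{a - a u_\alpha} + \norm{a - a u_\lambda}\) and chooses a common index \(i\) so that both terms are below \(\epsilon/2\) once \(\alpha,\lambda\ge i\). Your version is more economical for the statement as written, and it has the merit of explicitly verifying that the inner net converges for each fixed outer index before the outer limit is taken---a point the paper leaves implicit. The paper's estimate buys something slightly stronger: it shows \(a u_\lambda u_\alpha \to a\) as \emph{both} indices grow simultaneously, independently of the order of the limits, which yields both iterated limits at once. Either argument suffices for the later application in Corollary~\ref{cor:cros-app-id}.
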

\begin{proof}
  \noindent (1): The trangle inequality implies that
  \begin{multline*}
    \norm{a-au^2_\lambda}\leq
    \norm{a-au_\lambda}+\norm{au_\lambda-au^2_\lambda}=\norm{a-au_\lambda}+
    \norm{a-au_\lambda} \norm{u_\lambda} \leq 2\norm{a-au_\lambda}.
  \end{multline*}
  Since \((u_\lambda)\) is an approximate identity, the last term
  converges to \(0\). The second claim follows from a similar
  computation.

  \noindent (2): Let \(\epsilon>0\) be given. Similar to the last
  computation, we have
  \begin{multline*} \norm{a-au_\lambda u_\alpha}\leq
    \norm{a-au_\alpha}+\norm{au_\alpha-au_\lambda u_\alpha}\leq
    \norm{a-au_\alpha}+ \norm{a-au_\lambda} \norm{u_\alpha}\\ \leq
    \norm{a-au_\alpha}+ \norm{a-au_\lambda}.
  \end{multline*}
  Choose an index \(i\) such that \(\alpha,\lambda\geq i\) implies
  \(\norm{a-au_\alpha}< \epsilon/2\) and
  \(\norm{a-au_\lambda}<\epsilon/2\) so that the last term is
  \(<\epsilon\).
\end{proof}

\begin{lemma}\label{lem:convergence-to-id-in-nondgen-rep}
  Consider a nondegenerate representation
  \(\pi\colon A\to \Bound(\Hils)\) of a \(\Cst\)\nb-algebra \(A\) on a
  Hilbert space \(\Hils\).  Let \(\{u_\lambda\}_\lambda\) be a net in
  the unit ball of \(A\) such that \(\lim_\lambda (u_\lambda a)= a\)
  for all \(a\in A\). Then \(\pi(u_\lambda)x\to x\) in norm for each
  \(x\in \Hils\). In other words, \(\pi(u_\lambda)\to \Id_{\Hils}\) in
  the strong operator topology.
\end{lemma}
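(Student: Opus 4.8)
The plan is to reduce the claim to convergence on a dense subspace and then propagate it to all of \(\Hils\) via a uniform bound on the operators \(\pi(u_\lambda)\). First I would invoke nondegeneracy of \(\pi\): by definition the linear span of the set \(\{\pi(a)y : a\in A,\ y\in\Hils\}\) is dense in \(\Hils\). On such elementary tensors the computation is immediate, since \(\pi\) is multiplicative and so \(\pi(u_\lambda)\pi(a)y = \pi(u_\lambda a)y\); because \(\pi\) is a \(\Cst\)\nb-algebra representation it is contractive, whence
\[
  \norm{\pi(u_\lambda)\pi(a)y - \pi(a)y} = \norm{\pi(u_\lambda a - a)y} \leq \norm{u_\lambda a - a}\,\norm{y} \longrightarrow 0
\]
by the hypothesis \(\lim_\lambda u_\lambda a = a\). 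By linearity the same convergence holds on every finite linear combination of such vectors, hence on a dense subspace \(D\subseteq \Hils\).

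Next I would record the uniform bound. Since \(\pi\) is a representation of a \(\Cst\)\nb-algebra it is norm-decreasing, and the net lies in the unit ball, so \(\norm{\pi(u_\lambda)} \leq \norm{u_\lambda} \leq 1\) for every \(\lambda\); also \(\norm{\Id_{\Hils}} = 1\). This equiboundedness is exactly what allows the pointwise convergence on the dense subspace to spread to the whole space.

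Finally I would run the standard \(\epsilon/3\) argument. Fix \(x\in\Hils\) and \(\epsilon>0\). Choose \(\xi\in D\) with \(\norm{x-\xi} < \epsilon/3\). Then for every \(\lambda\),
\[
  \norm{\pi(u_\lambda)x - x} \leq \norm{\pi(u_\lambda)(x-\xi)} + \norm{\pi(u_\lambda)\xi - \xi} + \norm{\xi - x} \leq \norm{x-\xi} + \norm{\pi(u_\lambda)\xi - \xi} + \norm{\xi-x},
\]
using \(\norm{\pi(u_\lambda)} \leq 1\) in the first term. The outer two terms together are less than \(2\epsilon/3\), and by the dense-subspace convergence there is an index beyond which the middle term is \(<\epsilon/3\). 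Hence \(\pi(u_\lambda)x \to x\) in norm, which is precisely \(\pi(u_\lambda)\to\Id_{\Hils}\) in the strong operator topology.

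I do not expect a genuine obstacle here; the only point to keep in mind is that the net is not assumed to be an approximate identity of \(A\) (nor self-adjoint or positive), so one must rely solely on the one-sided relation \(u_\lambda a\to a\) together with the unit-ball bound. The uniform bound \(\norm{\pi(u_\lambda)}\leq 1\) is what makes the passage from the dense subspace to all of \(\Hils\) legitimate, so I would make sure to state it explicitly before the \(\epsilon/3\) estimate.
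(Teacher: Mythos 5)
Your proof is correct and follows essentially the same route as the paper: convergence on the dense subspace \(\pi(A)\Hils\) guaranteed by nondegeneracy, followed by an \(\epsilon/3\)-argument to pass to all of \(\Hils\). If anything, you improve on the paper's presentation by making explicit the uniform bound \(\norm{\pi(u_\lambda)}\leq 1\), which the paper's sketched \(\epsilon/3\)-step uses only implicitly.
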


\begin{proof}
  Let \(\pi(A)\Hils\) denote the linear span of
  \(\{\pi(a)x:a\in A, x\in\Hils\}\). Choose a vector, say
  \(\sum_{i=1}^n\pi(a_i)x_i\), in~\(\pi(A)\Hils\) where
  \(a_i\in A, x_i\in \Hils\) and \(n\in \N\). Now compute the
  following limit in~\(\Hils\):
  \begin{multline*}
    \lim_\lambda \biggr(\pi(u_\lambda)
    \biggr(\sum_{i=1}^n\pi(a_i)x_i\biggr) \biggr) =
    \sum_{i=1}^n\lim_\lambda\pi(u_\lambda)\pi(a_i)x_i \\=
    \sum_{i=1}^n\pi(\lim_\lambda u_\lambda a_i)x_i
    =\sum_{i=1}^n\pi(a_i)x_i.
  \end{multline*}
  Thus the required claim about limit holds for vectors
  in~\(\pi(A)\Hils\).  Next the claim can be extended to
  \(x\in \Hils\) by an \(\epsilon/3\)\nb-argument.
\end{proof}

\begin{corollary}\label{cor:cros-app-id}
  Let \(A\) be a \(\Cst\)\nb-algebra and \((u_\lambda)_\lambda\) an
  approximate identity of it. For any nondegenerate representation
  \(\pi\colon A\to \Bound(\Hils)\) on a Hilbert space~\(\Hils\),
  \(\pi(u_\lambda^2)\to \Id_{\Hils}\) and the iterated limits
  \(\lim_{\lambda}\lim_{\alpha}\pi(u_\lambda)\pi(u_\alpha)=
  \Id_{\Hils}= \lim_{\alpha} \lim_{\lambda}\pi(u_\lambda)
  \pi(u_\alpha)\) in the strong operator topology.
\end{corollary}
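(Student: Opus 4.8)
The plan is to deduce both assertions from the two preceding lemmas, using as the single engine the strong operator convergence of the representation evaluated on the approximate identity itself. First I would observe that \((u_\lambda)_\lambda\), being an approximate identity of a \(\Cst\)\nb-algebra, lies in the unit ball and satisfies \(\lim_\lambda u_\lambda a = a\) for all \(a\in A\); hence Lemma~\ref{lem:convergence-to-id-in-nondgen-rep} applies verbatim to the net \((u_\lambda)_\lambda\) and yields \(\pi(u_\lambda)\to \Id_{\Hils}\) in the strong operator topology. This fact drives everything that follows.

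For \(\pi(u_\lambda^2)\to \Id_{\Hils}\), I would apply the same lemma to the net \((u_\lambda^2)_\lambda\): it again lies in the unit ball, since \(\norm{u_\lambda^2}\le \norm{u_\lambda}^2\le 1\), and by part~(1) of Lemma~\ref{lem-squ-of app-idenetity} it satisfies \(\lim_\lambda u_\lambda^2 a = a\) for all \(a\in A\). Thus Lemma~\ref{lem:convergence-to-id-in-nondgen-rep}, now invoked for the net \((u_\lambda^2)_\lambda\), gives the claimed strong convergence directly.

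For the iterated limits I would argue pointwise on each \(x\in \Hils\), exploiting that every \(\pi(u_\lambda)\) is a contraction. To evaluate \(\lim_\lambda\lim_\alpha \pi(u_\lambda)\pi(u_\alpha)x\), fix \(\lambda\) and take the inner limit over \(\alpha\) first: since \(\pi(u_\alpha)x\to x\) and \(\norm{\pi(u_\lambda)}\le 1\), we get \(\norm{\pi(u_\lambda)\pi(u_\alpha)x-\pi(u_\lambda)x}\le \norm{\pi(u_\alpha)x-x}\to 0\), so the inner limit equals \(\pi(u_\lambda)x\); the outer limit over \(\lambda\) then returns \(x\). For the reverse order \(\lim_\alpha\lim_\lambda \pi(u_\lambda)\pi(u_\alpha)x\), fix \(\alpha\), put \(y=\pi(u_\alpha)x\), and note \(\pi(u_\lambda)y\to y\) directly from Lemma~\ref{lem:convergence-to-id-in-nondgen-rep}; hence the inner limit is \(\pi(u_\alpha)x\) and the outer limit is again \(x\).

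I do not expect a genuine obstacle here; the only points needing care are that the single approximate identity plays the role of both index nets \((u_\lambda)\) and \((u_\alpha)\) (so each meets the hypothesis of Lemma~\ref{lem:convergence-to-id-in-nondgen-rep}), and that one may pull the inner \(\alpha\)\nb-limit through \(\pi(u_\lambda)\) precisely because the latter is a norm contraction. Note that part~(2) of Lemma~\ref{lem-squ-of app-idenetity} is not strictly required for this route, since the strong convergence \(\pi(u_\lambda)\to\Id_{\Hils}\) already settles both iterated limits; alternatively one could obtain the iterated limits first at the algebraic level and then transport them into \(\Bound(\Hils)\) on the dense subspace \(\pi(A)\Hils\) by an \(\epsilon/3\)\nb-argument.
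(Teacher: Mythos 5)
Your proof is correct and takes essentially the same route as the paper: the paper's one-line proof likewise reduces all claims to Lemma~\ref{lem:convergence-to-id-in-nondgen-rep} via the identity \(\pi(u_\lambda u_\alpha)=\pi(u_\lambda)\pi(u_\alpha)\), with Lemma~\ref{lem-squ-of app-idenetity} supplying the algebra-level limit hypotheses. You simply make explicit the details the paper leaves implicit, namely the unit-ball check for \((u_\lambda^2)_\lambda\) and the use of \(\norm{\pi(u_\lambda)}\leq 1\) to pass the inner limit through in the iterated-limit claims.
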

\begin{proof}
  Since \(\pi(u_\lambda u_\alpha) = \pi(u_\lambda)\pi(u_\alpha)\), all
  claims follow from Lemma~\ref{lem:convergence-to-id-in-nondgen-rep}.
\end{proof}

\subsection{Bundles of \(\Cst\)-algebras}
\label{sec:upper-semic-bundl}

We refer the reader to Appendix~C
of~\cite{Williams2007Crossed-product-Cst-Alg} for the standard
material about the bundles of Banach spaces and upper semicontinuous
bundles of~\(\Cst\)\nb-algebras over locally compact Hausdorff
spaces. Still, we shall keep recalling some results whenever
required. Following~\cite[Defintion
I.13.4]{Fell1988Representation-of-Star-Alg-Banach-bundles}, an
\emph{upper semicontinuous bundle of Banach spaces} is an open mapping
of spaces \(p\colon \mathcal{B} \to X\) where \(\mathcal{B}\) is the
\emph{total space}, \(X\) is the \emph{base space} and~\(p\) is the
\emph{bundle map}, and following conditions hold: each \emph{fibre}
\(\mathcal{B}_x \defeq p\inverse(x)\) over \(x\in X\) is a Banach
space so that the \textrm{norm} \(b\mapsto \norm{b}\), for
\(b\in \mathcal{B}\), is an upper semicontinuous function
\(\mathcal{B}\to \R\); the scalar
multiplication~\(\C \times \mathcal{B} \to \mathcal{B}\) is also a
continuous function. Moreover, the addition is continuous in following
sense: let \(\mathcal{B}^{(2)}\) denote the fibre product
\(\{(b,b')\in \mathcal{B}\times \mathcal{B} : p(b) = p(b')\}\). Then
addition is a continuous function
\(\mathcal{B}^{(2)} \to \mathcal{B}\) where the domain has subspace
topology of \(\mathcal{B} \times \mathcal{B}\). Finally, a condition
relating the topology on~\(X\) and the norms on fibres is fulfilled.

\begin{definition}[\cite{Williams2007Crossed-product-Cst-Alg}]
  An upper semicontinuous \(\Cst\)\nb-bundle (or upper semicontinuous
  bundle of \(\Cst\)\nb-algebras) is an upper semicontinuous Banach
  bundle \(p\colon \A\to X \) such that each fibre is a
  \(\Cst\)\nb-algebra and the following conditions hold:
  \begin{enumerate}
  \item the fibrewise multiplication is a continuous mapping
    \(\A^{(2)} \to \A \);
  \item the fibrewise involution, \(a \mapsto a^* \) for \(a\in \A\),
    defines a continuous mapping \(\A \to \A \).
  \end{enumerate}
\end{definition}

\noindent We consider \emph{only} those upper semicontinuous
\(\Cst\)\nb-bundles \(p \colon \A \to X\) for which the base
space~\(X\) is locally compact, Hausdorff and second countable, and
each fibre~\(\A_x\) over \(x\in X\) is a separable
\(\Cst\)\nb-algebra. As a consequence of this, the section algebra
\(\Contz(X;\A)\) is also separable. In case we need to emphasize this
assumption, we call the bundle \emph{a separable bundle}.

For a bundle as above, a continuous (or measurable) section has the
standard meaning. However, we shall simply call a continuous section a
\emph{section} unless the continuity needs an emphasize.

The bundle in last definition is said to have \emph{enough sections}
if for given~\(x \in X\) and~\(a\in \A_x\), there is a section~\(s\)
with \(s(x)=a\). If the base space~\(X\) is locally compact and
Hausdorff, then the bundle \(p\colon \A\to X\) has enough
section~\cite{Hofmann1977Bundles-and-sheaves-equi-in-Cat-Ban}.

For a bundle as in last definition, \(\Contz(X,\A)\) denotes the
\(\Cst\)\nb-algebra of sections vanishing at infinity.
\(\Contc(X; \A)\) denotes the dense \({}^*\)-subalgebra
of~\(\Contz(X,\A)\) consisting of continuous sections with compact
support.

It is customary to write either~\(p\) or~\(\A\) for a bundle
\(p\colon \A\to X\) when the base space and the bundle map are
clear. Recall the definition of \(\Contz(X)\)\nb-algebras
from~\cite[Appendix C]{Williams2007Crossed-product-Cst-Alg}. A
\(\Contz(X)\)\nb-algebra \(A\) can be thaught as a bundle of upper
semicontinuous \(\Cst\)\nb-algrebras over \(X\) and conversely, the
section algebras of a bundle of upper semicontinuous
\(\Cst\)\nb-algbra is a \(\Contz(X)\)\nb-algebra (see~\cite[Theorem
C.26]{Williams2007Crossed-product-Cst-Alg}). We denote the fibre of a
\(\Contz(X)\)\nb-algebra \(A\) at \(x\in X\) by \(A(x)\).

\begin{lemma}\label{lem:fiberwisw-appr-iden}
  Let \(p\colon \A \to X \) be an upper semicontinuous
  \(\Cst\)\nb-bundle. If \((u_n)_{n\in \N}\) is an approximate
  identity of \(\Contz(X;\A)\), then for each \(x\in X\) the net
  \((u_n(x))_{n\in \N}\) is an approximate identity of the
  \(\Cst\)\nb-algebra \(\A_x\).
\end{lemma}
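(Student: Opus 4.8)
The plan is to exploit that evaluation at a point is a contractive \({}^*\)\nb-homomorphism onto the fibre, together with the existence of enough sections. Recall first that the \(\Cst\)\nb-norm on \(\Contz(X;\A)\) is the supremum norm \(\norm{f} = \sup_{y\in X}\norm{f(y)}\); consequently, for each fixed \(x\in X\) the evaluation map \(\mathrm{ev}_x\colon \Contz(X;\A)\to \A_x\), \(f\mapsto f(x)\), is a \({}^*\)\nb-homomorphism with \(\norm{\mathrm{ev}_x(f)}=\norm{f(x)}\le \norm{f}\). In particular it is norm\nb-decreasing and carries positive elements to positive elements.

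The only genuine input needed is that \(\mathrm{ev}_x\) is \emph{surjective}. Given \(a\in \A_x\), the bundle has enough sections, so there is a continuous section \(s\) with \(s(x)=a\). Choosing \(\chi\in \Contc(X)\) with \(\chi(x)=1\) (possible since \(X\) is locally compact and Hausdorff) and replacing \(s\) by \(\chi s\in \Contc(X;\A)\subseteq \Contz(X;\A)\), we may assume \(s\in \Contz(X;\A)\) while keeping \(s(x)=a\).

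With these two observations the result is immediate. Fix \(x\in X\) and \(a\in \A_x\), and choose \(s\in\Contz(X;\A)\) with \(s(x)=a\) as above. Since \((u_n)_{n\in\N}\) is an approximate identity of \(\Contz(X;\A)\), we have \(\norm{u_n s - s}\to 0\); applying the contraction \(\mathrm{ev}_x\) gives
\[
  \norm{u_n(x)\,a - a} = \norm{\mathrm{ev}_x(u_n s - s)} \le \norm{u_n s - s} \longrightarrow 0 ,
\]
and the symmetric computation yields \(\norm{a\,u_n(x)-a}\to 0\). As \(a\in\A_x\) was arbitrary, \((u_n(x))_{n\in\N}\) is a two\nb-sided approximate identity of \(\A_x\); should one also require the \(u_n(x)\) to be positive contractions, this transfers through the \({}^*\)\nb-homomorphism \(\mathrm{ev}_x\) from the corresponding property of \((u_n)_{n\in\N}\).

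I expect no serious obstacle here: the whole argument rests on the surjectivity and contractivity of the fibre evaluation, both of which are standard features of the section algebra of an upper semicontinuous \(\Cst\)\nb-bundle. The single point worth stating with care is the passage from a globally defined continuous section to one vanishing at infinity, via a compactly supported bump function, so that the approximate\nb-identity hypothesis on \(\Contz(X;\A)\) can genuinely be invoked.
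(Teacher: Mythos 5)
Your proof is correct and follows essentially the same route as the paper: both arguments rest on the fact that evaluation at \(x\) is contractive (equivalently, that uniform convergence in \(\Contz(X;\A)\) implies pointwise convergence) together with the enough-sections property to realise an arbitrary \(a\in \A_x\) as \(s(x)\) for some \(s\in\Contz(X;\A)\). Your explicit bump-function step ensuring \(s\in\Contz(X;\A)\) is a small detail the paper leaves implicit, but otherwise the two proofs coincide.
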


\begin{proof}
  For each \(x\in X\), \((u_n(x))_{n\in \N}\) is a sequence of the
  fibre \(\A_x\) with
  \(\norm{u_n(x)} \leq \norm{u_n}_\infty \leq 1 \).  For natural
  numbers \(n \leq m\), we have \(u_n \leq u_m\). By positivity
  of~\(u_m-u_n\in \Contz(X;\A)\), there is \(f \in \Contz(X; \A)\)
  with \(u_m - u_n = f^*\cdot f\). Thus
  \(u_m(x) - u_n(x) =f(x)^*f(x)\) is a positive element in~\(\A_x\).

  Finally, let \(a \in \A_x\). As~\(p\) has enough sections, get a
  section \(f \in \Contz (X;\A)\) such that \(f(x) =a\). As the
  uniform convergence in~\(\Contz(X;\A)\) implies pointwise
  convergences, we get \(\lim_n u_n(x) a = \lim_n a u_n =a\).
\end{proof}

\subsection{Fell bundles}

Fell bundles are natural genaralisation of \(\Cst\)\nb-dynamical
system and is an established objects in Operator Algebras. We shall
\emph{briefly} describe them
from~\cite{Muhly-Williams2008FellBundle-ME,
  Kumjian1998Fell-bundles-over-gpd}. Many of our notation and
elementary discussion of Fell bundles is adopted
from~\cite{Muhly-Williams2008FellBundle-ME}.  Suppose
\(p\colon \A \to G\) is an upper semicontinuous bundle of Banach
spaces over a locally compact, Hausdorff groupoid~\(G\). Let
\( \A^{(2)} \defeq\{ (a,b) \in \A\times \A : (p(a),p(b))\in
G^{(2)}\}.\)
\begin{definition}[Fell bundle~\cite{Muhly-Williams2008FellBundle-ME}]
  \label{def:fell-bundle}
  A Fell bundle over a locally compact, Hausdorff groupoid \(G\) is an
  upper semicontinuous bundle of Banach spaces \(p\colon \A\to G\)
  equipped with a continuous `multiplication' map
  \(m\colon \A^{(2)}\to \A\) and an involution map
  \(\A\to \A, a\mapsto a^*\) for \(a\in \A\) which satisfy the
  following axioms:
  \begin{enumerate}
  \item\label{item:Fell-1} \(p(m(a,b)) =p(a)p(b)\) for all
    \((a,b) \in \A^{(2)}\);
  \item\label{item:Fell-2} the induced map from
    \(\A_{\gamma_1}\times \A_{\gamma_2} \to \A_{\gamma_1 \gamma_2}\),
    \((a,b) \mapsto m(a,b)\) is bilinear for all
    \((\gamma_1, \gamma_2)\in G^{(2)}, a\in \A_{\gamma_1}\) and
    \(b\in \A_{\gamma_2}\);
  \item\label{item:Fell-3} \(m(m(a,b),c) =m(a, m(b,c))\) for
    \((a,b),(b,c)\in \A^{(2)}\);
  \item\label{item:Fell-4} \(\norm{m(a,b)} \leq \norm{a}\norm{b}\) for
    all \((a,b) \in \A^{(2)}\);
  \item\label{item:Fell-5} \(p(a^*)=p(a)^{-1}\) for all \(a\in \A\);
  \item\label{item:Fell-6} the induced map
    \(\A_\gamma \to \A_{\gamma^{-1}}\), \(a\mapsto a^*\) is conjugate
    linear for all \(\gamma \in G\);
  \item\label{item:Fell-7} \((a^*)^*=a\) for all \(a\in \A\);
  \item\label{item:Fell-8} \(m(a,b)^*=m(b^*,a^*)\) for all
    \((a,b)\in \A^{(2)}\);
  \item\label{item:Fell-9}
    \(\norm{m(a^*,a)}=\norm{m(a,a^*)}=\norm{a}^2\) for all
    \(a\in \A\);
  \item\label{item:Fell-10} \(m(a^*,a) \geq 0\) for all \(a\in \A\).
  \end{enumerate}
\end{definition}

\noindent Given \((a,b) \in \A^{(2)}\), we use the shorthand \(ab\)
instead of \(m(a, b)\).  A Fell bundle~\(\A\) is called saturated if
the set
\( \A_{\gamma_1} \cdot \A_{\gamma_2} = \textup{span} \{ab: a\in
\A_{\gamma_1}, b\in \A_{\gamma_2}\}\) is dense in
\(\A_{\gamma_1 \gamma_2}\) for all
\((\gamma_1,\gamma_2) \in G^{(2)}\).  For \(x\in G^{(0)}\),
Conditions~\eqref{item:Fell-2}--\eqref{item:Fell-9} in
Definition~\ref{def:fell-bundle} make the fibre \(\A_x\) a
\(\Cst\)\nb-algebra with the obvious norm, multiplication and
involution.  Further, if \(\gamma \in G\) and \(a,b\in \A_\gamma\),
then \(\A_\gamma\) is a right Hilbert \(\A_{s(\gamma)}\)\nb-module
when equipped with the inner product
\(\inpro{a}{b}_{\A_{s(\gamma)}}\defeq a^*b\); and~\(\A_\gamma\) is a
left Hilbert \(\A_{r(\gamma)}\)\nb-module if the inner product is
defined by \({}_{\A_{r(\gamma)}}\Linpro{a}{b} \defeq ab^*\). Those
claims follow from Conditions~\eqref{item:Fell-2},
\eqref{item:Fell-8}, \eqref{item:Fell-9} and~\eqref{item:Fell-10} in
Definition~\ref{def:fell-bundle}. Thus, the Fell bundle
\(p\colon \A \to G\) is saturated if and only if~\(\A_\gamma\) is an
\(\A_{r(\gamma)}\)-\(\A_{s(\gamma)}\)\nb-imprimitivity bimodule for
each~\(\gamma \in G\). In this article, by a Fell bundle we shall mean
a \emph{saturated} Fell bundles.  For a subset ~\(X\subseteq G\), by
\(\A|_X\) we denote the restriction of~\(\A\) to~\(X\) but we still
denote the bundle map by~\(p\colon \A|_X\to X\).

\subsection{Representation of Fell bundles}

Let \(G\) be a locally compact, Hausdorff, second countable, \etale\
groupoid, and \(p\colon \A\to G\) a saturated Fell bundle over
it. Denote the complex vector space of compactly supported continuous
sections of the Fell bundle by \(\Contc(G;\A)\).  This vector space
becomes a {\Star}-algebra when equipped with the following operations:
\[
  f*g(\gamma)= \sum_{\alpha\beta=\gamma}f(\alpha)g(\beta) \quad
  \textup{and} \quad f^*(\gamma)=f(\gamma^{-1})^*
\]
where \(f,g\in \Contc(G;\A)\).  Define the \emph{\(I\)-norm} on
\(\Contc(G;\A)\) by
\[
  \norm{f}_I= \max \biggl\{\sup_{x\in G^{(0)}}\sum_{\gamma\in G^x}
  \norm{f(\gamma)},\sup_{x\in G^{(0)}}\sum_{\gamma\in G_x}
  \norm{f(\gamma)}\biggr\}.
\]

A representation of the {\Star}-algebra \(\Contc(G; \A)\) on a Hilbert
space \(\Hils\) is a non-degenerate \(^*\)-homomorphism
\(L\colon \Contc(G;\A)\to \Bound(\Hils)\). The representation \(L\) is
called \(I\)-norm decreasing if \(\norm{L(f)}\leq \norm{f}_I\) for all
\(f\in \Contc(G;\A)\).  The universal \(\Cst\)-norm on
\(\Contc(G;\A)\) is defined by
\[
  \norm{f}= \sup\{\norm{L(f)} : L \text{ is an } I\text{-norm
    decrising representation}\};
\]
the completion of \(\Contc(G;\A)\) with respect to the universal norm,
denoted by \(\Cst(G;\A)\), is called the \emph{full
  \(\Cst\)\nb-algebra} of the Fell bundle \(p\colon \A\to G\).

Muhly and Williams prove the disintegration theorem for Fell bundles
over Hausdorff, locally compact, second countable groupoids equipped
with Haar systems in~\cite{Muhly-Williams2008FellBundle-ME}. This
non-trivial theorem is in the same zeal as Renault's disintegration
theorem~\cite[Theorem~II.1.21]{Renault1980Gpd-Cst-Alg} for groupoids
with Haar system.

Muhly and Williams define a unitary representation of a Fell
bundle~\(p\colon \A\to G\), and integrate it to a representation of
the convolution section algebra \(\Contc(G;\A)\). Conversely, a
representation of \(\Contc(G;\A)\) on a Hilbert space disintegrates to
a unitary representation of \(p\colon \A\to G\). Moreover, there is a
certain one-to-one correspondence between the representation of
\(\Cc(G;\A)\) and the unitary representation of \(p\colon \A \to
G\). We need the disintegration theorem~\cite[Theorem
4.13]{Muhly-Williams2008FellBundle-ME} for proving
Proposition~\ref{prop:integration-of-states}.  \medskip

Let \(p\colon \A\to G\) be a Fell bundle over a locally compact,
Hausdorff, second countable groupoid \(G\). Let \(G^{(0)}*\Hils\) be a
Borel Hilbert bundle over \(G^{(0)}\). Let
\(\textup{End}(G^{(0)}*\Hils)\) denote the set
\[
  \textup{End}(G^{(0)}*\Hils) \defeq \big\{(x,T,y): x,y \in G^{(0)},
  T\in \Bound(\Hils_y,\Hils_x)\big\}.
\]

\begin{definition}[{\cite[Definition~4.5]{Muhly-Williams2008FellBundle-ME}}]\label{def:star-funtor}
  A map \(\hat{\pi}\colon \A \to \textup{End}(G^{(0)}*\Hils)\) is
  called \(*\)-functor if for \(a\in \A\),
  \(\hat{\pi}(a)= \big(r(p(a)), \pi(a), s(p(a))\big)\) for a bounded
  linear operator
  \(\pi(a) \colon \Hils_{s(p(a))} \to \Hils_{r(p(a))}\) such that
  \begin{enumerate}
  \item \(\pi(\lambda a+b)=\lambda \pi(a)+\pi(b)\) if
    \(a,b \in \A_\gamma\) for \(\gamma \in G\);
  \item \(\pi(ab)=\pi(a)\pi(b)\) if \((a,b)\in \A^{(2)}\);
  \item \(\pi(a^*)=\pi(a)^*\) for \(a\in \A\).
  \end{enumerate}
\end{definition}
\noindent For \(x\in \base[G]\),
\(\pi|_{\A_x}\colon \A_x \to \Bound(\Hils_x)\) is a nondegenarate
\(^*\)\nb-homomorphism.  A strict representation of a Fell bundle
\(\A\) is a triple \((\mu, \base*\Hils, \hat{\pi})\) consisting of a
quasi-invariant probability measure \(\mu\) on \(G^{(0)}\), a Borel
Hilbert bundle \(G^{(0)}*\Hils\) and a \(*\)-funtor \(\hat{\pi}\). We
write \(L^2(G^{(0)}*\Hils,\mu)\) is the completion of the set of all
Borel sections \(f\colon G^{(0)}\to G^{(0)}*\mathcal{H}\) with
\(\int_{G^{(0)}}\inpro{f(x)}{f(x)}_{\Hils_x}\mathrm{d}\mu (x)<\infty\)
with respect to the inner product
\(\inpro{f}{g}_{L^2(G^{(0)}*\Hils,\mu)} = \int_{G^{(0)}}
\inpro{f(x)}{g(x)}_{\Hils_x}\mathrm{d}\mu(x).\) Let \(\Delta_\mu\) be
the Radon--Nikodym derivative for the quasi-invariant measure
\(\mu\). Given a strict representation
\((\mu, G^{(0)}*\Hils,
\hat{\pi})\)~\cite[Proposition~4.10]{Muhly-Williams2008FellBundle-ME}
gives an \(I\)-norm bounded representation of \(\Cst(G;\A)\) on the
Hilbert space \(L^2(G^{(0)}*\Hils,\mu)\) such that
\begin{equation}\label{equ-int-dis-rep}
  \inpro{\eta}{L(f)\xi} = \int_{G^{(0)}} \biggr(\sum_{\gamma\in G^x}
  \inpro{\eta(r(\gamma))}{\pi(f(\gamma))\xi(s(\gamma))}_{\Hils_{r(\gamma)}}
  \Delta_\mu(\gamma)^{-\frac{1}{2}} \biggr) \mathrm{d} \mu(x),
\end{equation}
where \(f\in \Cc(G;\A)\) and \(\xi,\eta \in L^2(G^{(0)}*\Hils,\mu)\).
Muhly and Williams called this representation \(L\) is the integrated
form of \(\pi\). The disintegration theorem~\cite[Theorem
4.13]{Muhly-Williams2008FellBundle-ME} for Fell bundle is a strong
converse of ~\cite[Proposition~4.10]{Muhly-Williams2008FellBundle-ME}.

	\begin{lemma}\label{lem:convo-func:supp:bise}
          Let \(U\) and \(V\) be bisections of \(G\).
          \begin{enumerate}
          \item
            \label{lem:convulu-supp} If \(f \in \Cc(U;\A|_{U})\) and
            \(g\in \Cc(V;\A|_{V})\), then
            \(f*g \in \Cc(UV;\A|_{UV})\).
          \item
            \label{lem:involu-supp} If \(f \in \Cc(U;\A|_{U})\), then
            \(f^* \in \Cc(U^{-1}; \A|_{U^{-1}})\).
          \item \label{cond-3-bise-lemma} If \(f\in \Cc(U;\A|_{U})\),
            then \(f*f^*, f^**f \in \Cc(\base; \A|_{\base})\). In
            particular \(f*f^* \in \Cc(r(U); \A|_{r(U)})\) and
            \(f^**f \in \Cc(s(U); \A|_{s(U)})\).
          \item \label{lem:uni-norm-bisec} If \(f\in \Cc(U;\A|_{U})\),
            then
            \(\norm{f*f^*}_\infty = \norm{f^**f}_\infty =
            \norm{f}_\infty^2\).
          \end{enumerate}
        \end{lemma}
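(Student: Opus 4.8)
The plan is to exploit the single device that underlies all four parts: on a bisection the range and source maps are injective, so the convolution sum defining \(f*g\) has \emph{at most one} nonzero summand. Before anything else I would record the inverse-semigroup facts for bisections of an \etale\ groupoid, namely that \(UV\) and \(U\inverse\) are again bisections and that
\[
  UU\inverse=r(U)\subseteq\base, \qquad U\inverse U=s(U)\subseteq\base.
\]
The two displayed identities come straight from injectivity: if \(uv\inverse\in UU\inverse\) then \(s(u)=s(v)\) forces \(u=v\) by injectivity of \(s|_U\), whence \(uv\inverse=r(u)\in\base\), and symmetrically \(U\inverse U=s(U)\) using \(r|_U\).

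For part~\eqref{lem:convulu-supp} I would first note that in \(f*g(\gamma)=\sum_{\alpha\beta=\gamma}f(\alpha)g(\beta)\) only terms with \(\alpha\in U\), \(\beta\in V\) survive; given \(\gamma\) with \(r(\gamma)=x\in r(UV)\), injectivity of \(r|_U\) fixes the unique \(u\in U\) with \(r(u)=x\), and then \(\beta=u\inverse\gamma\) is forced, so the sum collapses to the single value \(f(u)\,g(u\inverse\gamma)\) and \(\supp(f*g)\subseteq UV\). Writing \(w_x\in UV\) for the unique arrow over \(x\), continuity of \(f*g\) on the bisection \(UV\) then follows by composing the continuous maps \(x\mapsto(r|_U)\inverse(x)=u\), the sections \(f,g\), and the continuous Fell-bundle multiplication \(m\) of Definition~\ref{def:fell-bundle}; compactness of \(\supp(f*g)\) follows because it lies inside the continuous image of the compact set \((\supp f\times\supp g)\cap G^{(2)}\) under \(m\). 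Part~\eqref{lem:involu-supp} is then immediate: \(f^*(\gamma)=f(\gamma\inverse)^*\) is supported in \(U\inverse\), and it is a continuous compactly supported section there because it is the composite of the inversion homeomorphism \(\gamma\mapsto\gamma\inverse\) with \(f\) and with the continuous involution of the bundle.

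Parts~\eqref{cond-3-bise-lemma} and~\eqref{lem:uni-norm-bisec} then drop out. Combining the first two parts gives \(f*f^*\in\Cc(UU\inverse;\A|_{UU\inverse})\) and \(f^**f\in\Cc(U\inverse U;\A|_{U\inverse U})\), and the identities \(UU\inverse=r(U)\), \(U\inverse U=s(U)\) recorded above place these in \(\Cc(r(U);\A|_{r(U)})\) and \(\Cc(s(U);\A|_{s(U)})\), hence in \(\Cc(\base;\A|_{\base})\). For the norm equalities I would evaluate pointwise: for \(x=r(u)\in r(U)\) the collapsed sum gives \(f*f^*(x)=f(u)f(u)^*\), and Condition~\eqref{item:Fell-9} of Definition~\ref{def:fell-bundle} yields \(\norm{f(u)f(u)^*}=\norm{f(u)}^2\); taking the supremum over \(u\in U\) gives \(\norm{f*f^*}_\infty=\norm{f}_\infty^2\), and the computation for \(f^**f(s(u))=f(u)^*f(u)\) is identical. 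The only genuinely delicate point in the whole argument is the uniqueness of factorisation that collapses the convolution sum; once that is in hand, every remaining step is a routine continuity and compact-support verification, so I expect no real obstacle beyond setting up the bisection bookkeeping carefully.
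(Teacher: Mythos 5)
Your proof is correct and takes essentially the approach the paper relies on: the paper gives no argument of its own but cites Proposition~3.11 of Exel's paper, whose proof in the scalar \etale\ case is exactly your collapse of the convolution sum over a bisection together with the identities \(UU\inverse=r(U)\) and \(U\inverse U=s(U)\). You have, in addition, correctly supplied the Fell-bundle ingredients that the citation leaves implicit—continuity of the bundle multiplication and involution for parts~(1)--(2), and the fibrewise \(\Cst\)\nb-identity (Condition~\eqref{item:Fell-9} of Definition~\ref{def:fell-bundle}) for part~(4)—so there are no gaps.
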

        \begin{proof}
          The proof is similar to Proposition~3.11
          of~\cite{Exel2008Inverse-semigroup-combinotorial-Cst-alg}.
        \end{proof}

        \begin{remark}
          \label{rem:app-id-in-com-supp}
          Let \(p\colon \A \to G\) be a Fell bundle. Then
          \(\Contz(\base; \A|_{\base})\) is the section algebra of the
          upper semicontinuous bundle of \(\Cst\)\nb-algebra
          \(p\colon \A|_{\base} \to \base\).
          Then~\(\Contz(\base; \A|_{\base})\) has an approximate
          identity consisting of continuous sections having compact
          support. To see this, let~\((a_n)_{n\in \N}\) be an
          approximate identity of~\(\Contz(\base;
          \A|_{\base})\). Since~\(\base\) locally compact, Hausdorff,
          second countable \(\Contz(\base)\) has an approximate
          identity~\((s_n)_{n\in \N}\) where
          each~\(s_n\in \Contc(\base)\). Then~\((s_na_n)_{n \in \N}\)
          is an approximate identity of~\(\Contz(\base; \A|_{\base})\)
          consisting of continuous compactly supported sections.
        \end{remark}

\begin{proposition}\label{prop:exist-app:unit}
  Let \(p\colon \A \to G\) be a Fell bundle over a locally compact,
  Hausdorff, second countable, \etale\ groupoid.  Let
  \((u_n)_{n\in \N}\) be an approximate identity of
  \(\Contz(\base; \A|_{\base})\) in \(\Cc(\base;\A|_{\base})\) (by
  Remark~\ref{rem:app-id-in-com-supp} such approximate identity always
  exist) with respect to \(\norm{\cdot }_{\infty}\), then
  \((u_n)_{n\in \N}\) is an approximate identity for \(\Cst(G;\A)\).
\end{proposition}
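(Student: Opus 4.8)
The plan is to verify the two defining limits $\norm{u_n * f - f}\to 0$ and $\norm{f * u_n - f}\to 0$ in the universal norm for every $f\in\Cst(G;\A)$. First I would record that $(u_n)_{n}$ is bounded: each $u_n$ is supported on~$\base$, and for a section $a\in\Cc(\base;\A|_{\base})$ the only arrow in $G^x$ (resp.\ $G_x$) on which $a$ can be nonzero is $x$ itself, so $\norm{a}_I=\norm{a}_\infty$; since the universal norm is dominated by the $I$\nb-norm, $\norm{u_n}\le\norm{u_n}_I=\norm{u_n}_\infty\le 1$. Boundedness together with the density of $\Cc(G;\A)$ reduces the problem, by a standard $\epsilon/3$ estimate, to proving the two limits for $f\in\Cc(G;\A)$. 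A partition of unity subordinate to a finite cover of $\supp f$ by bisections then reduces it to the case $f\in\Cc(U;\A|_{U})$ for a single bisection $U$, using $\norm{u_n*f-f}\le\sum_i\norm{u_n*(\phi_i f)-\phi_i f}$ with $\phi_i f\in\Cc(U_i;\A|_{U_i})$.

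For such an $f$ the key move is to invoke the $C^*$\nb-identity \emph{before} estimating, so that the difference is pushed into the corner supported on~$\base$. Because $u_n$ is supported on $\base$ one has $(u_n*f)(\gamma)=u_n(r(\gamma))f(\gamma)$, and $u_n$ is self-adjoint (its fibres are positive), so $(u_n*f)^*=f^**u_n$. Setting $g\defeq f*f^*$, Lemma~\ref{lem:convo-func:supp:bise}\eqref{cond-3-bise-lemma} gives $g\in\Cc(r(U);\A|_{r(U)})\subseteq\Cc(\base;\A|_{\base})$, and the $C^*$\nb-identity yields
\[
  \norm{u_n*f-f}^2=\norm{(u_n*f-f)*(u_n*f-f)^*}=\norm{u_n*g*u_n-u_n*g-g*u_n+g}.
\]
All four sections on the right are supported on $\base$, where convolution is fibrewise multiplication; hence this element equals $u_n g u_n-u_n g-g u_n+g$ computed in the $\Cst$\nb-algebra $\Contz(\base;\A|_{\base})$.

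Finally I would estimate inside this corner, where the universal norm is dominated by the $I$\nb-norm, which equals the sup norm. Grouping as $u_n(g u_n-g)-(g u_n-g)$ and using $\norm{u_n}_\infty\le 1$,
\[
  \norm{u_n g u_n-u_n g-g u_n+g}_\infty\le 2\,\norm{g u_n-g}_\infty,
\]
and the right-hand side tends to $0$ because $(u_n)_n$ is, by hypothesis, an approximate identity of $\Contz(\base;\A|_{\base})$ and $g$ lies in it. Thus $\norm{u_n*f-f}\to 0$. The right-handed limit is symmetric: using $\norm{f*u_n-f}^2=\norm{(f*u_n-f)^**(f*u_n-f)}$ and $h\defeq f^**f\in\Cc(s(U);\A|_{s(U)})$, one is reduced to $\norm{u_n h u_n-u_n h-h u_n+h}_\infty\to 0$.

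The step I expect to be the genuine obstacle is exactly the one this argument is designed to sidestep. The naive route---bounding $\norm{u_n*f-f}$ by $\norm{u_n*f-f}_\infty=\sup_{\gamma}\norm{u_n(r(\gamma))f(\gamma)-f(\gamma)}$---would require the left module action of the fibrewise approximate identity $u_n(r(\gamma))$ to converge \emph{uniformly} in $\gamma$ over the compact set $\supp f$, which is awkward to control directly. Interposing the $C^*$\nb-identity so that the estimate takes place inside $\Contz(\base;\A|_{\base})$, where $(u_n)$ is an approximate identity by assumption and the $I$\nb-norm collapses to the sup norm, is the crux that makes the uniformity issue evaporate.
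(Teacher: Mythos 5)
Your proposal is correct and follows essentially the same route as the paper's proof: invoke the \(C^*\)\nb-identity to push the difference into \(g=f*f^*\) supported on \(\base\), use Lemma~\ref{lem:convo-func:supp:bise} to work fibrewise in \(\Contz(\base;\A|_{\base})\), and bound by \(2\norm{g\cdot u_n-g}_\infty\to 0\). You additionally spell out details the paper leaves implicit---the \(I\)\nb-norm bound \(\norm{u_n}\le 1\), the \(\epsilon/3\) density reduction, the partition-of-unity step, and the symmetric right-handed limit via \(h=f^**f\)---but the key idea is identical.
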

\begin{proof}
  Since for an \etale\ groupoid \(\base\) is a clopen subset of \(G\),
  we have \(u_n\in \Cc(G;\A)\) for all \(n\in \N\). We show that, for
  any \(f\in \Cc(G;\A)\) we have to show
  \begin{equation}\label{equ:req-app:uint}
    \lim_{n} \norm{f*u_n-f} = 0.
  \end{equation}
  Since the collections of bisections form a base for \(G\), it is
  suffices to prove Equation~\eqref{equ:req-app:uint} for
  \(f\in \Cc(U;\A|_{U})\) where \(U\) is an open bisection. We have
  \begin{multline*}
    \norm{u_n*f-f}^2 = \norm{(u_n*f-f)*(u_n*f-f)^*}
    = \norm{(u_n*f-f)*(f^**u_n-f^*)}\\
    = \norm{ u_n*f*f^**u_n- u_n*f*f^*-f*f^**u_n+f*f^*} \\
    = \norm{ u_n*(f*f^*)*u_n- u_n*(f*f^*)-(f*f^*)*u_n+f*f^*}.
  \end{multline*}
  The first equality follows from \(\Cst\)\nb-identity and the second
  follows because \(u_n\) is self-adjoint. Since \(u_n, f*f^*, f^**f\)
  are all supported on \(\base\), then using
  Lemma~\ref{lem:convo-func:supp:bise} the above term can be rewitten
  as
  \begin{multline*}
    \norm{ u_n\cdot (f*f^*) \cdot u_n- u_n\cdot (f*f^*)-(f*f^*)\cdot u_n+f*f^*}_{\infty} \\
    \leq \norm{u_n\cdot (f*f^*) \cdot u_n- u_n\cdot (f*f^*)}_{\infty} + \norm{(f*f^*)\cdot u_n-f*f^*}_{\infty} \\
    \leq \norm{u_n}_{\infty}\norm{ (f*f^*) \cdot u_n- (f*f^*)}_{\infty} +\norm{(f*f^*)\cdot u_n-f*f^*}_{\infty}\\
    \leq 2 \norm{(f*f^*)\cdot u_n-f*f^*}_{\infty} \to 0 \quad \text{
      as } n \to \infty.
  \end{multline*}
\end{proof}

\section{The induction correspondence}
\label{sec:an-example-corr}

Let \(G\) be a locally compact, Hausdorff, \etale\ groupoid and
\(p\colon \A \to G\) a Fell bundle over it. Fix \(x\in \base[G]\).
Let \(\A|_{G_x}\) be the restriction of \(\A\) to fibre
\(G_x\subseteq G\) and \(\A(x)\) the restriction of \(\A\) to
\(G^x_x\). The restrictions of~\(\A\) to \(G_x^x\) is a Fell bundle
over isotropy group. And the restrictions of~\(\A\) to \(G_x\) is an
upper semicontinuous bundle of Banach spaces such that
\(\Contc(G_x^x;\A(x)) \subseteq \Contc(G_x;\A|_{G_x})\) in an obvious
sense.

For \(\gamma\in G\), \(\delta_\gamma\) denotes the characteristic
function of the singletone \(\{\gamma\}\). In general, a section of
the restricted group Fell bundle \(\A(x)\to G_x^x\) is written as a
sum \(\sum_{\gamma\in G_x^x} a_\gamma \delta_\gamma\) where
\(a_\gamma\in \A_\gamma\). If necessary, we write
\(a_\gamma \cdot \delta_\gamma\) instead of \(a_\gamma\delta_\gamma\).
\smallskip

For \(f\in \Contc(G_x^x;\A(x)), g\in \Contc(G; \A)\) and
\(\xi,\zeta\in \Contc(G_x; \A|_{G_x})\) define
\begin{align}
  g*'\xi(z) &= \sum_{\tau\in G^{r(z)}} g(\tau) \xi(\tau\inverse z), \label{eq:ind-corr-1}\\
  \xi*''f(z) &= \sum_{\upsilon\in G^x_x} \xi(z\upsilon) f(\upsilon\inverse), \label{eq:ind-corr-2}\\
  \inpro{\xi}{\zeta} (\gamma) &= \sum_{t\in G_{x}} \xi(t)^* \zeta(t\gamma) \label{eq:ind-corr-3}
\end{align}
where \(z\in G_x\) and \(\gamma\in G_x^x\). Our immediate goal is to
show that Equations~\eqref{eq:ind-corr-1} and~\eqref{eq:ind-corr-2}
make~\(\Contc(G_x;\A|_{G_x})\) a bimodule over the
pre-\(\Cst\)\nb-algebras \(\Contc(G;\A)\) and \(\Contc(G_x^x;\A(x))\),
respectively; and Equation~\eqref{eq:ind-corr-3} defines a
\(\Contc(G_x^x;\A(x))\)\nb-valued inner product on this
bimodule. Before we start proving this, in the next lemma we note an
observation that shall prove useful in future computation.

For \(a\in \Contc(G_x^x;\A(x))\), let
\(\tilde{a}\in \Contc(G_x;\A|_{G_x})\) denote the section obtained by
extending~\(a\) by zero outside \(G_x^x\). Note that
\(\widetilde{a^*}=(\tilde{a})^*\).

\begin{lemma}\label{lem:inpro-conv-equivalence}
  For \(a\in \Contc(G_x^x;\A(x))\), and
  \(b\in \Contc(G_x;\A|_{G_x})\),
  \[
    \inpro{\tilde{a}}{b} = \widetilde{(a^*)} *'' (b|_{G_x^x}) =
    a^**(b|_{G_x^x})
  \]
  where the last term is the convolution in \(\Contc(G_x^x;\A(x))\).
\end{lemma}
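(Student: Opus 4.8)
The plan is to establish both equalities by a single direct pointwise computation: I evaluate each of the three sections at an arbitrary $\gamma\in G_x^x$ and check that the resulting elements of $\A_\gamma$ agree. Since $G$ is \etale, the fibres $G_x$ and $G^x$ are discrete, hence so is the isotropy group $G_x^x$; as $a$ and $b$ are compactly supported, every sum below is finite. The common value against which I match the other two terms is the inner product: because $\tilde a$ vanishes off $G_x^x$, formula~\eqref{eq:ind-corr-3} collapses to
\[
  \inpro{\tilde a}{b}(\gamma)=\sum_{t\in G_x}\tilde a(t)^*\,b(t\gamma)=\sum_{t\in G_x^x}a(t)^*\,b(t\gamma),
\]
and each summand lies in $\A_\gamma$ because, for $t\in G_x^x$, the pair $(t\inverse,t\gamma)$ is composable with product $\gamma$.

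For the rightmost term I expand the group convolution in $\Contc(G_x^x;\A(x))$ and use $a^*(\alpha)=a(\alpha\inverse)^*$:
\[
  a^**(b|_{G_x^x})(\gamma)=\sum_{\alpha\in G_x^x}a^*(\alpha)\,b(\alpha\inverse\gamma)=\sum_{\alpha\in G_x^x}a(\alpha\inverse)^*\,b(\alpha\inverse\gamma);
\]
the substitution $t=\alpha\inverse$, a bijection of the group $G_x^x$, turns this into $\sum_{t\in G_x^x}a(t)^*\,b(t\gamma)$, i.e.\ the inner product. For the middle term I expand~\eqref{eq:ind-corr-2}. First I observe that $\widetilde{(a^*)}(z\upsilon)\neq0$ forces $z\upsilon\in G_x^x$, and for $\upsilon\in G_x^x$ this holds iff $z\in G_x^x$; hence $\widetilde{(a^*)}*''(b|_{G_x^x})$ is automatically supported on $G_x^x$, which is exactly what lets the stated equality type-check although $*''$ a priori produces a section over the whole of $G_x$. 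Evaluating at $\gamma\in G_x^x$ and using $a^*(\gamma\upsilon)=a(\upsilon\inverse\gamma\inverse)^*$ gives $\sum_{\upsilon\in G_x^x}a(\upsilon\inverse\gamma\inverse)^*\,b(\upsilon\inverse)$; the substitutions $s=\upsilon\inverse$ and then $t=s\gamma\inverse$ reduce this once more to $\sum_{t\in G_x^x}a(t)^*\,b(t\gamma)$.

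The computation itself is elementary, so the genuine work is bookkeeping, and that is where I expect the only friction. Two points need care: (i) tracking which fibre of $\A$ each product $a(\cdot)^*b(\cdot)$ lands in, so that every side really is a section of $\A(x)$ over $G_x^x$; and (ii) carrying out the re-indexing substitutions correctly, since each relies on left translation and inversion being bijections of the discrete group $G_x^x$ and each interacts with the Fell-bundle involution $a^*(\gamma)=a(\gamma\inverse)^*$. Once the support reduction for $*''$ is in hand, all three evaluations collapse to the single sum $\sum_{t\in G_x^x}a(t)^*b(t\gamma)$, which proves the lemma.
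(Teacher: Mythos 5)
Your proof is correct and follows essentially the same route as the paper's: a direct pointwise evaluation at \(\gamma\in G_x^x\), using the support of \(\tilde a\) to collapse the inner-product sum to \(G_x^x\) and the bijectivity of translation and inversion in the discrete group \(G_x^x\) to re-index all three expressions to the common sum \(\sum_{t\in G_x^x}a(t)^*b(t\gamma)\). Your explicit check that \(\widetilde{(a^*)}*''(b|_{G_x^x})\) is automatically supported on \(G_x^x\) is a small point the paper leaves implicit, and is a welcome addition.
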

\begin{proof}
  For given \(a\) and \(b\), and \(\gamma\in G_x^x\),
  \[
    \inpro{\tilde{a}}{b}(\gamma) = \sum_{t\in
      G_x}\tilde{a}(t)^*b(t\gamma) = \sum_{t\in
      G_x}\widetilde{a^*}(t\inverse)b(t\gamma).
  \]
  Now we note that \(\widetilde{a^*}(t\inverse)=0\) for
  \(t\inverse\in G_x\7 G_x^x\), and \(\gamma\in G_x^x\), therefore,
  the last sum equals
  \begin{equation*}
    \sum_{t\in G^x_x}\widetilde{a^*}(t^{-1})b(t\gamma)  = \sum_{t\in G^x_x}\widetilde{a^*}(t^{-1})b|_{G^x_x}(t\gamma) = \widetilde{a^*}*'' (b|_{G_x^x})(\gamma).
  \end{equation*}
  And the last equality
  \( \widetilde{(a^*)} *'' (b|_{G_x^x}) = a^**(b|_{G_x^x})\) follows
  from definitions directly.
\end{proof}
If \(a\in \Cc(G^x_x;\A(x))\), then we abuse the notation by removing
the tilde and consider \(a\in \Cc(G_x;\A|_{G_x})\).  Now we start
proving the claim made before Lemma~\ref{lem:inpro-conv-equivalence}.

\begin{lemma}\label{lem:ind-corr-ops}
  In Equations~\eqref{eq:ind-corr-1}--\eqref{eq:ind-corr-3},
  \(g*'\xi, \xi *'' f\in \Contc(G_x; \A|_{G_x})\) and
  \(\inpro{\xi}{\zeta}\in \Contc(G_x^x;\A(x))\).
\end{lemma}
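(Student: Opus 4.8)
The plan is to exploit that \(G\) is \etale. Since the source and range maps are local homeomorphisms, each fibre \(G_x = s\inverse(x)\), each co-fibre \(G^{r(z)} = r\inverse(r(z))\), and the isotropy group \(G_x^x \subseteq G^x\) is discrete in its subspace topology. Consequently \(\Contc(G_x;\A|_{G_x})\) and \(\Contc(G_x^x;\A(x))\) are precisely the spaces of \emph{finitely supported} sections: continuity over a discrete base is automatic, and a compactly supported section over a discrete space has finite support. Thus proving the lemma reduces, for each of the three operations, to three bookkeeping checks: that the value at a point lands in the correct fibre of \(\A\); that the sum defining that value is finite; and that the resulting section has finite support contained in the asserted set.

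For the grading I would simply track which arrow each summand sits over using the Fell bundle axioms. For \(z \in G_x\) and \(\tau \in G^{r(z)}\), the pair \((\tau, \tau\inverse z)\) is composable with product \(z\), so \(g(\tau)\xi(\tau\inverse z) \in \A_z\) and hence \(g*'\xi(z) \in \A_z\); similarly \(\xi(z\upsilon)f(\upsilon\inverse) \in \A_z\) for \(\upsilon \in G_x^x\), giving \(\xi*''f(z)\in\A_z\); and \(\xi(t)^*\zeta(t\gamma)\in\A_{t\inverse\cdot t\gamma}=\A_\gamma\) for \(t \in G_x\) and \(\gamma \in G_x^x\), giving \(\inpro{\xi}{\zeta}(\gamma)\in\A_\gamma\). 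In each case the source computation also pins down the support: \(s(z)=x\) forces \(g*'\xi\) and \(\xi*''f\) to be sections over \(G_x\), while the definition places \(\inpro{\xi}{\zeta}\) over \(G_x^x\).

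Finiteness of each defining sum is immediate once I note that the relevant index set meets a compact (or already finite) set. In \(g*'\xi(z)\) only \(\tau \in \supp g \cap G^{r(z)}\) contribute, an intersection of a compact set with a discrete co-fibre, hence finite. In \(\xi*''f(z)\) and \(\inpro{\xi}{\zeta}(\gamma)\) the indices are constrained to the \emph{finite} supports of \(f\) and \(\xi\) respectively, so those sums are trivially finite.

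The one point that takes slightly more care, and which I regard as the main obstacle, is the finiteness of the \emph{support} of \(g*'\xi\), because here \(g\) is a section over all of \(G\) whose support is compact but typically infinite. I would argue that if \(g*'\xi(z)\neq 0\) then \(z = \tau w\) for some \(\tau \in \supp g\) and \(w=\tau\inverse z \in \supp\xi\); since \(\supp\xi\) is finite, it suffices to bound, for each fixed \(w \in \supp\xi\), the set \(\{\tau w : \tau\in\supp g,\ s(\tau)=r(w)\}\). But \(\{\tau \in \supp g : s(\tau)=r(w)\} = \supp g \cap G_{r(w)}\) is the intersection of a compact set with a discrete fibre, hence finite, and right translation by \(w\) is injective, so this set is finite; taking the finite union over \(w\in\supp\xi\) shows \(\supp(g*'\xi)\) is finite. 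The support finiteness for \(\xi*''f\) and \(\inpro{\xi}{\zeta}\) is easier: writing \(z=(z\upsilon)\upsilon\inverse\) with \(z\upsilon\in\supp\xi\) and \(\upsilon\inverse\in\supp f\), respectively \(\gamma=t\inverse(t\gamma)\) with \(t\in\supp\xi\) and \(t\gamma\in\supp\zeta\), exhibits each output support as a product of two finite sets. This completes the verification that the three operations take values in the claimed spaces.
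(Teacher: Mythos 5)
Your proof is correct and takes essentially the same route as the paper's: both exploit that \(G_x\), \(G^{r(z)}\) and \(G_x^x\) are discrete so that the claim reduces to finiteness of the defining sums and of the supports, with \(\supp(g*'\xi)\subseteq \supp(g)\supp(\xi)\) in the key case. The only difference is that you spell out why \(\supp(g)\supp(\xi)\) is finite (for each of the finitely many \(w\in\supp\xi\), the set \(\supp g\cap G_{r(w)}\) is a compact subset of a discrete fibre, hence finite, and right translation by \(w\) is injective), a point the paper asserts without elaboration.
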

\begin{proof}
  
  Since \(G_x, G^x_x\) are discrete spaces, we only have to show that
  the sections \(g*'\xi, \xi *'' f\) and \(\inpro{\xi}{\zeta}\) are
  zero everywhere but on finite sets. The sum on the right of
  Equation~\eqref{eq:ind-corr-1} is zero when
  \(\tau \notin G^{r(z)}\cap \supp(g) \) which is a finite set.  Thus
  \(g*'\xi(z)\) is a well-defined element of \(\A_z\). Next for
  \(z\notin \supp(g)\supp(\xi)\), we have \(g*'\xi (z) =0\). Which
  implies that \(\supp(g*'\xi)\subseteq \supp(g)\supp(\xi)\) is a
  finite set. It follows that \(g*'\xi \in \Cc(G_x;\A|_{G_x})\). A
  similar argument shows that
  \( \xi *'' f\in \Contc(G_x; \A|_{G_x})\). The sum
  in~Equation~\eqref{eq:ind-corr-3} is well-defined, because it is a
  finite sum; an argument similar to that of
  Equation~\eqref{eq:ind-corr-1} proves this.  From the
  Equation~\ref{eq:ind-corr-3}, it is clear that
  \(\inpro{\xi}{\zeta}\) is a section. If \(\zeta(t\gamma) = 0\) for
  some \(t\in G_x\), then \(\inpro{\xi}{\zeta}(\gamma) = 0\). Since
  \(\zeta\) is finitely supported, \(\inpro{\xi}{\zeta}\) is also
  finitely supported.
  
\end{proof}

\begin{lemma}\label{lem:ind-corr-bimod}
  Suppose sections \(f,l\in \Cc(G^x_x;\A(x))\), sections
  \(\xi,\zeta, \zeta' \in \Cc(G_x;\A|_{G_x})\) and sections
  \(g,k\in \Cc(G;\A)\). Then
  \begin{align}
    (\xi+\zeta)*''f &=\xi*''f+\zeta*''f \label{eq:ind-corr-bimod-1}\\
    \xi*'' (f+l) &= \xi *'' f+ \xi *'' l \label{eq:ind-corr-bimod-2}\\
    \xi*''(f*l) &=(\xi*''f)*''l \label{eq:ind-corr-bimod-3}\\
    g*' (\xi+\zeta) &= g*' \xi+ g*' \zeta \label{eq:ind-corr-bimod-4}\\
    (g+k)*'\xi &= g*'\xi + k*'\xi \label{eq:ind-corr-bimod-5}\\
    (g*k)*'\xi &= g*'(k*'\xi) \label{eq:ind-corr-bimod-6}\\
    (g*'\xi)*''f &= g*' (\xi*'' f) \label{eq:ind-corr-bimod-7}\\
    \inpro{\xi}{\zeta+\zeta'} &=\inpro{\xi}{\zeta}+\inpro{\xi}{\zeta'}
                                \label{eq:ind-corr-bimod-8}\\
    \inpro{\xi}{\zeta*''f}
                    &=\inpro{\xi}{\zeta}*f \label{eq:ind-corr-bimod-9}\\
    \inpro{\xi}{\zeta}^* &=\inpro{\zeta}{\xi} \label{eq:ind-corr-bimod-10}\\
    \inpro{g*'\xi}{\zeta} &=\inpro{\xi}{g^**'\zeta} \label{eq:ind-corr-bimod-11}
  \end{align}
\end{lemma}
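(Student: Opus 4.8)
The plan is to recognise the three operations in \eqref{eq:ind-corr-1}--\eqref{eq:ind-corr-3} as restrictions of the convolution product and involution already carried by sections over all of \(G\). Write \(\tilde\xi\) for the extension of \(\xi\in\Contc(G_x;\A|_{G_x})\) by zero to a finitely supported section of \(\A\) over \(G\), and regard \(f\in\Contc(G_x^x;\A(x))\) as supported on \(G_x^x\subseteq G\) in the same way. Unwinding the defining sums and using the convolution formula \(a*b(\gamma)=\sum_{\alpha\beta=\gamma}a(\alpha)b(\beta)\) together with the involution \(a^*(\gamma)=a(\gamma\inverse)^*\) of \(\Contc(G;\A)\), I would first check the three pointwise identities
\begin{equation*}
  g*'\xi=(g*\tilde\xi)\big|_{G_x},\qquad \xi*''f=(\tilde\xi*f)\big|_{G_x},\qquad \inpro{\xi}{\zeta}=\tilde\xi^**\tilde\zeta,
\end{equation*}
where in the last formula the right-hand side is automatically supported on \(G_x^x\). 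Each verification is a single re-indexing of a finite sum: for the left action set the summation arrow \(\tau\) equal to the first convolution factor so that \(\tau\inverse z\) is the second; for the right action substitute \(\upsilon=\beta\inverse\); and for the inner product substitute \(t=\alpha\inverse\), using \(s(\alpha\inverse)=r(\alpha)=x\). The support claims---that \(g*\tilde\xi\) and \(\tilde\xi*f\) live on \(G_x\) and \(\tilde\xi^**\tilde\zeta\) on \(G_x^x\)---are exactly the computations already made in Lemma~\ref{lem:ind-corr-ops}, and are forced by the constraints \(s=x\) on \(G_x\) and \(r=s=x\) on \(G_x^x\).

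Crucially, these are equalities of sections over the \emph{discrete} sets \(G_x\) and \(G_x^x\), so only the finite pointwise sums matter and the fact that \(\tilde\xi\) need not be continuous on \(G\) is irrelevant. The convolution of finitely supported sections is an associative product with the expected involution, associativity being the pointwise consequence of the Fell-bundle associativity \eqref{item:Fell-3} and \((a*b)^*=b^**a^*\) following from \eqref{item:Fell-8}. Granting the three identifications, the whole lemma reduces to the \({}^*\)-algebra axioms of this product: \eqref{eq:ind-corr-bimod-1}, \eqref{eq:ind-corr-bimod-2}, \eqref{eq:ind-corr-bimod-4}, \eqref{eq:ind-corr-bimod-5} and \eqref{eq:ind-corr-bimod-8} are bilinearity; the associativity identities \eqref{eq:ind-corr-bimod-3}, \eqref{eq:ind-corr-bimod-6}, \eqref{eq:ind-corr-bimod-7} and \eqref{eq:ind-corr-bimod-9} are instances of \((a*b)*c=a*(b*c)\); and \eqref{eq:ind-corr-bimod-10}, \eqref{eq:ind-corr-bimod-11} come from the involution, the latter using \(g^**'\zeta=(g^**\tilde\zeta)|_{G_x}\) so that \(\inpro{g*'\xi}{\zeta}=(g*\tilde\xi)^**\tilde\zeta=\tilde\xi^**g^**\tilde\zeta=\inpro{\xi}{g^**'\zeta}\).

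If one prefers a self-contained argument that does not pass through the ambient convolution, each identity can instead be proved by direct computation: expand both sides with \eqref{eq:ind-corr-1}--\eqref{eq:ind-corr-3}, interchange the (finite) sums, and reindex by a translation bijection of \(G_x\) or \(G_x^x\). For instance \eqref{eq:ind-corr-bimod-10} follows by writing \(\inpro{\xi}{\zeta}^*(\gamma)=\bigl(\sum_{t\in G_x}\xi(t)^*\zeta(t\gamma\inverse)\bigr)^*\) and substituting \(s=t\gamma\inverse\), a bijection of \(G_x\) because \(\gamma\in G_x^x\); and \eqref{eq:ind-corr-bimod-9} follows by substituting the first convolution factor \(\alpha=\gamma\upsilon\) in \(\inpro{\xi}{\zeta}*f\). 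The linearity statements and the support bookkeeping are routine.

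The step that needs genuine care is the adjointness \eqref{eq:ind-corr-bimod-11}, \(\inpro{g*'\xi}{\zeta}=\inpro{\xi}{g^**'\zeta}\): here one must reindex simultaneously over \(G_x\) (the inner-product sum) and over the range fibre \(G^{r(\cdot)}\) (the left-action sum), tracking that \(\tau\mapsto\tau\inverse\) turns the left factor \(g\) into \(g^*\) and that \(r(s\gamma)=r(s)\) for \(\gamma\in G_x^x\) makes the two range-fibre sums coincide. The compatibility \eqref{eq:ind-corr-bimod-7} of the left and right actions is of comparable difficulty, being the interchange of a range-side sum with a source-side sum, i.e.\ the associativity \((g*\tilde\xi)*f=g*(\tilde\xi*f)\); in the convolution picture both become transparent, which is the main advantage of setting the argument up that way.
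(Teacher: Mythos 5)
Your proof is correct, and it takes a genuinely different route from the paper's. The paper proves the identities one at a time by expanding the defining sums \eqref{eq:ind-corr-1}--\eqref{eq:ind-corr-3} and reindexing along translation bijections of \(G_x\) and \(G_x^x\) (it writes out \eqref{eq:ind-corr-bimod-7}, \eqref{eq:ind-corr-bimod-9}, \eqref{eq:ind-corr-bimod-10} and \eqref{eq:ind-corr-bimod-11} in this way, referring the remaining ones to the definitions and to Lemma~2.7 of \cite{Holkar2017Construction-of-Corr}); that is exactly your fallback argument in the third paragraph. Your main argument instead identifies all three operations inside the \({}^*\)-algebra of finitely supported sections of \(\A\) over \(G\): \(g*'\xi=(g*\tilde\xi)|_{G_x}\), \(\xi*''f=(\tilde\xi*f)|_{G_x}\) and \(\inpro{\xi}{\zeta}=\tilde\xi^**\tilde\zeta\), after which the eleven identities become instances of bilinearity, associativity and \((a*b)^*=b^**a^*\), all of which hold pointwise because every sum involved is finite and only the Fell-bundle axioms \eqref{item:Fell-2}, \eqref{item:Fell-3} and \eqref{item:Fell-8} are used. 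This packaging is natural: your third identification generalises the paper's own Lemma~\ref{lem:inpro-conv-equivalence}, which is precisely the case where the left entry of the inner product is supported on the isotropy. You also correctly isolate the one point requiring care, namely that \(\tilde\xi\) need not be continuous on \(G\) (the points of the closed discrete fibre \(G_x\) need not be isolated in \(G\), so extension by zero leaves \(\Cc(G;\A)\)); since every assertion is a pointwise identity of finite sums, working in the purely algebraic convolution algebra of finitely supported, not necessarily continuous, sections costs nothing, and your support bookkeeping (e.g.\ that \(g*\tilde\xi\) vanishes off \(G_x\) because any factorisation with second factor in \(G_x\) forces source \(x\), and that \(\tilde\xi^**\tilde\zeta\) is forced onto \(G_x^x\)) is where the actual content lives. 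What your route buys is uniformity --- three identifications plus standard \({}^*\)-algebra axioms replace eleven separate sum interchanges, and the delicate cases \eqref{eq:ind-corr-bimod-7} and \eqref{eq:ind-corr-bimod-11} become one-line consequences of associativity and the anti-multiplicativity of the involution; what the paper's route buys is self-containedness, verifying each identity directly from \eqref{eq:ind-corr-1}--\eqref{eq:ind-corr-3} without introducing an auxiliary algebra that plays no further role.
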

Equations~\eqref{eq:ind-corr-bimod-1}--\eqref{eq:ind-corr-bimod-7}
suggest that the vector space \(\Contc(G_x;\A|_{G_x})\) is a left and
right bimodule over the pre-\(\Cst\)\nb-algebras \(\Contc(G;\A)\) and
\(\Contc(G_x^x;\A(x))\),
respectively. Equations~\eqref{eq:ind-corr-bimod-8}--\eqref{eq:ind-corr-bimod-10}
show that \(\inpro{\cdot}{\cdot}\) is a
\(\Contc(G_x^x;\A(x))\)\nb-valued conjugate linear form on
\(\Contc(G_x;\A|_{G_x})\). And Equation~\eqref{eq:ind-corr-bimod-11}
show that of \(\Contc(G;\A)\) on this bimodule by adjointable
operators.
\begin{proof}[Proof of Lemma~\ref{lem:ind-corr-bimod}]
  The proofs of most of the equations in this lemma are similar to
  Lemma~2.7 in~\cite{Holkar2017Construction-of-Corr}.
  Equation~\eqref{eq:ind-corr-bimod-1},
  Equation~\eqref{eq:ind-corr-bimod-2},
  Equation~\eqref{eq:ind-corr-bimod-4} and
  Equation~\eqref{eq:ind-corr-bimod-5} follows directly from
  defintions.
 
  \noindent The proof of Equation~\eqref{eq:ind-corr-bimod-3},
  Equation~\eqref{eq:ind-corr-bimod-6} and
  Equation~\eqref{eq:ind-corr-bimod-7} are similar. We shall only
  prove Equation~\eqref{eq:ind-corr-bimod-7}~only.

  
 

  \noindent Equation~\eqref{eq:ind-corr-bimod-7}: we have
  \[
    (g*'\xi)*''f(z) = \sum_{t\in G^x_x}g*'\xi(zt)f(t^{-1}) = \sum_{t
      \in G^x_x} \sum_{\tau \in G^{r(x)}} g(\tau)
    \xi(\tau^{-1}zt)f(t^{-1}).
  \]
  Change the order of the sums; then last term equals
  \[
    \sum_{\tau \in G^{r(z)}} g(\tau) (\xi*''f)(\tau^{-1}z) =
    g*'(\xi*''f)(z).
  \]
  \noindent Equation~\eqref{eq:ind-corr-bimod-8} is direct
  computation.
 
  \noindent Equation~\eqref{eq:ind-corr-bimod-9}:
  \[
    \inpro{\xi}{\zeta*''f}(\gamma) = \sum_{t\in G_x}
    \xi(t)^*(\zeta*''f)(t\gamma) = \sum_{t\in G_x} \xi(t)^* \sum_{v\in
      G^x_x} \zeta(t\gamma v) f(v^{-1}).
  \]
  By changing the order of the sums in the last term, we get
  \[
    \sum_{v\in G^x_x} \inpro{\xi}{\zeta}(\gamma v)f(v^{-1}) =
    \inpro{\xi}{\zeta}*f(\gamma).
  \]
 
  \noindent Equation~\eqref{eq:ind-corr-bimod-10}:
  \[
    \inpro{\xi}{\zeta}^*(z) = \big(\inpro{\xi}{\zeta}(z^{-1})\big)^* =
    \biggr(\sum_{t\in G_x} \xi(t)^*\zeta(tz^{-1}) \biggr)^* =
    \sum_{t\in G_x} \zeta(tz^{-1})^*\xi(t).
  \]
  Since \(G_x\) is a discrete space and \(t\mapsto tz\) is a bijection
  on \(G_x\), using the variable \(t\mapsto tz\), we get
  \( \sum_{t\in G_x}\zeta(t)^*\xi(tz) = \inpro{\zeta}{\xi}(z).\)
 
  \noindent Equation \eqref{eq:ind-corr-bimod-11}: let
  \(g\in \Cc(G;\A)\) and \(\xi,\zeta \in \Cc(G_x;\A|_{G_x})\), then
  for \(\gamma \in G^x_x\), we have
  \begin{multline*}
    \binpro{g*'\xi}{\zeta}(\gamma) = \sum_{t\in
      G_x}\big(g*'\xi(t)\big)^*\zeta(t\gamma) = \sum_{t\in G_x}
    \sum_{\tau \in
      G^{r(t)}} \big(g(\tau)\xi(\tau ^{-1}t)\big)^*\zeta(t\gamma)\\
    = \sum_{t\in G_x} \sum_{\tau \in G^{r(t)}} \xi(\tau ^{-1}t)^*
    g(\tau)^*\zeta(t\gamma).
  \end{multline*}
  By applying the change of variable \(t \mapsto \tau t\), the last
  term above becomes
  \[
    \sum_{t\in G_x}\sum_{\tau \in G_{r(t)}}
    \xi(t)^*g(\tau)^*\zeta(\tau t\gamma).
  \]
  Since \(G^{r(t)}, G_{r(t)}\) both are discrete and
  \(\tau \mapsto \tau^{-1}\) is a bijection from
  \(G^{r(t)} \to G_{r(t)}\), using the change of variable
  \(\tau \mapsto \tau^{-1}\), we get
  \[
    \sum_{t\in G_x}\sum_{\tau \in G^{r(t)}}
    \xi(t)^*g(\tau^{-1})^*\zeta(\tau^{-1}t\gamma) = \sum_{t\in
      G_x}\xi(t)^*g^**'\zeta(t\gamma)=\binpro{\xi}{g^**'\zeta}(\gamma).
  \]
\end{proof}

Before we proceed, fix the notation: let \((u^x_n)_{n\in \N}\) be an
approximate identity for the fibre \(\Cst\)-algebra \(\A_x\); write
\(d^n_x\) for the section \(u^x_n\cdot \delta_x\) of the bundles
\(\A|_{G_x}\to G_x\) or \(\A(x)\to G_x^x\).

\begin{lemma}
  \label{lemma:apprx-id-of-bimod}
  Let \(x\in G^{(0)}\) be a unit. Then the sequence of sections
  \((d_x^n)_{n\in \N}\) defined above is an approximate identity for
  the \(\Cst\)-algebra \(\Cst(G^x_x;\A(x))\).
\end{lemma}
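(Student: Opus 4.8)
The plan is to reduce the statement to a fibrewise approximate-identity property and then assemble it through the \(I\)\nb-norm. First I would observe that each \(d_x^n\) lies in \(\Cc(G_x^x;\A(x))\) with \(\norm{d_x^n}_I = \norm{u^x_n}\le 1\), since \(d_x^n\) is supported only at the unit \(x\in G_x^x\). As the universal \(\Cst\)\nb-norm on \(\Cc(G_x^x;\A(x))\) is dominated by the \(I\)\nb-norm, the net \((d_x^n)_n\) is uniformly bounded by \(1\) in \(\Cst(G_x^x;\A(x))\). Hence, by a standard \(\epsilon/3\)\nb-argument together with the density of \(\Cc(G_x^x;\A(x))\) in \(\Cst(G_x^x;\A(x))\), it suffices to verify \(\norm{d_x^n * f - f}\to 0\) and \(\norm{f*d_x^n - f}\to 0\) only for \(f\in \Cc(G_x^x;\A(x))\).

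Next I would compute the two convolutions explicitly. Writing \(f=\sum_{\gamma\in G_x^x} a_\gamma\delta_\gamma\) with \(a_\gamma\in \A_\gamma\) finitely supported, the fact that \(d_x^n\) is supported only at \(x\) collapses the convolution sums: for every \(\gamma\in G_x^x\) one gets \((d_x^n * f)(\gamma)=u^x_n\,a_\gamma\) and \((f * d_x^n)(\gamma)=a_\gamma\,u^x_n\). Since \(\gamma\in G_x^x\) satisfies \(r(\gamma)=s(\gamma)=x\), these are precisely the left and right module actions of \(\A_x\) on the fibre \(\A_\gamma\).

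The heart of the argument is the fibrewise convergence \(\norm{u^x_n a_\gamma - a_\gamma}\to 0\) and \(\norm{a_\gamma u^x_n - a_\gamma}\to 0\). Because \(\A\) is saturated, \(\A_\gamma\) is an \(\A_{r(\gamma)}\)\nb-\(\A_{s(\gamma)}\)\nb-imprimitivity bimodule, so it is a full right Hilbert \(\A_x\)\nb-module (with \(\inpro{a}{b}_{\A_x}=a^*b\)) and a full left Hilbert \(\A_x\)\nb-module (with \({}_{\A_x}\Linpro{a}{b}=ab^*\)). Consequently the closed ideal generated by the inner products is all of \(\A_x\), and \((u^x_n)_n\) is an approximate identity of exactly that ideal. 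Applying Lemma~\ref{lem:appx-unit-for-full-Hilbert-mod} to the right module \(\A_\gamma\) yields \(\norm{a_\gamma u^x_n - a_\gamma}\to 0\); the left-handed statement follows from the symmetric version of the same lemma, whose proof is identical after exchanging the roles of the two variables.

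Finally I would assemble these estimates using the \(I\)\nb-norm. Since \(f\) is finitely supported, say on \(F\subseteq G_x^x\), we have \(\norm{d_x^n*f-f}_I=\sum_{\gamma\in F}\norm{u^x_n a_\gamma - a_\gamma}\) and \(\norm{f*d_x^n-f}_I=\sum_{\gamma\in F}\norm{a_\gamma u^x_n - a_\gamma}\), each a finite sum of terms tending to \(0\). As the \(\Cst\)\nb-norm is bounded by the \(I\)\nb-norm, both convergences hold in \(\Cst(G_x^x;\A(x))\), which is what the reduction needed. If one also wants \((d_x^n)_n\) to be positive and increasing, this transfers from \((u^x_n)_n\): writing \(u^x_n=(v^x_n)^*v^x_n\) gives \(d_x^n=(v^x_n\delta_x)^* * (v^x_n\delta_x)\ge 0\), and the same trick applied to \(u^x_m-u^x_n\) preserves monotonicity. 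I expect the only genuinely delicate point to be the left-module convergence, since Lemma~\ref{lem:appx-unit-for-full-Hilbert-mod} is phrased for right modules; everything else is bookkeeping.
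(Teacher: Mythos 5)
Your proposal is correct and follows essentially the same route as the paper: the convolution against \(d_x^n\) collapses to the fibrewise \(\A_x\)\nb-module action \(u^x_n a_\gamma\) (resp.\ \(a_\gamma u^x_n\)), saturation makes each \(\A_\gamma\) a full Hilbert \(\A_x\)\nb-module so Lemma~\ref{lem:appx-unit-for-full-Hilbert-mod} applies, and finite support turns pointwise into norm convergence. Your bookkeeping is in fact slightly more careful than the paper's, which verifies only \(d_x^n*f\to f\) in the inductive limit topology (the other side following from self-adjointness) and silently uses the left-module variant of Lemma~\ref{lem:appx-unit-for-full-Hilbert-mod}, a point you rightly flag as needing the symmetric version.
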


\begin{proof} Since \(u^x_n\) is positive, there exists
  \(v^x_n \in \A_x\) such that \((v^x_n)^*v^x_n =u^x_n\). It follows
  that
  \( d_x^n=u^x_n\cdot\delta_x =(v^x_n)^*v^x_n \cdot \delta_x \delta_x
  = (v^x_n\cdot \delta_x)^* *(v^x_n\cdot \delta_x).  \) Therefore,
  \(d_x^n\) is positive in \(\Cst(G^x_x;\A(x))\). Clearly,
  \((d_n)_{n\in \N}\) is an increasing sequence in
  \(\Cst(G^x_x;\A(x))\). It is sufficient to show that for any section
  \(f\in \Contc(G^x_x; \A(x))\), \(d^n_x*f\to f\) in the inductive
  limit topology of \(\Contc(G^x_x; \A(x))\).  Firstly, note that
  \( \supp (d^n_x*f)\subseteq \supp(d^n_x)\supp(f) \subseteq \supp(f)
  \) for all \(n\).  In fact \(d^n_x*f\) converges uniformly to \(f\)
  on the finite set \(\supp(f)\).  To see this, compute
  \[
    d^n_x*f(\gamma)=\sum_{\zeta\in G^x_x}d^n_x(\gamma
    \zeta^{-1})f(\zeta) =d^n_x(x)f(\gamma) =u^x_if(\gamma).
  \]
  Since \(\A_\gamma\) is full left (and right) Hilbert
  \(\A_x\)\nb-module, by
  Lemma~\ref{lem:appx-unit-for-full-Hilbert-mod},
  \(\norm{u_n^x\cdot f(\gamma) - f(\gamma)}\to 0\) in \(\A_\gamma\)
  for every \(\gamma\in \supp(f)\). Since \(\supp(f)\subseteq G_x^x\)
  is a finite set,
  \[
    \norm{d^n_x*f(y)-f(y)}_\infty = \max_{y\in
      \supp(f)}\big\{\norm{u^x_n\cdot f(y)-f(y)} \big\} \to 0 \quad
    \textup{as \(n\to \infty\).}
  \]
\end{proof}

\begin{lemma}\label{lem:for-cont-left-action}
  \begin{enumerate}[leftmargin=*]
  \item If \(\xi_n \to 0\) in \(\Cc(G_x;\A|_{G_x})\) in the inductive
    limit topology, then \(\inpro{\xi_n}{\xi_n} \to 0\) in
    \(\Cc(G^x_x;\A(x))\) in the inductive limit topology, hence in
    \(\Cst(G^x_x;\A(x))\).
		
  \item Let \(\xi\in \Cc(G_x;\A|_{G_x})\). If \(f_n \to f\) in
    \(\Cc(G;\A)\) in the inductive limit topology, then
    \(f_n*'\xi \to f*'\xi\) in \(\Cc(G_x;\A|_{G_x})\) in the inductive
    limit topology.
		
  \end{enumerate}
\end{lemma}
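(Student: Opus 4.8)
The plan is to use the concrete description of the inductive limit topology on these section spaces. Since \(G\) is \etale{}, the fibres \(G_x\) and \(G_x^x\) are discrete, so a compactly supported section on either of them has \emph{finite} support; accordingly, a sequence converges to \(0\) in the inductive limit topology precisely when its supports are contained in one fixed finite set and it converges to \(0\) in the supremum norm. All the sums defining \(\inpro{\cdot}{\cdot}\) and \(*'\) are finite for the same reason, and keeping track of which finite sets carry the relevant supports is the organising principle of the argument.

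For part~(1), I would first fix, using the hypothesis \(\xi_n \to 0\), a finite set \(K \subseteq G_x\) with \(\supp(\xi_n)\subseteq K\) for all \(n\), and record that \(\norm{\xi_n}_\infty \to 0\). From \(\inpro{\xi_n}{\xi_n}(\gamma) = \sum_{t\in G_x}\xi_n(t)^*\xi_n(t\gamma)\) a term can be nonzero only when \(t\in K\) and \(t\gamma\in K\), whence \(\gamma\in (K\inverse K)\cap G_x^x\); this is a fixed finite set, giving the support condition. For the uniform bound, for each \(\gamma\) at most \(\abs{K}\) summands survive and each has norm at most \(\norm{\xi_n}_\infty^2\), so \(\norm{\inpro{\xi_n}{\xi_n}(\gamma)}\le \abs{K}\,\norm{\xi_n}_\infty^2 \to 0\) uniformly. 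This is inductive limit convergence in \(\Cc(G_x^x;\A(x))\). Finally, a section \(h\) supported in a fixed finite set \(F\) satisfies \(\norm{h}_{\Cst} \le \norm{h}_I \le \abs{F}\,\norm{h}_\infty\), so convergence in \(\Cst(G_x^x;\A(x))\) follows at once.

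For part~(2), by linearity I would set \(g_n\defeq f_n-f\), so \(g_n\to 0\) in \(\Cc(G;\A)\), and prove \(g_n *'\xi \to 0\). Fix a compact \(C\subseteq G\) with \(\supp(g_n)\subseteq C\) and \(\norm{g_n}_\infty\to 0\), and write \(\supp(\xi)=\{t_1,\dots,t_m\}\). The decisive use of the \etale{} hypothesis is that each \(C\cap G_{r(t_i)}\) is a compact subset of a discrete source fibre, hence finite. In \(g_n *'\xi(z)=\sum_{\tau\in G^{r(z)}}g_n(\tau)\,\xi(\tau\inverse z)\) a nonzero term forces \(\tau\in C\) and \(\tau\inverse z=t_i\) for some \(i\), so \(z\in\bigcup_{i=1}^m (C\cap G_{r(t_i)})\,t_i\), a fixed finite set independent of \(n\). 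For the uniform estimate, fixing \(z\) the equation \(\tau\inverse z=t_i\) leaves at most one \(\tau=z t_i\inverse\) per index \(i\), so at most \(m\) terms survive, each bounded by \(\norm{g_n}_\infty\,\norm{\xi}_\infty\); hence \(\norm{g_n *'\xi(z)}\le m\,\norm{g_n}_\infty\,\norm{\xi}_\infty\to 0\) uniformly in \(z\). Together these two facts give \(g_n *'\xi\to 0\) in the inductive limit topology.

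The only step requiring genuine care is the support bookkeeping in part~(2): verifying both that \(\bigcup_{i=1}^m (C\cap G_{r(t_i)})\,t_i\) is finite and that it contains every \(\supp(g_n *'\xi)\) simultaneously. This is exactly where \etale{}ness enters, through the finiteness of \(C\cap G_{r(t_i)}\); once this is settled, the norm estimates in both parts are entirely routine.
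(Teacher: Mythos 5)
Your proposal is correct and follows essentially the same route as the paper's proof: fixed finite supports plus uniform norm estimates in both parts, with the same bounds \(\#(K)\norm{\xi_n}_\infty^2\) and (after reducing to \(f_n-f\) by linearity, as the paper also does) \(\norm{f_n-f}_\infty\) times a fixed constant. Your additions --- making explicit why the supports are finite via discreteness of the fibres, and the comparison \(\norm{h}_{\Cst(G_x^x;\A(x))}\le\norm{h}_I\le\abs{F}\,\norm{h}_\infty\) justifying the ``hence in \(\Cst(G_x^x;\A(x))\)'' clause --- are details the paper leaves implicit, not a different argument.
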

\begin{proof}
  \noindent (1): Let \(\xi_n \to 0\) uniformly and \(\supp(\xi_n)\) is
  contained in a fixed finite set \(K\). Then the support of
  \(\inpro{\xi_n}{\xi_n}\) is also contained in the fixed finite set
  \(K^{-1}\cdot K\). And
  \begin{multline*}
    \norm{\inpro{\xi_n}{\xi_n}}_\infty = \sup_{\gamma\in G^x_x} \norm{\inpro{\xi_n}{\xi_n}(\gamma)}  = \sup_{\gamma\in G^x_x} \norm{\sum_{t\in G_x} \xi_n(t)^*\xi_n (t\gamma)}\\
    \leq \sup_{\gamma\in G^x_x} \sum_{t\in G_x} \norm{\xi_n(t)}
    \norm{\xi_n(t\gamma)} \leq \#(K) \norm{\xi_n}^2_{\infty} \to 0
    \quad \textup{as } n\to \infty.
  \end{multline*}
  Therefore, \(\inpro{\xi_n}{\xi_n} \to 0\) in the inductive limit
  topology.
	
  \noindent (2): Let \(f_n \to f\) uniformly and
  \(\supp(f_n), \supp(f)\) contained in a fixed finite set \(L\). Then
  \(\supp(f_{n}*' \xi) \) and \(\supp(f*'\xi)\) contained in a fixed
  finite set of \(G_x\). Now,
  \begin{multline*}
    \norm{f_n*'\xi -f*'\xi}_\infty = \norm{(f_n-f)*'\xi}_\infty = \sup_{\gamma\in G_x}\norm{(f_n-f)*'\xi(\gamma)} \\
    = \sup_{\gamma\in G_x}\norm{\sum_{\tau \in G^{r(\gamma)}}
      (f_n-f)(\tau) \xi(\tau^{-1} \gamma)} \leq \sup_{\gamma\in
      G_x}\sum_{\tau \in G^{r(\gamma)}} \norm{(f_n-f)(\tau)}
    \norm{\xi(\tau^{-1} \gamma)}\\ \leq \norm{f_n -f}_\infty
    \sup_{\gamma\in G_x} \sum_{\tau \in G^{r(\gamma)}}
    \norm{\xi(\tau^{-1} \gamma)} \to 0 \quad \textup{as } n\to \infty
  \end{multline*}
  The first equality of the above computation follows from
  Equation~\eqref{eq:ind-corr-bimod-5} of
  Lemma~\ref{lem:ind-corr-bimod}.
\end{proof}

\begin{lemma}\label{lem:cont-left-action}
  Let \(\xi,\zeta \in \Cc(G_x;\A|_{G_x})\). Then the map
  \(f\mapsto \binpro{\xi}{f*'\zeta}\) from \(\Cc(G;\A)\) to
  \(\Cst(G^x_x;\A(x))\) is continuous in the inductive limit topology.
\end{lemma}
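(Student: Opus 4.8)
The plan is to factor the map through Lemma~\ref{lem:for-cont-left-action}(2) and then reduce everything to an elementary pointwise estimate of the inner product. Since \(\xi\) and \(\zeta\) are fixed, Equation~\eqref{eq:ind-corr-bimod-5} shows that \(f\mapsto f*'\zeta\) is additive and Equation~\eqref{eq:ind-corr-bimod-8} shows that \(\eta\mapsto \binpro{\xi}{\eta}\) is additive; hence the composite \(f\mapsto \binpro{\xi}{f*'\zeta}\) is additive, and it suffices to prove that \(f_n\to 0\) in the inductive limit topology of \(\Cc(G;\A)\) implies \(\binpro{\xi}{f_n*'\zeta}\to 0\) in \(\Cst(G^x_x;\A(x))\). (Given a general convergent sequence \(f_n\to f\), I would apply this to \(f_n-f\to 0\).) So let \(f_n\to 0\) with \(f_n\to 0\) uniformly and all \(\supp(f_n)\) inside a fixed finite set. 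By Lemma~\ref{lem:for-cont-left-action}(2), \(\eta_n\defeq f_n*'\zeta\to 0\) in the inductive limit topology of \(\Cc(G_x;\A|_{G_x})\); that is, \(\norm{\eta_n}_\infty\to 0\) and all \(\supp(\eta_n)\) lie in one fixed finite set \(L\subseteq G_x\).

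Next I would estimate the inner product directly. For \(\gamma\in G^x_x\),
\[
  \binpro{\xi}{\eta_n}(\gamma) = \sum_{t\in G_x}\xi(t)^*\,\eta_n(t\gamma),
\]
where only \(t\in\supp(\xi)\) contribute, so
\(\norm{\binpro{\xi}{\eta_n}}_\infty \leq \#(\supp(\xi))\,\norm{\xi}_\infty\,\norm{\eta_n}_\infty\to 0\). Moreover \(\binpro{\xi}{\eta_n}(\gamma)\neq 0\) forces \(t\gamma\in L\) for some \(t\in\supp(\xi)\), hence \(\gamma\in \supp(\xi)^{-1}L\), a fixed finite subset of \(G^x_x\). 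Thus \(\binpro{\xi}{\eta_n}\to 0\) uniformly with supports in a fixed finite set, i.e.\ in the inductive limit topology of \(\Cc(G^x_x;\A(x))\).

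Finally I would pass from inductive limit convergence to \(\Cst\)-norm convergence. On the isotropy \emph{group} \(G^x_x\) the unit space is the single point \(x\), so for a finitely supported section \(h\) one has \(\norm{h}_I=\sum_{\gamma\in G^x_x}\norm{h(\gamma)}\leq \#(\supp(h))\,\norm{h}_\infty\); since the universal \(\Cst\)-norm is dominated by the \(I\)-norm, the uniform convergence together with the uniform support bound yields \(\binpro{\xi}{\eta_n}\to 0\) in \(\Cst(G^x_x;\A(x))\), which is the claim. The estimates are routine; the only points needing care are the bookkeeping of supports (to remain within the inductive limit topology at each stage) and this last \(I\)-norm majorisation. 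I would deliberately avoid routing the argument through a Cauchy--Schwarz bound based on Lemma~\ref{lem:for-cont-left-action}(1), because the positivity of \(\binpro{\cdot}{\cdot}\) required for Cauchy--Schwarz is not yet available at this point, whereas the direct pointwise estimate needs no such input.
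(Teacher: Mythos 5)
Your proof is correct, and it takes a genuinely different route from the paper's. The paper reduces to \((f_n-f)*'\zeta\) via Equation~\eqref{eq:ind-corr-bimod-5} and then applies the Hilbert-module Cauchy--Schwarz inequality \(\norm{\binpro{\xi}{\eta}}^2\leq\norm{\inpro{\xi}{\xi}}\,\norm{\inpro{\eta}{\eta}}\), invoking \emph{both} parts of Lemma~\ref{lem:for-cont-left-action} — part~(1) to get \(\binpro{(f_n-f)*'\zeta}{(f_n-f)*'\zeta}\to 0\) in \(\Cst(G^x_x;\A(x))\) and part~(2) for the inductive-limit convergence of \(f_n*'\zeta\). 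You bypass Cauchy--Schwarz entirely: after the same linearity reduction you use only part~(2), then the bare counting estimate \(\norm{\binpro{\xi}{\eta_n}}_\infty\leq \#(\supp(\xi))\,\norm{\xi}_\infty\norm{\eta_n}_\infty\) with supports trapped in the fixed finite set \(\supp(\xi)^{-1}L\cap G^x_x\), and finally the majorisation (universal norm) \(\leq\) \(I\)-norm \(\leq \#(\supp(\cdot))\cdot\norm{\cdot}_\infty\), which on the group \(G^x_x\) is immediate since its unit space is the single point \(x\). Your closing remark about positivity is a sharp observation about the paper's ordering: the positivity of \(\inpro{\cdot}{\cdot}\) that underlies the Cauchy--Schwarz step is only established in Lemma~\ref{lem:inpro-positive}, which appears \emph{after} this lemma in the paper. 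There is no actual circularity — the proof of Lemma~\ref{lem:inpro-positive} does not depend on this lemma, so the paper's argument is a harmless forward reference — but your version removes the dependency altogether, and in fact proves the stronger statement that \(f\mapsto\binpro{\xi}{f*'\zeta}\) is continuous into \(\Cc(G^x_x;\A(x))\) with its inductive limit topology, from which \(\Cst\)-norm continuity follows. What the paper's route buys in exchange is brevity and structural generality: the Cauchy--Schwarz argument does not lean on the discreteness of \(G_x\) and finiteness of supports, whereas your counting estimates are specifically étale bookkeeping.
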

\begin{proof}
  Let \(K\) be a compact subset of \(G\) and \((f_n)_{n\in \N}\) a
  sequence of \(\Cc(G;\A)\) that converges uniformly to
  \(f\in \Cc(G;\A)\) on \(K\). Then the
  \(\supp\big(\inpro{\xi}{f_n*'\zeta}\big)\) and
  \(\supp\big(\inpro{\xi}{f*'\zeta}\big)\) are also contained in a
  fixed compact set.  Using Equation~\eqref{eq:ind-corr-bimod-5} of
  Lemma~\ref{lem:ind-corr-bimod}. and the Cauchy--Schwarz inequality,
  we have
  \begin{multline*}
    \norm{\binpro{\xi}{f_n*'\zeta} - \binpro{\xi}{f*'\zeta}}^2 = \norm{\binpro{\xi}{(f_n-f)*'\zeta}}^2 = \norm{\binpro{\xi}{(f_n-f)*'\zeta}^*\binpro{\xi}{(f_n-f)*'\zeta}}\\
    \leq
    \norm{\inpro{\xi}{\xi}}\norm{\binpro{(f_n-f)*'\zeta}{(f_n-f)*'\zeta}}.
  \end{multline*}
  Lemma~\ref{lem:for-cont-left-action}(2) says that
  \(f_n*'\zeta \to f*'\zeta\) in \(\Cc(G_x;\A|_{G_x})\) in the
  inductive limit topology. From
  Lemma~\ref{lem:for-cont-left-action}(1) we have
  \(\binpro{(f_n-f)*'\zeta}{(f_n-f)*'\zeta} \to 0\) in
  \(\Cst(G^x_x;\A(x))\). Therefore, the right hand side of above
  inequality converges to zero as \(n\to \infty\).
\end{proof}

\begin{lemma}\label{lem:inpro-positive}
  \begin{enumerate}[leftmargin=*]
  \item For \(\xi\in \Contc(G_x;\A|_{G_x})\),
    \(\inpro{\xi}{\xi}\geq 0\) in the pre-\(\Cst\)\nb-algebra
    \(\Contc(G_x^x;\A(x))\).
  \item The set
    \(\{\inpro{\xi}{\zeta}: \xi,\zeta\in \Contc(G_x; \A|_{G_x})\}\) is
    dense in \(\Contc(G_x^x; \A(x))\) in the inductive limit topology.
  \item The set
    \(\{g*'\xi : g\in \Contc(G;\A), \xi\in \Contc(G_x;\A|_{G_x})\}\)
    is dense in \(\Contc(G_x;\A|_{G_x})\) in the inductive limit
    topology.
  \end{enumerate}

\end{lemma}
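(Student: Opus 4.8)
The plan is to handle the three assertions separately: parts~(2) and~(3) will come quickly from the approximate identities already constructed, while part~(1)---the positivity of the inner product---carries all the weight and is where I expect the main obstacle to lie.

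For part~(1) I would first reduce to a single orbit. The group \(G_x^x\) acts freely on the right of \(G_x\), and two arrows \(t,t'\in G_x\) lie in the same orbit precisely when \(r(t)=r(t')\); since in Equation~\eqref{eq:ind-corr-3} the arrows \(t\) and \(t\gamma\) always share an orbit, the cross terms between distinct orbits vanish and \(\inpro{\xi}{\xi}=\sum_{O}\inpro{\xi|_O}{\xi|_O}\), a finite sum over the orbits meeting \(\supp(\xi)\). As a finite sum of positive elements is positive, it suffices to treat \(\xi\) supported on one orbit \(O=G_x^u\). Fix a representative \(\gamma_0\in O\), so every arrow of \(O\) is \(\gamma_0\upsilon\) for a unique \(\upsilon\in G_x^x\), and write \(c_\upsilon=\xi(\gamma_0\upsilon)\in\A_{\gamma_0\upsilon}\). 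The key device is to transport these coefficients into the fibres over \(G_x^x\): for \(w\in\A_{\gamma_0}\), the section \(\eta_w(\upsilon)\defeq w^*c_\upsilon\in\A_{\gamma_0^{-1}\gamma_0\upsilon}=\A_\upsilon\) lies in \(\Cc(G_x^x;\A(x))\), and a direct computation gives \(\eta_w^**\eta_w(\gamma)=\sum_\upsilon c_\upsilon^*\,ww^*\,c_{\upsilon\gamma}\), which is manifestly positive in \(\Cst(G_x^x;\A(x))\) since it has the form \(g^**g\). Because \(\A_{\gamma_0}\) is a full left Hilbert \(\A_u\)\nb-module (saturation), \(\A_u\) admits an approximate identity of the special form \(e_\lambda=\sum_j w_{j\lambda}w_{j\lambda}^*\) with \(w_{j\lambda}\in\A_{\gamma_0}\); setting \(B_\lambda=\sum_j\eta_{w_{j\lambda}}^**\eta_{w_{j\lambda}}\geq 0\) one computes \(B_\lambda(\gamma)=\sum_\upsilon c_\upsilon^*\,e_\lambda\,c_{\upsilon\gamma}\). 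Since each \(c_{\upsilon\gamma}\in\A_{\gamma_0\upsilon\gamma}\) is a nondegenerate left \(\A_u\)\nb-module element, \(e_\lambda c_{\upsilon\gamma}\to c_{\upsilon\gamma}\), so \(B_\lambda\to\inpro{\xi|_O}{\xi|_O}\) in the inductive limit topology (the sums over \(\upsilon\) and \(\gamma\) being finite), and closedness of the positive cone of \(\Cst(G_x^x;\A(x))\) gives positivity. The delicate point to get right is producing the approximate identity of \(\A_u\) in the special form \(\sum_j w_jw_j^*\), which is exactly what fullness of \(\A_{\gamma_0}\) (via Lemma~\ref{lem:appx-unit-for-full-Hilbert-mod}) supplies.

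For part~(2) I would exploit Lemma~\ref{lem:inpro-conv-equivalence}: extending \(a,b\in\Cc(G_x^x;\A(x))\) by zero to \(\tilde a,\tilde b\in\Cc(G_x;\A|_{G_x})\) gives \(\inpro{\tilde a}{\tilde b}=a^**b\), so every convolution product \(a^**b\) is realised as a single inner product and it is enough to show such products are inductive-limit dense in \(\Cc(G_x^x;\A(x))\). Using the approximate identity \(d_x^n=(v_n^x\delta_x)^**(v_n^x\delta_x)\) of Lemma~\ref{lemma:apprx-id-of-bimod}, for any \(f\) I can write \(d_x^n*f=(v_n^x\delta_x)^**\bigl((v_n^x\delta_x)*f\bigr)=a_n^**b_n\) with \(a_n=v_n^x\delta_x\) and \(b_n=a_n*f\); since \(d_x^n*f\to f\) in the inductive limit topology with supports trapped inside \(\supp(f)\), this exhibits \(f\) as a limit of single inner products.

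For part~(3) I would run the analogous argument with the left action \(*'\). Take an approximate identity \((u_n)\subseteq\Cc(\base;\A|_{\base})\) of \(\Cst(G;\A)\) as in Proposition~\ref{prop:exist-app:unit}; since \(u_n\) is supported on units, Equation~\eqref{eq:ind-corr-1} collapses to \(u_n*'\zeta(z)=u_n(r(z))\,\zeta(z)\) for \(\zeta\in\Cc(G_x;\A|_{G_x})\). By Lemma~\ref{lem:fiberwisw-appr-iden} the net \(\bigl(u_n(r(z))\bigr)_n\) is an approximate identity of \(\A_{r(z)}\), and \(\zeta(z)\in\A_z\) is a nondegenerate left \(\A_{r(z)}\)\nb-module element, so \(u_n*'\zeta(z)\to\zeta(z)\); as \(\supp(\zeta)\subseteq G_x\) is finite this convergence is uniform with supports inside \(\supp(\zeta)\), giving \(u_n*'\zeta\to\zeta\) in the inductive limit topology. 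This places \(\zeta\) in the closure of \(\{g*'\xi\}\) and completes the plan; the only genuinely hard step remains the positivity in part~(1).
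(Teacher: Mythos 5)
Your proposal is correct. Parts~(2) and~(3) are, modulo presentation, the paper's own arguments: the paper proves~(2) by observing that \(\inpro{u^x_n\cdot \delta_x}{a\cdot\delta_\eta} = (u^x_n\cdot\delta_x)*(a\cdot\delta_\eta) \to a\cdot\delta_\eta\) in the inductive limit topology, and settles~(3) in one line by citing the approximate identity of \(\Cst(G;\A)\) sitting inside \(\Cc(\base;\A|_{\base})\); you have merely supplied the details (correctly). One small remark on your~(2): the square-root factorisation \(d^n_x = a_n^**a_n\) is unnecessary, since Lemma~\ref{lem:inpro-conv-equivalence} applied with the self-adjoint element \(d^n_x\) already gives \(\inpro{\widetilde{d^n_x}}{\tilde{f}} = d^n_x * f\) as a single inner product; this is harmless, just roundabout.

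Part~(1) is where you genuinely diverge from the paper, and your route is the cleaner of the two. The paper argues by induction on the number of terms of \(\xi = \sum_{i=1}^n a_i\cdot\delta_{\gamma_i}\): it computes explicitly for one term, then for two terms with a case split according to whether \(r(\gamma_1) = r(\gamma_2)\), resolving the nontrivial equal-range case via Lance's Lemma~7.3, and then asserts ``proceeding by induction'' for general \(n\) without carrying it out. Your decomposition of \(G_x\) into orbits of the right \(G^x_x\)\nb-action (equivalently, by common range) disposes of the paper's Case~I systematically --- the cross terms between distinct orbits vanish identically, since \(t\) and \(t\gamma\) always share a range --- while the transported sections \(\eta_w(\upsilon) = w^*c_\upsilon\) and the identity \(\sum_j \eta_{w_j}^**\eta_{w_j}(\gamma) = \sum_\upsilon c_\upsilon^*\, e_\lambda\, c_{\upsilon\gamma}\) treat an orbit carrying arbitrarily many terms in a single stroke, replacing the paper's unfinished induction. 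The essential ingredients are nonetheless the same in both treatments: saturation makes the reference fibre \(\A_{\gamma_0}\) a full left Hilbert module over the unit fibre; Lance's Lemma~7.3 then supplies an approximate identity of the special form \(e_\lambda = \sum_j w_{j\lambda}w_{j\lambda}^*\) with \(w_{j\lambda}\in\A_{\gamma_0}\); and one concludes by closedness of the positive cone of \(\Cst(G^x_x;\A(x))\). Your observation that the supports of the approximants \(B_\lambda\) remain inside a fixed finite set, so that pointwise (hence uniform) convergence implies \(I\)\nb-norm and therefore \(\Cst\)\nb-norm convergence, is precisely what legitimises that last limiting step, and it is the same mechanism implicit in the paper's Case~II. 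In short: identical key lemma, but your decomposition yields a complete proof for general \(\xi\) where the paper only writes out \(n\leq 2\).
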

\begin{proof}
  \noindent (1): Let \(a\cdot \delta_\gamma \in
  \Cc(G_x;\A|_{G_x})\). Then for \(\eta \in G^x_x\), we have
  \[
    \binpro{a\cdot \delta_\gamma}{a\cdot \delta_\gamma}(\eta) =
    \begin{cases}
      0 & \quad \textup{ if } \eta \neq x,\\
      a^*a & \quad \textup{ if } \eta = x.
    \end{cases}
  \]
  Since for a vector \(a\in \A_\gamma\), \(a^*a\) is a positive
  element of \(\A_x\), there is an element \(b\in \A_x\) such that
  \(a^*a =b^*b\) and
  \(\binpro{a \cdot \delta_\gamma}{a \cdot \delta_\gamma} = (b \cdot
  \delta_x)^**(b\cdot \delta_x)\). Therefore,
  \(\binpro{a\cdot \delta_\gamma}{a \cdot \delta_\gamma} \geq 0\) in
  \(\Cc(G^x_x;\A(x))\).  Let
  \(a_1 \cdot \delta_{\gamma_1} , a_2 \cdot \delta_{\gamma_2} \in
  \Cc(G_x;\A|_{G_x})\). Then for \(i,j\in \{1,2\},\) and
  \(\eta \in G^x_x\), we have
  \[
    \binpro{ a_i\cdot \delta_{\gamma_i}}{ a_j\cdot
      \delta_{\gamma_j}}(\eta) =
    \begin{cases}
      a_i^*a_j & \quad \textup{ if } r(\gamma_i) = r(\gamma_j) \text{ and } \eta = \gamma^{-1}_i\gamma_j,\\
      0 & \quad \textup{ elsewhere. }
    \end{cases}.
  \]

  \noindent Let
  \(a_1 \cdot \delta_{\gamma_1} + a_2 \cdot \delta_{\gamma_2} \in
  \Cc(G_x;\A|_{G_x})\). We shall prove
  \(\binpro{a_1 \cdot \delta_{\gamma_1} + a_2 \cdot
    \delta_{\gamma_2}}{a_1 \cdot \delta_{\gamma_1} + a_2 \cdot
    \delta_{\gamma_2}} \geq 0\). The computation are done for two
  cases, namely, \(r(\gamma_1) \neq r(\gamma_2)\) and
  \(r(\gamma_1) = r(\gamma_2)\).

  \noindent Case I: Let \(r(\gamma_1) \neq r(\gamma_2)\). Then
  \[
    \binpro{a_1 \cdot \delta_{\gamma_1} + a_2 \cdot
      \delta_{\gamma_2}}{a_1 \cdot \delta_{\gamma_1} + a_2 \cdot
      \delta_{\gamma_2}} = a_1^*a_1 \cdot \delta_x + a^*_2a_2 \cdot
    \delta_x = \big(a_1^*a_1 + a^*_2a_2\big) \cdot \delta_x.
  \]
  Since \(a_1^*a_1+a_2^*a_2\) is a positive elements of \(\A_x\),
  there is \(b\in \A_x\) such that \((a_1^*a_1+a_2^*a_2) =b^*b
  \). Therefore, we have
  \[
    \binpro{a_1 \cdot \delta_{\gamma_1} + a_2 \cdot
      \delta_{\gamma_2}}{a_1 \cdot \delta_{\gamma_1} + a_2 \cdot
      \delta_{\gamma_2}} = b^*b \cdot \delta_x = (b\cdot
    \delta_x)^**(b\cdot \delta_x)
  \]
  which is a positive element in \(\Cc(G_x^x;\A(x))\).

  \noindent Case II: Let \(r(\gamma_1) = r(\gamma_2)\). We have

\[
  \binpro{a_1 \cdot \delta_{\gamma_1} + a_2 \cdot
    \delta_{\gamma_2}}{a_1 \cdot \delta_{\gamma_1} + a_2 \cdot
    \delta_{\gamma_2}} = a_1^*a_1 \cdot \delta_x + a^*_1a_2 \cdot
  \delta_{\gamma_1^{-1}\gamma_2} + a^*_2a_1 \cdot
  \delta_{\gamma_2^{-1}\gamma_1} + a^*_2a_2 \cdot \delta_x.
\]
Since the Fell bundle \(\A\) is saturated, \(\A_{\gamma_2}\) is a full
(left) Hilbert \(\A_{r(\gamma_2)}\)\nb-module and
by~\cite[Lemma~7.3]{Lance1995Hilbert-modules} there are
\(\eta^i_\alpha \in \A_{\gamma_2}\) such that
\(\big(\sum_{i=1}^{n_\alpha}
\Linpro{\eta^i_\alpha}{\eta^i_\alpha}\big)_\alpha\) is an approximate
identity of \(\A_{r(\gamma_2)} = \A_{r(\gamma_1)}\) where
\(n_\alpha \in \N\).  Then the above sum can be written as

\begin{multline*}
  \lim_{\alpha} \big\{ a_1^* \sum_{i=1}^{n_\alpha} \Linpro{\eta^i_\alpha}{\eta^i_\alpha}a_1 \cdot \delta_x + a^*_1 \sum_{i=1}^{n_\alpha} \Linpro{\eta^i_\alpha}{\eta^i_\alpha} a_2 \cdot \delta_{\gamma_1^{-1}\gamma_2} + a^*_2 \sum_{i=1}^{n_\alpha} \Linpro{\eta^i_\alpha}{\eta^i_\alpha}a_1 \cdot \delta_{\gamma_2^{-1}\gamma_1}\\ + a^*_2 \sum_{i=1}^{n_\alpha} \Linpro{\eta^i_\alpha}{\eta^i_\alpha} a_2 \cdot \delta_x \big\}\\
  = \lim_{\alpha}\sum_{i=1}^{n_\alpha} \big\{a_1^*\eta^i_\alpha (\eta^i_\alpha)^*a_1 \cdot \delta_x + a^*_1 \eta^i_\alpha (\eta^i_\alpha)^* a_2 \cdot \delta_{\gamma_1^{-1}\gamma_2} + a^*_2 \eta^i_\alpha (\eta^i_\alpha)^*a_1 \cdot \delta_{\gamma_2^{-1}\gamma_1} + a^*_2 \eta^i_\alpha (\eta^i_\alpha)^*a_2 \cdot \delta_x\big\}\\
  = \lim_{\alpha} \sum_{i=1}^{n_\alpha} \big(a_1^*\eta^i_\alpha \cdot
  \delta_{\gamma_1^{-1}\gamma_2} + a_2^*\eta^i_\alpha \cdot
  \delta_x\big) \big(a_1^*\eta^i_\alpha \cdot
  \delta_{\gamma_1^{-1}\gamma_2} + a_2^*\eta^i_\alpha \cdot
  \delta_x\big)^*
\end{multline*}
which is a positive element in \(\Cc(G^x_x;\A(x))\).  Therefore,
\((a_1 \cdot \delta_1 + a_2 \cdot \delta_2)\) is a positive
element. Proceeding by induction, we can show that
\(\inpro{\xi}{\xi}\geq 0\) any
\(\xi =\sum_{i=1}^{n} a_i\cdot \delta_{\gamma_i} \in
\Cc(G_x;\A|_{G_x}) \).

\noindent (2): Consider a generating element \(a\cdot \delta_\eta\) of
\(\Cc(G^x_x;\A(x))\) where \(a\in \A_\eta\) and \(\eta \in
G^x_x\). Let \((u^n_x)_{n\in \N}\) be an approximate identity of
\(\A_x\).  Then
\(\binpro{u^n_x \cdot \delta_x}{a\cdot
  \delta_\eta}=u^n_xa\cdot\delta_\eta = (u^n_x \cdot \delta_x)*
(a\cdot\delta_\eta) \). Lemma~\ref{lemma:apprx-id-of-bimod} says that
\(( u^n_x \cdot \delta_x)_n\) forms an approximate identity for
\(\Cst(G^x_x; \A(x))\) and we have
\[
  \binpro{u^n_x \cdot \delta_x}{a\cdot \delta_\eta}= (u^n_x \cdot
  \delta_x) *(a\cdot\delta_\eta) \to a \cdot \delta_{\eta}
\]
in the inductive limit toplogy on \(\Cc(G^x_x;\A(x))\) as
\(n \to \infty\).

\noindent (3): Follows from the fact \(\Cst(G;\A)\) has an approximate
identity of elements in \(\Cc(G;\A)\).
  
\end{proof}

Lemma~\ref{lem:inpro-positive}(1) shows that the
\(\Contc(G_x^x;A(x))\)\nb-valued conjugate bilinear form
\(\inpro{\cdot}{\cdot}\) on \(\Contc(G_x;\A|_{G_x})\) is also
positive. Let \(Y(x)\) be the Hilbert \(\Cst(G_x^x;\A(x))\)\nb-module
obtained by completing the pre-Hilbert \(\Cst(G_x^x;\A(x))\)\nb-module
\(\Contc(G_x; \A|_{G_x})\). Lemma~\ref{lem:inpro-positive}(2) implies
that \(Y(x)\) is a full Hilbert module.

Our next task is to prove that the action of \(\Contc(G;\A)\) in
Equation~\eqref{eq:ind-corr-1} extends to a nondegenerate
representation of \(\Cst(G;\A)\) on \(Y(x)\) by adjointable operators.
Equation~(\ref{eq:ind-corr-bimod-11}) says that the dense subalgebra
\(\Cc(G;\A)\) acts \(\Cc(G_x;\A|_{G_x})\) by adjointable operator. To
show this action is bounded we use a similar argument as used in
\cite[Lemma~2.9]{Holkar2017Construction-of-Corr}. Let \(\psi\) be a
state on \(\Cst(G^x_x;\A(x))\). Then \(Y(x)_\psi\) became a Hilbert
space with the inner product given by
\(\inpro{a}{b}_{\psi}\defeq \psi(\inpro{a}{b})\) where
\(a,b \in Y(x)\). Consider a vector subspace \(V\) of the Hilbert
space \(Y(x)_\psi\) generated by
\(\{f*'\xi:f\in \Cc(G;\A),\xi \in \Cc(G_x;\A|_{G_x})\}\). Define a
representation \(\pi\) of \(\Cc(G;\A)\) on \(V\) by
\(\pi(f)\xi=f*'\xi\).  The idea is to show that \(\pi\) is a
pre-representation (see~\cite[Definition
4.1]{Muhly-Williams2008FellBundle-ME}) of \(\Cc(G;\A)\) on
\(V \subseteq Y(x)_\psi\). We notics the following facts:
\begin{enumerate}
\item The representation \(\pi\) of \(\Cc(G;\A)\) is nondegenerate on
  \(V\) because \(\Cc(G;\A)*\Cc(G;\A) \subseteq \Cc(G;\A)\) is dense.
	
\item From Lemma~\ref{lem:cont-left-action}, for any
  \(\xi,\zeta \in \Cc(G_x;\A|_{G_x})\), the map
  \(f\mapsto \inpro{\xi}{\pi(f)\zeta}_{\psi}\) is a continuous
  function on \(\Cc(G;\A)\) when \(\Cc(G;\A)\) has inductive limit
  topology.
	
\item Equation~\eqref{eq:ind-corr-bimod-11} says that \(\pi\) is
  involutive, that is:
  \( \inpro{f*'\xi}{\zeta}_{\psi} = \inpro{\xi}{f^**'\zeta}_{\psi}.
  \)
\end{enumerate}
Muhly--Williams' disintegration theorem for Fell
bundles~\cite[Theorem~4.13]{Muhly-Williams2008FellBundle-ME} says that
\(\pi\) is equivalent to the integrated form of a representation of
the Fell bundle \(p\colon \A \to G\). Therefore \(\pi\) is bounded
with respect to the norm of \(\Cst(G;\A)\). Thus
\(\psi(\inpro{f*'\xi}{f*'\xi}) \leq
\norm{f}^2_{\Cst(G;\A)}\psi(\inpro{\xi}{\xi})\) for all
\(f\in \Cc(G;\A)\) and \(\xi \in \Cc(G_x;\A|_{G_x})\). Since \(\psi\)
was an arbitrary state, for all \(f\in \Cc(G;\A)\) and
\(\xi \in \Cc(G_x;\A|_{G_x})\) we have
\[
  \inpro{f*'\xi}{f*'\xi}\leq \norm{f}^2_{\Cst(G;\A)} \inpro{\xi}{\xi}.
\]
Hence, the action can be extended to \(\Cst(G;\A)\).
\begin{theorem}\label{thm:ind-cst-corr}
  Let \(p\colon \A \to G\) be a Fell bundle over a Hausdorff, locally
  compact, second countable, {\'e}tale groupoid \(G\). Let
  \(x\in G^{(0)}\). Then the \(\Contc(G;
  \A)\)-\(\Contc(G_x^x; \A(x))\)\nb-bimodule \(\Contc(G_x;\A|_{G_x})\)
  completes to a \(\Cst\)\nb-correspondence
  \(Y(x)\colon \Cst(G;\A)\to \Cst(G^x_x; \A(x))\).
\end{theorem}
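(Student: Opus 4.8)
The plan is to assemble the structural facts established in the preceding lemmas into the three ingredients demanded by the definition of a \(\Cst\)\nb-correspondence: a right Hilbert module structure over \(\Cst(G_x^x;\A(x))\), fullness of the inner product, and a nondegenerate representation of \(\Cst(G;\A)\) by adjointable operators. The point is that essentially all of the analytic content has already been isolated, so the proof is an act of packaging.

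First I would record that \(\Contc(G_x;\A|_{G_x})\) is a pre-Hilbert \(\Contc(G_x^x;\A(x))\)\nb-module. Lemma~\ref{lem:ind-corr-ops} guarantees that the right action \(\xi*''f\) and the pairing \(\inpro{\xi}{\zeta}\) land in the correct spaces; Equations~\eqref{eq:ind-corr-bimod-1}--\eqref{eq:ind-corr-bimod-3} make it a right module over the pre-\(\Cst\)\nb-algebra \(\Contc(G_x^x;\A(x))\); and Equations~\eqref{eq:ind-corr-bimod-8}--\eqref{eq:ind-corr-bimod-10} together with the positivity of Lemma~\ref{lem:inpro-positive}(1) exhibit \(\inpro{\cdot}{\cdot}\) as a positive, conjugate-symmetric, \(\Contc(G_x^x;\A(x))\)\nb-valued inner product compatible with the right action. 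Completing in the norm \(\xi\mapsto\norm{\inpro{\xi}{\xi}}^{1/2}\) then yields the Hilbert \(\Cst(G_x^x;\A(x))\)\nb-module \(Y(x)\), and fullness is exactly Lemma~\ref{lem:inpro-positive}(2).

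Next I would extend the left action of \(\Contc(G;\A)\) defined by Equation~\eqref{eq:ind-corr-1} to all of \(\Cst(G;\A)\); this is the step I expect to be the main obstacle. Equation~\eqref{eq:ind-corr-bimod-11} already shows that on the dense submodule each \(g\in\Contc(G;\A)\) acts with formal adjoint given by the action of \(g^*\), so adjointability will follow once boundedness is in hand, the same bound applying to \(g\) and to \(g^*\). To get boundedness I would imitate \cite[Lemma~2.9]{Holkar2017Construction-of-Corr}: for a state \(\psi\) on \(\Cst(G_x^x;\A(x))\), form the Hilbert space \(Y(x)_\psi\) from the scalar form \(\psi(\inpro{\cdot}{\cdot})\) and verify that \(f\mapsto(f*'\,\cdot\,)\) is a pre-representation of \(\Contc(G;\A)\) in the sense of Muhly--Williams, with nondegeneracy from density of \(\Contc(G;\A)*\Contc(G;\A)\), the continuity axiom from Lemma~\ref{lem:cont-left-action}, and involutivity from Equation~\eqref{eq:ind-corr-bimod-11}. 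The disintegration theorem \cite[Theorem~4.13]{Muhly-Williams2008FellBundle-ME} then identifies this pre-representation with the integrated form of a Fell-bundle representation, which is automatically \(\Cst(G;\A)\)\nb-bounded, yielding \(\psi(\inpro{f*'\xi}{f*'\xi})\le\norm{f}^2_{\Cst(G;\A)}\,\psi(\inpro{\xi}{\xi})\). Since \(\psi\) is arbitrary this upgrades to the module inequality \(\inpro{f*'\xi}{f*'\xi}\le\norm{f}^2_{\Cst(G;\A)}\,\inpro{\xi}{\xi}\), so the left action extends to a \(*\)\nb-homomorphism \(\Cst(G;\A)\to\Bound(Y(x))\).

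Finally, nondegeneracy of this representation is immediate from Lemma~\ref{lem:inpro-positive}(3), since the closed span of \(\{g*'\xi:g\in\Contc(G;\A),\ \xi\in\Contc(G_x;\A|_{G_x})\}\) is dense in \(\Contc(G_x;\A|_{G_x})\) and hence in \(Y(x)\). Packaging these observations gives the claimed \(\Cst\)\nb-correspondence \(Y(x)\colon\Cst(G;\A)\to\Cst(G_x^x;\A(x))\). Beyond the bookkeeping across the preceding lemmas, the only genuinely delicate input is the boundedness of the left action, whose proof routes through the Muhly--Williams disintegration theorem rather than through any direct norm estimate.
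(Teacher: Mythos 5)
Your proposal is correct and takes essentially the same route as the paper: the paper's own proof is literally the packaging of Lemma~\ref{lem:inpro-positive} with the discussion immediately preceding the theorem, which establishes boundedness of the left action by testing against an arbitrary state \(\psi\) on \(\Cst(G_x^x;\A(x))\), verifying the pre-representation axioms (nondegeneracy, inductive-limit continuity via Lemma~\ref{lem:cont-left-action}, involutivity via Equation~\eqref{eq:ind-corr-bimod-11}), and invoking the Muhly--Williams disintegration theorem---exactly your argument. You have correctly identified that the disintegration theorem, not a direct norm estimate, is the decisive input for the inequality \(\inpro{f*'\xi}{f*'\xi}\le\norm{f}^2_{\Cst(G;\A)}\inpro{\xi}{\xi}\).
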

We call~\(Y(x)\) above the induction correspondence from
\(\Cst(G;\A)\) to \(\Cst(G^x_x; \A(x))\). This idea is same as that of
the induction correspondence described in Example~3.14
in~\cite{Holkar2017Construction-of-Corr}.
\begin{proof}[Proof of Theorem~\ref{thm:ind-cst-corr}]
  The proof follows from Lemma~\ref{lem:inpro-positive} and the above
  discussion.
\end{proof}

 \begin{corollary}\label{cor:ind-rep-associated-with-ind-corr}
   Let \(p\colon \A \to G\) be a Fell bundle over a Hausdorff, locally
   compact, second countable, {\'e}tale groupoid \(G\). Let
   \(x\in G^{(0)}\). Then a representation
   \(\pi\colon \Cst(G^x_x; \A(x))\to B(\Hilm)\) induces a
   representation \(\Ind_{\Cst(G^x_x; \A(x))}^{\Cst(G; \A)}(\pi)\) of
   \(\Cst(G;\A)\).
 \end{corollary}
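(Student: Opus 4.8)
The plan is to realise \(\IndP(\pi)\) as the Rieffel induction of \(\pi\) along the induction correspondence \(Y(x)\) produced in Theorem~\ref{thm:ind-cst-corr}; concretely, to form the interior tensor product of \(Y(x)\) with the Hilbert space \(\Hilm\) over~\(\pi\). Write \(A = \Cst(G;\A)\) and \(B = \Cst(G^x_x;\A(x))\), and let \(\phi\colon A \to \Bound(Y(x))\) denote the nondegenerate left action of \(A\) on \(Y(x)\) by adjointable operators supplied by that theorem.

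First I would equip the algebraic tensor product \(Y(x) \odot \Hilm\) with the sesquilinear form determined on elementary tensors by
\[
  \inpro{\xi_1 \otimes h_1}{\xi_2 \otimes h_2} \defeq \inpro{h_1}{\pi(\inpro{\xi_1}{\xi_2})h_2}_{\Hilm},
\]
where \(\inpro{\xi_1}{\xi_2}\in B\) is the \(B\)\nb-valued inner product on \(Y(x)\). Positivity of this form is the standard Hilbert-module computation: for a finite sum \(\sum_i \xi_i \otimes h_i\), the quantity \(\sum_{i,j}\inpro{h_i}{\pi(\inpro{\xi_i}{\xi_j})h_j}_{\Hilm}\) equals \(\inpro{(h_i)_i}{\pi^{(n)}\bigl((\inpro{\xi_i}{\xi_j})_{i,j}\bigr)(h_j)_j}_{\Hilm^n}\), and this is nonnegative because the Gram matrix \((\inpro{\xi_i}{\xi_j})_{i,j}\) is a positive element of \(\Mat_n(B)\) (it equals \(\inpro{\sum_i \xi_i b_i}{\sum_j \xi_j b_j}\) after pairing with any \((b_i)\in B^n\)) and the amplification \(\pi^{(n)}\) of \(\pi\) is a positive map. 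Quotienting by the subspace of null vectors and completing yields a Hilbert space, which I denote \(Y(x) \otimes_\pi \Hilm\); the balancing relation \(\xi b \otimes h \sim \xi \otimes \pi(b)h\) is exactly what makes the form well defined, and it follows from \(\inpro{\xi_1 b}{\xi_2} = b^*\inpro{\xi_1}{\xi_2}\).

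Next I would set \(\IndP(\pi)(a) \defeq \phi(a)\otimes 1_{\Hilm}\), acting on elementary tensors by \(\xi \otimes h \mapsto (a\cdot \xi)\otimes h\). Since \(\phi(a)\) is adjointable with \(\phi(a)^*=\phi(a^*)\) and the inner product on the tensor product is built from the \(B\)\nb-valued one, a direct computation shows that \(\phi(a)\otimes 1\) descends to the quotient, is bounded with \(\norm{\phi(a)\otimes 1}\le \norm{\phi(a)}\le \norm{a}_A\), and hence extends to a bounded operator on \(Y(x)\otimes_\pi \Hilm\); the assignment \(a \mapsto \phi(a)\otimes 1\) is then a \(\Star\)\nb-homomorphism, and its nondegeneracy follows from the nondegeneracy of \(\phi\) and of \(\pi\) together with the density statements in Lemma~\ref{lem:inpro-positive}. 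This is precisely the interior-tensor-product construction for \(\Cst\)\nb-correspondences (see Lance~\cite{Lance1995Hilbert-modules} and Rieffel~\cite{Rieffel1974Induced-rep}). Because Theorem~\ref{thm:ind-cst-corr} has already carried out the substantive work of producing \(Y(x)\) with its left action, inner product and positivity, there is no genuine obstacle here; the only point demanding care is verifying that \(\phi(a)\otimes 1\) is well defined on the quotient and bounded, which is immediate from adjointability of \(\phi(a)\).
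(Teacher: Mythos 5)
Your proposal is correct and follows the same route as the paper: the paper's proof is a one-line citation of Theorem~\ref{thm:ind-cst-corr} together with \cite[Proposition~2.66]{Raeburn-Williams1998ME-book}, and that cited proposition is exactly the interior-tensor-product (Rieffel induction) construction you have written out in detail, including the Gram-matrix positivity argument and the verification that \(a \mapsto \phi(a)\otimes 1\) is a well-defined nondegenerate \(\Star\)\nb-representation. In other words, you have unpacked the standard result the authors invoke rather than done anything genuinely different.
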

 \begin{proof}
   Follows from Theorem~\ref{thm:ind-cst-corr}
   and~\cite[Proposition~2.66]{Raeburn-Williams1998ME-book}.
 \end{proof}

 \subsection{Induction of states}
 \label{sec:inducing-states}

 As in the earlier section, let \(p\colon \A \to G\) be a Fell bundle
 over a locally compact, Hausdorff, second countable, \etale\ groupoid
 and \(x\in \base[G]\).  In light of
 Corollary~\ref{cor:ind-rep-associated-with-ind-corr}, we raise the
 question if a state on \(\Cst(G_x^x;\A(x))\) induces a state of
 \(\Cst(G;\A)\). In other words, consider a cyclic representation of
 \(\Cst(G_x^x;\A(x))\); then does the induction correspondence induce
 it to a cyclic representation of \(\Cst(G;\A)\)?  The answer to this
 is affirmative. We need a cyclic vector of this induced
 representation the construction of which we discuss in this section.

 Assume that \(\phi\) is a state on \(\Cst(G_x^x;\A(x))\) and
 \((\Hils, L, \xi)\) the corresponding GNS triple. Then the induction
 correspondence \(Y(x)\colon \Cst(G;\A)\to \Cst(G_x^x;\A(x))\) induces
 the representation~\(L\) to the representation
 \(\IndP(L)\colon \Cst(G;\A)\to \Bound(Y(x)\otimes \Hils)\) of
 \(\Cst(G;\A)\).  We show that the induced representation of
 \(\Cst(G;\A)\) on \(Y(x)\otimes \Hils\) has a \emph{cyclic vector}
 \(a(x)\). Recall from~\cite[Page~33,
 34]{Raeburn-Williams1998ME-book}, that the inner product of
 \(Y(x) \otimes \Hils\) is given by
 \(\inpro{a\otimes \xi}{b \otimes \zeta} =
 \inpro{\xi}{L\inpro{a}{b}\zeta}\) for \(a,b\in Y(x)\) and
 \(\xi,\zeta \in \Hils\).
 
 \begin{lemma}\label{lem:compression-by-app-unit}
   Let \(x\in \base\) and \(d^n_x\) as defined in
   Lemma~\ref{lemma:apprx-id-of-bimod} for \(n\in \N\). Then for
   \(f\in \Cc(G;\A)\), \( \binpro{d^n_x}{f*'d^m_x} \to f|_{G^x_x}\)
   pointwise as \(m,n \to \infty\) where \(\inpro{\cdot}{\cdot}\) is a
   \(\Cc(G^x_x;\A(x))\)\nb-valued inner product defined in
   Equation~\eqref{eq:ind-corr-3}. By \(m,n \to \infty,\) we mean one
   of the itterated limits \(m\to \infty\) then \(n\to \infty\) or
   \(n\to \infty\) then \(m\to \infty\).
 \end{lemma}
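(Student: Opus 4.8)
The plan is to reduce the double sum defining $\binpro{d^n_x}{f*'d^m_x}(\gamma)$ to a single product of fibre elements, and then to recognise the resulting expression as a two-sided compression by the approximate identity $(u^x_n)$ inside the imprimitivity bimodule $\A_\gamma$.

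First I would unwind the left action. For $z\in G_x$, Equation~\eqref{eq:ind-corr-1} gives $f*'d^m_x(z)=\sum_{\tau\in G^{r(z)}} f(\tau)\,d^m_x(\tau^{-1}z)$. Since $d^m_x=u^x_m\cdot\delta_x$ is supported only at the unit $x$, the summand vanishes unless $\tau^{-1}z=x$, i.e.\ unless $\tau=z$; hence the sum collapses and $f*'d^m_x(z)=f(z)\,u^x_m$, the product being well defined in $\A_z$ because $s(z)=x=r(x)$. Next I would substitute this into the inner product. For $\gamma\in G_x^x$, Equation~\eqref{eq:ind-corr-3} reads $\binpro{d^n_x}{f*'d^m_x}(\gamma)=\sum_{t\in G_x} d^n_x(t)^*\,(f*'d^m_x)(t\gamma)$, and again the support of $d^n_x$ forces $t=x$. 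Using $x\gamma=\gamma$ and the self-adjointness of the positive element $u^x_n$, this yields the clean formula $\binpro{d^n_x}{f*'d^m_x}(\gamma)=u^x_n\,f(\gamma)\,u^x_m$, an element of $\A_\gamma$.

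It then remains to show $u^x_n f(\gamma) u^x_m\to f(\gamma)$ in each iterated limit. Because $\A$ is saturated, $\A_\gamma$ is an $\A_x$-$\A_x$-imprimitivity bimodule, so it is full both as a right and as a left Hilbert $\A_x$-module and $(u^x_n)$ is an approximate identity of the generated ideal on either side. Lemma~\ref{lem:appx-unit-for-full-Hilbert-mod} then gives $\norm{f(\gamma)u^x_m-f(\gamma)}\to 0$; the left-hand statement $\norm{u^x_n f(\gamma)-f(\gamma)}\to 0$ follows by applying the same lemma to $f(\gamma)^*\in\A_{\gamma^{-1}}$ and taking adjoints, since the Fell-bundle involution is isometric and $u^x_n$ is self-adjoint. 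To evaluate the limit $m\to\infty$ followed by $n\to\infty$, I would fix $n$ and bound $\norm{u^x_n(f(\gamma)u^x_m-f(\gamma))}\le\norm{u^x_n}\,\norm{f(\gamma)u^x_m-f(\gamma)}\to 0$, using $\norm{u^x_n}\le 1$, to obtain $u^x_n f(\gamma)u^x_m\to u^x_n f(\gamma)$, and then let $n\to\infty$; the reverse order is symmetric. Both routes land on $f(\gamma)=f|_{G_x^x}(\gamma)$, as required.

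The computation carries no serious analytic difficulty: the only real care needed is the bookkeeping of the source/range and support conditions that collapse both sums to a single term, after which the uniform bound $\norm{u^x_n}\le 1$ is exactly what keeps the two iterated limits under control.
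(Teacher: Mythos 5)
Your proposal is correct and follows essentially the same route as the paper: both collapse the sums to the key identity \(\binpro{d^n_x}{f*'d^m_x}(\gamma)=u^x_n f(\gamma)u^x_m\) (the paper does this via Lemma~\ref{lem:inpro-conv-equivalence} rather than by direct support bookkeeping, a trivial difference) and then invoke Lemma~\ref{lem:appx-unit-for-full-Hilbert-mod} together with the \(\A_x\)-\(\A_x\)-bimodule structure of \(\A_\gamma\). If anything, your handling of the iterated limits --- deriving left convergence by passing to \(f(\gamma)^*\in\A_{\gamma^{-1}}\) and taking adjoints, and controlling the cross term with \(\norm{u^x_n}\le 1\) --- is spelled out more carefully than in the paper, which leaves that step implicit.
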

 
 \begin{proof}
   Lemma~\ref{lem:inpro-conv-equivalence} and the positivity of
   \(d^n_x\) gives us
   \(\binpro{d^n_x}{f*'d^m_x} = (d^n_x)^**(f*'d^m_x)|_{G^x_x} =
   d^n_x*(f*'d^m_x)|_{G^x_x}\).  Recall the definition of \(*'\) from
   Equation~\eqref{eq:ind-corr-1}. For \(\gamma \in G^x_x\) ,
   \begin{multline*}
     \binpro{d^n_x}{f*'d^m_x} (\gamma) = d^n_x*(f*'d^m_x)|_{G^x_x} (\gamma) = \sum_{\alpha \in G^x_x} d^n_x(\alpha) (f*'d^m_x)|_{G^x_x}(\alpha^{-1}\gamma) \\
     = d^n_x(x) (f*'d^m_x)|_{G^x_x}(\gamma) = u^x_n \sum_{\tau \in
       G^x} f(\tau) d^m_x(\tau^{-1}\gamma) = u^x_nf(\gamma) d^m_x (x)=
     u^x_n f(\gamma) u^x_m.
   \end{multline*}
   Again \((u^x_x)_{n\in \N}\) is a approximate identity of \(\A_x\)
   and \(\A_\gamma\) is a \(\A_x\)-\(\A_x\)\nb-Hilbert bimodule for
   \(\gamma \in G^x_x\). Using
   Lemma~\ref{lem:appx-unit-for-full-Hilbert-mod} and taking limit in
   the above computation, we have
   \[
     \lim_{m,n} \binpro{d^n_x}{f*'d^m_x} (\gamma) = \lim_{m,n} u^x_n
     f(\gamma) u^x_m = f(\gamma)
   \]
   for \(\gamma\in G^x_x\).
 \end{proof}

 \begin{deflem}\label{lem:appx-unit-for-ind-corr}
   Let \(\phi\) be a state on \(\Cst(G^x_x;\A(x))\) with GNS triple
   \((\Hils, L,\xi)\). Then the sequence
   \((d^n_x\otimes\xi)_{n\in \N}\) in the Hilbert space
   \(Y(x)\otimes \Hils\) is convergent. Denote this limit
   by~\(a(x)\). Then \(a(x)\) is a unit vector.
 \end{deflem}
 \begin{proof}
   To establish the convergence, it is sufficient to show that the
   sequence is Cauchy.  Consider
   \begin{multline*}
     \norm{d^n_x\otimes\xi-d^m_x\otimes\xi}^2 =
     \inpro{d^n_x\otimes\xi-d^m_x\otimes\xi}{d^n_x\otimes\xi
       -d^m_x\otimes\xi} \\ =
     \inpro{d^n_x\otimes\xi}{d^n_x\otimes\xi}-\inpro{d^n_x\otimes\xi}{d^m_x\otimes\xi}
     -\inpro{d^m_x\otimes\xi}{d^n_x\otimes\xi}+\inpro{d^m_x\otimes\xi}{d^m_x\otimes\xi}
     \\
     =\inpro{\xi}{L(\inpro{d^n_x}{d^n_x})\xi}-\inpro{\xi}{L(\inpro{d^n_x}{d^m_x})\xi}
     -\inpro{\xi}{L(\inpro{d^m_x}{d^n_x})\xi}+\inpro{\xi}{L(\inpro{d^m_x}{d^m_x})\xi}.
   \end{multline*}
   Due to Lemma~\ref{lem:inpro-conv-equivalence} and positive natures
   of \(d^n_x\) and \(d^m_x\), we identify
   \(\inpro{d^n_x}{d^m_x} = d^n_x*d^m_x\) and similarly for the other
   terms. Therefore, the last term above equals
   \[
     \inpro{\xi}{L((d^n_x)^2)\xi}-\inpro{\xi}{L(d^n_x*d^m_x)\xi}
     -\inpro{\xi}{L(d^m_x*d^n_x)\xi}+\inpro{\xi}{L((d^m_x)^2)\xi}.
   \]
   Since, for all \(p,q\in \{n,m\}\),
   \(L(d^p_x*d^q_x)\to \Id_{\Hils}\) in the SOT, (see
   Corollary~\ref{cor:cros-app-id}) the above term converges to
   \(\inpro{\xi}{\xi}-\inpro{\xi}{\xi}-\inpro{\xi}{\xi}+\inpro{\xi}{\xi}=0\)
   as \(n,m \to \infty\).  Thus \(\{d^n_x\otimes \xi\}_{n\in \N}\) is
   a Cauchy sequence in the Hilbert space \(Y(x)\otimes \Hils\); let
   \(a(x)\) be the limit.
 
   To show that \(a(x)\) is an unit, consider
   \begin{multline*}
     \norm{a(x)}^2 = \norm{\lim_{n}d^n_x\otimes\xi}^2=
     \lim_{n}\norm{d^n_x\otimes\xi}^2=\lim_{n} \inpro{d^n_x\otimes
       \xi}{d^n_x\otimes\xi}=\lim_{n}\inpro{\xi}{L(\inpro{d^n_x}{d^n_x})\xi}
     \\=\lim_{n}\inpro{\xi}{L((d^n_x)^2)
       \xi}=\inpro{\xi}{\xi}=\norm{\xi}^2=1.
   \end{multline*}

 \end{proof}

 Towards the end, we need the following small lemma to prove
 Proposition~\ref{prop:cyclic-vect-for-ind-rep} which will come
 immediately after the lemma.

\begin{lemma}\label{lem:for-cyclic-vect}
  Suppose \(\phi\) be a state on \(\Cst(G^x_x;\A(x))\) with GNS triple
  \((\Hils, L, \xi)\). Let \(f\in \Contc(G;\A)\).
  \begin{enumerate}[leftmargin=*]
  \item \(f*' d_x^n \to f|_{G_x}\) in \(\Contc(G_x;\A|_{G_x})\) in the
    inductive limit topology. Hence \(f*'d^n_x\to f|_{G_x}\) in the
    norm topology in \(\Cc(G_x;\A|_{G_x}) \subseteq Y(x)\).
  \item \(\IndP(L) (f) (a(x)) = f|_{G_x} \otimes \xi\) in
    \(Y(x)\otimes \Hils\).
  \item \(\binpro{a(x)}{\IndP(L)(f) (a(x))} = \phi(f|_{G^x_x})\).
		
  \end{enumerate}
\end{lemma}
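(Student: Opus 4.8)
The three claims are best handled in the stated order, each feeding the next; the only genuine computation is the pointwise collapse of $f*'d^n_x$ caused by the fact that $d^n_x = u^x_n\cdot\delta_x$ is supported at the single arrow $x$. For~(1), the plan is to unwind Equation~\eqref{eq:ind-corr-1} at a point $z\in G_x$: since $d^n_x(\tau^{-1}z)\neq 0$ forces $\tau^{-1}z = x$, i.e.\ $\tau = z$, the convolution degenerates to $f*'d^n_x(z) = f(z)\,u^x_n$. Hence $f*'d^n_x$ and $f|_{G_x}$ are both supported in the fixed finite set $\supp(f)\cap G_x$, so convergence in the inductive limit topology reduces to fibrewise convergence there. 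For each $z\in G_x$ the fibre $\A_z$ is a full right Hilbert $\A_x$\nb-module by saturation, and $(u^x_n)_n$ is an approximate identity of $\A_x$, so Lemma~\ref{lem:appx-unit-for-full-Hilbert-mod} gives $\norm{f(z)u^x_n - f(z)}\to 0$; taking the maximum over the finite support yields $f*'d^n_x\to f|_{G_x}$ in the inductive limit topology. Norm convergence in $Y(x)$ then follows from Lemma~\ref{lem:for-cont-left-action}(1), because $\norm{\eta}^2_{Y(x)} = \norm{\inpro{\eta}{\eta}}$.

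For~(2), I would use $a(x) = \lim_n d^n_x\otimes\xi$ (Lemma~\ref{lem:appx-unit-for-ind-corr}) together with the boundedness, hence continuity, of $\IndP(L)(f)$ to write $\IndP(L)(f)(a(x)) = \lim_n (f*'d^n_x)\otimes\xi$. The map $\eta\mapsto \eta\otimes\xi$ on $Y(x)$ is norm-decreasing, since $\norm{\eta\otimes\xi}^2 = \inpro{\xi}{L(\inpro{\eta}{\eta})\xi}\leq \norm{\inpro{\eta}{\eta}}\,\norm{\xi}^2$ with $\norm{\xi}=1$; combining this with the $Y(x)$\nb-convergence from~(1) gives $\IndP(L)(f)(a(x)) = f|_{G_x}\otimes\xi$.

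For~(3), I would substitute~(2) into the inner product and pull the limit through the first slot, obtaining $\binpro{a(x)}{\IndP(L)(f)(a(x))} = \lim_n \inpro{\xi}{L(\binpro{d^n_x}{f|_{G_x}})\xi}$. Using Equation~\eqref{eq:ind-corr-3} with the self-adjoint $d^n_x$, Lemma~\ref{lem:inpro-conv-equivalence} identifies $\binpro{d^n_x}{f|_{G_x}} = d^n_x*(f|_{G^x_x})$ in $\Cst(G^x_x;\A(x))$, and this converges to $f|_{G^x_x}$ because $(d^n_x)_n$ is an approximate identity there by Lemma~\ref{lemma:apprx-id-of-bimod}. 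Norm-continuity of $L$ and of the vector state $v\mapsto \inpro{\xi}{v\,\xi}$ then give the limit $\inpro{\xi}{L(f|_{G^x_x})\xi} = \phi(f|_{G^x_x})$, which is the claim.

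The main obstacle is purely the bookkeeping that transfers convergence across the two completions: one must check that inductive-limit convergence in $\Cc(G_x;\A|_{G_x})$ upgrades to norm convergence in $Y(x)$ and then survives tensoring with $\xi$, both of which are controlled by the norm estimates above, so that no interchange of iterated limits is needed once~(1) and~(2) are established. One could instead prove~(3) directly via the double limit $\binpro{d^n_x}{f*'d^m_x}\to f|_{G^x_x}$ of Lemma~\ref{lem:compression-by-app-unit}, but routing through~(2) avoids the iterated limit altogether.
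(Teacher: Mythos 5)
Your proposal is correct, and for parts (1) and (2) it is essentially the paper's own argument: the same pointwise collapse $f*'d^n_x(z) = f(z)u^x_n$, the fixed finite support $\supp(f)\cap G_x$, Lemma~\ref{lem:appx-unit-for-full-Hilbert-mod} applied fibrewise, and continuity of the bounded operator $\IndP(L)(f)$ applied to $a(x)=\lim_n d^n_x\otimes\xi$ (for the upgrade from inductive-limit convergence to convergence in $Y(x)\otimes\Hils$ the paper cites Lemma~\ref{lem:for-cont-left-action}, while you use Lemma~\ref{lem:for-cont-left-action}(1) together with the explicit contractivity estimate $\norm{\eta\otimes\xi}\leq\norm{\eta}_{Y(x)}$; both are sound, and your estimate is the more transparent justification). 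Where you genuinely diverge is part (3). The paper keeps both approximations running: it evaluates the \emph{iterated} limit $\lim_{n,m}\binpro{d^n_x\otimes\xi}{(f*'d^m_x)\otimes\xi} = \lim_{n,m}\binpro{\xi}{L\big(\inpro{d^n_x}{f*'d^m_x}\big)\xi}$, pulls $\lim_{n,m}$ inside $L$, and invokes Lemma~\ref{lem:compression-by-app-unit}, which asserts only \emph{pointwise} convergence $\inpro{d^n_x}{f*'d^m_x}\to f|_{G^x_x}$; strictly speaking that interchange needs the (easy but unstated) upgrade to $\Cst$\nb-norm convergence, available because all the sections involved live on the fixed finite set $\supp(f)\cap G^x_x$. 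Your route substitutes (2) first, so a single limit survives in the left slot of the inner product; you then identify $\inpro{d^n_x}{f|_{G_x}} = d^n_x*(f|_{G^x_x})$ via Lemma~\ref{lem:inpro-conv-equivalence} and let the approximate identity of Lemma~\ref{lemma:apprx-id-of-bimod} deliver norm convergence $d^n_x*(f|_{G^x_x})\to f|_{G^x_x}$ in $\Cst(G^x_x;\A(x))$, after which only ordinary norm continuity of $L$ and of the vector state is needed. This buys a cleaner argument — no iterated limit, no implicit interchange of limit with $L$, and Lemma~\ref{lem:compression-by-app-unit} becomes unnecessary for this purpose — at no cost in generality; the paper's version, in exchange, simply reuses the compression lemma it had already established.
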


\begin{proof}
  (1) For given \(f\) and \(\gamma\in G_x\),
  \[
    f*' d_x^n(\gamma) = \sum_{\tau\in G^{r(\gamma)}} f(\tau)
    d_x^n(\tau\inverse \gamma) = f(\gamma)\cdot u^x_n.
  \]
  Therefore, \(\supp (f*'d^n_x) \subseteq \supp(f) \cap G_x\) is a
  fixed finite set. Also we have
  \[
    \norm{f*' d_x^n - f|_{G_x}}_{\infty} = \max_{\gamma \in \supp(f)
      \cap G_x} \norm{f(\gamma)\cdot u^x_n -f(\gamma)} \to 0 \text{ as
    } n\to \infty
  \]
  as \(\A(\gamma)\) is a full Hilbert \(\A_x\)\nb-module, and
  \((u^x_n)_n\) is an approximate identity of \(\A_x\).
	
  \noindent (2) Using the definition of \(a(x)\),
  \[
    \IndP(L)(f)(a(x)) = \lim_n \IndP(L)(f)(d_x^n\otimes \xi) =\lim_n
    (f*' d_x^n)\otimes \xi.
  \]
  The first equality of above follows from the continuity of \(\IndP\)
  and the definition of \(a(x)\).  Using
  Lemma~\ref{lem:for-cont-left-action}(2), we get
  \(\lim_n (f*' d_x^n)\otimes \xi = f|_{G_x} \otimes \xi\).  Thus
  \[
    \IndP(L)(f)(a(x)) = f|_{G_x}\otimes \xi.
  \]
	
  \noindent (3) Let \(f\in \Cc(G;\A)\).  Using the continuity of
  \(\IndP\) and the definiton of \(a(x)\), we get
  \begin{multline*}
    \binpro{a(x)}{\IndP(L)(f) (a(x))} =  \lim_{n,m}\binpro{d^n_x\otimes \xi}{\IndP (L)(f) (d^m_x\otimes \xi)}\\
    = \lim_{n,m}\binpro{d^n_x\otimes \xi}{(f*'d^m_x)\otimes \xi}.
  \end{multline*}
  By \(\lim_{n,m}\) we mean one of the iterated limits \(m\to \infty\)
  then \(n\to \infty\) or \(n\to \infty\) then \(m\to \infty\). Recall
  the defintion of \(*'\) from Equation~\eqref{eq:ind-corr-1}.  The
  definition of the inner product on the interior tensor product of
  Hilbert modules says that the above term is equal to
  \[
    \lim_{n,m}\binpro{\xi}{L\big(\inpro {d^n_x}{f*'d^m_x}\big) \xi} =
    \binpro{\xi}{L\big(\lim_{n,m} \inpro {d^n_x}{f*'d^m_x}\big) \xi}.
  \]
  The above computation together with
  Lemma~\ref{lem:compression-by-app-unit} gives us
  \[
    \binpro{a(x)}{\IndP(L)(f) (a(x))} = \binpro{\xi}{L( f|_{G_x^x})
      \xi} = \phi(f|_{G_x^x})
  \]
  for \(f\in \Cc(G;\A)\).
\end{proof}

\begin{proposition}
  \label{prop:cyclic-vect-for-ind-rep}
  Suppose that \(G\) is a locally compact, Hausdorff, second
  countable, \etale\ groupoid, \(x\in \base[G]\), and that
  \(p\colon \A \to G\) is a saturated Fell bundle. Let
  \(Y(x)\colon \Cst(G;A) \to \Cst(G_x^x;\A(x))\) be the induction
  correspondence. Assume that a state \(\phi\) on
  \(\Cst(G_x^x;\A(x))\) and its GNS triple \((\Hils, L, \xi)\) is
  given. Then \(a(x)\) is a unit cyclic vector for \(\IndP(L)\)
  where~\(a(x)\) is given by Lemma~\ref{lem:appx-unit-for-ind-corr}.
\end{proposition}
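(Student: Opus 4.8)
The plan is to prove cyclicity of $a(x)$; that $a(x)$ is a unit vector is already recorded in Lemma~\ref{lem:appx-unit-for-ind-corr}, so nothing more is needed there. Since $\Cc(G;\A)$ is dense in $\Cst(G;\A)$ and $\IndP(L)$ is a $^*$\nb-homomorphism into $\Bound(Y(x)\otimes\Hils)$, hence norm-contractive, it suffices to show that the closed linear span of $\{\IndP(L)(f)\,a(x) : f\in\Cc(G;\A)\}$ equals $Y(x)\otimes\Hils$. By Lemma~\ref{lem:for-cyclic-vect}(2) this set is exactly $\{f|_{G_x}\otimes\xi : f\in\Cc(G;\A)\}$, so the first task is to identify the image of the restriction map $f\mapsto f|_{G_x}$.

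First I would show that the restriction map $\Cc(G;\A)\to\Cc(G_x;\A|_{G_x})$ is surjective. Given $\eta\in\Cc(G_x;\A|_{G_x})$ with finite support $\{\gamma_1,\dots,\gamma_k\}\subseteq G_x$, I would use discreteness of $G_x$ (étale-ness) together with the Hausdorff property to pick mutually disjoint bisections $U_i\ni\gamma_i$ with $U_i\cap G_x=\{\gamma_i\}$, use that $p\colon\A\to G$ has enough sections to choose continuous sections $s_i$ of $\A\to G$ with $s_i(\gamma_i)=\eta(\gamma_i)$, and cut off with bump functions $\chi_i\in\Cc(U_i)$ satisfying $\chi_i(\gamma_i)=1$. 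Then $f=\sum_{i}\chi_i s_i$, extended by zero, lies in $\Cc(G;\A)$ and restricts to $\eta$ on $G_x$. Consequently the closed span in question contains every $\eta\otimes\xi$ with $\eta\in\Cc(G_x;\A|_{G_x})$; since $\eta\mapsto\eta\otimes\xi$ is norm-continuous—indeed $\norm{\eta\otimes\xi}\leq\norm{\eta}\,\norm{\xi}$ in the interior tensor product—and $\Cc(G_x;\A|_{G_x})$ is dense in $Y(x)$, the closed span contains all $\eta\otimes\xi$ with $\eta\in Y(x)$.

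Next I would exploit the module structure and cyclicity of $\xi$ to fill out the $\Hils$\nb-factor. The balancing relation of the interior tensor product gives $(\eta\cdot b)\otimes\xi=\eta\otimes L(b)\xi$ for $b\in\Cst(G^x_x;\A(x))$ and $\eta\in Y(x)$; as $\eta\cdot b\in Y(x)$, the closed span contains every $\eta\otimes L(b)\xi$. Because $\xi$ is cyclic for $L$, the vectors $L(b)\xi$ are dense in $\Hils$, and for fixed $\eta$ the map $\zeta\mapsto\eta\otimes\zeta$ is continuous; hence the closed span contains every elementary tensor $\eta\otimes\zeta$ with $\eta\in Y(x)$ and $\zeta\in\Hils$. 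Since finite sums of elementary tensors are dense in $Y(x)\otimes\Hils$ by construction, the closed span is the whole space, establishing that $a(x)$ is cyclic.

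The main obstacle is the surjectivity of the restriction map in the second step; the remainder is soft functional analysis. This surjectivity rests on the interplay of étale-ness—which makes $G_x$ discrete and lets its points be separated by bisections each meeting $G_x$ in a single point—with the existence of enough continuous sections of the Fell bundle, which is precisely what permits prescribing the values at the finitely many points of $\supp(\eta)$ independently.
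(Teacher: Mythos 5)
Your proof is correct and takes essentially the same route as the paper's: both arguments reduce cyclicity of \(a(x)\) to the surjectivity of the restriction map \(\Cc(G;\A)\to\Cc(G_x;\A|_{G_x})\), combined with the balancing relation of the interior tensor product (the paper rewrites \(g\otimes L(h)\xi\) as \((g*''h)\otimes\xi\), you move in the other direction via \((\eta\cdot b)\otimes\xi=\eta\otimes L(b)\xi\)), Lemma~\ref{lem:for-cyclic-vect}(2), and the cyclicity of \(\xi\). Your extension step is in fact slightly more careful than the paper's: by supporting the cutoffs \(\chi_i\) in bisections \(U_i\) with \(U_i\cap G_x=\{\gamma_i\}\) you ensure the extension vanishes on \(G_x\setminus\supp(\eta)\), whereas the paper's bump functions \(b_i\) are only prescribed at the finitely many points \(\gamma_j\), so its \(\tilde{k}\) as written need not restrict exactly to \(k\) away from those points --- the fix being precisely your choice of supports.
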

\begin{proof}
  Let \(A\subseteq Y(x)\otimes \Hils\) denote the linear span of the
  set
  \[
    \big\{\IndP(L)(f) (a(x)) : f\in \Contc(G;\A)\big\}.
  \]
  We need to show is that \(a(x)\) is a cyclic vector for the induced
  representation, that is, \(A\) is dense in \(Y(x)\otimes \Hils\).
  
  Let \(B\) denote the vector space spanned by the elementary tensors
  \(g\otimes L(h)\xi\), for \(g\in \Contc(G_x;\A|_{G_x})\) and
  \(h\in \Contc(G_x^x;\A(x))\), in the interior tensor product
  \(Y(x)\otimes_{\Cst(G_x^x;\A(x))}\Hils\). Since \(\xi\) is a cyclic
  vector for the representation \(L\) of \(\Cst(G_x^x;\A(x))\)
  on~\(\Hils\), \(B\) is a dense subspace of
  \(Y(x)\otimes_{\Cst(G_x^x;\A(x))}\Hils\). We show that
  \(B\subseteq A\) which will prove the proposition.
  
  For this purpose, consider \(g\otimes L(h)\xi\in B\) where
  \(g\in \Cc(G_x;\A|_{G_x})\) and \(h \in \Cc(G^x_x;\A(x))\). Since
  the tensor product \(\otimes\) balanced over \(\Cst(G^x_x;\A(x))\),
  \(g\otimes L(h)\xi = g*''h\otimes \xi\) where we know that
  \(g*''h\in \Contc(G_x;\A|_{G_x})\). For time being, assume that
  there is \(f\in \Contc(G;\A)\) such that \(f|_{G_x} = g*''h\). Then
  Lemma~\ref{lem:for-cyclic-vect}(2) tell us that
  \[
    (g*'' h)\otimes \xi = f|_{G_x}\otimes \xi = \IndP(L)(f)(a(x)) \in
    A.
  \]
  Thus, we need to prove the existence of \(f\in \Cc(G;\A)\) such that
  \(f|_{G_x} = g*''h\) to conclude the proof.

  Instead of proving the existence of \(f\), we can prove a general
  assertion, namely, given \(k\otimes \xi\in B\) where
  \(k\in \Contc(G_x;\A|_{G_x})\), there is
  \(\tilde{k}\in \Contc(G;\A)\) such that \(\tilde{k}|_{G_x} = k\).
  
  The construction of \(\tilde{k}\) is as follows: let
  \(\supp(k) = \{\gamma_1,\gamma_2,\cdots,\gamma_n\}\) where \(n\) is
  a natural number. Since the Fell bundle \(p\colon \A\to G \) has
  enough sections choose \(s_i\in \Cc(G;\A)\) such that
  \(s_i(\gamma_i)= k(\gamma_i)\) for \(i=1,2,\cdots,n\). Choose
  \(b_1,b_2,\cdots,b_n\in \Cc(G)\) such that
  \[
    b_i(\gamma_j)=~\begin{cases}
      1 & \quad \text{ when } i=j,\\
      0 & \quad \text{ when } j\in \{1,\dots,n\}\7 \{j\}.
    \end{cases}
  \]
  Let \(\tilde{k} = \sum_{i=1}^{n}b_is_i\). Then clearly
  \(\tilde{k}|_{G_x} = k.\)
\end{proof}

\section{An integration-disintegration theorem for states}
\label{sec:int-dis-thm-state}

As in earlier section, we fix a Fell bundle \(p\colon \A\to G\) over a
locally compact, Hausdorff, \etale\ groupoid~\(G\). In this section,
in Proposition~\ref{prop:integration-of-states}, we prove that a state
on \(\Cst(G;\A)\) disintegrates into a probability measure on
\(\base[G]\) and a \(\mu\)\nb-measurable field of states. The
converse, namely, the integration of a \(\mu\)\nb-measurable field of
states into a state on \(\Cst(G;\A)\) is described in
Proposition~\ref{thm:disint-states}. Finally,
Theorem~\ref{prop:bij-betwen-states-and-fields-of-states} establishes
the one-to-one correspondence of these two processes.

Let \(\phi\) be a state on a \(\Cst\)-algebra \(A\). The centraliser
of \(\phi\) is the subalgebra of \(A\) consisting elements \(a\in A\)
with the proeprty that \(\phi(ab)=\phi(ba)\) for all \(b\in A\).

\begin{definition}[{A \(\mu\)-measurable field of states
    (\cite[Page~515.]{Neshveyev2013KMS-states})}]
  \label{def:mbl-field-state}
  Let \(\mu\) be a probability measure on \(G^{(0)}\).
  \begin{enumerate}
  \item A field of state \(\{\phi_x\}_{x\in G^{(0)}}\), where
    \(\phi_x\) is a state on the \(\Cst\)-algebra
    \(\Cst(G^x_x;\A(x))\), is called a \(\mu\)-measurable field if for
    every \(f\in \Contc(G;\A)\) the map
    \[
      \base \to \C, \quad x\mapsto \sum_{\gamma\in
        G^x_x}\phi_x(f(\gamma)\delta_\gamma)
    \]
    is \(\mu\)-measurable.
  \item Given two \(\mu\)\nb-measurable field of states
    \(\{\psi_x\}_{x\in G^{(0)}}\) and \(\{\phi_x\}_{x\in\base[G]}\) on
    \(\base[G]\) are called equivalent if \(\psi_x=\phi_x\)
    \(\mu\)\nb-almost everywhere on \(\base[G]\).
  \end{enumerate}
\end{definition}

\noindent Clearly, being equivalent is an equivalence relation on the
collection of \(\mu\)\nb-measurable field of states. For such a field
\(\Phi = \{\phi_x\}_{x\in \base[G]}\), we denote its equivalence class
by~\([\Phi]\).

\begin{proposition}
  \label{prop:integration-of-states}
  Let \(p\colon \A \to G\) be a Fell bundle over a Hausdroff, locally
  compact, second countable, {\'e}tale groupoid \(G\). Let \(\phi\) be
  a state on \(\Cst(G;\A)\) whose centraliser is contained
  \(\Contz(G^{(0)};\A|_{G^{(0)}})\). Then there is probability measure
  \(\mu\) on the unit space \(G^{(0)}\) and a \(\mu\)\nb-measurable
  field~\(\{\phi_x\}_{x\in G^{(0)}}\) where \(\phi_x\) is a state on
  \(\Cst(G^x_x;\A(x))\) whose centraliser is contained \(\A_x\); and
  the pair \((\mu,\{\phi_x\}_{x\in \base[G]})\) satisfies the
  following equation
  \begin{equation}\label{eq:states-int-disint-reln}
    \phi(f)
    = \int_{G^{(0)}} \biggr( \sum\limits_{\gamma\in G^x_x} \phi_x(f(\gamma)\cdot
    \delta_\gamma) \biggr)  \textup{d}\mu(x)
  \end{equation}
  for \(f\in \Contc(G; \A)\).
\end{proposition}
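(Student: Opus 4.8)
The plan is to pass to the GNS representation of $\phi$, disintegrate it with the Muhly--Williams theorem~\cite[Theorem~4.13]{Muhly-Williams2008FellBundle-ME}, and then use the centraliser hypothesis to collapse the resulting sum over $G^x$ onto the isotropy $G^x_x$. First I would form the GNS triple $(\Hils,\pi_\phi,\xi_\phi)$; since $\pi_\phi$ is bounded by the universal norm and hence is $I$\nb-norm decreasing, its restriction to $\Contc(G;\A)$ is a nondegenerate $I$\nb-norm bounded representation on the separable space $\Hils$, so by~\cite[Theorem~4.13]{Muhly-Williams2008FellBundle-ME} it is equivalent to the integrated form of a strict representation $(\mu_0,\base*\Hils,\hat\pi)$, with $\xi_\phi$ corresponding to a Borel section $\xi\in L^2(\base*\Hils,\mu_0)$. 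Substituting $\xi$ into~\eqref{equ-int-dis-rep} gives
\[
  \phi(f)=\int_{\base}\sum_{\gamma\in G^x}\binpro{\xi(r(\gamma))}{\pi(f(\gamma))\xi(s(\gamma))}\,\Delta_{\mu_0}(\gamma)^{-\frac12}\,\dd\mu_0(x),\qquad f\in\Contc(G;\A).
\]
For each $x$ I restrict $\hat\pi$ to the isotropy Fell bundle $\A(x)\to G^x_x$ and integrate over the discrete group $G^x_x$ to obtain a nondegenerate representation $L_x$ of $\Cst(G^x_x;\A(x))$ on $\Hils_x$ with $L_x(a\cdot\delta_\gamma)=\pi(a)$; I then set $\dd\mu(x)=\norm{\xi(x)}^2\dd\mu_0(x)$ and $\phi_x(b)=\norm{\xi(x)}^{-2}\binpro{\xi(x)}{L_x(b)\xi(x)}$, which is a vector state wherever $\xi(x)\neq0$.

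The first substantive step is to discard the terms with $\gamma\notin G^x_x$. Here I use the centraliser hypothesis, which provides $\phi(d*g)=\phi(g*d)$ for every diagonal section $d\in\Contz(\base;\A|_{\base})$. By Remark~\ref{rem:obs-of-bise-out-iso} it suffices to treat $g\in\Contc(U;\A|_U)$ with $r(U)\cap s(U)=\emptyset$: choosing a net $(d_\lambda)$ in $\Contz(\base;\A|_{\base})$ supported in a neighbourhood of $r(\supp g)$ disjoint from $s(\supp g)$ and acting fibrewise as an approximate unit over $r(U)$ (Lemmas~\ref{lem:fiberwisw-appr-iden} and~\ref{lem:appx-unit-for-full-Hilbert-mod}), one has $g*d_\lambda=0$ identically while $d_\lambda*g\to g$ in $\Cst(G;\A)$, so $\phi(g)=\lim_\lambda\phi(d_\lambda*g)=\lim_\lambda\phi(g*d_\lambda)=0$. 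To promote this to the integrand for arbitrary $f$ supported on a single bisection $U$, I cut off by continuous functions $\psi_n$ on $r(U)$ that vanish near the closed fixed-point set $\{x:T_U(x)=x\}$ and increase to its complement; each $\psi_n f$ is then supported off the isotropy, so $\phi(\psi_n f)=0$, and dominated convergence applied to $\int\psi_n(x)\,(\,\cdot\,)\,\dd\mu_0$ shows that the $\gamma\notin G^x_x$ part of the integrand integrates to zero. Hence only $\gamma\in G^x_x$ survive.

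For the surviving terms the modular factor is trivial. Indeed $\mu_0$ is quasi-invariant, and for any bisection $U$ the map $T_U$ is the identity on its fixed-point set $\{x:u_x\in G^x_x\}$, so testing~\eqref{eq:quasi-inv-etale} against functions supported there forces $\Delta_{\mu_0}(u_x)=1$ for $\mu_0$\nb-a.e.\ such $x$; covering the isotropy bundle by countably many bisections gives $\Delta_{\mu_0}|_{G^x_x}\equiv1$ for $\mu_0$\nb-a.e.\ $x$. Therefore $\Delta_{\mu_0}(\gamma)^{-1/2}=1$ on every surviving term, and the formula becomes
\[
  \phi(f)=\int_{\base}\sum_{\gamma\in G^x_x}\binpro{\xi(x)}{\pi(f(\gamma))\xi(x)}\,\dd\mu_0(x)=\int_{\base}\sum_{\gamma\in G^x_x}\phi_x(f(\gamma)\cdot\delta_\gamma)\,\dd\mu(x),
\]
which is exactly~\eqref{eq:states-int-disint-reln}.

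It remains to verify the measure, the states, and measurability. Taking $f$ supported on $\base$ and running $f(x)$ through an approximate unit of $\A_x$, nondegeneracy of $\pi|_{\A_x}$ gives $\int_{\base}\norm{\xi(x)}^2\dd\mu_0=\lim\phi(u_\lambda)=1$, so $\mu$ is a probability measure and $\norm{\xi(x)}\neq0$ for $\mu$\nb-a.e.\ $x$; thus each $\phi_x$ is a genuine state, its unitality coming from nondegeneracy of $L_x$ built on the approximate unit of Lemma~\ref{lemma:apprx-id-of-bimod}. The centraliser condition for $\phi_x$ is inherited by disintegrating $\phi(d*f)=\phi(f*d)$ for $d\in\A_x$ (sitting in the diagonal at the unit of $G^x_x$), which yields $\phi_x(ab)=\phi_x(ba)$ for $a\in\A_x$; and measurability of $x\mapsto\sum_{\gamma\in G^x_x}\phi_x(f(\gamma)\delta_\gamma)$ follows from the Borel structure of $\xi(\cdot)$ and of $\hat\pi$ supplied by the disintegration. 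I expect the main obstacle to be exactly the passage from the pointwise identity $\phi(g)=0$ on off\nb-isotropy bisections to the vanishing of the off\nb-isotropy part of the integrand, together with the attendant almost-everywhere bookkeeping---nonvanishing of $\xi(x)$, triviality of $\Delta_{\mu_0}$ on the isotropy, and measurability of the field $\{\phi_x\}$---carried out against the Borel structure coming from Muhly--Williams.
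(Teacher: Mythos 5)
Your proposal is correct, and it follows the same overall architecture as the paper's proof---GNS triple, Muhly--Williams disintegration, \(\phi_x\) defined as the normalised vector states \(\norm{\xi_x}^{-2}\binpro{\xi_x}{\pi(\cdot)\,\xi_x}\), the measure \(\dd\mu = \norm{\xi_x}^2\,\dd\nu\), and approximate-identity arguments for \(\norm{\phi_x}=1\) and \(\mu(\base)=1\)---but you execute the crucial step, discarding the off-isotropy terms, by a genuinely different route. The paper keeps the localising test function \emph{inside} the centraliser element: for a bisection \(U\) with \(r(U)\cap s(U)=\emptyset\) it takes \(h_n=k\cdot u_n\) with \(k\in\Contc(r(U))\) arbitrary, so that \(0=\phi(f*h_n)=\phi(h_n*f)\) yields at once the \(\nu\)\nb-a.e.\ \emph{pointwise} vanishing of the off-isotropy integrand. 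You instead first prove the scalar identity \(\phi(g)=0\) for \(g\) supported off the isotropy (via \(g*d_\lambda=0\) while \(d_\lambda*g\to g\)) and then recover the integral statement by monotone cutoffs \(\psi_n\) and dominated convergence; this gives only that the off-isotropy part \emph{integrates} to zero, which is weaker than the paper's a.e.\ conclusion but entirely sufficient for Equation~\eqref{eq:states-int-disint-reln}. Your domination is legitimate (for \(f\) supported on a bisection the integrand is bounded by \(\norm{f}_\infty\norm{\xi_x}\,\norm{\xi_{T_U(x)}}\Delta^{-1/2}\), integrable by Cauchy--Schwarz and quasi-invariance), though the assertion ``\(\psi_nf\) is supported off the isotropy, so \(\phi(\psi_nf)=0\)'' needs the short partition-of-unity step over the basis of Remark~\ref{rem:obs-of-bise-out-iso}, which you should make explicit. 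A second genuine difference is a gain in rigour: you prove \(\Delta_{\mu_0}\equiv 1\) a.e.\ on the isotropy bundle by testing the quasi-invariance relation on the fixed-point set of each \(T_U\) (injectivity of \(T_U\) gives \(T_U^{-1}(E)=E\) for Borel \(E\) in the fixed-point set, once Equation~\eqref{eq:quasi-inv-etale} is extended from \(\Contc\) to bounded Borel \(k\), a routine measure-theoretic extension you should note); the paper's proof silently drops the factor \(\Delta_\nu(\gamma)^{-1/2}\) when restricting to \(G^x_x\), so your argument fills in a step the paper leaves implicit. Finally, your treatment of the centraliser condition for \(\phi_x\) is compressed: an element \(d\in\A_x\) is not itself a section, so, as the paper does, you must disintegrate \(\phi(k*g)=\phi(g*k)\) for diagonal sections \(k=h\cdot f\) with \(h\in\Contc(\base)\) arbitrary and then invoke enough sections (strictly, with countable dense families to obtain a single conull set)---but this matches the level of detail in the paper's own proof rather than constituting a gap.
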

\begin{proof}
  Consider the GNS triple \((\Hils, L, \xi)\) associated with the
  state \(\phi\), that is, \(\Hils\) is the Hilbert space associated
  with \(\phi\); \(L\) is the GNS representation of \(\Cst(G;\A)\) on
  a Hilbert space~\(\Hils\), and \(\xi\) is the cyclic vector for
  \(L\) with the property that \(\phi(x)= \binpro{\xi}{L(x)\xi}\) for
  \(x\in \Cst(G;\A)\). Let \((\nu, G^{(0)}*\Hilm, \hat{\pi})\) be the
  disintegration of \(L\) given by Muhly--Williams' disintegration
  theorem for Fell bundles (\cite[Theorem
  4.13]{Muhly-Williams2008FellBundle-ME}). Then~\(\nu\) is a
  quasi-invariant measure on \(\base\) and the Hilbert space \(\Hils\)
  is identified with the direct integral \(L^2(G^{(0)}*\Hilm,\nu)\)
  consisting the \(\nu\)\nb-square-integrable sections of the bundle
  of Hilbert spaces \(G^{(0)}*\Hilm\to \base[G]\). Let
  \(\{\xi_x\}_{x\in G^{(0)}}\) be the vector field corresponding to
  the cyclic vector~\(\xi\).  The representation \(L\) is unitarily
  equivalent to the integrated form of the
  \(*\)\nb-functor~\(\hat{\pi}\). Prompted by
  Equation~\eqref{equ-int-dis-rep}, we define the linear functional on
  \(\phi_x\colon \Cc(G^x_x;\A(x))\to \C\) as follows: for
  \(\sum_{i=1}^n a_i\cdot \delta_{\gamma_i} \in \Cc(G^x_x;\A(x))\),
  \[
    \phi_x\biggr(\sum_{i=1}^n a_i\cdot \delta_{\gamma_i}\biggr) =
    \sum_{i=1}^n \norm{\xi_x}^{-2}\binpro{\xi_x}{\pi(a_i)\xi_x}.
  \]
  We next show that the functional \(\phi_x\), in fact, extends to a
  state on \(\Cst(G^x_x;\A(x))\): let
  \(\sum_{i=1}^n a_i\cdot \delta_{\gamma_i} \in
  \Cc(G^x_x,\A(x))\). Then
  \begin{multline*}
    \phi_x \biggr( \big(\sum_{i=1}^n a_i\cdot
    \delta_{\gamma_i}\big)^*\big(\sum_{i=1}^n a_i\cdot
    \delta_{\gamma_i}\big)\biggr) = \phi_x
    \biggr(\sum_{i,j=1}^{n}a_i^*a_j\cdot
    \delta_{\gamma_i^{-1}\gamma_j} \biggr) \\=
    \sum_{i,j=1}^{n}\norm{\xi_x}^{-2}\binpro{\xi_x}{\pi(a_i^*a_j)\xi_x}
    =
    \sum_{i,j=1}^{n}\norm{\xi_x}^{-2}\binpro{\pi(a_i)\xi_x}{\pi(a_j)\xi_x}\\
    =\norm{\xi_x}^{-2}\Binpro{\sum_{i=1}^{n}\pi(a_i)\xi_x}{\sum_{i=1}^{n}\pi(a_i)\xi_x}
    \geq 0.
  \end{multline*}
  Now we show that \(\phi_x\) has a unit norm. Recall from
  Definition~\ref{def:star-funtor} that
  \(\pi|_{\A_x}\colon \A_x\to \Bound(\Hilm_x)\) is a nondegenerate
  representation. Then consider the approximate identity
  \((d_x^n)_{n\in \N}\) of \(\Cst(G^x_x; \A(x))\) as
  in~Lemma~\ref{lemma:apprx-id-of-bimod}; recall from the same lemma
  that \(d_x^n = u^x_n\cdot\delta_x\), where \((u^x_n)_{n\in \N}\) is
  an approximate identity of the
  \(\Cst\)\nb-algebra~\(\A_x\). Since~\(\phi_x\) is a positive linear
  functional,
  \begin{equation*}
    \norm{\phi_x} = \lim_{n} (\phi_x (d_x^n)) = \lim_{n}\norm{\xi_x}^{-2}\binpro{\xi_x}{\pi(u^x_n)\xi_x} =\norm{\xi_x}^{-2}\binpro{\xi_x}{\xi_x}  =1
  \end{equation*}
  where the first equality is due to~\cite[Theorem~3.33]{Murphy-Book}
  and the third one follows from the nondegeneracy of \(\pi\). This
  concludes that \(\phi_x\) is a state on \(\Cst(G_x^x;\A(x))\) for
  \(x\in \base[G]\).
	
  Define the measure~ \(\mu\) on \(G^{(0)}\) desired in
  Proposition~\ref{prop:integration-of-states} as
  \(\mathrm{d}\mu(x)=\norm{\xi_x}^2\mathrm{d}\nu(x)\). As \(L\) is the
  GNS representation corresponding to \(\phi\), using
  Equation~\eqref{equ-int-dis-rep}, we have
  \begin{equation}\label{equ:equality-state:dis-state}
    \phi(f)=\binpro{\xi}{L(f)\xi}= \int_{G^{(0)}}\sum_{\gamma \in G^x}\binpro{\xi_x}{\pi(f(\gamma))\xi_{s(\gamma)}}\Delta_\nu(\gamma)^{-\frac{1}{2}}\mathrm{d}\nu (x).
  \end{equation}
  for \(f\in \Cc(G;\A)\) and \(\Delta_\nu\) is the modular function of
  the quasi-invariant measure \(\nu\) on \(\base\). Choose an
  approximate identity \((f_n)_{n\in \N}\) of \(\Cst(G;\A)\) such that
  \(f_n \in \Cc(\base; \A|_{\base})\)
  (cf.~Proposition~\ref{prop:exist-app:unit}). Equation~\eqref{equ:equality-state:dis-state}
  implies that
  \[
    \phi(f_n) = \int_{\base} \binpro{\xi_x}{\pi(f_n(x)) \xi_x} \dd \nu
    (x).
  \]
  Taking limit as \(n\to \infty\) in above equation, and using
  \cite[Theorem~3.33]{Murphy-Book} and dominated convergence theorem
  we have
  \begin{multline*}
    1 = \norm{\phi} = \lim_{n \to \infty} \phi(f_n) = \lim_{n \to \infty} \int_{\base} \binpro{\xi_x}{\pi(f_n(x)) \xi_x} \dd \nu (x) =\int_{\base} \inpro{\xi_x}{\xi_x} \dd \nu(x)\\
    = \int_{\base} \norm{\xi_x}^2 \dd \nu(x).
  \end{multline*}
  Therefore, \(\dd \mu = \norm{\xi_x}^2 \dd \nu\) is a probability
  measure on \(\base\).  The \(\mu\)\nb-measurablity
  of~\(\{\phi_x\}_{x\in \base}\) is follows from the measuability of
  the vector field \(\{\xi_x\}_{x\in \base}\) corrseponding to the
  unit vector \(\xi\in \Hils\).
	
  To establish Equation~\eqref{eq:states-int-disint-reln}, we need to
  show that
  \[
    \sum_{\gamma \in G^x\setminus
      G^x_x}\binpro{\xi_x}{\pi(f(\gamma))\xi_{s(\gamma)}}\Delta_\nu(\gamma)^{-\frac{1}{2}}=0
  \]
  for \(\nu\)-almost every \(x\in G^{(0)}\) in
  Equation~\eqref{equ:equality-state:dis-state}. Since \(G\) is
  \etale\ and in above equation \(\gamma\) comes from outside
  \(G^x_x\), it is suffices to show that for each bisection
  \(U\subseteq G\setminus \textup{Ib}(G)\) with
  \(r(U) \cap s(U) = \emptyset\) and for \(f\in \Cc(U; \A|_{U})\),
  \begin{equation}\label{equ:dis:thm:rel:state:field}
    \sum_{\gamma \in G^x}\binpro{\xi_x}{\pi(f(\gamma))\xi_{s(\gamma)}} \Delta_\nu(\gamma)^{-\frac{1}{2}} = 0
  \end{equation}
  \(\nu\)-almost every \(x\in G^{(0)}\). Choose \(f\) as above and let
  \((u_n)_{n\in \N}\) be an approximate identity of
  \(\Contz(\base; \A|_{\base})\) contained in
  \(\Cc(\base; \A|_{\base})\)
  (cf. Remark~\ref{rem:app-id-in-com-supp}). Let \(k\in \Cc(r(U))\)
  and set \(h_n = k\cdot u_n\). Since \(r(U) \cap s(U) = \emptyset\)
  and \(\supp(f) \in U\) using Remark~\ref{rem:obs-of-bise-out-iso},
  we have \(f*h_n = 0\). Using the fact that \(h_n\) is contained in
  the centraliser of \(\phi\) we have
  \begin{multline*}
    0 = \phi(f*h_n) = \phi(h_n*f) = \int_{G^{(0)}}\sum_{\gamma \in G^x}\binpro{\xi_x}{\pi(h_n*f(\gamma))\xi_{s(\gamma)}}\Delta_\nu(\gamma)^{-\frac{1}{2}}\mathrm{d}\nu (x)\\
    = \int_{r(U)}\sum_{\gamma \in G^x} \binpro{\xi_x}{\pi(k(x)u_n(x)f(\gamma))\xi_{s(\gamma)}}\Delta_\nu(\gamma)^{-\frac{1}{2}}\mathrm{d}\nu (x)\\
    = \int_{r(U)} k(x) \sum_{\gamma \in G^x}
    \binpro{\xi_x}{\pi(u_n(x)f(\gamma))\xi_{s(\gamma)}}\Delta_\nu(\gamma)^{-\frac{1}{2}}\mathrm{d}\nu
    (x).
  \end{multline*}
  \noindent Since \(k\in \Cc(r(U))\) was arbitrary, we have that the
  function \(r(U) \to \C\) given by
  \[
    x\mapsto \sum_{\gamma \in G^x}
    \binpro{\xi_x}{\pi(u_n(x)f(\gamma))\xi_{s(\gamma)}}\Delta_\nu(\gamma)^{-\frac{1}{2}}
  \]
  is zero \(\nu\)\nb-almost everywhere on \(r(U)\). Taking limit as
  \(n\to \infty\), the last equation becomes
  \[
    \sum_{\gamma \in G^x}
    \binpro{\xi_x}{\pi(f(\gamma))\xi_{s(\gamma)}}\Delta_\nu(\gamma)^{-\frac{1}{2}}
    = 0
  \]
  for \(\nu\)\nb-almost everywhere \(x\in r(U)\). Since we can cover
  \(G\setminus \bigcup_{x\in \base} G^x_x \) by countably many \(U\)
  with \(r(U) \cap s(U) =\emptyset\), we have proved
  Equation~\eqref{equ:dis:thm:rel:state:field}.  Therefore,
  Equation~\eqref{equ:equality-state:dis-state} becomes
  \[
    \phi(f) =\int_{G^{(0)}} \sum_{\gamma\in G^x_x}
    \binpro{\xi_x}{\pi(f(\gamma))\xi_x}\dd \nu(x) =\int_{G^{(0)}}
    \sum_{\gamma\in G^x_x} \phi_x(f(\gamma)\cdot \delta_\gamma)\dd
    \mu(x)
  \]
  which is Equation~\eqref{eq:states-int-disint-reln}.

  Finally, to show \(\A_x\) contained in the centraliser of \(\phi_x\)
  for \(x\in \base\), let \(h\in \Cc(\base)\).  Since
  \(\Contz(\base; \A|_{\base})\) is contained in centraliser of
  \(\phi\), for \(k \in \Contz(\base; \A|_{\base}) \) and
  \(g \in \Cc(G;\A)\), we have
  \begin{equation}\label{equ:centraliser-cont-in-smaller-state}
    \phi(k*g) = \phi(g*k).
  \end{equation}
  Set \(k=h\cdot f \in \Contz(\base;\A|_{\base})\) for some
  \(f\in \Contz(\base; \A|_{\base})\). Plug in this \(k\) in the both
  side of Equation~\eqref{equ:centraliser-cont-in-smaller-state}; and
  use Equation~\eqref{eq:states-int-disint-reln} to see that
  \[
    \int_{G^{(0)}} \biggr( \sum\limits_{\gamma\in G^x_x}
    \phi_x\big((k*g)(\gamma)\cdot \delta_\gamma\big) \biggr) \dd\mu(x)
    = \int_{G^{(0)}} \biggr( \sum\limits_{\gamma\in G^x_x}
    \phi_x\big((g*k)(\gamma)\cdot \delta_\gamma\big) \biggr) \dd
    \mu(x).
  \]
  Using the definition of \(k\) and linearity of \(\phi_x\) in the
  above line, we have
  \[
    \int_{G^{(0)}} \biggr( h(x)\sum\limits_{\gamma\in G^x_x}
    \phi_x\big((f*g)(\gamma)\cdot \delta_\gamma\big) \biggr) \dd\mu(x)
    = \int_{G^{(0)}} \biggr(h(x) \sum\limits_{\gamma\in G^x_x}
    \phi_x\big((g*f)(\gamma)\cdot \delta_\gamma\big) \biggr) \dd
    \mu(x).
  \]
  Since the last equality holds for all \(h\in \Cc(\base)\), we have
  \begin{equation}\label{equ:dis:int:thm:centraliser}
    \sum_{\gamma \in G^x_x} \phi_x\big((f*g)(\gamma) \cdot \delta_\gamma\big) = \sum_{\gamma \in G^x_x} \phi_x\big((g*f)(\gamma) \cdot \delta_\gamma\big)
  \end{equation}
  for \(\mu\)\nb-almost everywhere \(x\in \base\).  Let \(a\in \A_x\),
  \(\gamma \in G^x_x\) and \(b \in \A_{\gamma}\). Since the bundle
  \(p\colon \A \to G\) has enough sections, choose
  \(f\in \Cc(\base; \A|_{\base})\) such that \(f(x) = a\) and
  \(g\in \Cc(V; \A|_{V})\) such that \(g(\gamma) = b\) where \(V\) is
  a bisection containing \(\gamma\). Then
  Equation~\eqref{equ:dis:int:thm:centraliser} becomes
  \( \phi_x\big((a\cdot \delta_x)( b \cdot \delta_\gamma)\big) =
  \phi_x\big((b \cdot \delta_\gamma) (a\cdot \delta_x)\big).\) Since
  \(\{b\cdot \delta_\gamma\}\) is a spanning element of
  \(\Cst(G^x_x;\A(x))\), we have that \(a\cdot \delta_x\) contained in
  the centraliser of \(\phi_x\) for \(x\in \base\). But \(a\in \A_x\)
  was arbitrary, therefore, \(\A_x\) contained in the centraliser of
  \(\phi_x\) for \(x\in \base\).
	
\end{proof}

\begin{proposition}
  \label{prop:integration-of-states-2}
  Assume the same hypothesis as in
  Proposition~\ref{prop:integration-of-states}. Assume that
  \((\mu, \Phi)\) and \((\nu,\Psi)\) are two disintegrations of the
  state~\(\phi\). Then \(\mu=\nu\) and \(\Phi\) and \(\Psi\) are
  equivalent fields of states.
\end{proposition}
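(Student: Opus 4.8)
The plan is to exploit the defining relation~\eqref{eq:states-int-disint-reln}, satisfied by both disintegrations, and to settle the two assertions in turn: first the uniqueness of the measure, then that of the field. For $f\in\Contc(G;\A)$ I write
\[
  F^\phi_f(x) = \sum_{\gamma\in G^x_x}\phi_x(f(\gamma)\cdot\delta_\gamma) = \phi_x(f|_{G^x_x}),
\]
the second equality holding because $f|_{G^x_x}=\sum_{\gamma\in G^x_x}f(\gamma)\delta_\gamma$ is a finitely supported section of $\A(x)\to G^x_x$ and $\phi_x$ is linear; define $F^\psi_f$ analogously. Each is a bounded measurable function, since $\abs{F^\phi_f(x)}\le\norm{f}_I$, and \eqref{eq:states-int-disint-reln} reads $\phi(f)=\int F^\phi_f\,\dd\mu=\int F^\psi_f\,\dd\nu$.

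\emph{The measures coincide.} Fix a scalar $h\in\Contc(\base)$ and an approximate identity $(u_n)_n$ of $\Contz(\base;\A|_{\base})$ lying in $\Contc(\base;\A|_{\base})$. Since $(h\cdot u_n)|_{G^x_x}=h(x)\,u_n(x)\cdot\delta_x$,
\[
  \phi(h\cdot u_n)=\int_{\base}h(x)\,\phi_x(u_n(x)\cdot\delta_x)\,\dd\mu(x).
\]
By Lemma~\ref{lem:fiberwisw-appr-iden} the net $(u_n(x))_n$ is an approximate identity of $\A_x$, so $(u_n(x)\cdot\delta_x)_n$ is one for $\Cst(G^x_x;\A(x))$ by the argument of Lemma~\ref{lemma:apprx-id-of-bimod}; as $\phi_x$ is a state, $\phi_x(u_n(x)\cdot\delta_x)\to1$ by~\cite[Theorem~3.33]{Murphy-Book}. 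Dominated convergence then gives $\phi(h\cdot u_n)\to\int h\,\dd\mu$, while the identical computation for $(\nu,\Psi)$ gives $\phi(h\cdot u_n)\to\int h\,\dd\nu$. Since the left-hand side is a single fixed sequence, $\int h\,\dd\mu=\int h\,\dd\nu$ for all $h\in\Contc(\base)$, whence $\mu=\nu$ by uniqueness of Radon measures.

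\emph{The fields agree against each test section.} With $\mu=\nu$ both sides of~\eqref{eq:states-int-disint-reln} are integrals over $\mu$. Fix $f\in\Contc(G;\A)$ and $h\in\Contc(\base)$ and apply the relation to $(h\circ r)\cdot f\in\Contc(G;\A)$; since $r(\gamma)=x$ for $\gamma\in G^x_x$ one has $F^\phi_{(h\circ r)f}=h\,F^\phi_f$ and $F^\psi_{(h\circ r)f}=h\,F^\psi_f$, so $\int h\,F^\phi_f\,\dd\mu=\int h\,F^\psi_f\,\dd\mu$ for every $h\in\Contc(\base)$. As $F^\phi_f,F^\psi_f\in L^1(\mu)$, this forces $F^\phi_f=F^\psi_f$ $\mu$-almost everywhere; that is, there is a $\mu$-null set $N_f$ with $\phi_x(f|_{G^x_x})=\psi_x(f|_{G^x_x})$ for $x\notin N_f$.

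\emph{From test sections to states, and the main obstacle.} It remains to replace the $f$-dependent null sets by a single one. Using second countability of $G$ and separability of the bundle, I would fix a countable set $D\subseteq\Contc(G;\A)$ dense in the inductive-limit topology and put $N=\bigcup_{f\in D}N_f$, again $\mu$-null. For $x\notin N$ the states $\phi_x$ and $\psi_x$ agree on $\{f|_{G^x_x}:f\in D\}$, and the decisive point is that this set is dense in $\Cst(G^x_x;\A(x))$. Indeed, the restriction map $\Contc(G;\A)\to\Contc(G^x_x;\A(x))$ is continuous for the inductive-limit topologies and surjective --- given $a\in\Contc(G^x_x;\A(x))$, extend it by zero to $G_x$ and apply the extension construction from the proof of Proposition~\ref{prop:cyclic-vect-for-ind-rep} to produce $f\in\Contc(G;\A)$ with $f|_{G^x_x}=a$ --- so it carries $D$ to a dense subset of $\Contc(G^x_x;\A(x))$, itself dense in $\Cst(G^x_x;\A(x))$. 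Continuity of the norm-one functionals $\phi_x,\psi_x$ then yields $\phi_x=\psi_x$ for all $x\notin N$, so $\Phi$ and $\Psi$ are equivalent. The step requiring most care is precisely this one: the algebras $\Cst(G^x_x;\A(x))$ vary with the base point, so one must ensure a \emph{single} countable family restricts densely on every fibre simultaneously, which is what separability of $\Contz(\base;\A|_{\base})$ and second countability of $G$ guarantee.
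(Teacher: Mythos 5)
Your proof is correct and follows essentially the same route as the paper's: the measure identity is obtained by testing \(\phi\) against \(h\cdot u_n\) with \((u_n)_n\) an approximate identity in \(\Contc(\base;\A|_{\base})\) (the paper's sections \(\tilde f_n\) are exactly your \(h\cdot u_n\)), and the field identity by testing against \((h\circ r)\cdot f\), where the paper instead takes \(f\) on the isotropy bundle \(\textup{Ib}(G)\), extends it by vector-valued Tietze and multiplies by \(g\circ r\) --- your direct version with arbitrary \(f\in \Contc(G;\A)\) is equivalent and skips the extension, since the integrand only sees \(f|_{\textup{Ib}(G)}\) anyway. The one genuine difference is your final paragraph: the paper stops after showing that for each \emph{fixed} \(f\) one has \(\phi_x(f|_{G^x_x})=\psi_x(f|_{G^x_x})\) \(\mu\)-almost everywhere and immediately declares \([\Phi]=[\Psi]\), leaving implicit the passage from \(f\)-dependent null sets to a single null set off which \(\phi_x=\psi_x\) as states on \(\Cst(G^x_x;\A(x))\); your countable inductive-limit-dense family \(D\), together with continuity and fibrewise surjectivity of the restriction map and the resulting norm-density of \(\{f|_{G^x_x}:f\in D\}\), supplies exactly this missing step, and the separability and second-countability you invoke are indeed the paper's standing assumptions. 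So at the decisive point your write-up is, if anything, more complete than the paper's own proof.
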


\begin{proof}
  Let \(\Phi = \{\phi_x\}_{x \in G^{(0)}}\) and
  \(\Psi = \{\psi_x\}_{x\in G^{(0)}}\). Let \((u_n)_{n\in \N}\) be an
  approximate identity for the \(\Contz(G^{(0)}; \A|_{G^{(0)}})\)
  where \(u_n \in \Cc(\base; \A|_{\base})\). Then by
  Lemma~\ref*{lem:fiberwisw-appr-iden}, \((u_n(x))_{n\in \N}\) is an
  approximate identity for \(\A_x\) where \(x\in G^{(0)}\). For each
  \(f\in \Cc(G^{(0)})\), define a sequence of sections
  \(\tilde{f}_n \in \Cc(G;\A)\) as
  \[
    \tilde{f}_n(\gamma) =
    \begin{cases}
      f(x)u_n(x) & \quad \textup{ if } \gamma = x \in G^{(0)},\\
      0 & \quad \textup{ if } \gamma \notin G^{(0)}.
    \end{cases}
  \]
  Since \((\mu, \Phi)\) and \((\nu, \Psi)\) are two disintegrations of
  \(\phi\), Equation~\eqref{eq:states-int-disint-reln} holds for both
  of them. Hence,
  \[
    \int_{G^{(0)}} \biggr( \sum\limits_{\gamma\in G^x_x}
    \phi_x\big(\tilde{f}_n (\gamma)\cdot \delta_\gamma\big) \biggr)
    \textup{d}\mu(x) = \phi(\tilde{f}_n) = \int_{G^{(0)}} \biggr(
    \sum\limits_{\gamma\in G^x_x} \psi_x\big( \tilde{f}_n(\gamma)\cdot
    \delta_\gamma\big) \biggr) \textup{d}\nu(x).
  \]
  As \(\tilde{f_n}\) is supported on \(\base\), the last equation
  becames
  \[
    \int_{G^{(0)}} \phi_x(\tilde{f_n}(x)) \textup{d}\mu(x)=
    \int_{G^{(0)}}\psi_x(\tilde{f_n}(x)) \textup{d}\nu(x).
  \]
  Using the definiton of \(\tilde{f_n}\) in the above equation, we get
  \[
    \int_{G^{(0)}} f(x)\phi_x(u_n(x)) \textup{d}\mu(x)= \int_{G^{(0)}}
    f(x)\psi_x(u_n(x)) \textup{d}\nu(x).
  \]
  Since \(\phi_x(u_n(x)), \psi_x(u_n(x)) \to 1\) as \(n \to \infty \),
  taking limit as \(n \to \infty \) and using the dominated
  convergence theorem the above equation becames
  \[
    \int_{G^{(0)}} f(x) \textup{d}\mu(x)= \int_{G^{(0)}}
    f(x)\textup{d}\nu(x) \quad \textup{for all } f \in \Cc(\base).
  \]
  Thus \(\mu= \nu\).
	
  Let
  \( \text{Ib}(G) \defeq \{\gamma \in G : s(\gamma) = r(\gamma)\} =
  \bigcup_{x\in \base} G^x_x \) be the bundle of isotropy groups. It
  is known that, \( \text{Ib}(G)\) is a closed subgroupoid of
  \(G\). Let \(f\in \Cc( \text{Ib}(G); \A|_{ \text{Ib}(G)})\) and
  \(g\in \Cc(\base)\). Using vector-valued Tietze extension theorem
  (cf.~\cite[Proposition A.5]{Muhly-Williams2008FellBundle-ME}) we
  extend \(f\) to get a section in \(\Cc(G;\A)\). We shall denote this
  extension by \(f\) it self. Define a section \(k\in \Cc(G; \A)\) as
  \( k(\gamma) = g(r(\gamma)) f(\gamma)\) for \(\gamma \in G\). Since
  \(\mu=\nu\) and \((\mu, \Phi) \) and \((\mu, \Psi)\) are two
  disintegration of \(\phi\), we have
  \[
    \int_{G^{(0)}} \biggr( \sum\limits_{\gamma\in G^x_x} \phi_x(k
    (\gamma)\cdot \delta_\gamma) \biggr) \textup{d}\mu(x) =
    \int_{G^{(0)}} \biggr( \sum\limits_{\gamma\in G^x_x} \psi_x(
    k(\gamma)\cdot \delta_\gamma) \biggr) \textup{d}\mu(x).
  \]
  Using the defintion of \(k\) in the above equation, we have
  \[
    \int_{G^{(0)}} \biggr( \sum\limits_{\gamma\in G^x_x}
    \phi_x\big(g(x)f(\gamma) \cdot \delta_\gamma\big) \biggr)
    \textup{d}\mu(x) = \int_{G^{(0)}} \biggr( \sum\limits_{\gamma\in
      G^x_x} \psi_x\big( g(x)f(\gamma)\cdot \delta_\gamma\big) \biggr)
    \textup{d}\mu(x).
  \]
  Since \(g(x)\) is a complex number, the linearilty of \(\phi_x\) and
  \(\psi_x\) ensure that the above equation became
  \[
    \int_{\base} g(x) \big( \phi_x(f|_{G^x_x}) - \psi_x(f|_{G^x_x})
    \big) \dd \mu(x) = 0.
  \]
  Since the above equality holds for all \(g\in \Cc(\base)\), we have
  \(\phi_x(f|_{G^x_x}) = \psi_x(f|_{G^x_x})\) for \(\mu\)\nb-almost
  everywhere \(x\in \base\), where
  \(f\in \Cc( \text{Ib}(G); \A|_{ \text{Ib}(G)})\). Therefore,
  \([\Phi] = [\Psi]\).
\end{proof}

\begin{proposition}
  \label{thm:disint-states}
  Let \(p\colon \A \to G\) be a Fell bundle over a Hausdorff, locally
  compact, second countable, {\'e}tale groupoid \(G\).  Let \(\mu\) be
  a probability measure on the unit space \(G^{(0)}\) and
  \(\Phi= \{\phi_x\}_{x\in G^{(0)}}\) a \(\mu\)\nb-measurable field of
  states on \(\Cst(G^x_x; \A(x))\) with \(\A_x\) contained in
  centraliser of \(\phi_x\). Then there is a state \(\phi\) on the
  \(\Cst\)\nb-algebra \(\Cst(G;\A)\) with centraliser containing
  \(\Contz(G^{(0)};\A|_{G^{(0)}}) \) satisfying the following relation
  \begin{equation}\label{eq:disint-states}
    \phi(f)
    = \int_{G^{(0)}} \biggr( \sum\limits_{\gamma\in G^x_x} \phi_x(f(\gamma)\cdot
    \delta_\gamma) \biggr) \textup{d}\mu(x)
  \end{equation}
  for all \(f\in \Contc(G; \A).\)
\end{proposition}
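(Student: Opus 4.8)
The plan is to \emph{define} $\phi$ by the right-hand side of Equation~\eqref{eq:disint-states} and to recognise its integrand, for each fixed $x$, as a vector state of an induced representation built from $\phi_x$. Concretely, for each $x\in\base$ I would take the GNS triple $(\Hils_x,L_x,\xi_x)$ of the state $\phi_x$, form the induction correspondence $Y(x)\colon\Cst(G;\A)\to\Cst(G^x_x;\A(x))$ of Theorem~\ref{thm:ind-cst-corr}, and consider the induced representation $\IndP(L_x)$ together with its unit cyclic vector $a(x)$ supplied by Lemma~\ref{lem:appx-unit-for-ind-corr} and Proposition~\ref{prop:cyclic-vect-for-ind-rep}. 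The decisive input is Lemma~\ref{lem:for-cyclic-vect}(3), which gives $\binpro{a(x)}{\IndP(L_x)(f)\,a(x)} = \sum_{\gamma\in G^x_x}\phi_x(f(\gamma)\cdot\delta_\gamma)$ for $f\in\Contc(G;\A)$; this identifies the prospective integrand with a genuine vector state pointwise in $x$, so that positivity will come for free.

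First I would check that $\phi(f)\defeq\int_{\base}\bigl(\sum_{\gamma\in G^x_x}\phi_x(f(\gamma)\cdot\delta_\gamma)\bigr)\dd\mu(x)$ makes sense: the integrand is $\mu$\nb-measurable by the hypothesis on $\Phi$, and it is bounded because $f$ has compact support and $G$ is \etale, so only finitely many isotropy arrows over each unit contribute, their number being bounded uniformly over $\supp(f)$; hence it is dominated by a constant and $\mu$\nb-integrable. Positivity is then immediate from the pointwise identity: applying it to $f^**f$ turns the integrand into $\norm{\IndP(L_x)(f)\,a(x)}^2\ge 0$, whence $\phi(f^**f)\ge 0$. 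For boundedness I would use that each $\IndP(L_x)$ is a representation of the $\Cst$\nb-algebra $\Cst(G;\A)$ (Corollary~\ref{cor:ind-rep-associated-with-ind-corr}), hence contractive, together with $\norm{a(x)}=1$, to bound
\[
\phi(f^**f)\le\int_{\base}\norm{\IndP(L_x)(f)}^2\norm{a(x)}^2\,\dd\mu(x)\le\norm{f}^2_{\Cst(G;\A)};
\]
the Cauchy--Schwarz inequality for the positive functional $\phi$ against an approximate identity then yields $\abs{\phi(f)}\le\norm{f}$, so $\phi$ extends to a bounded positive functional on $\Cst(G;\A)$.

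To see that $\phi$ is in fact a \emph{state}, I would evaluate it on an approximate identity $(u_n)$ of $\Cst(G;\A)$ lying in $\Cc(\base;\A|_{\base})$ (Proposition~\ref{prop:exist-app:unit}): since $u_n$ is supported on $\base$, its restriction to $G^x_x$ is $u_n(x)\cdot\delta_x$, which by Lemma~\ref{lem:fiberwisw-appr-iden} together with the argument of Lemma~\ref{lemma:apprx-id-of-bimod} is an approximate identity of $\Cst(G^x_x;\A(x))$; hence $\phi_x(u_n(x)\cdot\delta_x)\to 1$, and dominated convergence gives $\norm{\phi}=\lim_n\phi(u_n)=\int_{\base}1\,\dd\mu=1$. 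Equation~\eqref{eq:disint-states} then holds by construction. For the centraliser claim, take $k\in\Contc(\base;\A|_{\base})$ and $g\in\Contc(G;\A)$; for $\gamma\in G^x_x$ the support of $k$ forces $(k*g)(\gamma)=k(x)g(\gamma)$ and $(g*k)(\gamma)=g(\gamma)k(x)$, so the summands equal $(k(x)\cdot\delta_x)(g(\gamma)\cdot\delta_\gamma)$ and its reverse in $\Cst(G^x_x;\A(x))$. Since $k(x)\in\A_x$ lies in the centraliser of $\phi_x$, their $\phi_x$\nb-values agree; summing over $\gamma$ and integrating gives $\phi(k*g)=\phi(g*k)$, which extends by continuity and density to all $g\in\Cst(G;\A)$ and all $k\in\Contz(\base;\A|_{\base})$.

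I expect that, given how much machinery precedes, this proof is largely an assembly of earlier results, so the genuinely new verifications are the normalisation $\norm{\phi}=1$ and the centraliser identity. The step most likely to need care is the passage from a positive functional on $\Contc(G;\A)$ to a state on the completion: boundedness hinges on the contractivity of $\IndP(L_x)$, whose real content is already packaged in the construction of the correspondence $Y(x)$ in Section~\ref{sec:an-example-corr}, while the normalisation forces the approximate-identity/dominated-convergence argument above. The pointwise identity of Lemma~\ref{lem:for-cyclic-vect}(3) is what lets me avoid building the full direct integral $\int_{\base}^{\oplus}(Y(x)\otimes\Hils_x)\,\dd\mu(x)$ and the attendant measurability of the field of induced representations; conceptually, however, $\phi$ is exactly the vector state of that direct integral along the section $x\mapsto a(x)$.
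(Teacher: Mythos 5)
Your proposal is correct and follows essentially the same route as the paper's proof: both define $\phi$ by integrating the pointwise vector states $\psi_x(f)=\binpro{a(x)}{\IndP(L_x)(f)\,a(x)}=\phi_x(f|_{G^x_x})$ obtained from the induction correspondence $Y(x)$, the cyclic vector $a(x)$ of Lemma~\ref{lem:appx-unit-for-ind-corr}/Proposition~\ref{prop:cyclic-vect-for-ind-rep}, and the key identity of Lemma~\ref{lem:for-cyclic-vect}(3), and both verify the centraliser property by the same computation using $\A_x\subseteq$ centraliser of $\phi_x$. Your only divergence is to spell out explicitly the positivity, boundedness and normalisation $\norm{\phi}=1$ via the approximate identity of Proposition~\ref{prop:exist-app:unit}, details the paper compresses into the observation that each $\psi_x$ is a state and $\mu$ is a probability measure.
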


\begin{proof}
  The idea of the proof is as follows: let \((\pi_x,\Hils_x,\xi_x)\)
  be the GNS triple corresponding to the state \(\phi_x\) of
  \(\Cst(G_x^x;\A(x))\). Then,
  Theorem~\ref{prop:cyclic-vect-for-ind-rep} tells us that the induced
  representation \(\IndP(\pi_x)\), via the induction correspondence
  \(Y(x)\colon \Cst(G; \A)\to \Cst(G_x^x; \A(x))\), is also cyclic
  with a cyclic unit vector~\(a(x)\). Recall the definition of
  \(a(x)\) from Definition and
  Lemma~\ref{lem:appx-unit-for-ind-corr}. Thus,
  \(\big(\IndP(\pi_x), Y(x)\otimes \Hils_x, a(x)\big)\) is a GNS
  triple corresponding to a state \(\psi_x\) on \(\Cst(G;\A)\).  Since
  \(a(x) \in Y(x)\otimes \Hils_x\) is a unit cyclic vector for
  \(\IndP(\pi_x)\), for \(f\in \Cc(G;\A) \subseteq \Cst(G;\A)\),
  \[
    \psi_x(f) = \binpro{a(x)}{\IndP(\pi_x)(f)( a(x))}
  \]
  defines a state on \(\Cst(G;\A)\). Using
  Lemma~\ref{lem:for-cyclic-vect}(3) for
  \(\phi = \phi_x, L= \pi_x, \Hils = \Hils_x \) and \(\xi= \xi_x\), we
  get \( \psi_x(f) = \phi_x(f|_{G_x^x}) \) for \(f\in
  \Cc(G;\A)\). Since \(\{\phi_x\}_{x\in\base[G]}\) is a
  \(\mu\)\nb-measurable field of states, for a \(f\in \Contc(G;\A)\),
  \(x \mapsto \phi_x(f|_{G_x^x})\) is a \(\mu\)\nb-measurable function
  on \(\base[G]\) and the function is also essentially
  bounded. Therefore, \( x\mapsto \psi_x(f) = \phi_x(f|_{G_x^x}) \) is
  a \(\mu\)\nb-integrable function on \(\base[G]\). Since \(\psi_x\)
  is a state, the above function is essentially bounded by
  \(\norm{f}_{\Cst(G;\A)}\) on \(\base[G]\). Thus defining
  \(\phi(f) \defeq \int_{\base} \psi_x(f) \dd \mu (x) \) is justified;
  this arrangment gives us Equation~\eqref{eq:disint-states}. Since
  \(\mu\) is a probability measure on \(\base\), \(\phi\) is clearly a
  state on \(\Cst(G;\A)\).
	
  To show \(\Contz(\base; \A|_{\base}) \subseteq \) centraliser of
  \(\phi\), let \(f\in \Contz(G^{(0)};\A|_{G^{(0)}}) \) and
  \(g\in \Contc(G;\A)\). Then
  \begin{multline*}
    \psi_x(f*g)= \phi_x\big((f*g)|_{G^x_x}\big) =
    \phi_x\biggr(\sum_{\gamma\in G_x^x} f*g(\gamma) \delta_\gamma
    \biggr) = \sum_{\gamma\in G^x_x}
    \phi_x\big((f*g)(\gamma)\delta_\gamma\big)\\
    = \sum_{\gamma\in G^x_x}
    \phi_x\big(f(x)g(\gamma)\delta_\gamma\big) = \sum_{\gamma\in
      G^x_x} \phi_x\big(f(x)\delta_x \cdot
    g(\gamma)\delta_\gamma\big).
  \end{multline*}
  Since \(\A_x\) is contained in centraliser of \(\phi_x\), the last
  term of the above equation equals
  \[
    \sum_{\gamma\in G^x_x} \phi_x\big(g(\gamma)\delta_\gamma \cdot
    f(x)\delta_x\big) = \sum_{\gamma\in G^x_x}
    \phi_x\big((g*f)(\gamma)\delta_\gamma\big) =
    \phi_x\big((g*f)|_{G^x_x}\big) = \psi_x(g*f).
  \]
  The density of \(\Cc(\base; \A|_{\base}) \) in
  \( \Contz(\base; \A|_{\base})\) implies that
  \( \Contz(\base; \A|_{\base}) \subseteq \) centraliser of
  \(\psi_x\).
	
\end{proof}

\begin{lemma}\label{lem:disint-states}
  Assume the same hypothesis as in
  Proposition~\ref{thm:disint-states}. Let
  \(\Psi=\{\psi_x\}_{x\in \base[G]}\) be a \(\mu\)\nb-measurable field
  of states equivalent to \(\Phi = \{\phi_x\}_{x\in \base}\). Then
  \((\mu,\Phi)\) and \((\mu, \Psi)\) induce the same states on
  \(\Cst(G;\A)\).
\end{lemma}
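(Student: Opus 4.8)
The plan is to invoke Equation~\eqref{eq:disint-states} directly. Write \(\phi\) and \(\phi'\) for the states on \(\Cst(G;\A)\) induced by \((\mu,\Phi)\) and \((\mu,\Psi)\), respectively, both furnished by Proposition~\ref{thm:disint-states}. Since states are norm-continuous and \(\Contc(G;\A)\) is dense in \(\Cst(G;\A)\), it suffices to prove \(\phi(f) = \phi'(f)\) for every \(f \in \Contc(G;\A)\); the equality on all of \(\Cst(G;\A)\) then follows by continuity.

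The heart of the argument is the passage from almost-everywhere equality of the fields to equality of the integrals. By Definition~\ref{def:mbl-field-state}(2), the equivalence \(\Psi \sim \Phi\) provides a single \(\mu\)\nb-null set \(N \subseteq \base\) such that \(\phi_x = \psi_x\) as states on \(\Cst(G^x_x;\A(x))\) for every \(x \in \base \setminus N\). The crucial point is that \(N\) serves all test functions simultaneously: for \(x \notin N\) the two states agree on the \emph{entire} fibre algebra \(\Cst(G^x_x;\A(x))\), hence in particular \(\phi_x(f(\gamma)\delta_\gamma) = \psi_x(f(\gamma)\delta_\gamma)\) for every \(\gamma \in G^x_x\). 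Summing over the finite set \(\{\gamma \in G^x_x : f(\gamma) \neq 0\}\), which is finite because \(G^x_x\) is discrete and \(f\) has compact support, yields
\[
  \sum_{\gamma \in G^x_x} \phi_x(f(\gamma)\cdot\delta_\gamma) = \sum_{\gamma \in G^x_x} \psi_x(f(\gamma)\cdot\delta_\gamma)
\]
for all \(x \in \base \setminus N\) and every fixed \(f \in \Contc(G;\A)\).

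Consequently the two integrands appearing in Equation~\eqref{eq:disint-states} coincide \(\mu\)\nb-almost everywhere, so their integrals against \(\mu\) are equal; that is, \(\phi(f) = \phi'(f)\) for all \(f \in \Contc(G;\A)\), and the lemma follows by the density argument above. There is no serious obstacle here: the only point requiring a moment's care is that the exceptional null set \(N\) is independent of \(f\), which is automatic from the definition of equivalence of fields---equality of \(\phi_x\) and \(\psi_x\) as \emph{states}, i.e.\ on the whole fibre algebra, rather than merely on the particular elements \(f(\gamma)\delta_\gamma\). The measurability and integrability of both integrands are already guaranteed, for each of \(\Phi\) and \(\Psi\), by the hypotheses inherited from Proposition~\ref{thm:disint-states}.
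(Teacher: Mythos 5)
Your proof is correct and takes essentially the same route as the paper's: both fix \(f\in\Contc(G;\A)\), use the \(\mu\)\nb-almost-everywhere equality \(\phi_x=\psi_x\) (as states on the whole fibre algebra \(\Cst(G^x_x;\A(x))\), so a single null set serves all \(f\)) to identify the integrands in Equation~\eqref{eq:disint-states}, and conclude by density of \(\Contc(G;\A)\) in \(\Cst(G;\A)\). Your extra remarks---finiteness of the sum over \(G^x_x\) and the \(f\)-independence of the exceptional null set---only make explicit points the paper leaves implicit.
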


\begin{proof}
  Since \(\Psi\) and \(\Phi\) are equivalent, we have
  \(\psi_x =\phi_x\) for \(\mu\)\nb-almost everywhere
  \(x\in G^{(0)}\). By Proposition~\ref{thm:disint-states}, let
  \(\psi\) and \(\phi\) be the integrated states corresponding to the
  pairs \((\mu, \Psi)\) and \((\mu, \Phi)\), respectively. Then by
  Equation~\eqref{eq:disint-states}, we have
  \begin{multline*}
    \phi(f) = \int_{G^{(0)}} \biggr( \sum\limits_{\gamma\in G^x_x}
    \phi_x(f(\gamma)\cdot
    \delta_\gamma) \biggr) \textup{d}\mu(x) = \int_{\base} \phi_x(f|_{G^x_x}) \dd \mu(x)\\
    = \int_{\base} \psi_x(f|_{G^x_x}) \dd \mu(x)\ = \int_{G^{(0)}}
    \biggr( \sum\limits_{\gamma\in G^x_x} \psi_x(f(\gamma)\cdot
    \delta_\gamma) \biggr) \textup{d}\mu(x) = \psi(f)
  \end{multline*}
  for \(f \in \Cc(G;\A)\). Since \(\Cc(G;\A)\) is dense in
  \(\Cst(G;\A)\), we have \(\phi = \psi\).
\end{proof}

\begin{theorem}
  \label{prop:bij-betwen-states-and-fields-of-states}
  Let \(p\colon \A \to G\) be a Fell bundle over a Hausdorff, locally
  compact, second countable, {\'e}tale groupoid \(G\).
  \begin{enumerate}
  \item A state \(\psi\) on \(\Cst(G;\A)\), with centraliser
    containing \(\Contz(G^{(0)};\A|_{\base})\), \emph{disintegrates}
    into a pair \((\mu, [\Psi])\) where \(\mu\)\nb-is a probability
    measure; \(\Psi = \{\psi_x\}_{x \in \base[G]}\) is a
    \(\mu\)\nb-measurable field of states on \(\Cst(G^x_x; \A(x))\)
    with \(\A_x \subseteq\) centraliser \(\psi_x\); and \([\Psi]\) is
    the equivalence class of~\(\Psi\). The relation between \(\psi\)
    and the pair \( (\mu, [\Psi])\) is given by
    Equation~\eqref{eq:states-int-disint-reln} where
    \(\Phi\in [\Psi]\).
  \item Conversely, a pair \((\mu, [\Psi])\) consisting of a
    probability measure~\(\mu\); a \(\mu\)\nb-measurable field of
    states \(\Psi = \{\psi_x\}_{x \in \base[G]}\) where \(\psi_x\) is
    a state on~\(\Cst(G^x_x; \A)\) with \(\A_x \subseteq\) centraliser
    \(\psi_x\); and \([\Psi]\) is the equivalence class of~\(\Psi\),
    \emph{integrates} to a state~\(\psi\) on \(\Cst(G;\A)\) whose
    centraliser contains \(\Contz(\base;\A|_{\base[G]})\). The state
    \(\psi\) and the pair \((\mu, [\Psi])\) are related by
    Equation~\eqref{eq:disint-states} for \(\Phi\in [\Psi]\).
  \item Assume that a state \(\psi\) on \(\Cst(G;\A)\) disintegrates
    to \((\mu,[\Psi])\) as in~(1). Then~\(\psi\) is the integrated
    state corresponding to~\((\mu,[\Psi])\).
  \item Conversely, assume that a pair \((\mu, [\Psi])\) integrates to
    the state~\(\psi\) on \(\Cst(G;\A)\) as in~(2) above. Then
    \((\mu,[\Psi])\) is the disintegration of \(\psi\).
  \end{enumerate}
  In short, the processes~(1) and~(2) above are inverses of each
  other.
\end{theorem}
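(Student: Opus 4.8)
The plan is to package the four constituent results already proved in this section into a single bijection statement, so that parts~(3) and~(4) reduce to verifying that the two constructions undo one another. Part~(1) is precisely the content of Proposition~\ref{prop:integration-of-states}, which produces the probability measure~\(\mu\) together with a representative field \(\Phi = \{\phi_x\}\) satisfying Equation~\eqref{eq:states-int-disint-reln}, combined with Proposition~\ref{prop:integration-of-states-2}, which guarantees that any two such disintegrations agree in~\(\mu\) and lie in a common equivalence class \([\Psi]\); thus the assignment \(\psi \mapsto (\mu, [\Psi])\) is well defined. Dually, part~(2) is Proposition~\ref{thm:disint-states}, which integrates a given pair \((\mu, \Phi)\) to a state satisfying Equation~\eqref{eq:disint-states}, together with Lemma~\ref{lem:disint-states}, which shows the integrated state depends only on the class \([\Phi]\); this makes the assignment \((\mu, [\Psi]) \mapsto \psi\) well defined.

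For part~(3), I would start from a state \(\psi\) satisfying the centraliser hypothesis, disintegrate it by~(1) to obtain \((\mu, [\Psi])\) with a representative \(\Phi \in [\Psi]\), and then integrate this pair by~(2) to a state \(\phi\). The crucial observation is that Equations~\eqref{eq:states-int-disint-reln} and~\eqref{eq:disint-states} are literally the same formula: both assert
\[
  \phi(f) = \int_{G^{(0)}} \Bigl( \sum_{\gamma \in G^x_x} \phi_x(f(\gamma)\cdot \delta_\gamma) \Bigr) \dd\mu(x)
\]
for \(f \in \Contc(G;\A)\). Hence \(\phi(f) = \psi(f)\) for every \(f \in \Contc(G;\A)\), and since \(\Contc(G;\A)\) is dense in \(\Cst(G;\A)\) and both functionals are bounded, \(\phi = \psi\). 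So integrating the disintegration recovers the original state.

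For part~(4), I would begin with a pair \((\mu, [\Psi])\), integrate it by~(2) to a state \(\psi\), and then disintegrate \(\psi\) by~(1) to a pair \((\mu', [\Psi'])\). Because \(\psi\) was constructed to satisfy Equation~\eqref{eq:disint-states}, the chosen representative \((\mu, \Psi)\) is itself a disintegration of \(\psi\) in the sense of Proposition~\ref{prop:integration-of-states}; and the pair \((\mu', \Psi')\) produced in part~(1) is another disintegration of the \emph{same} state \(\psi\). Applying the uniqueness statement Proposition~\ref{prop:integration-of-states-2} to these two disintegrations forces \(\mu' = \mu\) and \([\Psi'] = [\Psi]\), so disintegrating the integral recovers the original pair.

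The main point to get right is not any hard estimate but the bookkeeping across the two directions: one must confirm that the centraliser conditions are compatible, namely that a state whose centraliser contains \(\Contz(G^{(0)};\A|_{G^{(0)}})\) disintegrates into a field whose members \(\phi_x\) each have \(\A_x\) in their centraliser (established in Proposition~\ref{prop:integration-of-states}), and conversely that such a field integrates to a state of the required type (Proposition~\ref{thm:disint-states}), so that the output of~(1) is a legitimate input for~(2) and vice versa. Once this matching is in place, parts~(3) and~(4) follow formally from the identity of the two integration formulas together with density (for~(3)) and the uniqueness of the disintegration (for~(4)), and the closing assertion that the two processes are mutually inverse is simply the conjunction of~(3) and~(4).
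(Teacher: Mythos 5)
Your proposal is correct and follows essentially the same route as the paper: parts~(1) and~(2) are assembled from Propositions~\ref{prop:integration-of-states}, \ref{prop:integration-of-states-2}, \ref{thm:disint-states} and Lemma~\ref{lem:disint-states}, part~(3) from the coincidence of Equations~\eqref{eq:states-int-disint-reln} and~\eqref{eq:disint-states} together with density of \(\Contc(G;\A)\), and part~(4) from the uniqueness of disintegrations. Your explicit remark that Proposition~\ref{prop:integration-of-states-2} applies to \emph{any} pair satisfying the integral identity (not merely pairs arising from the GNS/Muhly--Williams construction) is precisely the bookkeeping point the paper's proof of~(4) relies on.
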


\begin{proof}
  \noindent (1): The proof follows from
  Proposition~\ref{prop:integration-of-states} and
  Proposition~\ref{prop:integration-of-states-2}.
	
  \noindent (2): It follows from Theorem~\ref{thm:disint-states} and
  Lemma~\ref{lem:disint-states}.
	
  \noindent (3): Let a state \(\psi\) on \(\Cst(G;\A)\) disintegrate
  to \((\mu, [\Psi])\), then \(\psi\) and \([\Psi]\) are related by
  the Equation~\eqref{eq:states-int-disint-reln} for
  \(\Phi\in [\Psi]\). Then Lemma~\ref{lem:disint-states} says that
  \(\psi\) is the integrated state of \((\mu, [\Psi])\).
	
  \noindent (4): Let \(\psi\) be the integrated state of a pair
  \((\mu, [\Psi])\), then by Equation~\eqref{eq:disint-states} we have
  \[
    \psi(f) = \int_{G^{(0)}} \biggr( \sum\limits_{\gamma\in G^x_x}
    \psi_x(f(\gamma)\cdot \delta_\gamma) \biggr) \textup{d}\mu(x).
  \]
  Since \(\psi\) is a state on \(\Cst(G;\A)\) by
  Proposition~\ref{prop:integration-of-states}, there is a pair
  \((\mu^{\prime}, \{\psi^{\prime}_x\}_{x\in G^{(0)}})\) satisfying
  Equation~\eqref{eq:states-int-disint-reln}. Then
  Proposition~\ref{prop:integration-of-states-2}, gives us
  \(\mu=\mu^{\prime}\) and
  \(\{\psi^{\prime}_x\}_{x\in G^{(0)}} \in [\Psi]\). Therefore,
  \((\mu, [\Psi])\) is the disintegration of \(\psi\).
\end{proof}

\section{A characterisation of KMS states}
\label{sec:main-result-KMS-state}

\subsection{KMS states}
Let \((A, \mathbb{R}, \tau)\) be a \(\Cst\)-dynamical system. An
element \(a\in A\) is called analytic if the map
\(t\mapsto \tau_t(a)\) is restriction of an analytic function from
\(\mathbb{C}\) to \(A\). By \cite[Proposition
2.5.22]{Bratteli-Robinson1979Oper-alg-Quan-sta-mecha-part-1} says that
the set of all analytic element of \(A\) form a norm dense
\(^*\)\nb-subalgebra of \(A\).
\begin{definition}[{KMS state
    \cite[Definition~5.3.1]{Bratteli-Robinson1981Oper-alg-Quan-sta-mech-part-2}}]
  Let \((A,\mathbb{R}, \tau)\) ba a \(\Cst\)-dynamical system. For
  \(\beta \in \mathbb{R}\), a state \(\phi\) on \(A\) is called
  \(\KMS\) state (or KMS state at inverse temperature \(\beta\)) if
  \(\phi(ab) = \phi(b \tau_{\mathrm{i} \beta} (a))\) for all analytic
  elements \(a, b\) in \(A\).
\end{definition}
By
\cite[Proposition~5.3.3]{Bratteli-Robinson1981Oper-alg-Quan-sta-mech-part-2}
\(\KMS\) states for \(\beta \neq 0\) is \(\tau\) invariant in the
sense \(\phi(\tau_t(a)) = \phi(a)\) for all \(a \in A\) and for all
\(t \in \mathbb{R}\).

Let \(c\colon G\to \R\) be a 1\nb-cocycle, that is, a continuous
groupoid homomorphism. Such a cocycle induces a real dynamical system
\(\sigma \colon \R \to \textup{Aut}(\Cst(G;\A))\) which is defined on
the pre-\(\Cst\)\nb-algebra \(\Contc(G;\A)\) as
\begin{equation}\label{equ:real-dyna}
  \sigma_t(f)(\gamma)= \mathrm{e}^{\mathrm{i}tc(\gamma)}f(\gamma)
\end{equation}
where \(t\in \R, \gamma \in G\) and
\(f \in \Contc(G;\A)
\)~(\cite[Page~233]{Afsar-Sims2021KMS-states-on-Fell-bundle-Cst-alg}).

We fix a notation and mention some observations. Let~\(U\) be a
bisection. For \(x\in G^{(0)}\), let \(u^x\) denote the unique element
in \(r^{-1}(x)\cap U\), and by \(U^x\) we denote the singleton
\(\{u^x\}\); define the singleton set \(U_x= s^{-1}(x)\cap U\)
containing \(u_x\) similarly. The maps \(r(U) \to U, x\mapsto u^x\)
and \(s(U) \to U, x\mapsto u_x\) are homeomorphisms; they are the
inverses of the restrictions of the range and source maps,
respectively, to~\(U\). Let \(T_U\colon r(U)\to s(U)\) be the
homeomorphism \(T_U \colon x \mapsto s(u^x)\). Thus we note that
\(u^x = u_{T_U(x)}\) for \(x\in r(U)\).

\begin{obs}
  \label{obs:relation-betw-isotro}
  It can be easily chicked that for a bisection \(U\) and
  \(x\in r(U)\) the isotropies at \(T_U(x)\) and \(x\) are related as
  \[
    G^{T_U(x)}_{T_U(x)} = (u^x)\inverse G^x_x u^x.
  \]
\end{obs}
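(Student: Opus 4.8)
The plan is to exhibit the claimed equality as conjugation by the single connecting arrow~\(u^x\). First I would record what~\(u^x\) is: by the definitions preceding the statement, \(u^x\) is the unique element of \(r\inverse(x)\cap U\), so \(r(u^x)=x\), and \(T_U(x)=s(u^x)\). Thus \(u^x\in G^x_{T_U(x)}\) and \((u^x)\inverse\in G^{T_U(x)}_x\). This is the only input needed; everything else is bookkeeping with the groupoid structure.

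Next I would check the inclusion \((u^x)\inverse G^x_x\,u^x\subseteq G^{T_U(x)}_{T_U(x)}\). Given \(\gamma\in G^x_x\), the triple product \((u^x)\inverse\gamma\,u^x\) is composable: \((u^x)\inverse\gamma\) is defined because \(s((u^x)\inverse)=x=r(\gamma)\), and the further multiplication on the right by~\(u^x\) is defined because \(s(\gamma)=x=r(u^x)\). Tracking range and source through the composition, \(r((u^x)\inverse\gamma\,u^x)=r((u^x)\inverse)=T_U(x)\) and \(s((u^x)\inverse\gamma\,u^x)=s(u^x)=T_U(x)\), so the product lies in the isotropy at \(T_U(x)\).

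For the reverse inclusion I would run the same computation with \(u^x\) and \((u^x)\inverse\) interchanged: for \(\eta\in G^{T_U(x)}_{T_U(x)}\) the product \(u^x\,\eta\,(u^x)\inverse\) is composable and lands in \(G^x_x\). Finally I would observe that the two maps \(\gamma\mapsto (u^x)\inverse\gamma\,u^x\) and \(\eta\mapsto u^x\,\eta\,(u^x)\inverse\) are mutually inverse, using associativity together with \((u^x)\inverse u^x=s(u^x)=T_U(x)\) and \(u^x(u^x)\inverse=r(u^x)=x\), which act as identities on the relevant isotropy groups. This gives the stated equality \(G^{T_U(x)}_{T_U(x)}=(u^x)\inverse G^x_x\,u^x\).

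I do not expect any genuine obstacle: the statement is exactly the assertion that the isotropy groups at two points joined by an arrow are conjugate via that arrow, and the only care required is the repeated verification of composability and of the values of \(r\) and \(s\) at each multiplication. For this reason the author's own phrasing that the identity ``can be easily checked'' is accurate, and the proof is a short direct verification rather than an argument requiring the earlier machinery of the paper.
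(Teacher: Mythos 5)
Your verification is correct, and it is exactly the routine conjugation argument the paper intends: the paper offers no written proof, stating only that the identity ``can be easily checked,'' and your proposal supplies precisely that check (composability of \((u^x)\inverse\gamma u^x\), tracking of \(r\) and \(s\), and the mutually inverse conjugation maps via \(u^x(u^x)\inverse = x\) and \((u^x)\inverse u^x = T_U(x)\)). No gap and no divergence from the paper's approach.
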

If \(f\in \Contc(U;\A|_{U})\) and \(g\in \Contc(G; \A)\), then
\begin{eqnarray}\label{eq:kms-thm-comp-1}
  f*g(\gamma) &=&\sum_{\alpha \tau =\gamma} f(\alpha) g(\tau) \nonumber\\
              &=& \begin{cases}
                f(u^{r(\gamma)})g({(u^{r(\gamma)}})^{-1}\gamma) & \quad \textup{ if }  r(\gamma) \in r(U),\\
                0 & \quad \textup{ if } r(\gamma) \notin r(U).
              \end{cases}
\end{eqnarray}
Recall the real dynamics from Equation~\eqref{equ:real-dyna} and a
similar computation as above gives us
\begin{eqnarray}\label{eq:kms-thm-comp-2}
  g*\sigma_{\mi\beta}(f)(\gamma)&=&  \sum_{\alpha \tau = \gamma} g(\alpha) \sigma_{\mi\beta} (f) (\tau) =  \sum_{\alpha \tau =\gamma} g(\alpha)\,
                                    \mathrm{e}^{- \beta c(\tau)} f(\tau)  \nonumber\\
                                &=& \begin{cases}
                                  \e^{- \beta c(u_{s(\gamma)})} g(\gamma u_{s(\gamma)}^{-1})f(u_{s(\gamma)}) & \quad \textup{ if } s(\gamma) \in s(U),\\
                                  0 & \quad \textup{ if } s(\gamma) \not \in s(U).
                                \end{cases}
\end{eqnarray}
If \(\{\phi_x\}_{x\in \base}\) is a \(\mu\)\nb-measurable field of
states, then
\[
  x\mapsto \phi_x \left( g*\sigma_{\mathrm{i}\beta}(f)\right) =
  \phi_x\biggr(\mathrm{e}^{-\beta c(u_x)} \sum_{\gamma\in G_x^x }
  (g*f) (\gamma) \biggr) = \mathrm{e}^{-\beta
    c(u_x)}\phi_x\left((g*f)|_{G_x^x} \right)
\] is a measurable complex valued function on \(\base[G]\)
\emph{supported} in \(s(U)\).

We shall break the proof of Theorem~\ref{thm:KMS-state}, which is the
main result of this section, into succeeding lemmas.  Fix a Fell
bundle \(p\colon \A \to G\) over a Hausdorff, locally compact, second
countable, {\'e}tale groupoid \(G\) and \(\sigma\) a real dynamics on
\(\Cst(G;\A)\) induced by a real-valued \(1\)\nb-cocycle \(c\).
\begin{remark}\label{rem:dis-int-kms-state}
  If \(\phi\) be a KMS states on \((\Cst(G;\A), \R, \sigma)\), then
  the KMS condition forces that the centraliser of \(\phi\) contains
  \(\Contz(\base; \A|_{\base})\). Then by
  Theorem~\ref{prop:bij-betwen-states-and-fields-of-states}(1),
  \(\phi\) disintegrates to a pair \((\mu, [\Phi])\) consisting of a
  probability measure \(\mu\) on \(G^{(0)}\) and a
  \(\mu\)\nb-measurable field \( \Phi =\{\phi_x\}_{x\in G^{(0)}}\) of
  states as in
  Theorem~\ref{prop:bij-betwen-states-and-fields-of-states}. Moreover,
  the \KMS\ state \(\phi\) is given by
  Equation~\eqref{eq:disint-states}.
\end{remark}
	
\begin{lemma}
  \label{lem:kms-state-implies-quasi}
  Let \((\mu, [\Phi])\) be a disintegration of a \KMS\ state \(\phi\)
  on \(\Cst(G;\A)\) as observed in
  Remark~\ref{rem:dis-int-kms-state}. Then the probability measure
  \(\mu\) on \(\base\) is a quasi-invariant with Radon--Nikodym
  derivative \(\mathrm{e}^{-\beta c}\).
\end{lemma}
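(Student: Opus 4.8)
The plan is to verify the quasi\nb-invariance criterion~\eqref{eq:quasi-inv-etale} head\nb-on, using the KMS condition to compare its two sides. By Remark~\ref{remk:quasi-inv-positive-fun} it is enough to produce, for every bisection~\(U\) and every nonnegative \(k\in\Contc(s(U))\), the identity
\[
  \int_{r(U)} k(T_U(x))\,\dd\mu(x) = \int_{s(U)} k(x)\,\e^{-\beta c(u_x)}\,\dd\mu(x).
\]
Since both convolutions appearing below will be supported on~\(\base\), no restriction on \(r(U)\cap s(U)\) is needed, and on the unit space the relation is automatic because \(T_U\) and \(c\) are trivial there.

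First I would feed sections supported on a bisection into the KMS condition. Fix \(f\in\Contc(U;\A|_U)\) and apply \(\phi(f*f^*)=\phi(f^**\sigma_{\mi\beta}(f))\). By the convolution formula~\eqref{eq:kms-thm-comp-1}, \(f*f^*\) is supported on \(r(U)\subseteq\base\) with \((f*f^*)(x)=f(u^x)f(u^x)^*\), and by~\eqref{eq:kms-thm-comp-2}, \(f^**\sigma_{\mi\beta}(f)\) is supported on \(s(U)\subseteq\base\) with value \(\e^{-\beta c(u_x)}f(u_x)^*f(u_x)\). Substituting both into the disintegration formula~\eqref{eq:disint-states} collapses each sum over \(G_x^x\) to its value at the unit, giving
\begin{equation*}
  \int_{r(U)}\phi_x\bigl(f(u^x)f(u^x)^*\,\delta_x\bigr)\,\dd\mu(x)
  = \int_{s(U)}\e^{-\beta c(u_x)}\,\phi_x\bigl(f(u_x)^*f(u_x)\,\delta_x\bigr)\,\dd\mu(x).
  \tag{\(\dagger\)}
\end{equation*}

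To turn \((\dagger)\) into the scalar identity above I would make the state factors converge to~\(k\). As \(\A\) is saturated, \(\A_{u_x}\) is an \(\A_{r(u_x)}\)\nb-\(\A_{s(u_x)}\)\nb-imprimitivity bimodule, so by~\cite[Lemma~7.3]{Lance1995Hilbert-modules} (used already in Lemma~\ref{lem:inpro-positive}) one may choose, measurably in~\(x\), finite sums realising approximate units. Taking \(f\) with \(f(u_x)=k(x)^{1/2}v^x_\lambda\) where \((v^x_\lambda)^*v^x_\lambda\to 1_{\A_{s(u_x)}}\) forces the right\nb-hand integrand of \((\dagger)\) to tend to \(k(x)\,\e^{-\beta c(u_x)}\), because an approximate unit of \(\Cst(G^x_x;\A(x))\) as in Lemma~\ref{lemma:apprx-id-of-bimod} evaluates to~\(1\) under the state \(\phi_x\). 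Recalling \(u^x=u_{T_U(x)}\), the left\nb-hand integrand becomes \(k(T_U(x))\,\phi_x\bigl(v^{T_U(x)}_\lambda(v^{T_U(x)}_\lambda)^*\delta_x\bigr)\); a symmetric use of the companion identity \(\phi(f^**f)=\phi(f*\sigma_{\mi\beta}(f^*))\), combined with dominated convergence (the integrands are bounded by \(\norm{f}_\infty^2\)), should let me pass to the limit and cover \(G\) by countably many such bisections.

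The step I expect to be the main obstacle is exactly the asymmetry exhibited by~\((\dagger)\): the left side carries the \emph{left} inner product \(f(u^x)f(u^x)^*\in\A_{r(u_x)}\), while the right side carries the \emph{right} inner product \(f(u_x)^*f(u_x)\in\A_{s(u_x)}\). Because the fibres are \emph{not} singly generated, one cannot choose a single net of vectors in \(\A_{u_x}\) whose left \emph{and} right inner products simultaneously approximate the two identities, so normalising one end leaves the other end contributing an uncontrolled positive element under \(\phi_x\). Reconciling the two ends—rather than one at a time—is the delicate point that replaces the unital argument available to Afsar and Sims, and it is here that saturation, the measurable choice of imprimitivity\nb-bimodule approximate units, and the interplay of the two KMS identities must be used with care.
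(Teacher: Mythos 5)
Your set-up is exactly the paper's: the paper also fixes a bisection \(U\), feeds a section supported on \(U\) and its adjoint into the KMS identity, and evaluates through the disintegration formula~\eqref{eq:disint-states}; your identity \((\dagger)\) is literally the first display of the paper's proof (the paper's intermediate factor \(\e^{\beta c(u_x)}\) there is a sign typo, and its final form agrees with yours). The gap is the step you yourself flag, and it is genuine — the normalisation \(f(u_x)=k(x)^{1/2}v^x_\lambda\) fails on two counts. First, \cite[Lemma~7.3]{Lance1995Hilbert-modules} produces approximate units that are \emph{finite sums} \(\sum_i\inpro{\eta_i}{\eta_i}\) of inner products, not single vectors \(v\) with \(v^*v\) close to an approximate unit: for a single \(v\in\A_{u_x}\) the element \(v^*v\) is in general far from any approximate unit (in the \(\Comp\)\nb-\(\C\) imprimitivity bimodule \(\ell^2\), one-sided inner products are rank-one), and a fibrewise choice of such vectors need not assemble into a continuous compactly supported section of \(\A|_U\). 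Second — your main worry — even after summing the KMS identities over a finite family with \(\sum_i v_i^*v_i\approx k(x)\cdot 1\), the left integrand carries \(\phi_x\bigl(\sum_i v_iv_i^*\cdot\delta_x\bigr)\), and \(\sum_i v_iv_i^*\) is then merely some norm-controlled positive element (a ``density matrix'' in the \(\Comp\)\nb-\(\C\) picture) whose value under \(\phi_x\) is unrelated to \(1\); the companion identity \(\phi(f^**f)=\phi\bigl(f*\sigma_{\mi\beta}(f^*)\bigr)\) produces a second equation of the same asymmetric shape rather than cancelling the defect. Note also that you cannot invoke the trace-type Condition~\eqref{Cond:II} to move \(v_i\) across \(\phi_x\), since in the paper's logical order that condition is only established in Lemma~\ref{lem:kms-state-implies-trace-cond}, \emph{after} (and using) the present quasi-invariance lemma. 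So the proposal does not close, as your final paragraph essentially concedes.

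The device the paper uses, and which is the single missing idea in your attempt, is to avoid fibrewise normalisation altogether: it takes an approximate identity \((f_n)\subseteq\Cc(\base;\A|_{\base})\) of the full algebra \(\Cst(G;\A)\) (Proposition~\ref{prop:exist-app:unit}), lets \(g_n\) be the square root of \(f_n\), and tests the KMS condition against \(h_n(\gamma)=g_n(\gamma)q(s(\gamma))\) with \(q\in\Cc(s(U))\). Because the \emph{same} \(g_n\) occupies both slots, both convolutions collapse to the single positive section \(f_n\): the computation uses \((g_n*g_n^*)(x)=f_n(x)\) on \(r(U)\) and \((g_n^**g_n)(x)=f_n(x)\) on \(s(U)\), while the scalar weight comes out with the correct arguments automatically, \(q(s(u^x))=q(T_U(x))\) on the left and \(q(s(u_x))=q(x)\) on the right. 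Since \(\phi_x(f_n(x)\cdot\delta_x)\to\norm{\phi_x}=1\) (Lemmas~\ref{lem:fiberwisw-appr-iden} and~\ref{lemma:apprx-id-of-bimod}, states attaining their norm along approximate identities), dominated convergence yields \(\int_{r(U)}\abs{q(T_U(x))}^2\dd\mu(x)=\int_{s(U)}\e^{-\beta c(u_x)}\abs{q(x)}^2\dd\mu(x)\), and Remark~\ref{remk:quasi-inv-positive-fun} finishes. Your diagnosis of the asymmetry is, in fact, a fair criticism of how this is written: read literally, \(g_n\) is supported on \(\base\), so for the construction to make sense on \(U\) one needs \(g_n\) realised as a section over the bisection that is a \emph{simultaneous} two-sided square root of \(f_n\) on both legs, \(g_ng_n^*=f_n|_{r(U)}\) and \(g_n^*g_n=f_n|_{s(U)}\) — precisely the reconciliation of the two ends you identified as the delicate point. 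That simultaneous square root along \(U\), not a state-by-state normalisation, is what replaces the unital (``singly generated'') argument of Afsar and Sims.
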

	
	\begin{proof}
          Fix a bisection \(U\). Let \(a\in \Cc(U;\A|_U)\) and
          \(b\in \Cc(G;\A)\). Since \(\phi\) is a \KMS\ state, we have
          \begin{equation}\label{equ:kms-quasi-inv}
            \phi(a*b)=\phi\big(b*\sigma_{i\beta}(a)\big).
          \end{equation}
          Let \((f_n)_{n\in \N}\) be an approximate identity for
          \(\Cst(G;\A)\) where \(f_n\in \Cc(G;\A)\) for \(n\in \N\)
          (see~Proposition~\ref{prop:exist-app:unit}) and
          \(q \in \Cc(s(U))\). Let \(g_n\) be the square root of
          \(f_n\) for \(n\in \N\). Define the following continuous
          compactly supported sections \(h_n\colon U\to \A \) by
          \(h_n(\gamma) = g_n(\gamma) q(s(\gamma))\).  Let
          \(\Phi = \{\phi_x\}_{x\in \base} \). To compute both sides
          of Equation~\eqref{equ:kms-quasi-inv}, we use
          Equation~\eqref{eq:disint-states}. Using
          Equations~\eqref{eq:kms-thm-comp-1}
          and~\eqref{eq:kms-thm-comp-2}, and substituting
          \(a = h_n, b = h^*_n\) in
          Equation~\eqref{equ:kms-quasi-inv}, we get
          \[
            \int_{r(U)} \phi_x\big(h_n(u^x)h_n^*((u^x)^{-1} )\cdot
            \delta_x\big) \mathrm{d}\mu (x) = \int_{s(U)} \e^{\beta
              c(u_x)} \phi_x\big(h_n^*((u_x)^{-1})h_n(u_x ) \cdot
            \delta_x\big)\mathrm{d}\mu(x).
          \]
          Using the definition of involution in the above equation, we
          get
          \[
            \int_{r(U)} \phi_x\big(h_n(u^x)(h_n(u^x))^*\cdot
            \delta_x\big) \mathrm{d}\mu (x) = \int_{s(U)} \e^{\beta
              c(u_x)} \phi_x\big((h_n((u_x))^*h_n(u_x ) \cdot
            \delta_x\big)\mathrm{d}\mu(x).
          \]
          Putting the value of \(h_n\) in the last equation, we get
          \begin{multline*}
            \int_{r(U)} \phi_x\big(g_n(u^x) q(s(u^x)) \overline{ q(s(u^x))} (g_n((u^x))^* \cdot \delta_x\big) \mathrm{d}\mu (x) \\
            = \int_{s(U)} \e^{\beta c(u_x)}
            \phi_x\big(\overline{q(s(u_x))} (g_n(u_x))^*g_n(u_x )
            q(s(u_x))\cdot \delta_x\big)\mathrm{d}\mu(x).
          \end{multline*}
          As \(s(u_x)=x\) and \(s(u^x) =T_U(x)\), the above equation
          becomes
          \begin{multline*}
            \int_{r(U)} \abs{ q(T_U(x))}^2 \phi_x\big(g_n(u^x) g^*_n((u^x)^{-1} \cdot \delta_x\big) \mathrm{d}\mu (x)\\
            = \int_{s(U)} \abs{q(x)}^2\e^{\beta c(u_x)}
            \phi_x\big(g^*_n((u_x)^{-1})g_n(u_x ) \cdot
            \delta_x\big)\mathrm{d}\mu(x).
          \end{multline*}
          The last equation can be rewritten as
          \[
            \int_{r(U)} \abs{ q(T_U(x))}^2 \phi_x\big(g_n *g^*_n(x)
            \cdot \delta_x\big) \mathrm{d}\mu (x) = \int_{s(U)}
            \abs{q(x)}^2\e^{\beta c(u_x)} \phi_x\big(g^*_n*g_n(x)
            \cdot \delta_x\big)\mathrm{d}\mu(x).
          \]
          Since \(g_n\) is the positive square root of \(f_n\) for
          \(n\in \N\), the above equation is same as
          \[
            \int_{r(U)} \abs{ q(T_U(x))}^2 \phi_x(f_n(x) \cdot \delta_x) \mathrm{d}\mu (x)\\
            = \int_{s(U)} \abs{q(x)}^2\e^{\beta c(u_x)} \phi_x(f_n(x)
            \cdot \delta_x)\mathrm{d}\mu(x).
          \]
          Taking limits on both sides in the above equation, and using
          the dominated convergence theorem and
          \cite[Theorem~3.33]{Murphy-Book}, we have
		
		\[
                  \int_{r(U)} |q(T_U(x))|^2
                  \mathrm{d}\mu(x)=\int_{s(U)} \e^{-\beta c(u_x)}
                  |q(x)|^2 \mathrm{d}\mu(x).
		\]
		Therefore, by Equation~\eqref{eq:quasi-inv-etale} and
                Remark~\ref{remk:quasi-inv-positive-fun}, \(\mu\) is a
                quasi-invariant measure with Radon--Nikodym derivative
                \(\e^{-\beta c(\cdot)}\).
              \end{proof}
	
              \begin{lemma}
		\label{lem:kms-state-implies-trace-cond}
		Let \((\mu, [\Phi])\) be a disintegration of a \KMS\
                state \(\phi\) on \(\Cst(G;\A)\) as observed in
                Remark~\ref{rem:dis-int-kms-state}. Then there is a
                \(\mu\)\nb-conull set~\(K\subseteq \base[G]\) such
                that the field of states
                \(\Phi = \{\phi_x\}_{x\in \base}\) satisfies the
                following condition. Let \(x\in K\). For any
                \(\gamma \in G^x_x\), \(\eta \in G_x\),
                \(a\in \A_\gamma\) and \(\xi \in \A_\eta\) the
                following equation holds:
		\begin{equation*}
                  \phi_{s(\eta)} \big(a \xi^* \xi \cdot
                  \delta_{\gamma}\big) = \phi_{r(\eta)}\big(\xi   a \xi^* \cdot
                  \delta_{\eta \gamma \eta\inverse}\big).
		\end{equation*}
              \end{lemma}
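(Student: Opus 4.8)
The plan is to feed the \KMS{} condition with elements of the form $b*D*b^*$, where $b$ implements the transport along the arrow $\eta$ and $D$ carries the isotropy datum $a$ at $\gamma$, and then to read off the asserted identity from the disintegration formula~\eqref{eq:disint-states}. Concretely, I would fix a bisection $V$ with $\eta\in V$ and a bisection $W$ with $\gamma\in W$, choose $b\in\Cc(V;\A|_{V})$ with $b(\eta)=\xi$ and $D\in\Cc(W;\A|_{W})$ with $D(\gamma)=a$, and apply the \KMS{} condition in the split form $\phi\big(b*(D*b^*)\big)=\phi\big((D*b^*)*\sigma_{\mi\beta}(b)\big)$. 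Both $b$ and $D$ are compactly supported continuous sections, hence analytic for the dynamics~\eqref{equ:real-dyna} (on the support $c$ is bounded, so $z\mapsto\sigma_z$ is entire), so this use of the \KMS{} condition is legitimate.

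For the left-hand side, Lemma~\ref{lem:convo-func:supp:bise} shows that $b*D*b^*$ is supported on the bisection $VWV\inverse$ and takes, at the isotropy arrow lying over a unit above $v\in s(V)$, the conjugated value $b(u_v)\,D(w)\,b(u_v)^*$, where $u_v\in V$ is the arrow with source $v$ and $w\in G^v_v$ is the unique isotropy arrow of $W$ over $v$; this is obtained by the bisection bookkeeping of~\eqref{eq:kms-thm-comp-1}. Inserting this into~\eqref{eq:disint-states} gives an integral over $r(V)$ of terms $\phi_{r(u_v)}\big(b(u_v)D(w)b(u_v)^*\cdot\delta_{u_v w u_v\inverse}\big)$, and I would then invoke Lemma~\ref{lem:kms-state-implies-quasi} together with the quasi-invariance formula~\eqref{eq:quasi-inv-etale} to pass to an integral over $s(V)$, which introduces the Radon--Nikodym factor $\Delta_\mu(u_v)=\e^{-\beta c(u_v)}$.

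For the right-hand side, Lemma~\ref{lem:convo-func:supp:bise}\eqref{cond-3-bise-lemma} gives that $b^**\sigma_{\mi\beta}(b)$ is supported on units, with value at $v\in s(V)$ equal to $\e^{-\beta c(u_v)}\,b(u_v)^*b(u_v)$; convolving on the left by $D$ (which merely multiplies the section value pointwise) and applying~\eqref{eq:disint-states} yields an integral over $s(V)$ of $\e^{-\beta c(u_v)}\phi_v\big(D(w)b(u_v)^*b(u_v)\cdot\delta_{w}\big)$. The crucial point is that the exponential factor $\e^{-\beta c(u_v)}$ now occurs on \emph{both} sides and cancels, leaving the factor-free integrated identity $\int_{s(V)}\phi_{r(u_v)}\big(b(u_v)D(w)b(u_v)^*\delta_{u_v w u_v\inverse}\big)\,\dd\mu(v)=\int_{s(V)}\phi_v\big(D(w)b(u_v)^*b(u_v)\delta_w\big)\,\dd\mu(v)$. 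Replacing $b$ by $q\cdot b$ for an arbitrary scalar $q\in\Cc(s(V))$ scales both integrands by $\abs{q}^2$, so the integrands agree for $\mu$-almost every $v$; because $W$ is a bisection the isotropy sum has already collapsed to the single arrow $w$, and with $x=v$, $\gamma=w$, $\eta=u_v$, $a=D(w)$, $\xi=b(u_v)$ this is precisely the claimed equation.

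Finally, to produce a single $\mu$-conull set $K$ valid for all $\gamma,\eta,a,\xi$, I would run the argument over a countable basis of bisections $V$ and $W$ (covering $G$, and in particular $\textup{Ib}(G)$) and over countable dense families of sections $b,D$, intersect the countably many $\mu$-null exceptional sets, and pass to arbitrary $a\in\A_\gamma$, $\xi\in\A_\eta$ by density, using that each $\phi_x$ is norm-bounded. I expect the main obstacle to be this uniformity step rather than the computation: since $\textup{Ib}(G)$ is in general not open, for a fixed pair $(V,W)$ the support $s(V)\cap\{v:T_W(v)=v\}$ of the integrated identity may be $\mu$-thin or even null, so no single choice can see all of the isotropy at once. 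The resolution is that as $W$ ranges over the countable basis these fixed-point sets exhaust $\textup{Ib}(G)$, and the corresponding almost-everywhere statements assemble—through the fibrewise-counting measure $v\mapsto\sum_{\gamma\in G^v_v}(\cdot)\,\dd\mu(v)$ on $\textup{Ib}(G)$—into the single conull set $K$ on which the stated identity holds for every $\gamma\in G^x_x$ and every $\eta\in G_x$.
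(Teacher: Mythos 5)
Your proposal is correct and is essentially the paper's own argument: the same \KMS{} test elements (you compare \(\phi\big(b*(D*b^*)\big)\) with \(\phi\big((D*b^*)*\sigma_{\mi\beta}(b)\big)\), the paper compares \(\phi\big(g*(f*g^*)\big)\) with \(\phi\big((f*g^*)*\sigma_{\mi\beta}(g)\big)\)), the same bisection support bookkeeping identifying the isotropy arrow \(u_v w u_v\inverse\), the same change of variables via Lemma~\ref{lem:kms-state-implies-quasi} and Equation~\eqref{eq:quasi-inv-etale} with cancellation of the common factor \(\e^{-\beta c}\), and the same localization (your \(\abs{q}^2\)-scaling with \(q\in\Cc(s(V))\) in place of the paper's ``the bisection \(V\) was arbitrary'') followed by specialization using enough sections. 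The one point where you go beyond the paper is the final uniformity step --- running over a countable basis of bisections and countable dense families of sections and intersecting the exceptional null sets to get a single conull \(K\) valid for all \((\gamma,\eta,a,\xi)\) --- which the paper's proof leaves implicit (its \(K\) is ostensibly chosen for a fixed \((U,V,f,g)\) before \(\gamma,\eta,a,\xi\) are given), so your version is, if anything, more complete on that point.
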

	
	\begin{proof}
          Let \(U, V \subseteq G\) be bisections such that
          \(s(V) \subseteq s(U)\). Choose \(f \in \Cc(G_x;\A|_{G_x})\)
          with \(\supp(f) \in U\) and \(g\in \Cc(V;\A|_V)\). The KMS
          condition on \(\phi\) implies
          \begin{equation}\label{Equ:A}
            \phi\big(g*(f*g^*) \big) = \phi\big((f*g^*)*\sigma_{\mi\beta}(g)\big).
          \end{equation}
          We shall compute both sides of Equation~\eqref{Equ:A} for
          the functions~\(f,g\) above; and obtain an integral that
          shall allow us to conclude the desired result. Let us
          compute the left hand side of this equation
          first. Equation~\eqref{eq:disint-states} says that this left
          hand side is given by
          \begin{equation}\label{eq:A-1}
            \phi\big(g*(f*g^*)\big) = \int_{G^{(0)}} \sum_{w \in
              G^z_z}\phi_z\big(g*(f*g^*)(w)\cdot \delta_w\big) \dd{\mu}(z).
          \end{equation}
		
          \noindent Now we note that \(g*(f*g^*)\) is supported on the
          bisection \(V\cdot (U \cdot V\inverse)\). Thus the integral
          above is to be considered on~\(r(V)\).  Further, as the
          integral above is considered on~\(r(V)\), let \(z\in r(V)\).
          The integrant sum above is considered for the arrows
          \(w\in G^z_z \cap ( V\cdot (U \cdot V\inverse))\); what are
          these arrows? As~\(U, V\) are bisections with
          \(s(V)\subseteq s(U)\), the reader can take it as a small
          exercise that for given \(z\in r(V)\), these arrows are
          exactly \(w = v^z u_{T_V(z)} (v^z)\inverse\). A sketch of
          why this happens is that one should starts with
          \(v^z\in V\), then notice that the unique arrow
          \(\eta\in U\) that is composable with \(v^z\) is
          \(u_{T_V(z)}\). Finally, note that the unique arrow
          in~\(V\inverse\) and starting at~\(z\) that is composable
          with~\(u_{T_V(z)}\) is~\((v^z)\inverse\). Thus the required
          arrow~\(w\) in the isotropy~\(G_z^z\)
          is~\(v^z u_{T_V(z)} (v^z)\inverse\). See the
          footnote~\footnote{At this point, we not that
            \(u_{T_V(z)}\in G_z^z\) as it is conjugated
            by~\(v^z\).}\label{footnote:iso-cond-1}.  Now the last
          term of above equation becomes
          \[
            \int_{r(V)} \phi_z\biggr(\big(g (v^z) f( u_{T_V(z)})
            g^*((v^z)\inverse))\big) \cdot
            \delta_{v^zu_{T_V(z)}(v^z)^{-1}}\biggr) \dd{\mu}(z).
          \]
          Uising the definition~\(g^*((v^z)\inverse) \defeq g(v^z)^*\)
          in the \(^*\)-algebra~\(\Cc(G;\A)\), the last term can be
          written as
          \[
            \int_{r(V)} \phi_z\biggr(\big(g (v^z) f( u_{T_V(z)})
            g(v^z)^* \big) \cdot
            \delta_{v^zu_{T_V(z)}(v^z)^{-1}}\biggr) \dd{\mu}(z).
          \]
          The definitions of \(v^z\) and \(v_{T_V(z)}\) gives us
          \(v^z =v_{T_V(z)}\) for \(z\in r(V)\). Also, the index~\(z\)
          of~\(\phi_z\) above is~\(z=r(v^z) = r(v_{T_V(z)})\). Then
          above term can be rewritten as
          \[
            \int_{r(V)} \phi_{r(v_{T_V(z)})} \biggr(\big(g(v_{T_V(z)})
            f(u_{T_V(z)})g(v_{T_V(z)})^* \big)\cdot
            \delta_{(v_{T_V(z)} u_{T_V(z)})(v_{T_V(z)})^{-1}}\biggr)
            \dd{\mu}(z).
          \]
		
          \noindent Now we want to change the variable
          \(T_V(z) \mapsto
          z\). Lemma~\ref{lem:kms-state-implies-quasi} ensures that
          \(\mu\) is a quasi-invariant measure with Radon--Nikodym
          derivative \(\e^{-\beta c}\). We apply the quasi-invariance
          of~\(\mu\) using Equation~\ref{eq:quasi-inv-etale}: this
          equation suggests to use the homeomorphism
          \(T_V \colon r(V) \to s(V)\) to change the variable
          \(T_V(z) \mapsto T_V\inverse (T_V(z)) = z\)---which changes
          the integral over~\(r(V)\) to~\(s(V)\) and introduces the
          modular function~\(\e^{-\beta c(\cdot)}\). This change of
          variable transforms the last term to
          \[
            \int_{s(V)} \e^{-\beta c(v_z)}\phi_{r(v_z)} \biggr(
            \big(g(v_z)f(u_z)g(v_z)^* \big) \cdot \delta_{v_z u_z
              (v_z)^{-1}}\biggr) \dd{\mu}(z).
          \]
          Thus, Equation~\eqref{eq:A-1} can be rewritten as
          \begin{equation}\label{Equ:left-tr}
            \phi\big(g *(f*g^*)\big) = 
            \int_{s(V)} \e^{-\beta c(v_z)} \phi_{r(v_z)} \biggr( \big(g(v_z)f(u_z)g(v_z)^* \big) \cdot \delta_{v_zu_z(v_z)^{-1}}\biggr) \dd{\mu}(z).
          \end{equation} 
		
          \noindent See the footnote~\footnote{Continuing the
            observation of the last footnote on
            page~\pageref{footnote:iso-cond-1}, notice that
            \(u_{z}\in G_{T_V(z)}^{T_V(z)}\) as it is conjugated
            by~\(v_z\).}.\label{footnote:iso-cond-2}
		
          Next, we compute the right hand side of
          Equation~\eqref{Equ:A}. Like last computation, apply
          Equation~\eqref{eq:disint-states} to see that the this right
          hand side is given by
          \begin{equation}\label{eq:com-rhs-kms-st}
            \phi\big(f*g^**\sigma_{\mi\beta}(g)\big) = \int_{G^{(0)}}  \sum_{w \in G^y_y}\phi_y\biggr(\big(f*g^**\sigma_{\mi\beta}(g)\big)(w)\cdot \delta_w\biggr) \dd{\mu}(y).
          \end{equation}
          \noindent First of all, note that since
          \(\sigma_{\mi \beta}(g)(\gamma) = \e^{-\beta c(\gamma)}
          g(\gamma)\), for \(\gamma\in G\), \(\sigma_{\mi\beta}(g)\)
          has same support as that of~\(g\). Therefore, the function
          \(f*\big(g^**\sigma_{\mi\beta} (g)\big)\) is supported in
          \( U\cdot (V\inverse \cdot V) = U\cdot s(V) = V\) as
          \(s(V)\subseteq s(U)\). Thus the integral in
          Equation~\eqref{eq:A-1} is to be considered on~\(s(V)\).  As
          \(g^**\sigma_{\mi\beta}(g)\) is supported on~\(s(V)\), and
          \(f\) is supported on the bisection~\(U\), for
          \(y\in s(V)\), we can write
          \( f*g^**\sigma_{\mi\beta}(g)(v_y) = f(u_y)
          \big(g^**\sigma_{\mi\beta}(g)\big)(y)\) where \(v_y\in V\)
          is the unique arrow with source~\(y\), and similar is the
          meaning of~\(u_y\).  Thus, the right hand side of
          Equation~\eqref{eq:com-rhs-kms-st} becomes
          \begin{equation*}
            \int_{s(V)} \phi_y \biggr( f(u_y) \big(g^**\sigma_{\mi\beta}(g)\big)(y) \cdot \delta_{u_y} \biggr) \dd{\mu}(y).
          \end{equation*}
          Compute \(g^**\sigma_{\mi\beta}(g)\) using the formula in
          Equation~\eqref{eq:kms-thm-comp-2}. Then the last term
          becomes:
          \[
            \int_{s(V)} \e^{-\beta
              c(v_y)}\phi_y\big(f(u_y)(g^*((v_y)^{-1}) g(v_y)\cdot
            \delta_{u_y}\big) \dd{\mu}(y).
          \]
		
          \noindent Finally, notice
          that~\(g^*(v_y\inverse) \defeq g(v_y)^*\) in the
          \(^*\)-algebra~\(\Cc(G;\A)\); apply this observation to the
          last term and this simplifies
          Equation~\eqref{eq:com-rhs-kms-st} as
          \begin{equation}\label{Equ:right-tr}
            \phi\big(f*g^** \sigma_{\mi\beta}(g) \big) = \int_{s(V)} \e^{-\beta c(v_y)}\phi_y\big(f(u_y)g(v_y)^* g(v_y)\cdot \delta_{u_y}\big) \dd{\mu}(y).
          \end{equation}
		
          \noindent Substituting the right hand sides of
          Equations~\eqref{Equ:left-tr} and~\eqref{Equ:right-tr} in
          Equation~\eqref{Equ:A}, we get
          \begin{multline*}
            \int_{s(V)}  \e^{-\beta c(v_y)} \phi_{r(v_y)} \biggr( \big(g(v_y)f(u_y)g(v_y)^* \big) \cdot \delta_{v_yu_y(v_y)^{-1}}\biggr)  \dd{\mu}(y)\\
            = \int_{s(V)} \e^{-\beta c(v_y)}\phi_y\big(f(u_y) g(v_y)^*
            g(v_y)\cdot \delta_{u_y}\big) \dd{\mu}(y).
          \end{multline*}
		
          \noindent Since the choice of the bisection \(V\) was
          arbitrary, we have
		
		\begin{equation}\label{eq:last-eq-for-trace-cond}
                  \phi_{r(v_y)} \biggr( \big(g(v_y)f(u_y)g(v_y)^* \big) \cdot \delta_{v_yu_y(v_y)^{-1}}\biggr)\\
                  = \phi_y\big(f(u_y) g(v_y)^* g(v_y)\cdot \delta_{u_y}\big)
		\end{equation}
		for \(\mu\)\nb-almost everywhere \(y\in \base\). Let
                \(K\subseteq \base[G]\) be the \(\mu\)\nb-conull
                subset on which the above equality holds.
		
		Before proving the final claim of the lemma involving
                choices of arrows \(\gamma\in G_x^x\) and
                \(\eta\in G_x\) and their coefficients~\(a,b\) we make
                some observations. Note that
                Equation~\eqref{eq:last-eq-for-trace-cond} holds for
                all \(v_y\in V\) and \(u_y\in U\) such that the
                composite arrow~\(v_yu_y(v_y)^{-1}\) is
                well-defined. As in the earlier footnotes on
                pages~\pageref{footnote:iso-cond-1}
                and~\pageref{footnote:iso-cond-2}, the well-defineness
                of this arrow implies that~\(u_y\) is in the
                isotropy~\(G_{y}^y\) for \(y\in r(V)\). Also,
                \(v_y\in G_y\) for \(y\in V\).

		Finally, let \(x\in K\). As in the statement of lemma,
                suppose \(\gamma \in G^x_x, a\in \A_\gamma\),
                \(\eta \in G_x\) and \(\xi\in \A_\eta\) are
                given. Let~\(U\) be a bisection containing~\(\gamma\)
                and~\(V\) be a bisection containing~\(\eta\) such that
                \(s(V) \subseteq s(U)\). Since~\(p\colon \A \to G\)
                has enough sections, choose
                \(f \in \Cc(G_x;\A|_{G_x})\) with \(\supp(f) \in U\)
                and \(g\in \Cc(V;\A|_V)\) with \(f(\gamma) = a,\) and
                \(g(\eta) = \xi.\) The observation in the last
                paragraph implies that
                Equation~\eqref{eq:last-eq-for-trace-cond} holds for
                current~\(f(\gamma) = a\) and~\(g(\eta) = \xi\) as
                well. This is exactly what we wanted in the lemma.
		
              \end{proof}
	
              \begin{proposition}
		\label{thm:conds-implies-kms-state}
		Let \(p\colon \A \to G\) be a Fell bundle over a
                Hausdorff, locally compact, second countable,
                {\'e}tale groupoid \(G\). Let \(\sigma\) be a real
                dynamics on \(\Cst(G;\A)\) induced by a real-valued
                \(1\)\nb-cocycle \(c\). Let \(\beta\in \R\). Let
                \((\mu, [\Phi])\) be a pair consisting of a
                probability measure \(\mu\) on \(G^{(0)}\) and a
                \(\mu\)\nb-measurable field
                \( \Phi =\{\phi_x\}_{x\in G^{(0)}}\) of state
                \(\phi_x\) on \(\Cst(G^x_x;\A(x))\) as in
                Theorem~\ref{prop:bij-betwen-states-and-fields-of-states}(2). Then
                the integrated state \(\phi\) corresponding to
                \((\mu, [\Phi])\) is a \KMS\ states on
                \((\Cst(G;\A), \R, \sigma)\) if the following holds:
		\begin{enumerate}
                \item\label{prop:int-kms-cond-1} \(\mu\) is
                  quasi-invariant with Radon--Nikodym derivative
                  \(\mathrm{e}^{-\beta c}\).
                \item \label{prop:int-kms-cond-2} There is a
                  \(\mu\)\nb-conull set~\(K\subseteq \base[G]\) such
                  that the field of states
                  \(\Phi = \{\phi_x\}_{x\in \base}\) satisfies the
                  following condition. Let \(x\in K\). For any
                  \(\gamma \in G^x_x\), \(\eta \in G_x\),
                  \(a\in \A_\gamma\) and \(\xi \in \A_\eta\) the
                  following equation holds:
                  \begin{equation*}
                    \phi_{s(\eta)} \big(a \xi^* \xi \cdot
                    \delta_{\gamma}\big) = \phi_{r(\eta)}\big(\xi   a \xi^* \cdot
                    \delta_{\eta \gamma \eta\inverse}\big).
                  \end{equation*}
		\end{enumerate}
              \end{proposition}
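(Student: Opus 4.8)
The plan is to verify the \KMS\ identity $\phi(a*b)=\phi\big(b*\sigma_{\mi\beta}(a)\big)$ directly. Since $c$ is continuous, every $f\in\Cc(G;\A)$ has $c$ bounded on its compact support, so $z\mapsto\sigma_z(f)$ with $\sigma_z(f)(\gamma)=\e^{\mi z c(\gamma)}f(\gamma)$ is entire; thus $\Cc(G;\A)$ is a norm-dense, $\sigma$-invariant $^*$-subalgebra of analytic elements, and it suffices to check the identity on it. As both sides are bilinear in $(a,b)$ and bisections form a basis for $G$, a partition of unity lets me assume $a\in\Cc(U;\A|_U)$ and $b\in\Cc(V;\A|_V)$ for bisections $U,V$.

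First I would compute the two sides using the integration formula~\eqref{eq:disint-states} together with the convolution identities~\eqref{eq:kms-thm-comp-1} and~\eqref{eq:kms-thm-comp-2}. For $a$ on $U$ and $b$ on $V$, the only isotropy arrows contributing to $a*b$ have the form $u^xv_x$ (for those $x\in r(U)\cap s(V)$ with $s(u^x)=r(v_x)$, equivalently $T_U(x)=T_V^{-1}(x)$), whence
\[
  \phi(a*b)=\int \phi_x\big(a(u^x)\,b(v_x)\cdot\delta_{u^xv_x}\big)\,\dd\mu(x),
\]
the integral ranging over that measurable set; similarly the isotropy arrows contributing to $b*\sigma_{\mi\beta}(a)$ have the form $v^yu_y$, giving
\[
  \phi\big(b*\sigma_{\mi\beta}(a)\big)=\int \e^{-\beta c(u_y)}\,\phi_y\big(b(v^y)\,a(u_y)\cdot\delta_{v^yu_y}\big)\,\dd\mu(y).
\]

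Next I would change variables $x\mapsto y=T_U(x)$ in the first integral. By Condition~\eqref{prop:int-kms-cond-1} the measure $\mu$ is quasi-invariant with modular function $\e^{-\beta c}$, so~\eqref{eq:quasi-inv-etale} introduces exactly the weight $\e^{-\beta c(u_y)}$; using $u^x=u_{T_U(x)}=u_y$ and $v_x=v^y$, the first integral becomes $\int \e^{-\beta c(u_y)}\,\phi_{T_V(y)}\big(a(u_y)b(v^y)\cdot\delta_{u_yv^y}\big)\,\dd\mu(y)$, over the same domain (the two domains correspond under $T_U$, since the composability constraint reads $T_U^{-1}(y)=T_V(y)$ in both). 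Thus the weights on the two sides already agree, and it remains to prove, for $\mu$-a.e.\ $y$, the pointwise identity $(\star)$: setting $\eta:=v^y$ (so $s(\eta)=T_V(y)$, $r(\eta)=y$) and $\zeta:=u_yv^y\in G^{s(\eta)}_{s(\eta)}$, for which $\eta\zeta\eta^{-1}=v^yu_y$, I must show $\phi_{s(\eta)}(pq\cdot\delta_\zeta)=\phi_{r(\eta)}(qp\cdot\delta_{\eta\zeta\eta^{-1}})$ with $p=a(u_y)\in\A_{\zeta\eta^{-1}}$ and $q=b(v^y)\in\A_\eta$.

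Deducing $(\star)$ from the sandwich-shaped Condition~\eqref{prop:int-kms-cond-2} is the crux, and the two devices are polarization and saturation. Condition~\eqref{prop:int-kms-cond-2} reads $\phi_{s(\eta)}(A\,\Xi^*\Xi\cdot\delta_\zeta)=\phi_{r(\eta)}(\Xi A\Xi^*\cdot\delta_{\eta\zeta\eta^{-1}})$; viewing the difference of its two sides as a form $G(\Xi_1,\Xi_2)$ that is conjugate-linear in $\Xi_1$, linear in $\Xi_2$, and vanishes on the diagonal, polarization yields $\phi_{s(\eta)}(A\,\Xi_1^*\Xi_2\cdot\delta_\zeta)=\phi_{r(\eta)}(\Xi_2 A\Xi_1^*\cdot\delta_{\eta\zeta\eta^{-1}})$ for all $A\in\A_\zeta$ and $\Xi_1,\Xi_2\in\A_\eta$. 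Taking $\Xi_2=q$ and $p=A\Xi_1^*$ turns this into exactly $(\star)$; and since $\A$ is saturated, $\A_\zeta\A_{\eta^{-1}}$ is dense in $\A_{\zeta\eta^{-1}}$, so the elements $p=A\Xi_1^*$ span a dense subspace. Hence $(\star)$ extends to all $p$ by linearity and the continuity of $\phi_{s(\eta)}$, $\phi_{r(\eta)}$ and of multiplication, finishing the argument. The main obstacles are the combinatorial bookkeeping of which isotropy arrows occur in $a*b$ versus $b*\sigma_{\mi\beta}(a)$ (and that the two integration domains match under $T_U$), and—more essentially—passing from the quadratic, $\Xi^*\Xi$-shaped hypothesis to the bilinear identity $(\star)$, which is precisely where polarization and saturation are indispensable.
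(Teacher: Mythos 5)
Your argument is correct, and its overall skeleton coincides with the paper's: verify the \KMS\ condition on the dense, \(\sigma\)\nb-invariant subalgebra \(\Cc(G;\A)\) of analytic elements, localise to bisections, expand both sides through Equation~\eqref{eq:disint-states} and the computations~\eqref{eq:kms-thm-comp-1}--\eqref{eq:kms-thm-comp-2}, and use Condition~\eqref{prop:int-kms-cond-1} via Equation~\eqref{eq:quasi-inv-etale} to change variables so that the modular weight \(\e^{-\beta c(u_y)}\) appears on both sides. Two things differ. First, you localise \emph{both} factors to bisections, so each isotropy sum collapses to a single arrow \(u^xv_x\), resp.\ \(v^yu_y\); the paper localises only \(f\), keeps \(g\) arbitrary, and matches the full isotropy sums through Observation~\ref{obs:relation-betw-isotro} (\(G^{T_U(x)}_{T_U(x)} = (u^x)\inverse G^x_x u^x\)) --- this difference is cosmetic, and your bookkeeping matching the two integration domains under \(T_U\) is right. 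Second, and more substantially, the crux of passing from the quadratic Condition~\eqref{prop:int-kms-cond-2} to the bilinear pointwise identity \((\star)\) is handled by a genuinely different device. The paper fixes \(x\), takes an approximate identity \(\big(\sum_{i}\binpro{\eta^i_\alpha}{\eta^i_\alpha}\big)_\alpha\) of \(\A_{T_U(x)}\) with \(\eta^i_\alpha\in \A_{u^x}\) (\cite[Lemma~7.3]{Lance1995Hilbert-modules}, i.e.\ saturation), inserts it into the integrand, applies Condition~\eqref{prop:int-kms-cond-2} with \(\xi=\eta^i_\alpha\), and then --- crucially --- invokes the standing hypothesis \(\A_x\subseteq\) centraliser of \(\phi_x\) to commute \(f(u^x)(\eta^i_\alpha)^*\in\A_x\) past the remaining factors before collapsing the limit with Lemma~\ref{lem:appx-unit-for-full-Hilbert-mod}. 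You instead observe that the difference of the two sides of Condition~\eqref{prop:int-kms-cond-2} is a sesquilinear form in \(\xi\) vanishing on the diagonal, polarise to get the two-variable identity, and then use saturation (density of \(\textup{span}\,\A_\zeta\A_{\eta\inverse}\) in \(\A_{\zeta\eta\inverse}\)) plus norm continuity to reach all coefficients \(p\). Your route isolates cleanly what Condition~\eqref{prop:int-kms-cond-2} encodes and does not consume the fibrewise centraliser hypothesis at this step (that hypothesis is still needed upstream, in Theorem~\ref{prop:bij-betwen-states-and-fields-of-states}(2), for the integrated state to exist with the right centraliser), whereas the paper's route stays uniform with the approximate-identity computations it uses elsewhere. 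One small point you pass over quickly, but which is fine: applying Condition~\eqref{prop:int-kms-cond-2} at the unit \(x=T_V(y)\) for \(\mu\)\nb-almost every \(y\) uses that \(T_V\inverse(K)\) is conull, which follows from the quasi-invariance of \(\mu\) guaranteed by Condition~\eqref{prop:int-kms-cond-1}.
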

	
	\begin{proof}
          To prove that \(\phi\) is a \KMS\ state on \(\Cst(G;\A)\)
          for \(\sigma\), it is sufficient to show that
          \[
            \phi(f*g)=\phi\big(g*\sigma_{\mi\beta}(f)\big)
          \]
          for \(f,g\in \Contc(G;\A)\). Since the bisections constitute
          a basis for the topology of~\(G\) and \(\phi\) is linear, we
          may choose \(f\in \Contc(U;\A|_{U})\) where \(U\subseteq G\)
          is a bisection and \(g \in \Cc(G;\A)\).  The definition of
          the state~\(\phi\), Equation~\eqref{eq:disint-states},
          combined with the Computation~\eqref{eq:kms-thm-comp-1} says
          that
          \begin{equation}\label{Equ:kms-LHS}
            \phi(f*g)=\int_{r(U)} \biggr( \sum_{\gamma\in G^x_x} \phi_x\big(f(u^x)g((u^x)^{-1}\gamma)\cdot \delta_\gamma\big)\biggr) \mathrm{d} \mu(x).
          \end{equation}
          On the other hand, using
          Computation~\eqref{eq:kms-thm-comp-2}, we see that
          \begin{multline*}
            \phi\big(g*\sigma_{\mi\beta}(f)\big)
            =\int_{s(U)}\e^{-\beta c(u_x)} \biggr( \sum_{\gamma\in
              G^x_x}\phi_x\big(g(\gamma(u_x)^{-1})f(u_x)\cdot
            \delta_\gamma\big)\biggr) \mathrm{d}\mu(x)\\ = \int_{s(U)}
            \e^{-\beta c(u_x)}\phi_x\big( (g*f)|_{G_x^x}\big)
            \dd\mu(x).
          \end{multline*}
          Now \(\mu\) is quasi-invariant with the Radon--Nikodym
          derivative \(\e^{-\beta c(\cdot)}\). Therefore,
          Equation~\eqref{eq:quasi-inv-etale} implies that the last
          term is
          \begin{multline*}
            \int_{r(U)} \phi_{T_U(x)}\left(
              (g*f)|_{G_{T_U(x)}^{T_U(x)}}\right) \dd\mu(x) \\ =
            \int_{r(U)} \biggr( \sum_{\gamma\in
              G_{T_U(x)}^{T_U(x)}}\phi_{T_U(x)}\big(g(\gamma
            (u_{T_U(x)})^{-1})f(u_{T_U(x)})\cdot
            \delta_\gamma\big)\biggr) \mathrm{d}\mu(x).
          \end{multline*}
          Since \(s(u_{T_U(x)}) = T_U(x)\) and \(u^x\) is the unique
          arrow in the bisection \(U\) with source \(T_U(x)\), we have
          \(u_{T_U(x)} = u^x\). Therefore, the previous term can be
          rewritten as
          \begin{equation}
            \label{equ:prov-kms-state-eq-1}
            \int_{r(U)} \biggr(
            \sum_{\gamma\in
              G_{T_U(x)}^{T_U(x)}}\phi_{T_U(x)}\big(g(\gamma (u^x)^{-1})f(u^x)\cdot
            \delta_\gamma \big) \biggr)
            \mathrm{d}\mu(x).
          \end{equation}
          From the Observation~\ref{obs:relation-betw-isotro}, the
          isotropies at \(T_U(x)\) and \(x\) are related as
          \[
            G^{T_U(x)}_{T_U(x)}(u^x)^{-1}=(u^x)\inverse G^x_x.
          \]
          With this observation, we replace \(\gamma (u^x)\inverse\),
          where \(\gamma\in G^{T_U(x)}_{T_U(x)}\), by
          \((u^x)\inverse \zeta\) where \(\zeta\in G_x^x\).  Thus
          Equation~\eqref{equ:prov-kms-state-eq-1} transforms to
          \begin{equation}\label{Equ:kms-RHS}
            \int_{r(U)} \sum_{\zeta\in
              G^x_x}\phi_{T_U(x)}\big(g((u^x)^{-1}\zeta)f(u^x)\cdot
            \delta_{(u^x)^{-1}\zeta u^x } \big)  \, \mathrm{d}\mu(x).
          \end{equation}
          Let us compute the summand in the last integral. Since
          \(\A_{u^x}\) is a full right Hilbert
          \(\A_{T_U(x)}\)\nb-module in the Fell bundle
          \(\A\),~\cite[Lemma~7.3]{Lance1995Hilbert-modules} assures
          the existence of an approximate identity
          \(\big(\sum_{i=1}^{n_\alpha}
          \binpro{\eta^i_\alpha}{\eta^i_\alpha}\big)_\alpha\) of
          \(\A_{T_U(x)}\) where \(\eta^i_\alpha \in \A_{u^x}\),
          \(n_\alpha \in \N\). Using
          Lemma~\ref{lem:appx-unit-for-full-Hilbert-mod}, for
          \(\zeta \in G^x_x\), we have
          \begin{multline*}
            \phi_{T_U(x)}\big(g((u^x)^{-1}\zeta)f(u^x)\cdot
            \delta_{(u^x)^{-1}\zeta u^x}\big) \\
            = \phi_{T_U(x)}\biggr(g((u^x)^{-1}\zeta)f(u^x)
            \lim_{\alpha}\sum_{i=1}^{n_\alpha}
            \binpro{\eta^i_\alpha}{\eta^i_\alpha}\cdot
            \delta_{(u^x)^{-1}\zeta u^x}\biggr).
          \end{multline*}
          Using the defintion of inner product and the continuity of
          \(\phi_x\), the last term became
          \[
            \lim_{\alpha}\sum_{i=1}^{n_\alpha}\phi_{T_U(x)}\big(g((u^x)^{-1}\zeta)f(u^x)
            (\eta^{i}_\alpha)^* \eta^i_\alpha \cdot
            \delta_{(u^x)^{-1}\zeta u^x}\big).
          \]
          Since \(T_U(x) = s(u^x)\) and \(r(u^x) = x \), using the
          Condition~\eqref{prop:int-kms-cond-2} of the current
          proposition for
          \(a = g((u^x)^{-1}\zeta)f(u^x), \xi = \eta^{i}_\alpha,
          \gamma = (u^x)^{-1}\zeta u^x\) and \(\eta = u^x\), we see
          that the above term is the same as
          \begin{multline*}
            \lim_{\alpha}\sum_{i=1}^{n_\alpha}\phi_{r(u^x)}\big(\eta^i_\alpha
            g((u^x)^{-1}\zeta) f(u^x) (\eta^i_\alpha)^* \cdot
            \delta_{u^x(u^x)^{-1} \zeta u^x(u^x)^{-1}}\big)\\ =
            \lim_{\alpha}\sum_{i=1}^{n_\alpha}\phi_{x}\big(\eta^i_\alpha
            g((u^x)^{-1}\zeta) f(u^x) (\eta^i_\alpha)^* \cdot
            \delta_{\zeta}\big)
          \end{multline*}
          Since
          \(p\big(f(u^x)(\eta^i_\alpha)^*\big) =u^x(u^x)^{-1} = r(u^x)
          = x\), we have \(f(u^x)(\eta^i_\alpha)^* \in \A_x\), which
          is contained in the centraliser of \(\phi_x\). Therefore,
          the last term is equal to
          \begin{multline*}
            \lim_{\alpha}\sum_{i=1}^{n_\alpha}\phi_{x}\big(f(u^x)
            (\eta^i_\alpha)^* \eta^i_\alpha g((u^x)^{-1}\zeta) \cdot
            \delta_{\zeta}\big) = \lim_{\alpha}\phi_{x}\biggr(f(u^x)
            \sum_{i=1}^{n_\alpha}
            \binpro{\eta^i_\alpha}{\eta^i_\alpha}g((u^x)^{-1}\zeta)
            \cdot \delta_{\zeta}\biggr)\\ =
            \phi_x\big(f(u^x)g((u^x)^{-1}\zeta) \cdot
            \delta_{\zeta}\big).
          \end{multline*}
          Replacing the integrand sum in Equation~\eqref{Equ:kms-RHS}
          by the last term above, we get,
          \[
            \phi\big(g*\sigma_{i\beta}(f)\big)=\int_{r(U)} \biggr(
            \sum_{\zeta\in G^x_x} \phi_x\big(f(u^x)g((u^x)^{-1}\zeta)
            \cdot \delta_{\zeta}\big) \biggr) \mathrm{d}\mu(x),
          \]
          which is equal to Equation~\eqref{Equ:kms-LHS}. Therefore,
          \(\phi\) is a \KMS\ state.
		
	\end{proof}

	\begin{theorem}
          \label{thm:KMS-state}
          Let \(p\colon \A \to G\) be a Fell bundle over a Hausdorff,
          locally compact, second countable, {\'e}tale groupoid
          \(G\). Let \(\sigma\) be a real dynamics on \(\Cst(G;\A)\)
          induced by a real-valued \(1\)\nb-cocycle \(c\). Let
          \(\beta\in \R\). Then there is a one-to-one correspondence
          between the \KMS\ states on \(\Cst(G;\A)\) and a pair
          \((\mu, [\Phi])\) consisting of a probability measure
          \(\mu\) on \(G^{(0)}\) and a \(\mu\)\nb-measurable field
          \( \Phi =\{\phi_x\}_{x\in G^{(0)}}\) of state \(\phi_x\) on
          \(\Cst(G^x_x;\A(x))\) with \(\A_x\) contained in centraliser
          \((\phi_x)\) such that the following holds:
          \begin{enumerate}
          \item\label{Cond:I} \(\mu\) is quasi-invariant with
            Radon--Nikodym derivative \(\mathrm{e}^{-\beta c}\).
          \item \label{Cond:II} There is a \(\mu\)\nb-conull
            set~\(K\subseteq \base[G]\) such that the field of states
            \(\Phi = \{\phi_x\}_{x\in \base}\) satisfies the following
            condition. Let \(x\in K\). For any \(\gamma \in G^x_x\),
            \(\eta \in G_x\), \(a\in \A_\gamma\) and
            \(\xi \in \A_\eta\) the following equation holds:
            \begin{equation*}
              \phi_{s(\eta)} \big(a \xi^* \xi \cdot
              \delta_{\gamma}\big) = \phi_{r(\eta)}\big(\xi   a \xi^* \cdot
              \delta_{\eta \gamma \eta\inverse}\big).
            \end{equation*}
          \end{enumerate}
	\end{theorem}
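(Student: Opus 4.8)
The plan is to show that the general integration–disintegration bijection of Theorem~\ref{prop:bij-betwen-states-and-fields-of-states} restricts to the claimed correspondence, once the \KMS\ condition on the \(\Cst(G;\A)\) side is matched with Conditions~\eqref{Cond:I} and~\eqref{Cond:II} on the pair side. All of the genuine analytic work has already been carried out in the preceding lemmas and in Proposition~\ref{thm:conds-implies-kms-state}; what remains is to assemble these results and to transport the bijectivity of integration and disintegration through this matching.

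First I would treat the forward direction. Let \(\phi\) be a \KMS\ state on \(\Cst(G;\A)\). By Remark~\ref{rem:dis-int-kms-state}, the \KMS\ condition forces \(\Contz(\base;\A|_{\base})\) to lie in the centraliser of \(\phi\), so \(\phi\) is admissible for Theorem~\ref{prop:bij-betwen-states-and-fields-of-states}(1) and disintegrates into a pair \((\mu,[\Phi])\) consisting of a probability measure \(\mu\) and a \(\mu\)-measurable field \(\Phi=\{\phi_x\}_{x\in\base}\) of states with \(\A_x\) contained in the centraliser of \(\phi_x\). Lemma~\ref{lem:kms-state-implies-quasi} then supplies Condition~\eqref{Cond:I}, namely that \(\mu\) is quasi-invariant with Radon--Nikodym derivative \(\e^{-\beta c}\), and Lemma~\ref{lem:kms-state-implies-trace-cond} supplies Condition~\eqref{Cond:II}. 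Hence every \KMS\ state produces a pair of the required type.

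Conversely, start with a pair \((\mu,[\Phi])\) satisfying Conditions~\eqref{Cond:I} and~\eqref{Cond:II}. Such a pair is admissible for Theorem~\ref{prop:bij-betwen-states-and-fields-of-states}(2), so it integrates to a state \(\phi\) on \(\Cst(G;\A)\) whose centraliser contains \(\Contz(\base;\A|_{\base})\); and Proposition~\ref{thm:conds-implies-kms-state} shows precisely that, under Conditions~\eqref{Cond:I} and~\eqref{Cond:II}, this integrated state satisfies the \KMS\ identity for \((\Cst(G;\A),\R,\sigma)\). To see that the two assignments are mutually inverse, I would invoke parts~(3) and~(4) of Theorem~\ref{prop:bij-betwen-states-and-fields-of-states}, which already establish that integration and disintegration are inverse to one another on the level of all admissible states and pairs. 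Since we have shown that a state is \KMS\ exactly when its disintegration satisfies Conditions~\eqref{Cond:I} and~\eqref{Cond:II}, the ambient bijection restricts to the desired one-to-one correspondence.

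The genuine obstacle lies not in this assembly but in the lemmas it quotes: one must verify that the single \KMS\ identity \(\phi(ab)=\phi\big(b\,\sigma_{\mi\beta}(a)\big)\), tested against sections supported in bisections, decouples \emph{exactly} into the quasi-invariance statement and the fibrewise trace-type identity~\eqref{Cond:II}, with no residual condition left over. The one point needing care in the packaging is that no admissibility hypothesis is silently dropped in passing between the two sides---in particular that the condition \(\A_x\subseteq\) centraliser of \(\phi_x\) is present at both ends---but this is guaranteed uniformly by Theorem~\ref{prop:bij-betwen-states-and-fields-of-states}, which builds that condition into both the disintegration and the integration.
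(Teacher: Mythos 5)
Your proposal is correct and takes essentially the same route as the paper: its proof of Theorem~\ref{thm:KMS-state} likewise disintegrates a \KMS\ state via Remark~\ref{rem:dis-int-kms-state}, derives Conditions~\eqref{Cond:I} and~\eqref{Cond:II} from Lemmas~\ref{lem:kms-state-implies-quasi} and~\ref{lem:kms-state-implies-trace-cond}, and obtains the converse from Proposition~\ref{thm:conds-implies-kms-state}. Your explicit appeal to parts~(3) and~(4) of Theorem~\ref{prop:bij-betwen-states-and-fields-of-states} to check that the two assignments are mutually inverse merely spells out what the paper leaves implicit in claiming a one-to-one correspondence.
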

	
	\begin{proof}
          Suppose \(\phi\) is a \KMS\ state on \(\Cst(G;\A)\) for the
          real dynamics \(\sigma\). Remark~\ref{rem:dis-int-kms-state}
          says that there is a pair \((\mu, [\Phi])\), consisting a
          probability measure \(\mu\) on \(\base\) and the field of
          states \(\Phi= \{\phi_x\}_{x\in \base}\) is
          \(\mu\)\nb-measurable with \(\A_x\subseteq \) centraliser of
          \(\phi_x\) for \(x\in
          \base\). Lemmas~\ref{lem:kms-state-implies-quasi}
          and~\ref{lem:kms-state-implies-trace-cond} ensure that, the
          pair \((\mu, [\Phi])\) satsfies Conditions~\ref{Cond:I}
          and~\ref{Cond:II} of current theorem.
		
          The converse part follows from
          Proposition~\ref{thm:conds-implies-kms-state}.
	\end{proof}

		\begin{remark}
                  Condition~\ref{Cond:II} of
                  Theorem~\ref{thm:KMS-state} implies that each
                  \(\phi_x\) is a trace on the \(\Cst\)\nb-algebra
                  \(\Cst(G^x_x;\A(x))\). The reason is, if
                  \(\gamma, \eta \in G^x_x\), then the
                  Condition~\ref{Cond:II} become
                  \begin{equation*}
                    \phi_{x} \big((a\cdot \delta_\gamma) (\xi \cdot \delta_\eta)^* (\xi \cdot
                    \delta_{\eta})\big) = \phi_{x}\big((\xi\cdot \delta_\eta) (a \cdot \delta_\gamma )(\xi \cdot
                    \delta_{\eta})^*\big) \quad \mu \text{ almost everywhere } x\in \base,
                  \end{equation*}
                  for \(a\in \A_\gamma\) and \(\xi \in \A_\eta\);
                  which is trace condition
                  (see~\cite[Proposition~8.1.1]{Kadison-Ringrose1997-Vol-II}). Therefore,
                  it makes sense to call Condition~\ref{Cond:II} in
                  Theorem~\ref{thm:KMS-state} ``the trace condition
                  for Fell bundle''.
		\end{remark}
		
		\noindent Applying Theorem~\ref{thm:KMS-state} for
                inverse temperature \( \beta =0\), we get the
                following corollary:
		\begin{corollary}
                  Let \(p\colon \A \to G\) be a Fell bundle over a
                  Hausdorff, locally compact, second countable,
                  {\'e}tale groupoid \(G\). Let \(\sigma\) be a real
                  dynamics on \(\Cst(G;\A)\) induced by a real-valued
                  \(1\)\nb-cocycle \(c\). Let \(\beta\in \R\). Then
                  there is a one-to-one correspondence between the
                  \(\sigma\)\nb-invariant tracial states on
                  \(\Cst(G;\A)\) and a pair \((\mu, [\Phi])\)
                  consisting of a probability measure \(\mu\) on
                  \(G^{(0)}\) and a \(\mu\)\nb-measurable field
                  \( \Phi =\{\phi_x\}_{x\in G^{(0)}}\) of state
                  \(\phi_x\) on \(\Cst(G^x_x;\A(x))\) with \(\A_x\)
                  contained in centraliser \((\phi_x)\) such that the
                  following holds:
                  \begin{enumerate}
                  \item \(\mu\) is an invariant measure.
                  \item There is a \(\mu\)\nb-conull
                    set~\(K\subseteq \base[G]\) such that the field of
                    states \(\Phi = \{\phi_x\}_{x\in \base}\)
                    satisfies the following condition. Let \(x\in
                    K\). For any \(\gamma \in G^x_x\),
                    \(\eta \in G_x\), \(a\in \A_\gamma\) and
                    \(\xi \in \A_\eta\) the following equation holds:
                    \begin{equation*}
                      \phi_{s(\eta)} \big(a \xi^* \xi \cdot
                      \delta_{\gamma}\big) = \phi_{r(\eta)}\big(\xi   a \xi^* \cdot
                      \delta_{\eta \gamma \eta\inverse}\big).
                    \end{equation*}
                  \end{enumerate}
		\end{corollary}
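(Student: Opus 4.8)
The plan is to read off the statement as the $\beta=0$ specialisation of Theorem~\ref{thm:KMS-state}, so the bulk of the work is to see what each of its two conditions becomes at $\beta=0$ and then to match the two descriptions of the left-hand side of the correspondence.

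First I would dispose of the two conditions. In Condition~\eqref{Cond:I} the prescribed Radon--Nikodym derivative is $\me^{-\beta c}$; putting $\beta=0$ turns this into the constant function $\me^{0}=1$, so ``quasi-invariant with derivative $\me^{-\beta c}$'' becomes $\dd\nu/\dd\nu\inverse=1$, which is exactly the definition of an invariant measure recalled in Section~\ref{sec:prelim}. Condition~\eqref{Cond:II} contains neither $\beta$ nor $c$, so it is reproduced verbatim. Hence the pairs $(\mu,[\Phi])$ admissible at $\beta=0$ are precisely those with $\mu$ invariant and satisfying~\eqref{Cond:II}, as claimed on the right-hand side.

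Next I would match the left-hand sides. Since $\sigma_{\mi\beta}=\sigma_{0}=\Id$ at $\beta=0$, the defining identity $\phi(ab)=\phi\bigl(b\,\sigma_{\mi\beta}(a)\bigr)$ collapses to $\phi(ab)=\phi(ba)$ on the dense \(^*\)\nb-subalgebra of analytic elements, and therefore on all of $\Cst(G;\A)$ by continuity; conversely any tracial state satisfies it. Thus the $\KMS[0]$ states are exactly the tracial states, and the bijection of Theorem~\ref{thm:KMS-state} specialises to a bijection between these and the pairs described above. The one point I expect to require genuine care is the invariance qualifier in the statement: I would check, in the forward direction, that the measure returned by the disintegration of Proposition~\ref{prop:integration-of-states} is forced to be invariant by Lemma~\ref{lem:kms-state-implies-quasi} at $\beta=0$, and, in the reverse direction, that the state integrated from $(\mu,[\Phi])$ by Proposition~\ref{thm:conds-implies-kms-state} satisfies $\phi(f*g)=\phi\bigl(g*\sigma_{0}(f)\bigr)=\phi(g*f)$ and is $\sigma$\nb-invariant. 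Isolating exactly how invariance of $\mu$ together with Condition~\eqref{Cond:II} yields the global $\sigma$\nb-invariance of $\phi$ is the step I would treat most carefully before declaring the two classes identical; once that is in hand, the bijection of Theorem~\ref{thm:KMS-state} restricts to the asserted one and no further computation is needed.
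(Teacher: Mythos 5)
Your overall route is exactly the paper's: the corollary appears there with no separate argument beyond ``applying Theorem~\ref{thm:KMS-state} for inverse temperature \(\beta=0\)'', and your specialisations are the right ones --- Condition~\eqref{Cond:I} with \(\e^{-0\cdot c}=1\) is precisely invariance of \(\mu\), Condition~\eqref{Cond:II} contains neither \(\beta\) nor \(c\) and so is reproduced verbatim, and the KMS identity at \(\beta=0\) collapses to the trace identity on the dense subalgebra of analytic elements, hence everywhere by continuity. Up to that point your proof coincides with the paper's intended one.

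However, the step you rightly refused to wave through --- deducing \(\sigma\)\nb-invariance of the integrated state from invariance of \(\mu\) together with Condition~\eqref{Cond:II} --- cannot be completed: it is false in general. Take \(G=\Z\) regarded as a groupoid with a single unit, the trivial Fell bundle with fibre \(\C\), and the cocycle \(c(n)=n\), so that \(\Cst(G;\A)\cong\Cst(\Z)\cong\Cont(\mathbb{T})\) and \(\sigma\) is the rotation flow. The unique probability measure on \(\base\) is invariant, and Condition~\eqref{Cond:II} holds for \emph{every} state \(\phi_x\) on \(\Cst(G^x_x;\A(x))=\Cst(\Z)\) (the group is abelian and the coefficients are central scalars), so every state of \(\Cont(\mathbb{T})\) --- all of which are tracial --- arises from an admissible pair; yet only Lebesgue measure yields a \(\sigma\)\nb-invariant state. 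Expanding \(\phi(\sigma_t(f))=\phi(f)\) against Equation~\eqref{eq:disint-states} shows what \(\sigma\)\nb-invariance genuinely adds: the further \(\mu\)\nb-a.e.\ requirement \(\phi_x(a\cdot\delta_\gamma)=0\) for all \(\gamma\in G^x_x\) with \(c(\gamma)\neq 0\) --- the vanishing condition present in Neshveyev's theorem but absent from the corollary. So your argument is complete, and matches what the paper's machinery actually delivers at \(\beta=0\) (Lemmas~\ref{lem:kms-state-implies-quasi} and~\ref{lem:kms-state-implies-trace-cond} and Proposition~\ref{thm:conds-implies-kms-state} use only the algebraic KMS identity, i.e.\ traciality), for the correspondence with \emph{tracial} states; the qualifier ``\(\sigma\)\nb-invariant'' requires the extra vanishing condition on the field \(\{\phi_x\}\), and to that extent the gap you flagged lies in the corollary as stated rather than in your proof.
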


                \section{KMS states on the groupoid (twisted) crossed
                  product
                  \(\Cst\)-algebras}\label{sec:gpd-crossed:product}
  
                In this section, we shall give a quick summary of
                groupoid crossed product; for detailed treatment, the
                reader can
                see~\cite{Muhly-Williams2008Renaults-equivalence-thm-for-gpd-cros-prd}
                and~\cite{Goehle2009phd-thesis}.  The groupoid crossed
                product \(\Cst\)\nb-algebra can be identified as a
                \(\Cst\)\nb-algebra of a Fell bundle over the same
                groupoid. We shall apply Theorem~\ref{thm:KMS-state}
                and characterise the invariant traces and KMS states
                on the groupoid crossed product.
  
                \begin{definition}
                  [{\cite[Definition~3.46]{Goehle2009phd-thesis}}]
                  \label{def:gpd-crossed-prd}
                  A groupoid dynamical system is a triple
                  \((A, G, \alpha)\), where \(A\) is a
                  \(\textup{C}_0(G^{(0)})\)\nb-algebra and \(\alpha\)
                  is an action of \(G\) on \(A\), that is,
                  \(\alpha =\{\alpha_\gamma\}_{\gamma \in G}\) such
                  that
                  \begin{enumerate}[leftmargin=*]
                  \item for each
                    \(\gamma \in G, \alpha_\gamma \colon A(s(\gamma))
                    \to A(r(\gamma))\) is a \(^*\)\nb-isomorphism;
                  \item
                    \(\alpha_{\gamma \eta} = \alpha_\gamma \circ
                    \alpha_\eta\) for \((\gamma, \eta) \in G^{(2)}\);
                  \item \((\gamma, a) \mapsto \alpha_\gamma(a)\) is
                    continuous from
                    \(G\times_{s, \base, p}\mathcal{A} \to
                    \mathcal{A}\), where
                    \(p\colon \mathcal{A} \to \base \) is the
                    associated upper semicontinuous bundle of
                    \(\Cst\)-algebra corresponding to the
                    \(\Contz(\base)\)\nb-algebra \(A\). And
                    \(G\times_{s, \base, p}\mathcal{A}\) is the fibre
                    product of
                    \(\{(\gamma, a) \in G \times \A : s(\gamma) =
                    p(a)\}\).
                  \end{enumerate}
                \end{definition}
                \noindent Fix a groupoid dynamical system
                \((A,G,\alpha)\). Consider the pullback bundle
                \(q\colon r^*\A \to G\) along the range map \(r\),
                that is,
                \( r^*\A \defeq \big\{(\gamma, a) \in G \times \A :
                r(\gamma) = p(a)\big\}\) and \(q(\gamma, a)
                =\gamma\). The set \(r^*\A\) is given subspace
                topology of \(G \times \A\). Proposition~3.24
                of~\cite{Goehle2009phd-thesis} says that
                \(q\colon r^*\A \to G\) is an upper semicontinuous
                bundle of \(\Cst\)\nb-algebras.  Assume \(G\) is an
                \etale\ groupoid. Then
                by~\cite[Proposition~3.54]{Goehle2009phd-thesis}
                \(\Cc(G;r^*\mathcal{A})\) is a \(^*\)\nb-algebra with
                the following operations:
                \[
                  f*g(\gamma) \defeq \sum_{\eta \in
                    G^{r(\gamma)}}f(\eta)\alpha_\eta\big(g(\eta^{-1}\gamma)\big)~
                  \text{ and } f^*(\gamma) \defeq
                  \alpha_\gamma\big(f(\gamma^{-1})^*\big).
                \]

                \begin{proposition}[{\cite[Examples~2.1 and
                    2.8]{Muhly-Williams2008FellBundle-ME}}]
                  \label{rel:cross:pro-Fell:bundle}
                  Let \((A,G,\alpha)\) be a groupoid dynamical system.
                  \begin{enumerate}
                  \item Then \(r^*\mathcal{A}\) is a Fell bundle over
                    \(G\) and
                    \(A\rtimes_\alpha G \iso \Cst(G;r^*\mathcal{A})\).
                  \item
                    \(A(x)\rtimes_{\alpha|_{G^x_x}} G^x_x \iso
                    \Cst(G^x_x; r^*\mathcal{A}|_{G^x_x})\) for
                    \(x\in G^{(0)}\).
                  \end{enumerate}
                \end{proposition}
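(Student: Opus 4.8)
The plan is to obtain both parts from the identification, due to Muhly and Williams, of a groupoid crossed product with the sectional \(\Cst\)\nb-algebra of the pullback Fell bundle \(r^*\A\), treating part~(2) as the special case of part~(1) over the isotropy groups. First I would equip \(q\colon r^*\A \to G\) with its Fell bundle structure. The fibre over \(\gamma\) is \((r^*\A)_\gamma \cong A(r(\gamma))\); for \((\gamma_1,\gamma_2)\in G^{(2)}\) a vector over \(\gamma_1\) is a pair \((\gamma_1,a)\) with \(a\in A(r(\gamma_1))\) and over \(\gamma_2\) a pair \((\gamma_2,b)\) with \(b\in A(r(\gamma_2))=A(s(\gamma_1))\), and I define
\[
  (\gamma_1,a)\cdot(\gamma_2,b)=\big(\gamma_1\gamma_2,\;a\,\alpha_{\gamma_1}(b)\big),\qquad (\gamma,a)^*=\big(\gamma\inverse,\;\alpha_{\gamma\inverse}(a^*)\big).
\]
I would then verify the axioms of Definition~\ref{def:fell-bundle}. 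Bilinearity and the norm estimate~\eqref{item:Fell-4} are immediate from the fibrewise \(\Cst\)\nb-structure together with the fact that each \(\alpha_\gamma\) is an isometric \(^*\)\nb-isomorphism; associativity~\eqref{item:Fell-3} is exactly the cocycle identity \(\alpha_{\gamma\eta}=\alpha_\gamma\circ\alpha_\eta\); and the \(\Cst\)\nb-axioms~\eqref{item:Fell-9} and~\eqref{item:Fell-10} descend from the corresponding properties in each fibre \(A(r(\gamma))\). Continuity of the multiplication and involution is the only analytic point, and it follows from the continuity hypothesis (3) of Definition~\ref{def:gpd-crossed-prd} together with the subspace topology carried by \(r^*\A\subseteq G\times\A\). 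Finally, saturation holds because, for fixed \(\gamma_1\), as \(b\) ranges over \(A(s(\gamma_1))\) the element \(\alpha_{\gamma_1}(b)\) ranges over all of \(A(r(\gamma_1))\), so the products \(a\,\alpha_{\gamma_1}(b)\) span a dense subset of \(A(r(\gamma_1))=A(r(\gamma_1\gamma_2))\).

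Granting the Fell bundle, I would identify the two convolution algebras. The multiplication and involution just defined induce on \(\Cc(G;r^*\A)\) precisely the operations
\[
  f*g(\gamma)=\sum_{\eta\in G^{r(\gamma)}}f(\eta)\,\alpha_\eta\big(g(\eta\inverse\gamma)\big),\qquad f^*(\gamma)=\alpha_\gamma\big(f(\gamma\inverse)^*\big)
\]
recorded just before the statement; these are exactly the operations defining the dense \(^*\)\nb-subalgebra of \(A\rtimes_\alpha G\). Hence the identity map on \(\Cc(G;r^*\A)\) is a \(^*\)\nb-isomorphism of pre-\(\Cst\)\nb-algebras, and since both completions are formed with respect to the universal norm over \(I\)\nb-norm-decreasing representations, it extends to the asserted isomorphism \(A\rtimes_\alpha G\iso\Cst(G;r^*\A)\). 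This is Example~2.1 of~\cite{Muhly-Williams2008FellBundle-ME}, to which I would appeal for the remaining continuity and completion details.

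For part~(2) I would restrict the whole picture to the isotropy group \(G^x_x\). For \(\gamma\in G^x_x\) one has \(r(\gamma)=s(\gamma)=x\), so every fibre of \((r^*\A)|_{G^x_x}\) equals \(A(x)\) and each \(\alpha_\gamma\) is an automorphism of the single \(\Cst\)\nb-algebra \(A(x)\); thus \(\big(A(x),G^x_x,\alpha|_{G^x_x}\big)\) is an ordinary group dynamical system and \((r^*\A)|_{G^x_x}\) is canonically the pullback Fell bundle over \(G^x_x\) associated with it. Applying part~(1) to this group dynamical system yields \(A(x)\rtimes_{\alpha|_{G^x_x}}G^x_x\iso\Cst(G^x_x;r^*\A|_{G^x_x})\), which is the content of Example~2.8 of~\cite{Muhly-Williams2008FellBundle-ME}. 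The only bookkeeping point here is that restriction commutes with the pullback construction, i.e.\ that \((r^*\A)|_{G^x_x}\) carries exactly the Fell bundle structure of the restricted system.

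I expect the single genuine obstacle to be the continuity of the bundle operations in part~(1): everything algebraic is a direct consequence of the cocycle identity and the fibrewise \(\Cst\)\nb-structure, but promoting the fibrewise multiplication and involution to continuous maps on the total space \(r^*\A\) relies essentially on hypothesis (3) of Definition~\ref{def:gpd-crossed-prd}, and this is precisely the technical heart of the cited Muhly--Williams example. Everything else reduces to matching the two convolution formulas and to the elementary observation that passing to the isotropy turns the groupoid system into a group system.
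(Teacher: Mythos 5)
Your proposal is correct and takes essentially the same route as the paper: equip \(r^*\A\) with the multiplication \((\gamma,a)(\eta,b)=(\gamma\eta,\,a\alpha_\gamma(b))\) and involution \((\gamma,a)^*=(\gamma^{-1},\alpha_{\gamma^{-1}}(a^*))\), identify \(\Cc(G;r^*\A)\) as a common dense \(^*\)\nb-subalgebra of \(A\rtimes_\alpha G\) and \(\Cst(G;r^*\A)\), and match the universal norms --- the paper handles this last step by citing Goehle's Proposition~3.131 (equivalently the Muhly--Williams disintegration theorem), where you treat it as definitional and defer to Example~2.1 of Muhly--Williams, a harmless difference of bookkeeping. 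For part~(2) the paper likewise restricts to the group dynamical system \((A(x),G^x_x,\alpha|_{G^x_x})\), observes that every fibre of \((r^*\A)|_{G^x_x}\) is \(A(x)\), and invokes \cite{Williams2007Crossed-product-Cst-Alg}*{Corollary~2.46}, which is exactly the reduction you carry out by specialising part~(1) to the (discrete, hence \'etale) isotropy group.
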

  
                \begin{proof}
                  \noindent (1): Let \((A,G,\alpha)\) be a groupoid
                  dynamical system. Then \(r^*\A\) is an upper
                  semicontinuous \(\Cst\)\nb-bundle over \(G\). The
                  multiplication and involution on \(r^*\A\) is
                  defined with the help of the action \(\alpha\). Let
                  \((r^*\A)^{(2)} = \big\{((\gamma, a),(\eta, b)) \in
                  r^*\A \times r^*\A : (\gamma, \eta) \in
                  G^{(2)}\big\}.\) Then the multiplication and
                  involution are defined by
                  \[
                    (\gamma, a)(\eta, b) \defeq \big( \gamma \eta
                    ,a\alpha_\gamma(b)\big) \text{ and } (\gamma, a)^*
                    \defeq
                    \big(\gamma^{-1},\alpha_{\gamma^{-1}}(a^*)\big).
                  \]
                  Those operations makes \(r^*\A\) is a Fell bundle
                  over \(G\) (see~\cite[Example
                  7]{Muhly2001Bundle-over-gpd}). Therefore, we can
                  think \(\Cc(G;r^*\A)\) as a dense
                  \(^*\)\nb-subalgebra of the crossed product
                  \(A\rtimes_\alpha G\), as well as, the Fell bundle
                  algebra \(\Cst(G;r^*\A)\). By
                  \cite[Proposition~3.131]{Goehle2009phd-thesis}
                  (or~\cite[Theorem
                  4.13]{Muhly-Williams2008FellBundle-ME}) we conclude
                  that the norm of \(A\rtimes_\alpha G\) and the norm
                  of \(\Cst(G;r^*\A)\) are same. Therefore,
                  \(A \rtimes_\alpha G \iso \Cst(G;r^*\A)\).
  	
                  \noindent (2): Let \(x\in G^{(0)}\) be a unit. For
                  \(\gamma \in G^x_x\), the map
                  \(\alpha_\gamma\colon A(x) \to A(x)\) is an
                  \(^*\)\nb-automorphism. Therefore, we can restrict
                  the action \(\alpha\) on the isotropy group
                  \(G^x_x\) to get a group dynamical system
                  \((A(x),G^x_x,\alpha|_{G^x_x})\). For any
                  \(\gamma \in G^x_x\) the fibre over \(\gamma\) is
                  \(A(r(\gamma))\) which is \(A(x)\). Therefore, as a
                  \(^*\)\nb-algebra \(\Cc(G^x_x;A(x))\) and
                  \(\Cc(G^x_x;r^*\A|_{G^x_x})\) are isomorphic. Then
                  by~\cite[Corollary~2.46]{Williams2007Crossed-product-Cst-Alg}
                  we have
                  \(A(x) \rtimes_{\alpha|_{G^x_x}} G^x_x \iso
                  \Cst(G^x_x;r^*\A|_{G^x_x})\).
                \end{proof}
  
                \noindent The following result characterises the KMS
                states on the groupoid crossed product when the acting
                groupoid is \emph{\etale}.
  
                \begin{corollary}[{Corollary of
                    Theorem~\ref{thm:KMS-state} and
                    Proposition~\ref{rel:cross:pro-Fell:bundle}}]
                  \label{kms-state-gpd:cros}
                  Let \((A, G, \alpha)\) be a groupoid dynamical
                  system where \(G\) is a locally compact, Hausdorff,
                  second countable, {\'e}tale groupoid. Let
                  \(\beta \in \R\). Let \(\tau\) be a real dynamics on
                  \(A\rtimes_{\alpha}G\) induced by a real-valued
                  \(1\)\nb-cocycle \(c\). Then there is a one-to-one
                  correspondence between \KMS\ states \(\phi\) on
                  \(A\rtimes_{\alpha} G\) and a pair \((\mu, [\Phi])\)
                  consisting of a probability measure \(\mu\) on
                  \(\base\) and a \(\mu\)\nb-measurable field
                  \(\Phi=\{\phi_x\}_{x \in G^{(0)}}\) where \(\phi_x\)
                  is a state on
                  \(A(x)\rtimes_{\alpha|_{G^x_x}} G^x_x\) such that
                  \(A(x)\) contained in centraliser \(\phi_x\)
                  satisfying:
                  \begin{enumerate}
                  \item \label{cross:prod:Cond-I} \(\mu\) is a
                    quasi-invariant measure with Radon--Nikodym
                    derivative \(\e^{-\beta c}\).
                  \item \label{cross:prod:Cond-II} There is a
                    \(\mu\)\nb-conull set~\(K\subseteq \base[G]\) such
                    that the field of states
                    \(\Phi = \{\phi_x\}_{x\in \base}\) satisfies the
                    following condition. Let \(x\in K\). For any
                    \(\gamma \in G^x_x\), \(\eta \in G_x\),
                    \(a\in A(x)\) and \(\xi \in A(r(\eta))\) the
                    following equation holds:
                    \[
                      \phi_{s(\eta)}\big(
                      a\alpha_{\gamma\eta^{-1}}(\xi^*\xi)\cdot
                      \delta_\gamma\big) =
                      \phi_{r(\eta)}\big(\xi\alpha_{\eta}(a)\alpha_{\eta
                        \gamma \eta^{-1}}(\xi^*)\cdot \delta_{\eta
                        \gamma \eta^{-1}}\big).
                    \]
                  \end{enumerate}
                \end{corollary}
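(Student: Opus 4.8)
The plan is to transport Theorem~\ref{thm:KMS-state} across the two isomorphisms furnished by Proposition~\ref{rel:cross:pro-Fell:bundle}. Since $A\rtimes_\alpha G \iso \Cst(G;r^*\A)$ and $A(x)\rtimes_{\alpha|_{G^x_x}} G^x_x \iso \Cst(G^x_x; r^*\A|_{G^x_x})$ for every $x\in\base$, and the dynamics $\tau$ on $A\rtimes_\alpha G$ agrees on the common dense subalgebra $\Cc(G;r^*\A)$ with the dynamics $\sigma$ of Equation~\eqref{equ:real-dyna} (both multiply $f(\gamma)$ by $\e^{\mi t c(\gamma)}$), the one-to-one correspondence of Theorem~\ref{thm:KMS-state} applies verbatim to the Fell bundle $r^*\A$. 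The only genuine work is to rewrite the abstract trace Condition~\eqref{Cond:II} of that theorem in terms of the action $\alpha$, using the explicit multiplication $(\gamma_1,a)(\gamma_2,b) = (\gamma_1\gamma_2, a\,\alpha_{\gamma_1}(b))$ and involution $(\eta,\xi)^* = (\eta\inverse, \alpha_{\eta\inverse}(\xi^*))$ on $r^*\A$ recorded in the proof of Proposition~\ref{rel:cross:pro-Fell:bundle}.

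First I would note that under the identification the fibre $(r^*\A)_\gamma$ over $\gamma\in G$ is $A(r(\gamma))$, so for $\gamma\in G_x^x$ the coefficient $a$ lies in $A(x)$, and for $\eta\in G_x$ the coefficient $\xi$ lies in $A(r(\eta))$; moreover $(r^*\A)_x = A(x)$ for a unit $x$, which matches the hypothesis that $A(x)$ sits inside the centraliser of $\phi_x$. Under these identifications the measure $\mu$, the quasi-invariance statement, and Condition~\eqref{Cond:I} carry over unchanged, giving Condition~\eqref{cross:prod:Cond-I} directly.

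The main computation is to evaluate the two sides of Condition~\eqref{Cond:II} for $r^*\A$. Using the formulas above I would first compute
\[
  \xi^*\xi = \big(x,\ \alpha_{\eta\inverse}(\xi^*\xi)\big)\in A(x),
\]
whence $a\,\xi^*\xi\cdot\delta_\gamma = a\,\alpha_{\gamma\eta\inverse}(\xi^*\xi)\cdot\delta_\gamma$ (applying $\alpha_\gamma\circ\alpha_{\eta\inverse}=\alpha_{\gamma\eta\inverse}$, valid since $s(\gamma)=x=r(\eta\inverse)$), which is exactly the left-hand side of Condition~\eqref{cross:prod:Cond-II}. Symmetrically,
\[
  \xi\, a\, \xi^* = \big(\eta\gamma\eta\inverse,\ \xi\,\alpha_\eta(a)\,\alpha_{\eta\gamma\eta\inverse}(\xi^*)\big),
\]
using $\alpha_{\eta\gamma}\circ\alpha_{\eta\inverse} = \alpha_{\eta\gamma\eta\inverse}$ (valid because $s(\eta\gamma)=x=r(\eta\inverse)$), so that $\xi a\xi^*\cdot\delta_{\eta\gamma\eta\inverse}$ reproduces the right-hand side of Condition~\eqref{cross:prod:Cond-II}. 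Since $\phi_{s(\eta)}=\phi_x$ and the second index is $\phi_{r(\eta)}$, the two conditions coincide.

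I expect the only delicate point to be bookkeeping the base points of the fibres through these products---verifying that the composites $\gamma\eta\inverse$ and $\eta\gamma\eta\inverse$ are composable so that the $\alpha$-cocycle relation may be applied, and confirming that the resulting coefficients land in $A(x)$ and $A(r(\eta))$ as indexed by $\phi_{s(\eta)}$ and $\phi_{r(\eta)}$, respectively. Once this translation of the trace condition is in place, the corollary is an immediate restatement of Theorem~\ref{thm:KMS-state} for the Fell bundle $r^*\A$.
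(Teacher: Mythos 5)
Your proposal is correct and takes essentially the same route as the paper: both transport Theorem~\ref{thm:KMS-state} through the isomorphisms \(A\rtimes_\alpha G \iso \Cst(G;r^*\A)\) and \(A(x)\rtimes_{\alpha|_{G^x_x}} G^x_x \iso \Cst(G^x_x;r^*\A|_{G^x_x})\) of Proposition~\ref{rel:cross:pro-Fell:bundle}, observe that the dynamics agree on \(\Cc(G;r^*\A)\), and then rewrite Condition~\eqref{Cond:II} using the multiplication \((\gamma,a)(\eta,b)=(\gamma\eta, a\alpha_\gamma(b))\) and involution \((\eta,\xi)^*=(\eta\inverse,\alpha_{\eta\inverse}(\xi^*))\) of the pullback bundle. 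Your explicit evaluations \(\xi^*\xi=(x,\alpha_{\eta\inverse}(\xi^*\xi))\) and \(\xi a\xi^*=(\eta\gamma\eta\inverse,\ \xi\alpha_\eta(a)\alpha_{\eta\gamma\eta\inverse}(\xi^*))\), together with the composability checks justifying \(\alpha_\gamma\circ\alpha_{\eta\inverse}=\alpha_{\gamma\eta\inverse}\) and \(\alpha_{\eta\gamma}\circ\alpha_{\eta\inverse}=\alpha_{\eta\gamma\eta\inverse}\), correctly spell out the translation that the paper's proof merely asserts.
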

  
  \begin{proof}
    Proposition~\ref{rel:cross:pro-Fell:bundle} says that \(r^*\A\) as
    a Fell bundle over \(G\) and
    \(A\rtimes_{\alpha} G \simeq \Cst(G;r^*\A)\) and
    \( A(x) \rtimes_{\alpha|_{G^x_x}} G^x_x \simeq
    \Cst(G^x_x;r^*\A|_{G^x_x}).\) Theorem~\ref{thm:KMS-state} gives us
    a one-to-one correspondence between a \KMS\ states \(\phi\) on
    \(A \rtimes_{\alpha} G\) and a pair \((\mu, [\Phi])\)
    satisfying~\eqref{Cond:I} and \eqref{Cond:II} of
    Theorem~\ref{thm:KMS-state}. Let
    \(\Phi = \{\phi_x\}_{x\in \base}\). Then the definition of the
    convolution and involution of groupoid crossed product and the
    \eqref{Cond:II} of Theorem~\ref{thm:KMS-state} forces that
    \(\Phi = \{\phi_x\}_{x\in \base}\) satisfies
    Condition~\eqref{cross:prod:Cond-II} of the current corollary.
  \end{proof}
  
  If we apply the above corollary for \(\beta =0\), then we get a
  result which characterises the invariant traces on the
  \(A\rtimes_{\alpha} G\).
  
  \begin{corollary}
    Let \((A, G, \alpha)\) be a groupoid dynamical system where \(G\)
    is a locally compact, Hausdorff, second countable, {\'e}tale
    groupoid. Let \(\tau\) be a real dynamics on
    \(A\rtimes_{\alpha}G\) induced by a \(1\)\nb-cocycle \(c\). Then
    there is an one-to-one correspondence between the
    \(\tau\)\nb-invariant trace \(\phi\) on \(A\rtimes_{\alpha} G\)
    and a pair \((\mu, [\Phi])\) consisting of a probability measure
    \(\mu\) and a \(\mu\)\nb-measurable field
    \( \Phi = \{\phi_x\}_{x \in G^{(0)}}\) where \(\phi_x\) is a state
    on \(A(x)\rtimes_{\alpha|_{G^x_x}} G^x_x\) with \(A(x)\) contained
    in the centraliser \(\phi_x\) satisfying:
    \begin{enumerate}
    \item \(\mu\) is an invariant measure.
    \item There is a \(\mu\)\nb-conull set~\(K\subseteq \base[G]\)
      such that the field of states \(\Phi = \{\phi_x\}_{x\in \base}\)
      satisfies the following condition. Let \(x\in K\). For any
      \(\gamma \in G^x_x\), \(\eta \in G_x\), \(a\in A(x)\) and
      \(\xi \in A(r(\eta))\) the following equation holds:
      \[
        \phi_{s(\eta)}\big( a\alpha_{\gamma\eta^{-1}}(\xi^*\xi)\cdot
        \delta_\gamma\big) =
        \phi_{r(\eta)}\big(\xi\alpha_{\eta}(a)\alpha_{\eta \gamma
          \eta^{-1}}(\xi^*)\cdot \delta_{\eta \gamma \eta^{-1}}\big).
      \]
  		
    \end{enumerate}
  \end{corollary}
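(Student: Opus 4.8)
The plan is to read this off as the inverse-temperature-zero specialisation of Corollary~\ref{kms-state-gpd:cros}, so that nothing beyond bookkeeping is required. First I would invoke Proposition~\ref{rel:cross:pro-Fell:bundle} once more to realise the crossed products as Fell bundle algebras, \(A\rtimes_\alpha G \iso \Cst(G;r^*\A)\) and \(A(x)\rtimes_{\alpha|_{G^x_x}} G^x_x \iso \Cst(G^x_x; r^*\A|_{G^x_x})\), exactly as in the proof of Corollary~\ref{kms-state-gpd:cros}. I would then recall that at \(\beta = 0\) the \KMS\ condition \(\phi(ab) = \phi\big(b\,\tau_{\mi 0}(a)\big) = \phi(ba)\) is precisely the trace condition, so that the \(\tau\)\nb-invariant \KMS\ states at \(\beta = 0\) are exactly the \(\tau\)\nb-invariant traces on \(A\rtimes_\alpha G\).

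With this in hand I would specialise the two conditions of Corollary~\ref{kms-state-gpd:cros}. Setting \(\beta = 0\) makes the modular function \(\e^{-\beta c}\) equal to the constant function \(1\); hence a quasi\nb-invariant measure with Radon--Nikodym derivative \(\e^{-\beta c}\) is an invariant measure in the sense of Section~\ref{sec:prelim}, which is exactly Condition~(1) of the present corollary. Condition~\eqref{cross:prod:Cond-II} of Corollary~\ref{kms-state-gpd:cros} contains no occurrence of \(\beta\) whatsoever---it is the trace condition
\[
  \phi_{s(\eta)}\big( a\,\alpha_{\gamma\eta^{-1}}(\xi^*\xi)\cdot \delta_\gamma\big) = \phi_{r(\eta)}\big(\xi\,\alpha_{\eta}(a)\,\alpha_{\eta \gamma \eta^{-1}}(\xi^*)\cdot \delta_{\eta \gamma \eta^{-1}}\big)
\]
written through the convolution and involution of \(r^*\A\)---so it carries over verbatim as Condition~(2). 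Thus the bijection of Corollary~\ref{kms-state-gpd:cros} restricts, on putting \(\beta = 0\), to the asserted correspondence.

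The single point that is not pure bookkeeping is the identification used in the first paragraph, namely that the \(\beta = 0\) members of the correspondence are the \(\tau\)\nb-invariant traces rather than merely traces, since the implication ``\KMS\ \(\Rightarrow\) \(\tau\)\nb-invariant'' quoted from Bratteli--Robinson is asserted only for \(\beta \neq 0\). I expect this to be the main (indeed the only) obstacle, and I would dispose of it on the disintegrated side exactly as in the corollary of Theorem~\ref{thm:KMS-state} that closes the previous section: for a pair \((\mu,[\Phi])\) satisfying Conditions~(1) and~(2), the integrated state of Equation~\eqref{eq:disint-states} obeys \(\phi(\tau_t(f)) = \int_{\base}\sum_{\gamma\in G^x_x}\e^{\mi t c(\gamma)}\phi_x\big(f(\gamma)\cdot\delta_\gamma\big)\,\dd\mu(x)\), so that \(\tau\)\nb-invariance of \(\phi\) corresponds fibrewise to invariance of the field \(\Phi\) under the dynamics induced by \(c\) on each \(\Cst(G^x_x;\A(x))\); tracking this invariance through the integration--disintegration of Theorem~\ref{prop:bij-betwen-states-and-fields-of-states} then yields the stated one-to-one correspondence.
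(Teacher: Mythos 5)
Your bookkeeping follows exactly the paper's route: the paper offers no argument for this corollary beyond the sentence ``apply Corollary~\ref{kms-state-gpd:cros} for \(\beta=0\)'', and your first two paragraphs carry that out faithfully --- Proposition~\ref{rel:cross:pro-Fell:bundle} for the identifications of the crossed products with Fell bundle algebras, \(\e^{-\beta c}\equiv 1\) turning quasi-invariance into invariance, and Condition~\eqref{cross:prod:Cond-II}, which contains no \(\beta\), passing through verbatim. On that portion you and the paper agree completely.

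Where you go beyond the paper is the invariance point, and your instinct there is sound but your patch does not close the gap you correctly identified. At \(\beta=0\) the paper's definition of a KMS state reduces to the trace condition alone, so what Theorem~\ref{thm:KMS-state} (hence Corollary~\ref{kms-state-gpd:cros}) actually delivers at \(\beta=0\) is a bijection between \emph{traces} on \(A\rtimes_\alpha G\) and pairs \((\mu,[\Phi])\) satisfying (1)--(2); nothing in (1)--(2) encodes \(\tau\)\nb-invariance. Your closing step --- ``tracking the fibrewise invariance of \(\Phi\) through the integration--disintegration'' --- quietly introduces a condition on \(\Phi\) that is not among the hypotheses of the statement, so it cannot ``yield the stated one-to-one correspondence''; it only shows that \(\tau\)\nb-invariance of \(\phi\) is \emph{equivalent} to an additional fibrewise condition that neither the statement nor the paper imposes. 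Read literally, the correspondence with \(\tau\)\nb-invariant traces in fact fails: take \(A=\C\) and \(G=\Z\) (a group, so \(\base\) is one point), whence \(A\rtimes_\alpha G\iso \Cst(\Z)\iso C(\mathbb{T})\), with \(c=\mathrm{id}_{\Z}\). Every state of \(C(\mathbb{T})\) is a trace, \(\mu\) is forced to be the (invariant) point mass, and Condition~(2) is vacuous since the algebra is commutative; yet only the Lebesgue trace is invariant under the induced rotation dynamics. So either one reads the corollary with the convention that a KMS state at \(\beta=0\) is simply a trace (the word ``invariant'' being a gloss inherited from Bratteli--Robinson, whose automatic-invariance result the paper itself quotes only for \(\beta\neq 0\)), or Condition~(2) must be supplemented by invariance of each \(\phi_x\) under the dynamics induced by \(c|_{G^x_x}\). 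In short: your proof matches the paper's (essentially nonexistent) proof on the substance, and the obstacle you flagged is real --- but it is a defect of the statement, and your final paragraph does not repair it.
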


  \subsection{KMS ststes on twisted groupoid crossed product}
  \label{sec:kms-state-gpd-twisted-cp}
  
  Renault~\cite{Renault1980Gpd-Cst-Alg} introduced and studied twisted
  groupoid algebra where the twist given by a continuous
  \(2\)\nb-cocycle. Then
  in~\cite{Renault1985Representations-of-crossed-product-of-gpd-Cst-Alg},
  he introduced twist over a groupoid \(G\) by the groupoid extension
  
  \begin{equation}
    \label{equ:gpd-extension}
    \base \to S \mathrel{\mathop{\rightarrow}^{i}} \Sigma \mathrel{\mathop{\rightarrow}^{j}} G \to \base
  \end{equation}
  
  \noindent where \(\Sigma\) is groupoid with unit space \(\base\),
  \(S\) is an abelian group bundle, \(i\) is injective and \(j\) is
  surjective groupoid homomorphism. We can think of \(S\) as a closed
  subgroupoid of \(\Sigma\). The twists coming from a \(2\)\nb-cocycle
  is a particular case of the twist given by groupoid
  extension. However, in general, not all twists induced from a
  \(2\)\nb-cocycle
  (see~\cite[Section~11.1]{Sims-Szabo-Williams2020Operator-alg-book}).
  Renault generalised Green's notion of the twisted group action
  (see~\cite{Green1978local-structure-of-twisted-covariant-alg}) to
  twisted groupoid action as follows:
  
  \begin{definition}[Green--Renault twisted dynamical system]
    Let \(G,\Sigma,\) and \(S\) as in above. A twisted groupoid
    dynamical system is an upper semicontinuous \(\Cst\)\nb-bundle
    \(\pi \colon \A \to \base (= \Sigma^{(0)})\) on which \(\Sigma\)
    acts by the isomorphisms \(\{\alpha_\sigma\}_{\sigma\in \Sigma}\)
    such that the restricted action of \(\alpha\) gives an action of
    \(S\) is given by a homomorphism \(\chi\) from \(S\) to
    \( \bigsqcup_{s\in S} M(A(r(s)))\) satisfying
    \( \alpha_s(a) = \chi(s)a \chi(s)^* ~\textup{ for } (s,a) \in
    S\times_{r,\base, \pi}\A\) and
    \( \chi(\sigma s\sigma^{-1}) = \alpha_\sigma(\chi(s))~ \textup{
      for } (\sigma,s)\in \Sigma\times_{s, \base, r}S\).
  \end{definition}
  There is an action of \(S\) on the bundle
  \(r^*\A = \big\{(a,\sigma) \in \A \times \Sigma : \pi(a) =
  r(\sigma)\big\}\) by
  \( (a,\sigma)\cdot s \defeq \big(a\chi(s)^*,s\sigma\big).\) We
  define \(\mathcal{B}\) to be the quotient \(r^*\A/S\) and the bundle
  map \(p\colon \mathcal{B} \to G\) by \(p([a,\sigma]) = j(\sigma)\).
  Then Lemma~2.7 of~\cite{Muhly-Williams2008FellBundle-ME}, says that
  \(p\colon \mathcal{B}\to G\) is a Fell bundle in the following way:
  let
  \( \mathcal{B}^{(2)} \defeq \big\{\big([a,\sigma],[b,\tau]\big) :
  \big(j(\sigma),j(\tau)\big) \in G^{(2)}\big\}.  \) Then for
  \([a, \sigma], [b, \tau] \in \mathcal{B},\) define
  \[
    [a,\sigma][b,\tau] \defeq [a\alpha_\sigma(b), \sigma\tau];\quad
    [a,\sigma]^* \defeq [\alpha^{-1}_\sigma(a^*), \sigma^{-1}].
  \]
  For a locally compact, Hausdorff, second countable groupoid \(G\),
  the \(\Cst\)\nb-algebra of this Fell bundle \(\mathcal{B}\) is
  called twisted crossed product \(\Cst\)\nb-algebra of
  \((G, \Sigma, \A, \alpha, \chi)\). It is denoted by
  \(\Cst(\mathcal{B})\). Our goal is to identify the
  \(\Cst\)\nb-algebra at isotropy of the Fell
  bundle~\(\mathcal{B}\). For any \(x\in \base\), the short exact
  sequence of groupoids~\eqref{equ:gpd-extension} become the short
  exact sequence of groups
  \[
    \{x\} \to S_x \mathrel{\mathop{\rightarrow}^{i}} \Sigma^x_x
    \mathrel{\mathop{\rightarrow}^{j}} G^x_x \to \{x\}
  \]
  where \(S_x\) is an abelian group. Note that \(S_x\) is a normal
  subgroup of \(\Sigma^x_x\). For any \(x\in \base\) the map
  \(\chi_x\colon S_x \to UM(A(x)) (= UM(\A_x))\) is a continuous
  homomorphism where \(\chi_x = \chi|_{S_x}\). Moreover for
  \(s\in S_x, a\in \A_x\) we have
  \((\alpha|_{\Sigma^x_x})_s (a) = \chi_x(s) a \chi(s)^*\) and for any
  \(\sigma \in \Sigma^x_x\) we have
  \( \chi_x(\sigma s \sigma^{-1}) =
  \overline{(\alpha|_{\Sigma^x_x})}_\sigma \chi_x(s)\).  Therefore,
  \((\A_x, \Sigma^x_x, \alpha|_{\Sigma^x_x}, \chi_x)\) is a twisted
  dynamical system in Green's sense
  (see~\cite[Section~7.4.1]{Williams2007Crossed-product-Cst-Alg}). Moreover,
  a similar argument as Proposition~\ref{rel:cross:pro-Fell:bundle}(2)
  says that: as a \(^*\)\nb-algebras
  \(\Cc(\Sigma^x_x, \A_x, \alpha|_{\Sigma^x_x}, \chi_x)\) and
  \(\Cc(G^x_x;\mathcal{B}|_{G^x_x})\) are isomorphic. Then
  by~\cite[Corollary~2.46]{Williams2007Crossed-product-Cst-Alg} we
  have
  \(A(x) \rtimes^{\chi_x}_{\alpha|_{G^x_x}} G^x_x \iso
  \Cst(G^x_x;r^*\A|_{G^x_x})\).  Since the \(\Cst\)\nb-algebras of
  this Fell bundle over the isotropy group at \(x\in \base\) is
  isomorphic to \(\A_x\rtimes^{\chi_x}_{\alpha|_{\Sigma^x_x}} G^x_x\),
  we apply Theorem~\ref{thm:KMS-state} and get an analogous result as
  (Corollary~\ref{kms-state-gpd:cros}) groupoid crossed product.
  
  \section{An integration-disintegration theorem for states on
    \(\Contz(X)\)-algebras}
  \label{sec:state-upp-semi-con-bundle}
 
  In this section, we shall establish an integration-disintegration
  theorem for the states (or traces) on the section algebra of an
  upper semicontinuous \(\Cst\)\nb-bundle. To prove such
  integration-disintegration theorem we need not appeal to the
  Muhly--Williams disintegration theorem for Fell bundle and our
  approch is quite simpler. We also remove the `centraliser related'
  hypothesis on the states. Hence, the main result of this section is
  not a particullar case of
  Theorem~\ref{prop:bij-betwen-states-and-fields-of-states}.  Recall
  the definition of measurable field of states from
  Definition~\ref{def:mbl-field-state}.
 
 \begin{proposition}\label{int-state-upp-sem}
   Let \(p\colon \A \to X \) be a upper semicontinuous, separable
   \(\Cst\)\nb-bundle over \(X\). Let \((\mu, \{\phi_x\}_{x\in X})\)
   be a pair consisting of a probability measure \(\mu\) on \(X\) and
   a \(\mu\)\nb-measurable field \(\{\phi_x\}_{x\in X}\) where
   \(\phi_x\) is a state (or trace) on~\(\A_x\). Then the pair
   \((\mu, \{\phi_x\}_{x\in X})\) defines a state (or trace)~\(\phi\)
   on the section algebra \(\mathrm{C}_0(X;\A)\) as follows:
   \[
     \phi(f)=\int_{X}\phi_x(f(x)) \mathrm{d}\mu(x)
   \]
   for \(f \in \Cc(X;\A)\).
 \end{proposition}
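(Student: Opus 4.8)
The plan is to verify directly that the stated formula defines a bounded positive linear functional of norm one on the dense $*$-subalgebra $\Cc(X;\A)$, to extend it to $\Contz(X;\A)$ by continuity, and then to read off the tracial case from the fibrewise nature of the multiplication. First I would check that $\phi$ is well defined on $\Cc(X;\A)$. Viewing $X$ as the trivial groupoid on its own units, the isotropy at each point is the singleton $\{x\}$, so Definition~\ref{def:mbl-field-state} specialises to the assertion that $x\mapsto \phi_x(f(x))$ is $\mu$\nb-measurable for every $f\in \Cc(X;\A)$. Since each $\phi_x$ is a state, $\abs{\phi_x(f(x))}\le \norm{f(x)}\le \norm{f}_\infty$, and as $\mu$ is a probability measure the integrand is measurable and uniformly bounded, hence $\mu$\nb-integrable; thus $\phi(f)$ is defined and $\abs{\phi(f)}\le \norm{f}_\infty=\norm{f}_{\Contz(X;\A)}$. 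Linearity of $\phi$ is immediate from the linearity of each $\phi_x$ and of the integral.

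Next I would establish positivity and compute the norm. If $f\in \Cc(X;\A)$ is positive in the $\Cst$\nb-algebra $\Contz(X;\A)$, then each evaluation $f(x)$ is positive in $\A_x$, since the point evaluations onto the fibres are $*$\nb-homomorphisms; hence $\phi_x(f(x))\ge 0$ and therefore $\phi(f)=\int_X \phi_x(f(x))\,\dd\mu(x)\ge 0$. To see that $\phi$ has norm one, take an approximate identity $(u_n)_{n\in\N}$ of $\Contz(X;\A)$; by Lemma~\ref{lem:fiberwisw-appr-iden} each $(u_n(x))_n$ is an approximate identity of $\A_x$, so $\phi_x(u_n(x))\to 1$ because $\phi_x$ is a state. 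As $\abs{\phi_x(u_n(x))}\le 1$, the dominated convergence theorem yields $\phi(u_n)=\int_X \phi_x(u_n(x))\,\dd\mu(x)\to \int_X 1\,\dd\mu=1$. Since $\phi$ is a bounded positive functional, $\norm{\phi}=\lim_n \phi(u_n)=1$ by~\cite[Theorem~3.33]{Murphy-Book}, so $\phi$ is a state on the dense subalgebra and extends uniquely to a state on $\Contz(X;\A)$.

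Finally, for the tracial case I would use that the multiplication of $\Contz(X;\A)$ is fibrewise, so that $(fg)(x)=f(x)g(x)$ and $(gf)(x)=g(x)f(x)$ for $f,g\in \Cc(X;\A)$. If every $\phi_x$ is a trace then $\phi_x((fg)(x))=\phi_x(f(x)g(x))=\phi_x(g(x)f(x))=\phi_x((gf)(x))$, and integrating gives $\phi(fg)=\phi(gf)$; this identity passes to all of $\Contz(X;\A)$ by density and the continuity of $\phi$. I do not expect a genuine obstacle here: unlike the groupoid situation underlying Theorem~\ref{thm:KMS-state}, there is no convolution and no modular function to carry, so the only points that require care are the reduction of the measurability hypothesis to the fibrewise statement and the approximate-identity computation of the norm, both of which are routine.
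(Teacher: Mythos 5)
Your proposal is correct and takes essentially the same route as the paper's proof: linearity and the bound \(\abs{\phi(f)}\le\norm{f}_\infty\) fibrewise, positivity from positivity of each \(\phi_x\), the norm-one computation via an approximate identity together with Lemma~\ref{lem:fiberwisw-appr-iden}, the dominated convergence theorem and \cite[Theorem~3.3.3]{Murphy-Book}, and the fibrewise multiplication for the tracial case. Your explicit reduction of the measurability hypothesis of Definition~\ref{def:mbl-field-state} to the statement that \(x\mapsto\phi_x(f(x))\) is \(\mu\)\nb-measurable is a detail the paper leaves implicit, but it does not change the substance of the argument.
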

 
 \begin{proof}
   Define the linear functional~\(\phi\) on \(\Cc(X;\A)\) by
   \( \phi(f)=\int_{X}\phi_x(f(x)) \mathrm{d}\mu(x)\).  The positivity
   of the functional \(\phi\) follows from the fact that each
   \(\phi_x\) is positive and \(\mu\) is a positive measure on \(X\).
   Since each \(\phi_x\) has unit norm, for all \(f \in \Cc(X;\A)\),
   \[
     \abs{\phi(f)} \leq \int_X \abs{\phi_x(f(x))}\,\dd\mu(x) \leq
     \int_X \norm{f(x)}\,\dd\mu(x) \leq \norm{f}_{\infty}.
   \]
   Thus, \(\phi\) is a bounded functional on \(\Contc(X;\A)\) with
   \(\norm{\phi}\leq 1\). Extended \(\phi\) to \(\Contz(X;\A)\) as a
   bounded linear functional whose norm \(\leq 1\). We denote this
   extension by \(\phi\) itself. Our next claim is \(\norm{\phi} =1\):
   Let \((u_n)_{n\in \N}\) be an approximate identity of
   \(\Contz(X;\A)\) consisting of sections with compact support. Then
   \((u_n(x))_{n\in \N}\) is an approximate identity of \(\A_x\) (see
   Lemma~\ref{lem:fiberwisw-appr-iden}).  By~\cite[Theorem
   3.3.3]{Murphy-Book} we get,
   \begin{multline*}
     \norm{\phi}=\lim_{n}\phi(u_n)=\lim_{n}\int_{X}\phi_x(u_n(x))\mathrm{d}\mu(x)=\int_{X} \lim_{n} \phi_x(u_n(x))\mathrm{d}\mu(x)\\
     =\int_{X}\norm{\phi_x}\mathrm{d}\mu(x)=1.
   \end{multline*}
   The third equality above is because of the dominated convergence
   theorem. The fourth one follows
   from~Lemma~\ref{lem:fiberwisw-appr-iden} and~\cite[Theorem
   3.3.3]{Murphy-Book}.
 	
   \noindent Note that, if each \(\phi_x\) is a trace, then \(\phi\)
   is also a trace:
   \begin{equation*}
     \phi(fg)=\int_{X}\phi_x\big(f(x)g(x)\big)\mathrm{d}\mu(x)=\int_{X}\phi_x\big(g(x)f(x)\big)\mathrm{d}\mu(x)
     =\phi(gf)
   \end{equation*}
   for all \(f,g \in \Cc(X;\A)\).
 \end{proof}
 
 \begin{proposition}\label{dis-sta-upp-sem}
   Let \(p\colon \A \to X \) be a upper semicontinuous, separable
   \(\Cst\)\nb-bundle. Let \(\phi\) be a state (or trace) on
   \(\Contz(X;\A)\). Then there is a pair
   \((\mu, \{\phi_x\}_{x\in X})\) consisting of a probability measure
   \(\mu\) on \(X\) and a \(\mu\)\nb-measurable field
   \(\{\phi_x\}_{x\in X}\) where \(\phi_x\) is a state (or trace) on
   the fibre \(\A_x\). The relation between the given state (or trace)
   \(\phi\) and the pair \((\mu, \{\phi_x\}_{x\in X})\) is given by
   \begin{equation*}
     \phi(f)=\int_{X}\phi_x(f(x)) \mathrm{d}\mu(x)
   \end{equation*}
   for \(f \in \Cc(X;\A)\).
 \end{proposition}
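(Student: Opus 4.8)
The plan is to disintegrate the functional $\phi$ \emph{directly}, without passing through a direct integral of its GNS representation; this is what makes the argument simpler than the proof of Proposition~\ref{prop:integration-of-states}. Recall that $A \defeq \Contz(X;\A)$ is a $\Contz(X)$\nb-algebra, so that $\Contz(X)$ embeds as central multipliers $\iota\colon \Contz(X) \to ZM(A)$ with $(\iota(h) f)(x) = h(x) f(x)$. First I would produce the measure~$\mu$: extend $\phi$ to a state $\bar{\phi}$ on the multiplier algebra $M(A)$ (via the GNS construction, since nondegenerate representations extend to $M(A)$) and set $\mu(h) \defeq \bar{\phi}(\iota(h))$ for $h \in \Contz(X)$. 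This is a positive linear functional on $\Contz(X)$; using an approximate identity $(e_n)_{n}$ of $\Contz(X)$ and the nondegeneracy of $\iota$ one gets $\mu(e_n) = \bar{\phi}(\iota(e_n)) \to 1$, so by the Riesz representation theorem $\mu$ is a probability (Radon) measure on $X$.

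Next, for each positive $f \in A$ I would consider the positive functional $h \mapsto \phi(\iota(h) f)$ on $\Contz(X)$. Since $0 \le \iota(h) f \le \norm{f}_{\infty}\,\iota(h)$ for $0 \le h$, this functional is dominated by $\norm{f}_{\infty}\,\mu$, hence absolutely continuous with respect to~$\mu$, and the Radon--Nikodym theorem furnishes $g_f \in L^{\infty}(\mu)$ with $0 \le g_f \le \norm{f}_{\infty}$ and
\[
  \phi(\iota(h) f) = \int_X h(x)\, g_f(x)\, \dd\mu(x), \qquad h \in \Contz(X).
\]
Extending $f \mapsto g_f$ linearly, this assignment is additive and $\ast$\nb-compatible up to $\mu$\nb-null sets. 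The crucial point---and \textbf{the main obstacle}---is a \emph{locality} statement: for fixed $f \ge 0$ one has $g_f(x) \le \norm{f(x)}$ for $\mu$\nb-almost every~$x$, and in particular $g_f(x) = 0$ almost everywhere on $\{x : f(x) = 0\}$. I would prove this by fixing $x_0$, using upper semicontinuity of the norm to choose a cutoff $\chi \in \Cc(X)$ with $\chi = 1$ near $x_0$ and $\sup_{\supp\chi}\norm{f(\cdot)} \le \norm{f(x_0)} + \epsilon$; then the fibrewise inequality $\iota(h\chi) f \le (\norm{f(x_0)}+\epsilon)\,\iota(h\chi)$ in $M(A)$, together with monotonicity of $\bar{\phi}$, yields $\chi\, g_f \le (\norm{f(x_0)}+\epsilon)\,\chi$ almost everywhere, whence $g_f \le \norm{f(x_0)}+\epsilon$ almost everywhere on a neighbourhood of $x_0$. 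Since $X$ is second countable (hence Lindelöf) and $\mu$ is Radon, a countable-cover argument upgrades this to the stated almost-everywhere bound.

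With locality in hand, I would fix (by separability) a countable dense $\ast$\nb-subalgebra $D \subseteq A$ and a single $\mu$\nb-conull set $X_0$ on which, for all $f \in D$, the value $g_f(x)$ depends only on $f(x) \in \A_x$ and $\abs{g_f(x)} \le \norm{f(x)}$ holds. For $x \in X_0$ and $a \in \A_x$, choosing (by enough sections) some $f \in A$ with $f(x) = a$, I define $\phi_x(a) \defeq g_f(x)$; locality makes this well defined, positive and contractive, so it extends to a state on~$\A_x$, and for $x \notin X_0$ I set $\phi_x$ to be any fixed state. Measurability of $x \mapsto \phi_x(f(x)) = g_f(x)$ is then immediate (by uniform approximation from~$D$ and continuity of each~$\phi_x$). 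To obtain the integral formula I apply the defining identity with the approximate identity $(e_n)$ of $\Contz(X)$ and dominated convergence:
\[
  \phi(f) = \lim_n \phi(\iota(e_n) f) = \lim_n \int_X e_n\, g_f \,\dd\mu = \int_X g_f \,\dd\mu = \int_X \phi_x(f(x))\,\dd\mu(x).
\]
That each $\phi_x$ actually has norm one follows from $\int_X \norm{\phi_x}\,\dd\mu = \lim_n \int_X \phi_x(u_n(x))\,\dd\mu = \lim_n \phi(u_n) = 1$ for an approximate identity $(u_n)$ of $A$ (so that $(u_n(x))_n$ is an approximate identity of $\A_x$ by Lemma~\ref{lem:fiberwisw-appr-iden}), together with $\norm{\phi_x} \le 1$ and $\mu$ being a probability measure. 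Finally, in the trace case the centrality of $\iota(h)$ gives $\phi(\iota(h) fg) = \phi(\iota(h) gf)$, hence $g_{fg} = g_{gf}$ almost everywhere, so each $\phi_x$ is a trace.
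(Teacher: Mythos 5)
Your proposal is correct in substance, but it takes a genuinely different route from the paper's. The paper disintegrates the \emph{GNS representation}: it invokes Propositions~3.99 and~3.101 of Goehle's thesis to decompose \(\Hilm_{\phi} \iso \int_X^{\oplus}\Hilm_x \,\dd\nu(x)\) and \(\pi_{\phi} \iso \int_X^{\oplus}\pi_x\,\dd\nu(x)\) over a Borel Hilbert bundle, identifies the cyclic vector with a vector field \((\xi_x)_{x\in X}\), and then simply sets \(\phi_x(a) = \norm{\xi_x}^{-2}\binpro{\xi_x}{\pi_x(a)\xi_x}\) and \(\dd\mu = \norm{\xi_x}^{2}\,\dd\nu\); measurability of \(x\mapsto\phi_x(f(x))\) and the unit-norm property then come essentially for free from the vector field and the fibrewise approximate identity (Lemma~\ref{lem:fiberwisw-appr-iden})---your final norm computation and your trace argument via central cutoffs \(g\in\Cc(X)\) coincide with the paper's. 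You instead disintegrate the \emph{functional} directly, by Radon--Nikodym against the central copy of \(\Contz(X)\), with the locality lemma \(g_f \le \norm{f(\cdot)}\) \(\mu\)\nb-a.e.\ carried by upper semicontinuity of the norm. What your route buys: it avoids direct-integral machinery entirely (only Riesz representation and Radon--Nikodym are used) and makes explicit where upper semicontinuity and separability enter; what it costs is the null-set bookkeeping that the paper's vector-field formula sidesteps. Two spots in your sketch need slightly more care, though both repairs are routine: (i) the inequality \(\iota(h\chi)f \le (\norm{f(x_0)}+\epsilon)\,\iota(h\chi)\) in \(M(A)\) should be verified by compressing with arbitrary \(a\in A\) and checking positivity fibrewise (equivalently, work with the open sets \(U_q = \{x : \norm{f(x)} < q\}\), \(q\in\Q\), on which any \(h\in\Cc(U_q)\) gives \(\phi(\iota(h)f)\le q\,\mu(h)\) directly, and then take the countable intersection of exceptional null sets); and (ii) your conull set \(X_0\) controls \(g_f\) only for \(f\) in the countable algebra \(D\), so \(\phi_x\) must first be defined on the dense subspace \(\{f(x): f\in D\}\subseteq \A_x\) and then extended by boundedness and positivity, rather than by choosing an arbitrary \(f\in A\) with \(f(x)=a\) as written.
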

 
 \begin{proof}
 
   Let \(\phi\) be a state on \(\Contz(X;\A)\) and let
   \((\pi_\phi, \Hilm_\phi, \xi_\phi)\) be the GNS triple
   of~\(\phi\). Therefore,
   \(\phi(f) = \binpro{\xi_{\phi}}{\pi_{\phi}(f)\xi_\phi}\) for
   \(f\in \Cc(X;\A)\).  We think \(\Contz(X;\A)\) as a
   \(\Contz(X)\)\nb-algebra where the action of \(\Contz(X)\) on
   \(\Contz(X;\A)\) given by \( f\cdot \xi (x) = f(x)\xi(x)\) for
   \(f \in \Contz(X)\) and \(\xi \in \Contz(X;\A)\). Propositions~3.99
   and~3.101 of~\cite{Goehle2009phd-thesis} say that there is a Borel
   Hilbert bundle \(X*\Hilm\) and a finite positive measure \(\nu\) on
   \(X\) such that \(\Hilm_{\phi}\) is unitary equivalent to
   \(\int_{X}^{\oplus} \Hilm_x\dd\mu(x)\) and \(\pi_{\phi}\) is also
   unitary equivalent to \(\int_{X}^{\oplus} \pi_x\dd\mu(x)\) where
   \(\pi_x:\A_x \to \Bound(\Hilm_x)\) is a representation. Identify
   the cyclic vector \(\xi_{\phi}\) for \(\pi_{\phi}\) to a vector
   field \((\xi_x)_{x\in X}\). Define the positive linear map
   \(\phi_x\) on \(\A_x\) by
   \[
     \phi_x(a) = \norm{\xi_x}^{-2}\binpro{\xi_x}{\pi_x(a)\xi_x}
   \]
   where \(a\in \A_x\). To show \(\norm{\phi_x} =1\): let
   \((a_n)_{n\in \N}\) be an approximate identity of \(\A_x\). Then
   \[
     \norm{\phi_x} = \lim_{n} \phi_x(a_n) = \lim_{n}
     \norm{\xi_x}^{-2}\binpro{\xi_x}{\pi_x(a_n)(\xi_x)} =
     \norm{\xi_x}^{-2} \binpro{\xi_x}{\xi_x} =1.
   \]
   The first equality of above computation is follows
   from~\cite[Theorem~3.33]{Murphy-Book} and third one follows from
   nondegeneracy of \(\pi_x\). Therefore, \(\phi_x\) is a state on
   \(\A_x\) for \(x\in X\).
 	
   \noindent Define \(\dd \mu = \norm{\xi_x}^2 \dd\nu\). For any
   \(f\in \Cc(X;\A)\), we have
   \begin{multline*}
     \phi(f) = \binpro{\xi_{\phi}}{\pi_{\phi} (f) \xi_{\phi}} = \int_{X} \binpro{\xi_x}{\pi_x(f(x))\xi_x} \dd\nu(x) = \int_{X} \norm{\xi_x}^2 \phi_x(f(x)) \dd \nu(x)\\
     = \int_{X} \phi_x(f(x)) \dd \mu(x).
   \end{multline*}
   To prove \(\mu\) is a probability measure, we take an approximate
   identity \((u_n)_{n\in\N}\) for \(\Cst(X;\A)\). From the last
   computation we have
   \( \phi(u_n) = \int_{X} \phi_x(u_n(x)) \dd \mu\).  Taking limit as
   \(n \to \infty \) in above and using~\cite[Theorem
   3.33]{Murphy-Book}, dominated convergence theorem and
   Lemma~\ref{lem:fiberwisw-appr-iden}, we have
   \[
     1 = \norm{\phi} = \lim_{n} \phi(u_n) = \lim_{n} \int_{X}
     \phi_x(u_n(x)) \dd \mu = \int_{X} \lim_{n} \phi_x(u_n(x)) \dd \mu
     = \int_{X} \dd \mu.
   \]
   Therefore, \(\mu\) is a probbility measure on \(X\).

   Suppose \(\phi\) is a trace on \(\Contz(X;\A)\). Let
   \(f,f_1\in \Cc(X;\A)\) and \(g\in \Cc(X)\). Define
   \(h = g\cdot f_1 \in \Cc(X;\A)\). Using the trace proparty of
   \(\phi\) we have \(\phi(f\cdot h) = \phi(h\cdot f)\). Which implies
   that
   \[
     \int_{X} \phi_x\big(f\cdot h(x)\big) \dd \mu(x) = \int_{X}
     \phi_x\big(h\cdot f(x)\big) \dd \mu(x).
   \]
   The defintion of the multiplication on \(\Contz(X;\A)\) gives us
   \[
     \int_{X} \phi_x\big(f(x) g(x)f_1(x)\big) \dd \mu(x) = \int_{X}
     \phi_x\big(g(x) f_1(x) f(x)\big) \dd \mu(x).
   \]
   Since \(g\) is a complex-valued function, the above equation became
   \[
     \int_{X} g(x) \phi_x(f(x)f_1(x)) \dd \mu(x) = \int_{X}
     g(x)\phi_x( f_1(x)f(x)) \dd \mu(x).
   \]
   But \(g\in \Cc(X)\) was arbitary,
   \( \phi_x(f(x)f_1(x)) = \phi_x( f_1(x)f(x)) \) for
   \(\mu\)\nb-almost everywhere \(x\in X\). Since \(p\colon \A \to X\)
   has enough sections \(\phi_x\) is a trace on \(\A_x\) for
   \(\mu\)\nb-almost everywhere \(x\in X\).
 	
 \end{proof}
 
 \begin{remarks}\label{remks:one-to-one-C(X)-alg}
   (1). Using the similar argument as in
   Proposition~\ref{prop:integration-of-states-2}, we conclude that if
   \((\mu, \{\phi_x\}_{x\in X})\) and \((\nu, \{\psi_x\}_{x\in X})\)
   are two disintegration of a state \(\phi\) on \(\Cst(X;\A)\) as in
   Proposition~\ref{dis-sta-upp-sem}, then \(\mu=\nu\) and
   \(\{\phi_x\}_{x\in X}\) and \(\{\psi_x\}_{x\in X}\) are eqivalent
   field of states.
 	
   \noindent (2). Along the same line of
   Lemma~\ref{lem:disint-states}, we can say that if
   \(\{\phi_x\}_{x\in X}\) and \(\{\psi_x\}_{x\in X}\) are two
   \(\mu\)\nb-equilvalent field of states, then the process of
   Proposition~\ref{int-state-upp-sem} induce the same state on
   \(\Cst(X;\A)\). We shall denote the equivalence class of
   \(\Phi = \{\phi_x\}_{x\in \base}\) by \([\Phi]\).
 	
   \noindent (3). Moreover, the above two process are inverses of each
   other.
 \end{remarks}
 
 \begin{theorem}\label{int-dis-state-upp-semi}
   Let \(p\colon \A \to X \) be a separable, upper semicontinuous
   \(\Cst\)\nb-bundle. Then there is an one-to-one correspondence
   between the state (or trace) \(\phi\) on \(\Contz(X;\A)\) and a
   pair \((\mu, [\Phi])\) consisting of a probability measure \(\mu\)
   on \(X\) and a \(\mu\)\nb-measurable field
   \( \Phi = \{\phi_x\}_{x\in X}\) where \(\phi_x\) is a state (or
   trace) on the fibre \(\A_x\). Moreover, the relation between a
   state (or trace) \(\phi\) and the associated pair \((\mu, [\Phi])\)
   is given by
   \begin{equation}\label{eq:rel-state:on:C(X)-alg}
     \phi(f)=\int_{X}\phi_x(f(x)) \mathrm{d}\mu(x),
   \end{equation}
   for \(f \in \Cc(X;\A)\).
 \end{theorem}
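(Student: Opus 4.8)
The plan is to establish the claimed bijection by exhibiting two mutually inverse maps and verifying that each descends to equivalence classes of fields. All the analytic content has already been isolated in the three preceding results, so the argument here is one of assembly rather than fresh work.

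First I would treat the disintegration direction. Given a state (or trace) \(\phi\) on \(\Contz(X;\A)\), Proposition~\ref{dis-sta-upp-sem} produces a probability measure \(\mu\) on \(X\) together with a \(\mu\)\nb-measurable field \(\{\phi_x\}_{x\in X}\) of states (resp.\ traces) on the fibres satisfying \(\phi(f)=\int_X \phi_x(f(x))\,\dd\mu(x)\) for \(f\in\Cc(X;\A)\). This construction passes through a choice of GNS triple and a choice of direct-integral decomposition, so I must check that the resulting pair \((\mu,[\Phi])\) is independent of these choices. That is exactly Remarks~\ref{remks:one-to-one-C(X)-alg}(1): any two such disintegrations share the same measure and have \(\mu\)\nb-equivalent fields. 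Hence \(\phi\mapsto(\mu,[\Phi])\) is a well-defined map from states (resp.\ traces) to pairs.

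Next I would treat the integration direction. Given a pair \((\mu,[\Phi])\), I pick any representative \(\{\phi_x\}_{x\in X}\) and apply Proposition~\ref{int-state-upp-sem} to obtain a state (resp.\ trace) \(\phi\) on \(\Contz(X;\A)\) via the same integral formula. By Remarks~\ref{remks:one-to-one-C(X)-alg}(2), two \(\mu\)\nb-equivalent representatives induce the same state, so the assignment descends to a well-defined map on equivalence classes.

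Finally I would show that these two maps are mutually inverse, which is the content of Remarks~\ref{remks:one-to-one-C(X)-alg}(3). Both maps are compatible with the single defining relation~\eqref{eq:rel-state:on:C(X)-alg}; since \(\Cc(X;\A)\) is dense in \(\Contz(X;\A)\), that relation determines \(\phi\) on all of \(\Contz(X;\A)\) from \((\mu,[\Phi])\), while Proposition~\ref{dis-sta-upp-sem} together with the uniqueness in Remarks~\ref{remks:one-to-one-C(X)-alg}(1) recovers \((\mu,[\Phi])\) from \(\phi\). Composing in either order therefore returns the identity, yielding the bijection. The trace case runs verbatim, invoking the trace statements already contained in Propositions~\ref{int-state-upp-sem} and~\ref{dis-sta-upp-sem}. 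At this stage there is no genuine obstacle: the real difficulty---the direct-integral disintegration of the GNS representation of the \(\Contz(X)\)\nb-algebra \(\Contz(X;\A)\)---was already dispatched inside Proposition~\ref{dis-sta-upp-sem} using Goehle's decomposition, and what remains is purely formal bookkeeping.
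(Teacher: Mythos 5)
Your proposal is correct and matches the paper's proof exactly: the paper also derives Theorem~\ref{int-dis-state-upp-semi} by combining Propositions~\ref{int-state-upp-sem} and~\ref{dis-sta-upp-sem} with Remarks~\ref{remks:one-to-one-C(X)-alg}, merely stating the combination in one line where you spell out the bookkeeping. Your explicit verification that both directions descend to equivalence classes and are mutually inverse is a faithful elaboration of what the paper leaves implicit.
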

 \begin{proof}
   The proof follows from Propositions~\ref{int-state-upp-sem}
   and~\ref{dis-sta-upp-sem} and
   Remarks~\ref{remks:one-to-one-C(X)-alg}.
 \end{proof}
 
 \begin{remark}
   Theorem~\ref{int-dis-state-upp-semi} does not assume the condition
   that \(\Contz(X;\A)\) contained in centraliser of a state \(\phi\),
   which ensure that Theorem~\ref{int-dis-state-upp-semi} is not a
   special case of
   Theorem~\ref{prop:bij-betwen-states-and-fields-of-states}.
 \end{remark}

 \subsection{Group bundle}
 A locally compact, Hausdorff groupoid \(G\) is called a \emph{group
   bundle} if its range and source map are the same. In other words,
 \(G\) is the bundle \(G = \bigcup_{x\in \base} G^x_x\), where
 \(G^x_x\) is the isotropy at \(x\). The bundle map of the group
 bundle is \(p =r =s\). A locally compact, Hausdorff, second countable
 group bundle \(G\) has a Haar system \emph{iff} the bundle map \(p\)
 is open~(\cite[Theorem 6.9]{Williams2019A-Toolkit-Gpd-algebra}).
 \begin{example}
   Let \(G\) be a locally compact, Hausdorff groupoid. Then
   \( \text{Ib}(G) \defeq \{\gamma \in G : s(\gamma) = r(\gamma)\} =
   \bigcup_{x\in \base} G^x_x \) is a group bundle called the isotropy
   bundle of \(G\). Since the range and source maps are continuous,
   the isotropy bundle is a closed subgroupoid of \(G\). The isotropy
   bundle is the bundle of isotropy groups over the space of units
   \(\base\).
 \end{example}
 
 \noindent Williams~\cite{Williams2019A-Toolkit-Gpd-algebra}
 identifies the \(\Cst\)\nb-algebra of a group bundle \(G\) as a
 \(\Contz(\base)\)\nb-algebra.
 
 \begin{proposition}
   [{\cite[Proposition~5.38]{Williams2019A-Toolkit-Gpd-algebra}}]
   \label{prop:group:bundle-characterisation}
   Let \(G\) be a locally compact, Hausdorff, second countable group
   bundle with a Haar system. Then \(\Cst(G)\) is a
   \(\Contz(\base)\)\nb-algebra with fibre \(\Cst(G)(x)\) isomorphic
   to \(\Cst(G^x_x)\) for every \(x\in \base\).
 \end{proposition}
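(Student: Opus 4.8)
The plan is to realise $\Cst(G)$ as a $\Contz(\base)$\nb-algebra by building a central action of $\Contz(\base)$, and then to identify each fibre by comparing two canonical quotients of $\Cst(G)$: the fibre quotient attached to the $\Contz(\base)$\nb-algebra structure, and the restriction homomorphism onto $\Cst(G^x_x)$.

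First I would define, for $f\in\Contz(\base)$ and $\xi\in\Cc(G)$, the section $f\cdot\xi$ by $(f\cdot\xi)(\gamma)=f(r(\gamma))\xi(\gamma)$. Because $G$ is a group bundle, in every convolution $\xi*\eta(\gamma)=\int_{G^{r(\gamma)}}\xi(\alpha)\eta(\alpha\inverse\gamma)\,\dd\lambda^{r(\gamma)}(\alpha)$ all the arrows $\alpha,\alpha\inverse\gamma,\gamma$ live in the single isotropy group $G^{r(\gamma)}_{r(\gamma)}$, so $r(\alpha)=r(\alpha\inverse\gamma)=r(\gamma)$. A direct check then yields $(f\cdot\xi)*\eta=f\cdot(\xi*\eta)=\xi*(f\cdot\eta)$ and $(f\cdot\xi)^*=\overline{f}\cdot\xi^*$, so $f\mapsto(\xi\mapsto f\cdot\xi)$ is a $^*$\nb-homomorphism of $\Contz(\base)$ into the centre of $\Mult(\Cst(G))$. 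It is $I$\nb-norm bounded, with $\norm{f\cdot\xi}_I\leq\norm{f}_\infty\norm{\xi}_I$, and nondegenerate (given $\xi$, pick $f\in\Cc(\base)$ equal to $1$ on the compact set $r(\supp\xi)$, whence $f\cdot\xi=\xi$). Thus $\Cst(G)$ is a $\Contz(\base)$\nb-algebra, and by~\cite[Appendix~C]{Williams2007Crossed-product-Cst-Alg} its fibre at $x$ is $\Cst(G)(x)=\Cst(G)\slash\overline{I_x\cdot\Cst(G)}$, where $I_x=\{f\in\Contz(\base):f(x)=0\}$.

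Second, since $G^x_x$ is the closed subgroup $p\inverse(x)$ and the Haar system restricts to a Haar measure on it, restriction $\operatorname{res}_x\colon\Cc(G)\to\Cc(G^x_x)$, $\xi\mapsto\xi|_{G^x_x}$, is a surjective $^*$\nb-homomorphism (surjectivity by a Tietze-type extension using that the bundle has enough sections; the multiplicative property from the fibrewise nature of the convolution above). As $\norm{\operatorname{res}_x\xi}_I\leq\norm{\xi}_I$, precomposing any representation of $\Cst(G^x_x)$ with $\operatorname{res}_x$ gives an $I$\nb-norm decreasing representation of $\Cc(G)$; taking the supremum shows $\operatorname{res}_x$ is norm decreasing and extends to a surjective $^*$\nb-homomorphism $\rho_x\colon\Cst(G)\to\Cst(G^x_x)$. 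Since $(f\cdot\xi)|_{G^x_x}=f(x)\,\xi|_{G^x_x}$, we have $\overline{I_x\Cst(G)}\subseteq\ker\rho_x$, so $\rho_x$ descends to a surjection $\overline{\rho}_x\colon\Cst(G)(x)\to\Cst(G^x_x)$ satisfying $\norm{\rho_x(a)}\leq\norm{a(x)}$ for all $a$; it remains to prove $\overline{\rho}_x$ is injective, i.e. $\ker\rho_x=\overline{I_x\Cst(G)}$.

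This last step is the main obstacle, and I would handle it at the level of representations. A nondegenerate representation $\pi$ of $\Cst(G)(x)$ is the same as a representation $L$ of $\Cst(G)$ annihilating $\overline{I_x\Cst(G)}$. Extending $L$ to multipliers and using nondegeneracy, the condition $L(f\cdot a)=\overline{L}(f)L(a)=0$ for $f\in I_x$ forces $\overline{L}|_{\Contz(\base)}$ to factor through evaluation at $x$, so the spectral measure of the central copy of $\Contz(\base)$ is the point mass at $x$. Feeding $L$ into Renault's disintegration theorem~\cite[Theorem~II.1.21]{Renault1980Gpd-Cst-Alg} then produces a quasi-invariant measure on $\base$ lying in this class, namely the point mass at $x$; the direct integral collapses to the fibre Hilbert space $\Hilm_x$ and $L$ becomes the integrated form of a unitary representation of the group $G^x_x$, that is, $L=\tilde{\pi}\circ\rho_x$ for a representation $\tilde{\pi}$ of $\Cst(G^x_x)$. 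Consequently $\norm{\pi(a(x))}=\norm{L(a)}=\norm{\tilde{\pi}(\rho_x(a))}\leq\norm{\rho_x(a)}$, and taking the supremum over $\pi$ gives $\norm{a(x)}\leq\norm{\rho_x(a)}$. Combined with the reverse inequality from the previous paragraph, $\overline{\rho}_x$ is isometric, hence an isomorphism $\Cst(G)(x)\iso\Cst(G^x_x)$, as claimed.
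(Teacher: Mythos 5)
The paper does not actually prove this proposition: it is quoted verbatim from Williams' toolkit book (Proposition~5.38 there), so the only comparison available is with that cited source, whose route your argument essentially reproduces --- the central \(\Contz(\base)\)-action (legitimate precisely because \(r=s\) on a group bundle, so all arrows in a convolution share one fibre), the \(I\)-norm--controlled surjective restriction \(\rho_x\colon\Cst(G)\to\Cst(G^x_x)\) annihilating \(\overline{I_x\Cst(G)}\), and Renault's disintegration theorem applied to a representation killing \(\overline{I_x\Cst(G)}\), whose central multiplier condition forces the quasi-invariant measure into the class of the point mass \(\delta_x\) and collapses the direct integral, yielding the reverse norm inequality and hence that \(\overline{\rho}_x\) is isometric. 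Your proof is correct; two points deserve a finishing touch: Renault's disintegration theorem is stated for separable Hilbert spaces, so one should first decompose the representation into cyclic summands (second countability of \(G\) makes \(\Cst(G)\) separable, so cyclic representations are separable), and the collapsed integrated form carries the factor \(\Delta(\gamma)^{-1/2}\), where for \(\mu=\delta_x\) the Radon--Nikodym cocycle \(\Delta\) restricts on \(G^x_x\) to the modular function of that group --- this factor is exactly what reconciles the groupoid involution \(\xi^*(\gamma)=\overline{\xi(\gamma\inverse)}\) (no modular twist) with the classical group-algebra conventions, so the operator-valued map \(\zeta\mapsto\int_{G^x_x}\zeta(\gamma)\pi(\gamma)\Delta(\gamma)^{-1/2}\,\dd\lambda^x(\gamma)\) is genuinely a representation of \(\Cst(G^x_x)\) in the groupoid normalisation, as your factorisation \(L=\tilde{\pi}\circ\rho_x\) requires.
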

 
 \noindent In Proposition~\ref{prop:group:bundle-characterisation},
 the action of \(\Contz(\base)\) on \(\Cst(G)\) is
 \(\phi_{\Cst(G)} \colon \Contz(\base) \to ZM(\Cst(G))\) given by
 \( \phi_{\Cst(G)}(f)(g)(\gamma) \defeq f(p(\gamma)) g(\gamma), \)
 where \(f\in \Contz(\base), g\in \Cc(G)\) and \(p\) is the bundle
 map. Combining Theorem~\ref{int-dis-state-upp-semi} with the above
 proposition, we get the following immediate corollary.
 
 \begin{corollary}
   \label{coro:state-trace-on-gp-bundle}
   Let \(G\) be a locally compact, Hausdorff, second countable group
   bundle with a Haar system. Then there is a one-to-one
   correspondence between states (or traces) \(\phi\) on \(\Cst(G)\)
   and a pair \((\mu, [\Phi])\) consisting of a probability measure
   \(\mu\) on \(\base\) and a \(\mu\)\nb-measurable field
   \(\Phi = \{\phi_x\}_{x\in \base}\) where each \(\phi_x\) is a state
   (or trace) on \(\Cst(G^x_x)\). Moreover, the relation between
   \(\phi\) and the pair \((\mu, [\Phi])\) is given by
   \[
     \phi(f) = \int_{G^{(0)}} \phi_x(f|_{G^x_x}) \dd\mu(x)
   \]
   for \(f\in \Cc(G)\).
 \end{corollary}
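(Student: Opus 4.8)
The plan is to realise $\Cst(G)$ as the section algebra of a separable upper semicontinuous $\Cst$-bundle over $\base$ and then invoke Theorem~\ref{int-dis-state-upp-semi}. By Proposition~\ref{prop:group:bundle-characterisation}, $\Cst(G)$ is a $\Contz(\base)$-algebra whose fibre at $x\in \base$ is isomorphic to $\Cst(G^x_x)$. As recalled in Section~\ref{sec:prelim} (cf.~\cite[Theorem~C.26]{Williams2007Crossed-product-Cst-Alg}), a $\Contz(\base)$-algebra is exactly the section algebra of an upper semicontinuous $\Cst$-bundle; hence there is an upper semicontinuous $\Cst$-bundle $q\colon \A \to \base$ with $\Contz(\base;\A) \iso \Cst(G)$ and $\A_x \iso \Cst(G^x_x)$ for each $x\in \base$. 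Since $G$ is locally compact, Hausdorff and second countable, each isotropy group $G^x_x$ is as well, so each fibre $\Cst(G^x_x)$ is separable and the bundle $q$ is a separable bundle in the sense required by Section~\ref{sec:upper-semic-bundl}.

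First I would apply Theorem~\ref{int-dis-state-upp-semi} to the bundle $q\colon \A\to \base$. This produces a one-to-one correspondence between states (respectively traces) $\phi$ on $\Contz(\base;\A) \iso \Cst(G)$ and pairs $(\mu,[\Phi])$ consisting of a probability measure $\mu$ on $\base$ and a $\mu$-measurable field $\Phi = \{\phi_x\}_{x\in \base}$ of states (respectively traces) $\phi_x$ on $\A_x \iso \Cst(G^x_x)$, with the relation
\[
  \phi(f) = \int_{\base} \phi_x\big(\hat{f}(x)\big)\,\dd\mu(x),
\]
where $\hat{f}(x)$ denotes the image of $f\in \Cc(G)$ in the fibre $\A_x$ under the canonical quotient map of the $\Contz(\base)$-algebra structure.

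It remains to identify the fibre map $f\mapsto \hat{f}(x)$ with the restriction $f\mapsto f|_{G^x_x}$. The isomorphism $\Cst(G)(x)\iso \Cst(G^x_x)$ of Proposition~\ref{prop:group:bundle-characterisation} is implemented on the dense subalgebra $\Cc(G)$ by the restriction homomorphism $\Cc(G)\to \Cc(G^x_x)$, $f\mapsto f|_{G^x_x}$; that is, the quotient of $\Cst(G)$ by the ideal of sections vanishing at $x$ coincides with evaluation along the fibre $G^x_x$. Substituting $\hat{f}(x) = f|_{G^x_x}$ into the displayed formula yields
\[
  \phi(f) = \int_{\base} \phi_x\big(f|_{G^x_x}\big)\,\dd\mu(x)
\]
for $f\in \Cc(G)$, which is the asserted relation, and the bijectivity is inherited verbatim from Theorem~\ref{int-dis-state-upp-semi}. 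I do not expect a genuine obstacle here: the proof is a matter of correctly transporting the earlier results along the $\Contz(\base)$-algebra identification, and the only point requiring care is this last identification of the fibrewise quotient with the restriction map, which is precisely the content of the construction in~\cite[Proposition~5.38]{Williams2019A-Toolkit-Gpd-algebra}.
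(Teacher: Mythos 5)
Your proof is correct and follows essentially the same route as the paper, which obtains this corollary immediately by combining Proposition~\ref{prop:group:bundle-characterisation} with Theorem~\ref{int-dis-state-upp-semi}, exactly as you do. The two points you make explicit---separability of the fibres \(\Cst(G^x_x)\) coming from second countability, and the identification of the fibrewise quotient map of the \(\Contz(\base)\)-algebra structure with the restriction map \(f\mapsto f|_{G^x_x}\) on \(\Cc(G)\)---are left implicit in the paper but are precisely the right details to verify.
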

 
 \subsection{Fell bundle over a group bundle}
 
 A locally compact, Hausdorff, second countable group bundle \(G\)
 acts on its unit space~\(\base\) trivially. Therefore, for any unit
 \(x\in \base\) the orbit \([x] = \{x\}\). A particular case of
 Corollary~10
 of~\cite{Sims-Williams2013Amenability-Fell-bundle-over-gpd} for group
 bundle gives us the following result:
 
 \begin{proposition}[{\cite[Corollary
     10]{Sims-Williams2013Amenability-Fell-bundle-over-gpd}}]
   \label{prop:Fell-bund-over-gp-bundle}
   Let \(G\) be a locally compact, Hausdorff, second countable group
   bundle with Haar system and \(p\colon \A \to G\) be a Fell bundle
   over \(G\). Then \(\Cst(G;\A)\) is a \(\Contz(\base)\)\nb-algebra
   with fibre over \(x\in \base\) being isomorphic to
   \(\Cst(G^x_x;\A(x))\).
 \end{proposition}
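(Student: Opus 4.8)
The plan is to obtain this proposition as the group\nb-bundle specialisation of Corollary~10 of~\cite{Sims-Williams2013Amenability-Fell-bundle-over-gpd}, exactly as the scalar statement Proposition~\ref{prop:group:bundle-characterisation} (Williams' toolkit) is its trivial\nb-coefficient predecessor. In its general form that corollary exhibits \(\Cst(G;\A)\) as a \(\Contz(\base/G)\)\nb-algebra whose fibre over an orbit is the \(\Cst\)\nb-algebra of the Fell bundle restricted to the transitive subgroupoid carried by that orbit. The first thing to observe is that a group bundle trivialises this picture: since \(r=s=p\), no arrow joins two distinct units, so each orbit is a singleton \([x]=\{x\}\), the orbit space \(\base/G\) is homeomorphic to \(\base\) (whence \(\Contz(\base/G)=\Contz(\base)\)), and the subgroupoid carried by \(\{x\}\) is precisely the isotropy group \(G^x_x=p\inverse(x)\), over which the restricted Fell bundle is \(\A(x)=\A|_{G^x_x}\). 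Feeding these identifications into the corollary yields both assertions at once.

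To make the specialisation concrete I would proceed in three steps. First, I would record the \(\Contz(\base)\)\nb-algebra structure: define \(\phi_{\Cst(G;\A)}\colon \Contz(\base)\to ZM(\Cst(G;\A))\) on the dense subalgebra by \((h\cdot g)(\gamma)=h(p(\gamma))\,g(\gamma)\) for \(h\in\Contz(\base)\) and \(g\in\Cc(G;\A)\). Centrality is immediate from the bundle condition: for composable \(\alpha,\beta\) one has \(p(\alpha)=p(\beta)=p(\alpha\beta)\), so the scalar \(h(p(\cdot))\) passes through the convolution, giving \(h\cdot(g_1*g_2)=(h\cdot g_1)*g_2=g_1*(h\cdot g_2)\); nondegeneracy follows from an approximate identity supported on \(\base\) (Proposition~\ref{prop:exist-app:unit}), and one extends \(\phi_{\Cst(G;\A)}\) to the multiplier algebra. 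Second, I would set up the fibre map as a restriction homomorphism. Because \(G^x_x=p\inverse(x)\) and, in a group bundle, the convolution and involution preserve this fibre (for \(\gamma\in G^x_x\) any factorisation \(\alpha\beta=\gamma\) forces \(\alpha,\beta\in G^x_x\), and \(\gamma\inverse\in G^x_x\)), the map \(\rho_x\colon\Cc(G;\A)\to\Cc(G^x_x;\A(x))\), \(g\mapsto g|_{G^x_x}\), is a \(^*\)\nb-homomorphism. It is surjective by enough sections / the vector\nb-valued Tietze theorem (cf.~\cite[Proposition~A.5]{Muhly-Williams2008FellBundle-ME}) and is \(I\)\nb-norm decreasing, hence extends to \(\overline{\rho}_x\colon\Cst(G;\A)\to\Cst(G^x_x;\A(x))\).

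Third, I would identify the fibre \(\Cst(G;\A)(x)=\Cst(G;\A)/\overline{I_x\cdot\Cst(G;\A)}\), where \(I_x=\{h\in\Contz(\base):h(x)=0\}\), with \(\Cst(G^x_x;\A(x))\) via \(\overline{\rho}_x\). The inclusion \(\overline{I_x\cdot\Cst(G;\A)}\subseteq\ker\overline{\rho}_x\) is routine, since \(h\in I_x\) kills the fibre over \(x\). The reverse inclusion \(\ker\overline{\rho}_x\subseteq\overline{I_x\cdot\Cst(G;\A)}\) is the substantive point and is exactly what the general result of~\cite{Sims-Williams2013Amenability-Fell-bundle-over-gpd} supplies: it amounts to showing that the quotient \(\Cst\)\nb-norm on the fibre coincides with the universal norm of \(\Cst(G^x_x;\A(x))\), which requires a representation\nb-theoretic / disintegration argument rather than a bare algebraic manipulation. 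This is where I expect the only real difficulty to lie; everything else is bookkeeping once the orbit structure of a group bundle has been exploited to collapse \(\base/G\) onto \(\base\) and the transitive pieces onto the isotropy groups.
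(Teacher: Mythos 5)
Your proposal is correct and takes essentially the same route as the paper: the paper gives no independent proof but obtains Proposition~\ref{prop:Fell-bund-over-gp-bundle} exactly as you do, by specialising \cite[Corollary~10]{Sims-Williams2013Amenability-Fell-bundle-over-gpd} via the observation that a group bundle acts trivially on \(\base\), so every orbit \([x]=\{x\}\) is a singleton, the orbit space collapses to \(\base\), and the transitive piece over \(x\) is the isotropy group \(G^x_x\) carrying \(\A(x)\), with the \(\Contz(\base)\)\nb-action \(g\cdot f(\gamma) = g(r(\gamma))f(\gamma)\) that you also record. Your additional bookkeeping (centrality of the action, the restriction \(^*\)\nb-homomorphism, and the identification of the quotient fibre, with the norm equality delegated to the cited corollary) is a faithful unpacking of what that citation supplies, not a different argument.
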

 The action of \(\Contz(\base)\) on \(\Cst(G;\A)\) is given by
 \( g\cdot f(\gamma) \defeq g(r(\gamma)) f(\gamma) \) where
 \(g\in \Contz(\base), f\in \Cst(G;\A)\) and \(\gamma\in G\).
 Therefore, We have an analogous result as
 Corollary~\ref{coro:state-trace-on-gp-bundle} for the Fell bundle
 over group bundle.
 
 \subsection{Crossed product \(\Cst\)-algebras}
 
 In~\cite{Gardella-Hirshberg-Santiago2021Rokhlin-dim-Crossed-prod}
 Gardella, Hirshberg and Santiago prove a structure theorem for
 certain crossed product \(\Cst\)\nb-algebra. We use their result to
 characterise the states on certain crossed product algebras.
 
 \begin{proposition}[{\cite[Proposition~4.5]{Gardella-Hirshberg-Santiago2021Rokhlin-dim-Crossed-prod}}]
   \label{prop:st-cros-prod-GHS}
   Let \(G\) be a compact group, let \(X\) be a locally compact space
   and let \(A\) be a \(\Cst\)\nb-algebra. Let \(G\) acts \(X\) and
   let \(\alpha \colon G\to \textup{Aut}(A)\) be an action by
   automorphism. Endow \(\Contz(X;A)\) with the diagonal
   \(G\)\nb-action \(\gamma\). Then \(\Contz(X;A)\rtimes_\gamma G\) is
   a continuous \(\Contz(X/G)\)\nb-algebra. Moreover, if
   \(\pi\colon X \to X/G\) denotes the quotient map, then the fibre
   over \(\pi(x)\) is canonically isomorphic to
   \( (A\rtimes_\alpha St(x)) \otimes \mathcal{K}(L^2(G/St(x))), \)
   where \(St(x)\) is the stabiliser at \(x\in X\).
 \end{proposition}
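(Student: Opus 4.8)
The plan is to realise the crossed product as a $\Contz(X/G)$\nb-algebra whose fibres are computed by restricting to single orbits, and then to invoke Green's imprimitivity theorem in the special case of a compact group acting transitively.

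First I would produce the $\Contz(X/G)$\nb-algebra structure. Since $G$ is compact acting on $X$, the orbit space $X/G$ is locally compact Hausdorff, the quotient map $\pi$ is open and proper, and averaging over $G$ identifies $\Contz(X/G)$ with the invariant subalgebra $\Contz(X)^G$. A function $\varphi \in \Contz(X)^G$ acts on $\Contz(X;A)\iso\Contz(X)\otimes A$ by pointwise multiplication; being scalar it commutes with $A$, and being $G$\nb-invariant it commutes with the diagonal action $\gamma$. Hence $\varphi$ extends to a central multiplier of $\Contz(X;A)\rtimes_\gamma G$, giving a nondegenerate $*$\nb-homomorphism $\Contz(X/G)\to ZM(\Contz(X;A)\rtimes_\gamma G)$ and thus the desired $\Contz(X/G)$\nb-algebra structure.

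Next I would identify the fibre over $y=\pi(x)$ as the quotient of $\Contz(X;A)\rtimes_\gamma G$ by the ideal generated by $\{\varphi\cdot b:\varphi\in\Contz(X/G),\ \varphi(y)=0\}$. Invariant functions vanishing at $y$ correspond to functions on $X$ vanishing on the (closed, compact) orbit $Gx$, so this ideal is $\Contz(X\setminus Gx;A)\rtimes_\gamma G$. Because $G$ is amenable, the full crossed product is exact, so for this $G$\nb-invariant ideal the quotient of crossed products is the crossed product of the quotient; the fibre is therefore $\bigl(\Contz(X;A)/\Contz(X\setminus Gx;A)\bigr)\rtimes_\gamma G\iso\Cont(Gx;A)\rtimes_\gamma G$. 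As $Gx$ is $G$\nb-equivariantly homeomorphic to $G/St(x)$, the fibre is $\Cont(G/H;A)\rtimes_\gamma G$ with $H=St(x)$ and $\gamma$ the diagonal action.

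Finally I would apply Green's imprimitivity theorem (see \cite{Williams2007Crossed-product-Cst-Alg}) to this transitive system: a completion $Z$ of $\Contc(G;A)$ implements a Morita equivalence $\Cont(G/H;A)\rtimes_\gamma G\sim A\rtimes_\alpha H$, with $\Cont(G/H;A)\rtimes_\gamma G\iso\mathcal{K}_{A\rtimes_\alpha H}(Z)$. Compactness of $G$ is what upgrades this to a genuine isomorphism: it forces $G/H$ to be compact, so that, using a finite partition of unity over $G/H$ and the invariant probability measure on $G$, the Hilbert $(A\rtimes_\alpha H)$\nb-module $Z$ is isomorphic to the standard module $L^2(G/H)\otimes(A\rtimes_\alpha H)$. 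Combining $\mathcal{K}_{A\rtimes_\alpha H}\bigl(L^2(G/H)\otimes(A\rtimes_\alpha H)\bigr)\iso\mathcal{K}(L^2(G/H))\otimes(A\rtimes_\alpha H)$ (cf.\ \cite{Raeburn-Williams1998ME-book}) with the imprimitivity isomorphism yields the fibre $\iso(A\rtimes_\alpha St(x))\otimes\mathcal{K}(L^2(G/St(x)))$. The two delicate points are continuity of the bundle (rather than mere upper semicontinuity), which follows from openness and properness of $\pi$ together with a lower\nb-semicontinuity argument for compact\nb-group actions, and the concrete identification $Z\iso L^2(G/H)\otimes(A\rtimes_\alpha H)$; pinning down the latter so that the $\mathcal{K}(L^2(G/H))$ factor appears on the nose is the main obstacle, and is precisely where compactness of $G$ is indispensable.
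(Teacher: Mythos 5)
The paper offers no proof of this proposition to compare yours against: it is imported verbatim, with citation, from Gardella--Hirshberg--Santiago \cite{Gardella-Hirshberg-Santiago2021Rokhlin-dim-Crossed-prod}. Judged on its own merits, your outline follows the standard route (and, as far as the skeleton goes, the cited one): averaging identifies \(\Contz(X/G)\iso \Contz(X)^G\) acting centrally; the fibre over \(\pi(x)\) is the quotient by the orbit ideal, computed as \(\Cont(Gx;A)\rtimes_\gamma G \iso \Cont(G/H;A)\rtimes_\gamma G\) with \(H=St(x)\) via exactness of crossed products for invariant ideals (correct, though note that full crossed products preserve short exact sequences for \emph{every} locally compact group, so the appeal to amenability is superfluous there); and the transitive case is handled by Green's imprimitivity theorem after identifying \(\Cont(G/H;A)\) with the induced algebra \(\mathrm{Ind}_H^G(A,\alpha|_H)\). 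Up to that point the argument is sound.

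The two points you yourself flag as delicate are, however, genuine gaps, and the fixes you sketch are not the right ones. First, the trivialisation \(Z \iso L^2(G/H)\otimes (A\rtimes_\alpha H)\) does not follow from a finite partition of unity over \(G/H\): that device presupposes local cross-sections of \(G\to G/H\) (automatic for Lie groups, not obvious for general compact groups), and even granted those, patching only exhibits \(Z\) as a complemented submodule of a finite direct sum of standard modules --- Kasparov-type absorption then gives a stable statement, not the exact isomorphism the proposition claims. The standard mechanism is the strong form of Green's imprimitivity theorem \cite{Green1978local-structure-of-twisted-covariant-alg}: a \emph{Borel} cross-section of \(G\to G/H\) yields a measurable factorisation of \(G\) as \(G/H\times H\) and hence a concrete unitary \(Z\iso L^2(G/H)\otimes(A\rtimes_\alpha H)\). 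What this needs is second countability (or some selection hypothesis), not compactness; compactness of \(G\) instead buys closed orbits, properness of \(\pi\), compact stabilisers, and full \(=\) reduced --- so your diagnosis that compactness is ``precisely where'' the trivialisation lives is misplaced. Second, continuity of the bundle is asserted, not proved: upper semicontinuity is automatic for any \(\Contz(X/G)\)\nb-algebra, whereas lower semicontinuity is a theorem and does not follow from ``openness and properness of \(\pi\)'' alone. The clean route is to check that \(\Contz(X;A)\) is a \emph{continuous} \(\Contz(X/G)\)\nb-algebra (openness and closedness of \(\pi\) make \(Gx\mapsto \norm{f|_{Gx}}_\infty\) continuous) and then invoke the Kirchberg--Wassermann theorem that crossed products by amenable groups preserve continuity of \(\Cst\)\nb-bundles, or else carry out an explicit fibrewise norm computation on \(\Contc(G;\Contz(X;A))\), as in the toy case of \(\mathbb{Z}/2\) acting on \([-1,1]\) by reflection. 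Without these two inputs your text is an outline of the correct strategy rather than a proof.
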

 
 \noindent The last result combined with
 Theorem~\ref{int-dis-state-upp-semi} gives the following corollary.
 
 \begin{corollary}
   Let \(G\) be a compact group, let \(X\) be a locally compact,
   Hausdorff, second countable space and let \(A\) be a
   \(\Cst\)\nb-algebra. Assume \(X\) and \(A\) satisfies the same
   hypothesis as in Proposition~\ref{prop:st-cros-prod-GHS}.  Then
   there is a one-to-one correspondence between state (or trace)
   \(\phi\) on \(\Contz(X;A)\rtimes_\gamma G\) and a pair
   \((\mu, [\Phi])\) consisting a probability measure \(\mu\) on
   \(X/G\) and a \(\mu\)\nb-measurable field
   \(\Phi = \{\phi_{\pi(x)}\}_{\pi(x) \in X/G}\) where
   \(\phi_{\pi(x)}\) is a state (or, trace) on
   \((A\rtimes_\alpha St(x)) \otimes \mathcal{K}(L^2(G/St(x)))\),
   where \(St(x)\) is the stabiliser at \(x\in X\). Moreover, the
   relation between \(\phi\) and \((\mu, [\Phi])\) is given by
   Equation~\eqref{eq:rel-state:on:C(X)-alg}.
 	
 \end{corollary}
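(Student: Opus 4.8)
The plan is to recognise $\Contz(X;A)\rtimes_\gamma G$ as the section algebra of a separable upper semicontinuous $\Cst$\nb-bundle over $X/G$ and then apply Theorem~\ref{int-dis-state-upp-semi} verbatim. First I would invoke Proposition~\ref{prop:st-cros-prod-GHS}, which presents $\Contz(X;A)\rtimes_\gamma G$ as a continuous $\Contz(X/G)$\nb-algebra whose fibre over $\pi(x)\in X/G$ is canonically isomorphic to $(A\rtimes_\alpha St(x))\otimes\mathcal{K}(L^2(G/St(x)))$. By the standard equivalence between $\Contz(Y)$\nb-algebras and upper semicontinuous bundles of $\Cst$\nb-algebras over $Y$ (\cite[Theorem~C.26]{Williams2007Crossed-product-Cst-Alg}, recalled in Section~\ref{sec:upper-semic-bundl}), this $\Contz(X/G)$\nb-algebra structure is the same datum as an upper semicontinuous $\Cst$\nb-bundle $q\colon \mathcal{B}\to X/G$ with $\mathcal{B}_{\pi(x)}\cong (A\rtimes_\alpha St(x))\otimes\mathcal{K}(L^2(G/St(x)))$, together with an isomorphism $\Contz(X;A)\rtimes_\gamma G\cong \Contz(X/G;\mathcal{B})$.

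Next I would verify that $q\colon \mathcal{B}\to X/G$ satisfies the separability hypothesis demanded by Theorem~\ref{int-dis-state-upp-semi}. The base $X/G$ is locally compact, Hausdorff and second countable, being the quotient of such a space by an action of the compact (hence second countable) group $G$. Each fibre is separable: assuming $A$ separable (as is standing in this article), $St(x)\subseteq G$ is a closed subgroup of a compact second countable group and is therefore second countable, so $A\rtimes_\alpha St(x)$ is separable, and $L^2(G/St(x))$ is a separable Hilbert space, whence $\mathcal{K}(L^2(G/St(x)))$ is separable and so is the tensor product. Thus $q\colon \mathcal{B}\to X/G$ is a separable upper semicontinuous $\Cst$\nb-bundle in the sense of Section~\ref{sec:upper-semic-bundl}.

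Finally I would apply Theorem~\ref{int-dis-state-upp-semi} to the bundle $q\colon \mathcal{B}\to X/G$. It produces a one-to-one correspondence between states (or traces) $\phi$ on $\Contz(X/G;\mathcal{B})\cong \Contz(X;A)\rtimes_\gamma G$ and pairs $(\mu,[\Phi])$ consisting of a probability measure $\mu$ on $X/G$ and a $\mu$\nb-measurable field $\Phi=\{\phi_{\pi(x)}\}_{\pi(x)\in X/G}$ of states (or traces) on the fibres $\mathcal{B}_{\pi(x)}=(A\rtimes_\alpha St(x))\otimes\mathcal{K}(L^2(G/St(x)))$, with $\phi$ and $(\mu,[\Phi])$ related by Equation~\eqref{eq:rel-state:on:C(X)-alg}. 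This is precisely the asserted statement, and the ``(or trace)'' alternative is carried through automatically because Theorem~\ref{int-dis-state-upp-semi} treats states and traces uniformly.

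The only real obstacle is the identification step rather than any analytic difficulty: one must confirm that the fibre assignment $\pi(x)\mapsto (A\rtimes_\alpha St(x))\otimes\mathcal{K}(L^2(G/St(x)))$ is well defined on orbits---distinct representatives of $\pi(x)$ have conjugate stabilisers, hence isomorphic fibres---and that Proposition~\ref{prop:st-cros-prod-GHS} yields exactly the $\Contz(X/G)$\nb-algebra structure required by \cite[Theorem~C.26]{Williams2007Crossed-product-Cst-Alg}, together with checking the separability of $\mathcal{B}$. Once this bookkeeping is settled, the corollary is a direct specialisation of Theorem~\ref{int-dis-state-upp-semi} and needs no further computation.
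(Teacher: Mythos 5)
Your proposal is correct and takes essentially the same route as the paper, which obtains this corollary precisely by feeding the $\Contz(X/G)$\nb-algebra structure from Proposition~\ref{prop:st-cros-prod-GHS} (equivalently, via \cite[Theorem~C.26]{Williams2007Crossed-product-Cst-Alg}, a separable upper semicontinuous $\Cst$\nb-bundle over $X/G$ with the stated fibres) into Theorem~\ref{int-dis-state-upp-semi}; your extra bookkeeping on well-definedness over orbits and separability is sound. One small caveat: compactness of $G$ does \emph{not} by itself imply second countability---that instead follows from the paper's standing convention that all groups and spaces are second countable---but this does not affect the argument.
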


 \section{KMS states on matrix algebras}
 \label{sec:kms-stat-text}

 In this section, we discuss a groupoid model of the matrix algebra
 \(\Mat_n(\mathrm{C}(X))\) for a compact topological space \(X\). We
 also study the quasi-invariant measures on the unit space of the
 underlying principal \etale\ groupoid of \(\Mat_n(\mathrm{C}(X))\).

 Let \(X\) be a compact, Hausdorff topological space and
 \(N=\{1,2,3,\cdots n\}\) for \(n\in \N\). Consider
 \(G= N\times X \times N\) with the product topology. Then \(G\) is a
 compact, Hausdorff topological groupoid with the following
 operations: \((h,x,m), (n,y,k)\in G \) are composable \emph{iff}
 \(x=y\) and \(m=n\); the composition is given by
 \((h,x,m)(m,x,k) \defeq (h,x,k)\); \((h,x,m)^{-1} \defeq (m,x, h)\).
 For \((h,x,m)\in G\) we have \(s(h,x,m) = (m,x,m)\),
 \(r(h,x,m) = (h,x,h)\). The space of units
 \(\base = \{(k,x,k): x\in X, k=1,2,\cdots, n\}\). Therefore,
 \(\base\) identified with \(N\times X\). Since the range map \(r\) is
 a local homeomorphism, the groupoid \(G\) is \etale. Given
 \(f\in \mathrm{C}(G)\) and \((h,x,k) \in G\), we write \(f(h,x,k)\)
 as \(f_{h,k}(x)\). Then
 \((f_{h,k}(x)) \in \mathrm{C}(X,\Mat_n(\C))\). Then we note that for
 \(f,g\in \mathrm{C}(G)\), the multiplication and involution agree
 those on \(\mathrm{C}(X,\Mat_n(\C))\). That is,
 \[
   f*g(h,x,k) \defeq \sum_{m \in N} f(h,x,m) g(m, x, k) ~\text{ and }
   f^*(h,x,k) \defeq \overline{f(k,x,h)}.
 \]
 Thus \(\mathrm{C}(G)\) is isomorphic to \(\Mat_n(\mathrm{C}(X))\) as
 a \(^*\)\nb-algebra. Uniqueness of \(\Cst\)\nb-norm implies
 \(\Cst(G) \iso \Mat_n(\mathrm{C}(X))\). For a unit \(u=(h,x,h)\), we
 have \(G^u_u = \{(h,x,h)\} = \{u\}\). Therefore, \(G\) is a principle
 groupoid. \cite[Proposition~II.5.4]{Renault1980Gpd-Cst-Alg} allows us
 to describe the \(\KMS\)-states on \(\Cst(G)\) in terms of
 quasi-invariant probability measure on \(G^{(0)} = N\times X\). Next,
 we investigate those quasi-invariant measures on \(N \times X\) whose
 Radon--Nikodym derivative \(\e^{-\beta c}\); for some real-valued
 \(1\)\nb-cocycle \(c\) on \(G\).

 Let \(c\) be a real-valued \(1\)\nb-cocycle on \(G\).  The unit space
 \(G^{(0)} = N\times X\) can be thought of as a disjoint union of
 \(n\) many copies of the compact topological space \(X\). Therefore,
 \(\base = \bigsqcup_{k=1}^n X_k\) where \(X_k =\{k\}\times X\).  Let
 \(\mu_1\) be a finite measure on \(X_1\). Using the real-valued
 \(1\)\nb-cocycle \(c\) we define a measure \(\mu_2\) on \(X_2\) by
 \( \dd \mu_2 (x) = \e^{-\beta c (2, x, 1) }\dd \mu_1(x); \) and, in
 general, the measure \(\mu_k\) on \(X_k\) as
 \( \dd \mu_k (x) = \e^{-\beta c (k, x, 1) }\dd \mu_1(x) \) for
 \(k\in N\). Since \(c\) is a cocycle,
 \(\dd \mu_h (x)= \e^{-\beta c (h, x, m) }\dd \mu_m(x)\) for
 \(h,m \in N\). Define a measure \(\mu \defeq \sum_{i=1}^{n} \mu_i\)
 on the unit space \(N\times X\). Then
 \begin{equation}\label{equ:def-prob-measure}
   \nu\defeq \frac{\mu}{\mu(\base)}
 \end{equation}
 is a probability measure on \(\base\). The measure \(\nu\) is a
 quasi-invariant measure with Radon--Nikodym derivative
 \(\e^{-\beta c}\). To see this we prove the following equality:
 \begin{equation}\label{equ:qus:inv-examp}
   \int_{N\times X} \sum_{(i,x,j) \in G^{(h,x)}} f(i,x,j) \dd \nu(h,x) =\int_{N\times X} \sum_{(i,x,j) \in G_{(h,x)}} f(i,x,j) \e^{-\beta c(i,x,j)} \dd \nu (h,x)
 \end{equation}
 for \(f\in \Cc(G)\).  The left hand side of the above equation is
 \begin{align*}
   \int_{N\times X} \biggr(\sum_{j=1}^{n} f(h,x,j) \biggr)\dd \nu(h,x) &= \frac{1}{\mu(\base)} \sum_{j=1}^{n} \sum_{h=1}^{n} \int_{X_h} f(h,x,j) \dd \mu_h(x) \\
                                                                       &= \frac{1}{\mu(\base)}\sum_{j=1}^{n} \sum_{h=1}^{n} \int_{X_j}\big( f(h,x,j) \e^{-\beta c(h,x,j)} \big) \dd \mu_j(x) \\
                                                                       &= \frac{1}{\mu(\base)} \sum_{h=1}^{n} \sum_{j=1}^{n} \int_{X_j} \big(f(h,x,j) \e^{-\beta c(h,x,j)} \big) \dd \mu_j(x).
 \end{align*}
 The last term equals the right hand side of
 Equation~\eqref{equ:qus:inv-examp}. Therefore, \(\nu\) is a
 quasi-invariant probability measure with Radon--Nikodym derivative
 \(\e^{-\beta c}\).

 Conversely, suppose that \(\nu\) be a quasi-invariant probability
 measure on \(N\times X\) with Radon--Nikodym derivative
 \(\e^{-\beta c}\) where \(c\) is a real-valued \(1\)\nb-cocycle of
 \(G\). Then the probability measure \(\nu\) satisfies the
 Equation~\eqref{equ:qus:inv-examp}. Let \(\nu =\sum_{i=1}^{n} \nu_i\)
 where \(\nu_i\) is a finite measure on \(X_i\) for \(i\in N\). Then
 the left hand side of the Equation~\eqref{equ:qus:inv-examp} is equal
 to
 \( \sum_{j=1}^{n} \sum_{h=1}^{n} \int_{X_h} f(h,x,j) \dd \nu_h(x) \)
 and the right hand side of the Equation~\eqref{equ:qus:inv-examp} is
 equal to
 \begin{multline*}
   \int_{N\times X} \sum_{(i,x,j) \in G_{(h,x)}} f(i,x,j) \e^{-\beta c(i,x,j)} \dd \nu (h,x) = \int_{N\times X} \sum_{i=1}^{n} f(i,x,h) \e^{-\beta c (i,x,h)} \dd\nu(h,x)\\
   = \sum_{i=1}^{n} \sum_{h=1}^{n} \int_{X_h} \big(f(i,x,h)\e^{\beta
     c(i,x,h)} \big)\dd \nu_h(x).
 \end{multline*}
 Therefore, equating the left and right hand sides of
 Equation~\eqref{equ:qus:inv-examp}, gives us
 \[
   \sum_{j=1}^{n} \sum_{h=1}^{n} \int_{X_h} f(h,x,j) \dd \nu_h(x) =
   \sum_{i=1}^{n} \sum_{h=1}^{n} \int_{X_h} \big(f(i,x,h)\e^{\beta
     c(i,x,h)} \big)\dd \nu_h(x)
 \]
 for any \(f\in \Cc(G)\). Choose \(f\in \Cc(G)\) which is supported in
 \(\{m\} \times X \times \{k\}\). Then the last equation becomes
 \[
   \int_{X_m}f(m,x,k) \dd \nu_m(x)= \int_{X_k}\big(f(m,x,k)\e^{-\beta
     c(m,x,k)} \big) \dd \nu_k(x).
 \]
 Since \(X_m, X_k\) both are copies of \(X\), the relation between
 \(\nu_m\) and \(\nu_k\) is given by
 \(\dd \nu_m (x) = \e^{-\beta c(m,x,k)} \dd \nu_k(x)\) where
 \(m,k\in N\). Therefore, we have proved the following result:

\begin{proposition}\label{thm:quasi-inv-masu-on-NXN}
  Let \(X\) be a compact, Hausdorff space and \(N=\{1,2,\cdots,n\}\)
  for \(n\in \N\). As earlier, define the groupoid
  \(G = N\times X \times N\). Let \(c\) be a real-valued
  \(1\)\nb-cocycle on \(G\). A finite measure \(\mu\) induces a
  quasi-invariant probability measure \(\nu\) on \(\base\) with
  Radon--Nikodym derivative \(\e^{-\beta c}\) as in
  Equation~\eqref{equ:def-prob-measure}.  Conversely, assume that
  \(\nu\) is a quasi-invariant probability measure on \(\base\) with
  Radon--Nikodym derivative \(\e^{-\beta c}\). Then \(\nu\) is
  obtained from a finite measure \(\mu\) on \(X\) by the last process.
\end{proposition}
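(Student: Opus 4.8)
The plan is to prove the two implications separately, exploiting the very explicit arrow structure of $G = N\times X\times N$: for a unit $(h,x)$ the range fibre is $G^{(h,x)} = \{(h,x,j) : j\in N\}$ and the source fibre is $G_{(h,x)} = \{(i,x,h) : i\in N\}$, so every sum over a fibre is a finite sum indexed by $N$. Throughout I would decompose the unit space as $\base = \bigsqcup_{k=1}^n X_k$ with $X_k = \{k\}\times X$ and record any measure on $\base$ as a finite sum $\sum_{k=1}^n \mu_k$ of finite measures $\mu_k$ on the copy $X_k\cong X$. Because $G$ is \etale\ and principal, quasi-invariance of a measure on $\base$ with modular function $\me^{-\beta c}$ amounts exactly to the integral identity~\eqref{equ:qus:inv-examp} holding for all $f\in\Cc(G)$, so both directions reduce to establishing, respectively exploiting, this single identity.

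For the forward direction, starting from a finite measure $\mu_1$ on $X_1$, I would define $\mu_k$ on $X_k$ by $\dd\mu_k(x) = \me^{-\beta c(k,x,1)}\dd\mu_1(x)$ and set $\mu = \sum_k \mu_k$, $\nu = \mu/\mu(\base)$ as in~\eqref{equ:def-prob-measure}. The key consistency check is that the cocycle identity $c(h,x,m) = c(h,x,1) + c(1,x,m) = c(h,x,1) - c(m,x,1)$ upgrades the defining relations to $\dd\mu_h(x) = \me^{-\beta c(h,x,m)}\dd\mu_m(x)$ for every pair $h,m\in N$; this is the single algebraic fact that makes the whole construction coherent. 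Granting this, verifying~\eqref{equ:qus:inv-examp} is a direct computation: expand the left side as $\frac{1}{\mu(\base)}\sum_{h,j}\int_{X_h} f(h,x,j)\,\dd\mu_h(x)$, substitute $\dd\mu_h = \me^{-\beta c(h,x,j)}\dd\mu_j$ to rewrite each integral over $X_j$, and relabel the summation indices so that the result matches the right side, where the sum now runs over the source fibres $G_{(h,x)}$ carrying the weight $\me^{-\beta c}$.

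For the converse, I would start from a quasi-invariant probability measure $\nu$ with modular function $\me^{-\beta c}$, write $\nu = \sum_{i=1}^n \nu_i$, and feed into~\eqref{equ:qus:inv-examp} test functions $f\in\Cc(G)$ supported in a single off-diagonal block $\{m\}\times X\times\{k\}$. All but one term on each side then vanishes, leaving $\int_{X_m} f(m,x,k)\,\dd\nu_m(x) = \int_{X_k} f(m,x,k)\me^{-\beta c(m,x,k)}\,\dd\nu_k(x)$. Since $\Cc(X)$ separates Radon measures on $X$, this forces $\dd\nu_m(x) = \me^{-\beta c(m,x,k)}\dd\nu_k(x)$ for all $m,k$, which is precisely the family of relations produced by the forward process applied to the finite measure $\mu = \nu_1$. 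Hence $\nu$ is recovered from $\mu$, closing the correspondence.

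The routine parts are the index bookkeeping and the change of variables between the $\mu_k$. The one step that genuinely carries the content---and where I would be most careful---is the coherence of the definition of the $\mu_k$: I must verify that the cocycle relation makes $\dd\mu_h = \me^{-\beta c(h,x,m)}\dd\mu_m$ hold simultaneously for \emph{all} pairs $(h,m)$, not merely those involving the base index $1$, since otherwise the single identity~\eqref{equ:qus:inv-examp} could not close up consistently across the blocks. Everything else follows by unwinding the descriptions of the range and source fibres and invoking that continuous functions separate measures.
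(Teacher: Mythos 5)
Your proposal is correct and follows essentially the same route as the paper: the same block decomposition \(\base = \bigsqcup_k X_k\), the same use of the cocycle identity \(c(h,x,m)=c(h,x,1)-c(m,x,1)\) to get \(\dd\mu_h = \e^{-\beta c(h,x,m)}\dd\mu_m\) for all pairs, the same verification of Equation~\eqref{equ:qus:inv-examp} for the forward direction, and the same choice of test functions supported in a single block \(\{m\}\times X\times\{k\}\) for the converse. Your explicit remark that \(\Cc(X)\) separates Radon measures, and that the recovered measure is \(\mu=\nu_1\) (with normalization harmless since \(\nu(\base)=1\)), just makes precise what the paper leaves implicit.
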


\begin{remark}
  Our last result offers an answer for the Radon--Nikodym problem
  asked by
  Renault~\cite{Renault2001AF-equiv-rela-and-cocycle},~\cite{Renault2005The-R-N-problem-for-appx-proper-equiv-relation},
  for the groupoid \(G= N\times X \times N\).
\end{remark}

Renault~\cite[Proposition~II.5.4]{Renault1980Gpd-Cst-Alg} proved that
the KMS states of \(\Cst\)\nb-algebras of a \emph{principal \etale}
groupoid are completly characterised by the quasi-invariant
probability measures. Therefore, we have the next reslut:

\begin{corollary}[{Corollary of \cite[Proposition~II.5.4]{Renault1980Gpd-Cst-Alg}} and Proposition~\ref{thm:quasi-inv-masu-on-NXN}]
  Let \(X\) be a compact, Hausdorff space and \(N=\{1,2,\cdots,n\}\)
  for \(n\in \N\). As earlier, define the groupoid
  \(G = N\times X \times N\). Let \(c\) be a real-valued
  \(1\)\nb-cocycle on \(G\). Then
  \(\tau \colon \mathbb{R} \to \textup{Aut}(\Mat_n(\mathrm{C}(X)))\)
  is a dynamics given by
  \( (\tau_t(f))_{h ,k}(x) = \e^{\mathrm{i}tc(h,x,k)}f_{h,k}(x).\) Let
  \(\beta \in \R\). Then every \KMS\ states on
  \((\Mat_n(\mathrm{C}(X)), \mathbb{R}, \tau)\) are obtained from a
  finite measure on \(X\) as in
  Proposition~\ref{thm:quasi-inv-masu-on-NXN}.
\end{corollary}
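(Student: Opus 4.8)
The plan is to assemble the corollary from three ingredients already established: the isomorphism $\Cst(G) \iso \Mat_n(\mathrm{C}(X))$, the principality of $G$, and the description of quasi-invariant measures furnished by Proposition~\ref{thm:quasi-inv-masu-on-NXN}.

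First I would check that, under the isomorphism $\Cst(G) \iso \Mat_n(\mathrm{C}(X))$, the dynamics $\tau$ on $\Mat_n(\mathrm{C}(X))$ corresponds exactly to the dynamics $\sigma$ induced by the cocycle $c$ as defined in Equation~\eqref{equ:real-dyna}. Using the identification $f(h,x,k) = f_{h,k}(x)$ for $f\in \mathrm{C}(G)$, the formula $\sigma_t(f)(\gamma) = \e^{\mi t c(\gamma)} f(\gamma)$ evaluated at $\gamma = (h,x,k)$ reads $(\sigma_t(f))_{h,k}(x) = \e^{\mi t c(h,x,k)} f_{h,k}(x)$, which is precisely the stated formula for $\tau_t$. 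Hence $(\Mat_n(\mathrm{C}(X)), \R, \tau)$ and $(\Cst(G), \R, \sigma)$ are the same $\Cst$\nb-dynamical system, so their $\KMS$ states coincide.

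Second, I would observe that $G$ is a principal \etale\ groupoid, since for each unit $u = (h,x,h)$ the isotropy $G^u_u = \{u\}$ is trivial. Therefore Renault's Proposition~II.5.4 of~\cite{Renault1980Gpd-Cst-Alg} applies and yields a one-to-one correspondence between $\KMS$ states on $\Cst(G)$ for the dynamics $\sigma$ and quasi-invariant probability measures $\nu$ on $\base = N\times X$ whose Radon--Nikodym derivative equals $\e^{-\beta c}$.

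Finally, Proposition~\ref{thm:quasi-inv-masu-on-NXN} identifies exactly these quasi-invariant probability measures: every such $\nu$ arises from a finite measure $\mu$ on $X$ via the construction of Equation~\eqref{equ:def-prob-measure}, and conversely. Composing this bijection with the correspondence from Renault's proposition gives the desired passage from $\KMS$ states to finite measures on $X$. The only step demanding any care is the matching of the two dynamics, but this is a direct translation through the identification $f(h,x,k) = f_{h,k}(x)$; everything else is an immediate combination of the two cited results.
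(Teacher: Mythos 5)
Your proof is correct and takes essentially the same route as the paper, which likewise presents this corollary as an immediate combination of the isomorphism \(\Cst(G)\iso \Mat_n(\mathrm{C}(X))\), the principality of \(G\) (so that Renault's Proposition~II.5.4 characterises \KMS\ states by quasi-invariant probability measures with Radon--Nikodym derivative \(\e^{-\beta c}\)), and Proposition~\ref{thm:quasi-inv-masu-on-NXN}. Your explicit verification that \(\tau\) agrees with the cocycle-induced dynamics under the identification \(f(h,x,k)=f_{h,k}(x)\) is the one step the paper leaves tacit, and you carry it out correctly.
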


Let \(A\) be a \(\Cst\)-algebra. Consider the trivial Fell bundle
\(\A = G \times A\). Here the Fell bundle operations are given by
\[
  \big((h,x,m),a\big)\big((m,x,k),b\big) = \big((h,x,k), ab\big)
  \textup{ and } \big((h,x,m),a\big)^*=\big((m,x,h),a^*\big)
\]
where \((h,x,m), (m,x,k) \in G\) and \(a, b \in A\). Since this Fell
bundle is a trivial bundle we have
\(\Cst(G;\A) \iso \Cst(G) \otimes A \iso \Mat_n(\mathrm{C}(X))\otimes
A\). Let \(c\) be a continuous real-valued \(1\)\nb-cocycle on
\(G\). For \(f \otimes a \in \Mat_n(\mathrm{C}(X))\otimes A \) define
\(\tau_t(f\otimes a) \defeq g\otimes a\) where
\(g_{h,k}(x) = \e^{\mathrm{i}tc(h,x,k)}f_{h,k}(x)\) for
\((h,x,k)\in G\) and \(t\in \mathbb{R}\). Then
\(\big(\Mat_n(\mathrm{C}(X))\otimes A, \mathbb{R}, \tau\big)\) is a
\(\Cst\)\nb-dynamical system.

\begin{corollary}\label{coro:kms-state-mncxa}
  Let \(\tau\) be a real dynamics on
  \(\Mat_n(\mathrm{C}(X))\otimes A\) defined above. Let
  \(\beta \in \mathbb{R}\). Let \(\phi\) be a \(\KMS\) state on
  \((\Mat_n(\mathrm{C}(X))\otimes A, \mathbb{R}, \tau)\). Then there
  is a pair \((\mu, [\Phi])\) consisting of a probability measure
  \(\mu\) on \(X\) and a \(\tilde{\mu}\)-measurable field
  \(\Phi = \{\phi_{(k,x)}\}_{(k,x)\in N\times X}\) where
  \(\tilde{\mu}\) is a measure on \(N \times X\) obtained from \(\mu\)
  as in Equation~\eqref{equ:def-prob-measure}. Moreover, for
  \(\tilde{\mu}\) almost everywhere on \(N\times X\)
  \[
    \phi_{(k,x)}\big(a\xi^*\xi \cdot \delta_{(k,x)}\big) =
    \phi_{(m,x)}\big(\xi a\xi^* \cdot \delta_{(m,x)}\big)\] for all
  \(m=1,2,\cdots,n\) and \(\xi,a \in A\).
	
  \noindent Conversely, every pair \((\mu, \Phi)\) as in above induces
  a \(\KMS\) states on
  \((\Mat_n(\mathrm{C}(X))\otimes A, \mathbb{R}, \tau)\).
\end{corollary}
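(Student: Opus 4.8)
The plan is to obtain Corollary~\ref{coro:kms-state-mncxa} as a direct specialization of Theorem~\ref{thm:KMS-state} to the trivial Fell bundle $\A = G\times A$ over the principal groupoid $G = N\times X\times N$, reading off what Conditions~\eqref{Cond:I} and~\eqref{Cond:II} assert in this concrete situation and then translating the quasi-invariant measure via Proposition~\ref{thm:quasi-inv-masu-on-NXN}.

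First I would record the two structural facts that drive the specialization. We have already established $\Cst(G;\A)\iso \Mat_n(\Cont(X))\otimes A$, so a \KMS\ state on $(\Mat_n(\Cont(X))\otimes A,\R,\tau)$ is the same datum as a \KMS\ state on $(\Cst(G;\A),\R,\sigma)$ for the dynamics induced by $c$. Moreover, for each unit $u=(k,x,k)$ the isotropy group is trivial, $G^u_u=\{u\}$, so the isotropy $\Cst$\nb-algebra collapses to a single fibre, $\Cst(G^u_u;\A(u)) = \A_u\iso A$. Consequently the field of states delivered by Theorem~\ref{thm:KMS-state} is a field $\{\phi_{(k,x)}\}$ of states on $A$; and since the theorem's centraliser hypothesis here reads $\A_u = A \subseteq$ centraliser of $\phi_u$, each $\phi_{(k,x)}$ is in fact a trace on $A$.

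Next I would invoke Theorem~\ref{thm:KMS-state} verbatim, giving a bijection between \KMS\ states and pairs $(\tilde\mu,[\Phi])$ where $\tilde\mu$ is a quasi-invariant probability measure on $\base = N\times X$ with Radon--Nikodym derivative $\e^{-\beta c}$ and $\Phi=\{\phi_{(k,x)}\}$ is a $\tilde\mu$\nb-measurable field satisfying Condition~\eqref{Cond:II}. Proposition~\ref{thm:quasi-inv-masu-on-NXN} identifies exactly such measures $\tilde\mu$ with those built from a finite measure on $X$ through Equation~\eqref{equ:def-prob-measure}; after normalization this finite measure is the probability measure $\mu$ on $X$ of the statement, and $\tilde\mu$ is the induced measure on $N\times X$. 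This disposes of Condition~\eqref{Cond:I} and supplies the measure-theoretic data of the corollary.

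It remains to unwind Condition~\eqref{Cond:II} at a unit $u=(k,x,k)$. Triviality of the isotropy forces $\gamma = u$, so $\delta_\gamma=\delta_{(k,x)}$; an arrow $\eta\in G_u$ has the form $\eta=(m,x,k)$, whence $s(\eta)=(k,x)$, $r(\eta)=(m,x)$, and the composition law of $G$ gives $\eta\gamma\eta\inverse = (m,x,k)(k,x,k)(k,x,m) = (m,x,m)$, a unit, so $\delta_{\eta\gamma\eta\inverse}=\delta_{(m,x)}$. Under the identifications $\A_\gamma\iso A$ and $\A_\eta\iso A$, Condition~\eqref{Cond:II} becomes precisely
\[
  \phi_{(k,x)}\big(a\xi^*\xi\cdot\delta_{(k,x)}\big) = \phi_{(m,x)}\big(\xi a\xi^*\cdot\delta_{(m,x)}\big)
\]
for all $m\in N$ and $a,\xi\in A$, which is the asserted condition; the converse direction is the converse half of Theorem~\ref{thm:KMS-state}. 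The only genuinely delicate point is the bookkeeping of this last paragraph---correctly computing the conjugate $\eta\gamma\eta\inverse$ and tracking how the isotropy-based structure collapses onto the fibre $A$---but this is routine given the explicit groupoid operations on $G$.
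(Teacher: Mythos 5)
Your proposal is correct and takes essentially the same route as the paper's own proof: identify \(\Mat_n(\mathrm{C}(X))\otimes A\) with \(\Cst(G;\A)\) for the trivial Fell bundle over \(G=N\times X\times N\), apply Theorem~\ref{thm:KMS-state}, convert the quasi-invariant measure with Radon--Nikodym derivative \(\e^{-\beta c}\) into a measure on \(X\) via Proposition~\ref{thm:quasi-inv-masu-on-NXN}, and unwind Condition~\eqref{Cond:II} at a unit \(u=(k,x,k)\) by computing \(\eta=(m,x,k)\), \(s(\eta)=(k,x)\), \(r(\eta)=(m,x)\) and \(\eta\gamma\eta\inverse=(m,x,m)\), exactly as the paper does. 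Your added remark that the centraliser hypothesis makes each \(\phi_{(k,x)}\) a trace on \(A\) is a harmless strengthening not present in the paper's proof.
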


\begin{proof}
  Let \(c\) be a continuous real-valued \(1\)\nb-cocycle on \(G\). We
  identify \(\Mat_n(\mathrm{C}(X))\otimes A\) as a \(\Cst\)-algebra of
  the trivial Fell bundle \(\A\) over \(G\) where each fibre is
  \(A\). Then the dynamics induced by \(c\) on \(\Cst(G;\A)\) is same
  as the dynamics \(\tau\). Let \(\phi\) be a \(\KMS\) state on
  \(\Mat_n(\mathrm{C}(X))\otimes A\). Then by
  Theorem~\ref{thm:KMS-state} there is a pair \((\nu, [\Phi])\) with
  \(\Phi = \{\phi_{(k,x)}\}_{(k,x)\in N\times X}\) satisfying
  Conditions~\eqref{Cond:I} and \eqref{Cond:II} of
  Theorem~\ref{thm:KMS-state}. Condition~~\eqref{Cond:I} says that
  \(\nu\) is a quasi-invariant measure on \(N\times X\) with
  Radon--Nikodym derivative \(\e^{-\beta
    c}\). Proposition~\ref{thm:quasi-inv-masu-on-NXN} ensures us
  \(\nu\) obtained from a probability measure \(\mu\) on \(X\). Since
  the groupoid \(G\) is principal for any unit \((k,x) \in \base\) we
  have \(\Cst(G^{(k,x)}_{(k,x)};\A((k,x))) \iso A\). Therefore, each
  \(\phi_{(k,x)}\) is a state on \(A\). Now Condition~\eqref{Cond:II}
  of Theorem~\ref{thm:KMS-state} gives us
  \[
    \phi_{(k,x)}\big(a\xi^*\xi \cdot \delta_{(k,x)}\big) =
    \phi_{r(m,x,k)} \big(\xi a \xi^* \cdot
    \delta_{(m,x,k)(k,x,k)(k,x,m)}\big)
  \]
  for all
  \((m,x,k) \in G_{(k,x)} = \{(m,x,k) : m = 1,2,3, \cdots, n\}\) and
  \(a,\xi \in A\). Then the last equation becomes
  \[
    \phi_{(k,x)}\big(a\xi^*\xi \cdot \delta_{(k,x)}\big) =
    \phi_{(m,x)} \big(\xi a \xi^* \cdot \delta_{(m,x)}\big)
  \]
  for all \(m =1,2, \cdots, n\) and \(a,\xi \in A\).
	
  The converse part directly follows from Theorem~\ref{thm:KMS-state}.
\end{proof}

\section{KMS states associated with \(G\)-spaces}
\label{sec:kms-state-G-space}

The \(G\)\nb-sapce is a particular groupoid dynamical system, and
hence we can associate a Fell bundle corresponding to a \(G\)\nb-space
\(X\). In this particular case,
Proposition~\ref{Prop:kms-state:G-space} gives a better understanding
of a KMS state on the groupoid crossed product \(\Contz(X)\rtimes
G\). Given a KMS state on \(\Contz(X)\rtimes G\), we disintegrate it
two times. First, by Theorem~\ref{thm:KMS-state} and then by
Neshveyev's theorem~\cite{Neshveyev2013KMS-states}.

Let \(X\) be a \(G\) space with \(m\colon X \to \base \) the
continuous momentum map. Then \(\Contz(\base)\) acts on \(\Contz(X)\)
by \( f\cdot g(x) \defeq f(m(x)) g(x)\) where
\(f\in \Contz(\base), g\in \Contz(X)\). Therefore, \(\Contz(X)\) is a
\(\Contz(\base)\)\nb-algebra with fibre over \(x \in \base\) can be
identify with \(\Contz(m^{-1}(x))\).  We also attach a groupoid
dynamical system by \((\Contz(X), G, \alpha)\) where
\(\alpha = \{\alpha_\gamma\}_{\gamma\in G}\) is given by
\(\alpha_\gamma \colon \Contz(m^{-1}(s(\gamma))) \to
\Contz(m^{-1}(r(\gamma)))\)
\[
  \alpha_\gamma(f)(x) \defeq f(x \cdot \gamma).
\]

\noindent Proposition~\ref{rel:cross:pro-Fell:bundle} says that there
is a Fell bundle associated with the \(G\)\nb-space \(X\). Suppose
\(p\colon \A \to G\) is the Fell bundle associated with the
\(G\)\nb-space \(X\).  And the \(\Cst\)\nb-algebra of this Fell bundle
is isomorphic to \( \Cst(X\rtimes G) \iso \Contz(X) \rtimes G\).  Then
Theorem~\ref{thm:KMS-state} gives us a one-to-one correspondence
between a \(\KMS\) state on \(\Contz(X) \rtimes G\) and a pair
\((\mu,[\Phi])\), where the pair \((\mu, [\Phi])\) satisfying
Condition~\eqref{Cond:I} and \eqref{Cond:II} of
Theorem~\ref{thm:KMS-state}. Assume \(\Phi = \{\phi_u\}_{u\in \base}\)
where \(\phi_u\) is a state on \(\Cst(G^u_u;\A(u))\).  The relation
between \(\phi\) and \(\Phi\) is given by
\begin{equation}\label{equ:rel:space:kms-state}
  \phi(f) = \int_{G^{(0)}} \phi_u(f|_{G^u_u})\dd\mu
\end{equation}
for \(f\in \Cc(G; \A)\).  Since \(G\) act on \(X\) there is an induced
action of the isotropy group \(G^u_u\) on \(m^{-1}_X(u)\). Let
\(m^{-1}_X(u) \rtimes G^u_u\) be the corresponding transformation
group. For \(x\in m^{-1}_X(u)\), let
\( St(x) = \big\{\gamma: (x, \gamma) \in m^{-1}_X(u) \times G^u_u
\textup{ and } x\gamma = x\big\} \subseteq G^u_u \) be the stabiliser
subgroup at \(x\).  Again \(\Cst(G^u_u; \A(u))\) is isomorphic to
\( \Contz(m_X^{-1}(u)) \rtimes G^u_u \iso \Cst(m_X^{-1}(u) \rtimes
G^u_u).  \) Condition~\eqref{Cond:II} of Theorem~\ref{thm:KMS-state},
gives us that \(\Contz(m^{-1}_X(u))\) contained in the centraliser of
\(\phi_u\) for each \(u\in G^{(0)}\). Then
by~\cite[Theorem~1.1]{Neshveyev2013KMS-states}, there is a one-to-one
correspondence between each \(\phi_u\) and pair
\((\nu_u, \{\tau_{u,x}\}_{x\in m^{-1}_X(u)})\) where \(\nu_u\) is a
probability measure on \(m^{-1}_X(u)\) and
\(\{\tau_{u,x}\}_{x\in m^{-1}_X(u)}\) is a \(\nu_u\)\nb-measurable
field of states on \(\Cst(St(x))\) satisfying
\begin{equation}\label{eq:space_state-int-disint}
  \phi_u (g) =\int_{m^{-1}_X(u)} \big(\sum_{\gamma \in St(x)} g(\gamma) \tau_{u,x}(\delta_\gamma) \big)\dd \nu_u(\gamma)
\end{equation}
for \(g\in \Cc(St(x))\) and \(u\in \base\). For \(f\in \Cc(G;\A)\) the
restriction \(f|_{G^u_u}\) can be thought as a function of
\(\Cc(G^u_u;\Contz(m^{-1}_X(u)))\). Therefore, from
Equation~\eqref{equ:rel:space:kms-state} and
Equation~\eqref{eq:space_state-int-disint} we have
\begin{equation}\label{equ:rel:kms-state-measure}
  \phi(f) = \int_{G^{(0)}} \int_{m^{-1}_X(u)}  \big(\sum_{\eta\in St(x)} f(\eta) \tau_{u,x}(\delta_\eta) \big) \dd\nu_u(\eta) \dd\mu
\end{equation}
for \(f \in \Cc(G;\A)\).  Therefore, we have proved the following
result:
\begin{proposition}\label{Prop:kms-state:G-space}
  Let \(G\) be a locally compact, Hausdorff, second countable,
  \etale~groupoid, and \(X\) be a locally compact, Hausdorff (right)
  \(G\)\nb-space. Let \(\beta \in \R\) and \(\sigma\) be the dynamics
  on \(\Contz(X)\rtimes G\) induced by a real-valued \(1\)\nb-cocycle
  \(c\) of \(X\rtimes G\). Then a \(\KMS\) state \(\phi\) on
  \((\Contz(X)\rtimes G, \R, \sigma)\) disintegrates to
  \(\big(\mu, \{\nu_u\}_{u\in \base}, \{\tau_{u,x}\}_{u\in \base, x\in
    m^{-1}_X(u)}\big)\) such that
  \begin{enumerate}
  \item\label{cond-1-GS} \(\mu\) is a quasi-invariant probability
    measure on \(\base\) with Radon--Nikodym derivative
    \(\e^{-\beta c}\);
  \item\label{cond-2-GS} \(\nu_u\) is a probability measure on
    \(m^{-1}_X(u)\) satisfying
    Equation~\eqref{equ:rel:kms-state-measure};
  \item\label{cond-3-GS} \(\{\tau_{u,x}\}_{x\in m^{-1}_X(u)}\) is a \(\nu_u\)\nb-measurable field, where \(\tau_{u,x}\) is a state on \(\Cst(St(x))\) \\
    and the relation between \(\phi_u\) and \(\tau_{u,x}\) given by
    Equation~\eqref{eq:space_state-int-disint}.
  \end{enumerate}
  Conversely, suppose
  \(\big(\mu, \{\nu_u\}_{u\in \base}, \{\tau_{u,x}\}_{u\in \base, x\in
    m^{-1}_X(u)}\big)\) integrates to a \(\KMS\) states on
  \((\Contz(X) \rtimes G, \R, \sigma)\) if the triplet satisfing
  Conditions~\eqref{cond-1-GS}--\eqref{cond-3-GS} of above and the
  integrated states \(\{\phi_u\}_{u\in \base}\) defined by
  Equation~\ref{eq:space_state-int-disint} satisfying
  Condition~\eqref{Cond:II} of Theorem~\ref{thm:KMS-state}.
\end{proposition}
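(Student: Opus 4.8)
The plan is to disintegrate the given KMS state in two successive stages, each governed by a structure theorem already available: first via Theorem~\ref{thm:KMS-state} to extract the unit-space measure $\mu$ together with the isotropy states $\{\phi_u\}$, and then via Neshveyev's theorem applied to each isotropy state to further produce $\nu_u$ and the field $\{\tau_{u,x}\}$.

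For the first stage, I would begin by realizing $\Contz(X)\rtimes G$ as the section $\Cst$-algebra of a Fell bundle. Proposition~\ref{rel:cross:pro-Fell:bundle} supplies a saturated Fell bundle $p\colon\A\to G$ with $\Contz(X)\rtimes G\iso\Cst(G;\A)$ and, fibrewise over each unit, $\Cst(G^u_u;\A(u))\iso\Contz(m_X^{-1}(u))\rtimes G^u_u\iso\Cst(m_X^{-1}(u)\rtimes G^u_u)$. Applying Theorem~\ref{thm:KMS-state} to $\phi$ then yields a probability measure $\mu$ on $\base$ and a $\mu$-measurable field $\Phi=\{\phi_u\}_{u\in\base}$ of states on $\Cst(G^u_u;\A(u))$, related to $\phi$ by Equation~\eqref{equ:rel:space:kms-state}; Condition~\eqref{Cond:I} delivers exactly the quasi-invariance with Radon--Nikodym derivative $\e^{-\beta c}$ asserted in~\eqref{cond-1-GS}.

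For the second stage, the crucial preliminary point is that each state $\phi_u$ produced by Theorem~\ref{thm:KMS-state} carries its unit fibre in its centraliser, that is, $\A_u$ is contained in the centraliser of $\phi_u$. Since $\A_u\iso\Contz(m_X^{-1}(u))$ is precisely the coefficient algebra of the transformation-group realization $\Cst(G^u_u;\A(u))\iso\Contz(m_X^{-1}(u))\rtimes G^u_u$, this is exactly the centraliser hypothesis under which Neshveyev's \cite[Theorem~1.1]{Neshveyev2013KMS-states} applies. I would invoke that theorem to disintegrate each $\phi_u$ into a probability measure $\nu_u$ on $m_X^{-1}(u)$ and a $\nu_u$-measurable field $\{\tau_{u,x}\}_{x\in m_X^{-1}(u)}$ of states on the stabiliser algebras $\Cst(St(x))$, related to $\phi_u$ by Equation~\eqref{eq:space_state-int-disint}. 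Substituting this expression for $\phi_u(f|_{G^u_u})$ into Equation~\eqref{equ:rel:space:kms-state} produces the iterated formula Equation~\eqref{equ:rel:kms-state-measure}, which is conditions~\eqref{cond-2-GS} and~\eqref{cond-3-GS}.

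The converse direction reverses both stages: given a triple satisfying~\eqref{cond-1-GS}--\eqref{cond-3-GS}, the converse half of Neshveyev's theorem integrates each $(\nu_u,\{\tau_{u,x}\})$ to a state $\phi_u$ on $\Cst(G^u_u;\A(u))$, and one then feeds $(\mu,[\Phi])$ into the converse half of Theorem~\ref{thm:KMS-state}. The main obstacle---and the reason the converse must carry the extra hypothesis that the integrated $\{\phi_u\}$ satisfy Condition~\eqref{Cond:II}---is that the two disintegration theorems are logically independent: Neshveyev's theorem only guarantees that each $\phi_u$ is a state with $\Contz(m_X^{-1}(u))$ in its centraliser, but does not by itself deliver the cross-isotropy trace-type identity of Condition~\eqref{Cond:II} needed to rebuild a genuine KMS state. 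Consequently the most delicate bookkeeping lies in tracking how Condition~\eqref{Cond:II} interacts with the centraliser hypothesis across the two levels, and in checking that the two-parameter field $\{\tau_{u,x}\}$ is jointly measurable so that the iterated integral in Equation~\eqref{equ:rel:kms-state-measure} is well defined.
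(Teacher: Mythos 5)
Your proposal is correct and follows essentially the same route as the paper: you realise \(\Contz(X)\rtimes G\) as \(\Cst(G;\A)\) via Proposition~\ref{rel:cross:pro-Fell:bundle}, apply Theorem~\ref{thm:KMS-state} to obtain \((\mu,\{\phi_u\})\), use the fact that \(\A_u\iso\Contz(m_X^{-1}(u))\) lies in the centraliser of \(\phi_u\) to invoke Neshveyev's \cite[Theorem~1.1]{Neshveyev2013KMS-states} on each \(\Cst(G^u_u;\A(u))\iso\Contz(m_X^{-1}(u))\rtimes G^u_u\), and substitute Equation~\eqref{eq:space_state-int-disint} into Equation~\eqref{equ:rel:space:kms-state} to get Equation~\eqref{equ:rel:kms-state-measure}, with the converse reversing both stages under the extra Condition~\eqref{Cond:II} hypothesis. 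This is exactly the paper's argument, including your correct identification of why the converse needs Condition~\eqref{Cond:II} as an added assumption.
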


\medskip

\paragraph{\itshape Acknowledgement:}
We are thankful to Jean Renault for discussing the latest development
about this subject and indicating some interesting problems.  We thank
Ralf Meyer for his insightful suggestions to improve
Condition~\eqref{Cond:II} of Theorem~\ref{thm:KMS-state}. This work
was supported by SERB's grant SRG/2020/001823 of the first author and
the CSIR grant~09/1020(0159)/2019-EMR-I of the second one. We are
thankful to the institutions. We are thankful to Bhaskaracharya
Pratishthan, Pune for the pleasant stay during the spring of~2022 when
a substantial work of the project was done.

\begin{bibdiv}
  \begin{biblist}
		
    \bib{Afsar-Sims2021KMS-states-on-Fell-bundle-Cst-alg}{article}{
      author={Afsar, Zahra}, author={Sims, Aidan}, title={K{MS} states
        on the {$C^*$}-algebras of {F}ell bundles over groupoids},
      date={2021}, ISSN={0305-0041}, journal={Math. Proc. Cambridge
        Philos. Soc.}, volume={170}, number={2}, pages={221\ndash
        246}, url={https://doi.org/10.1017/S0305004119000379},
      review={\MR{4222432}}, }
		
    \bib{Bratteli-Robinson1979Oper-alg-Quan-sta-mecha-part-1}{book}{
      author={Bratteli, Ola}, author={Robinson, Derek~W.},
      title={Operator algebras and quantum statistical
        mechanics. {V}ol. 1}, series={Texts and Monographs in
        Physics}, publisher={Springer-Verlag, New York-Heidelberg},
      date={1979}, ISBN={0-387-09187-4}, note={$C^{\ast} $- and
        $W^{\ast} $-algebras, algebras, symmetry groups, decomposition
        of states}, review={\MR{545651}}, }
		
    \bib{Bratteli-Robinson1981Oper-alg-Quan-sta-mech-part-2}{book}{
      author={Bratteli, Ola}, author={Robinson, Derek~W.},
      title={Operator algebras and quantum-statistical
        mechanics. {II}}, series={Texts and Monographs in Physics},
      publisher={Springer-Verlag, New York-Berlin}, date={1981},
      ISBN={0-387-10381-3}, note={Equilibrium states. Models in
        quantum-statistical mechanics}, review={\MR{611508}}, }
		
    \bib{Christensen-thesis}{thesis}{ author={Christensen, Johannes},
      title={{KMS} weights on groupoid
        {C}${^\ast}$\nobreakdash-algebras with an emphasis on graph
        {C}${^\ast}$\nobreakdash-algebras}, type={Ph.D. Thesis},
      date={2018}, }
		
    \bib{Christensen2018Symmetries-of-KMS-symplex}{article}{
      author={Christensen, Johannes}, title={Symmetries of the {KMS}
        simplex}, date={2018}, ISSN={0010-3616},
      journal={Comm. Math. Phys.}, volume={364}, number={1},
      pages={357\ndash 383},
      url={https://doi.org/10.1007/s00220-018-3250-5},
      review={\MR{3861301}}, }
		
    \bib{Christensen-Klaus2016Finite-digraphs-and-KMS-states}{article}{
      author={Christensen, Johannes}, author={Thomsen, Klaus},
      title={Finite digraphs and {KMS} states}, date={2016},
      ISSN={0022-247X}, journal={J. Math. Anal. Appl.}, volume={433},
      number={2}, pages={1626\ndash 1646},
      url={https://doi.org/10.1016/j.jmaa.2015.08.060},
      review={\MR{3398782}}, }
		
    \bib{Cuntz-Vershik2013Cst-Agl-asso-with-endomorphisms}{article}{
      author={Cuntz, Joachim}, author={Vershik, Anatoly},
      title={{$C^\ast$}-algebras associated with endomorphisms and
        polymorphsims of compact abelian groups}, date={2013},
      ISSN={0010-3616}, journal={Comm. Math. Phys.}, volume={321},
      number={1}, pages={157\ndash 179}, review={\MR{3089668}}, }
		
    \bib{Davidson1996Cst-by-Examples}{book}{ author={Davidson,
        Kenneth~R.}, title={{$C^*$}-{A}lgebras by example},
      series={Fields Institute Monographs}, publisher={American
        Mathematical Society, Providence, RI}, date={1996},
      volume={6}, ISBN={0-8218-0599-1}, review={\MR{1402012
          (97i:46095)}}, }
		
    \bib{Exel2008Inverse-semigroup-combinotorial-Cst-alg}{article}{
      author={Exel, Ruy}, title={Inverse semigroups and combinatorial
        {$C^\ast$}-algebras}, date={2008}, ISSN={1678-7544},
      journal={Bull. Braz. Math. Soc. (N.S.)}, volume={39},
      number={2}, pages={191\ndash 313},
      url={https://doi.org/10.1007/s00574-008-0080-7},
      review={\MR{2419901}}, }
		
    \bib{Fell1988Representation-of-Star-Alg-Banach-bundles}{book}{
      author={Fell, J. M.~G.}, author={S., Doran~R.},
      title={Representation ${^\ast}$\nobreakdash-algebras, locally
        compact groups, and {B}anach ${^\ast}$\nobreakdash-algebraic
        bundles, vol. 1 and 2 algebraic bundles}, series={Pure and
        Applied Mathematics}, publisher={Academic Press, New York},
      date={1988}, }
		
    \bib{Gardella-Hirshberg-Santiago2021Rokhlin-dim-Crossed-prod}{article}{
      author={Gardella, Eusebio}, author={Hirshberg, Ilan},
      author={Santiago, Luis}, title={Rokhlin dimension: duality,
        tracial properties, and crossed products}, date={2021},
      ISSN={0143-3857}, journal={Ergodic Theory Dynam. Systems},
      volume={41}, number={2}, pages={408\ndash 460},
      url={https://doi.org/10.1017/etds.2019.68},
      review={\MR{4177290}}, }
		
    \bib{Goehle2009phd-thesis}{book}{ author={Goehle, Geoff},
      title={Groupoid crossed products}, publisher={ProQuest LLC, Ann
        Arbor, MI}, date={2009}, ISBN={978-1109-70354-2},
      url={http://gateway.proquest.com/openurl?url_ver=Z39.88-2004&rft_val_fmt=info:ofi/fmt:kev:mtx:dissertation&res_dat=xri:pqdiss&rft_dat=xri:pqdiss:3397937},
      note={Thesis (Ph.D.)--Dartmouth College}, review={\MR{2941279}},
    }
		
    \bib{Green1978local-structure-of-twisted-covariant-alg}{article}{
      author={Green, Philip}, title={The local structure of twisted
        covariance algebras}, date={1978}, ISSN={0001-5962},
      journal={Acta Math.}, volume={140}, number={3-4},
      pages={191\ndash 250}, review={\MR{0493349 (58 \#12376)}}, }
		
    \bib{Hofmann1977Bundles-and-sheaves-equi-in-Cat-Ban}{inproceedings}{
      author={Hofmann, Karl~Heinrich}, title={Bundles and sheaves are
        equivalent in the category of {B}anach spaces}, date={1977},
      booktitle={{$K$}-theory and operator algebras ({P}roc. {C}onf.,
        {U}niv.  {G}eorgia, {A}thens, {G}a., 1975)}, pages={53\ndash
        69. Lecture Notes in Math., Vol. 575}, review={\MR{0487491}},
    }
		
    \bib{Holkar2017Construction-of-Corr}{article}{ author={Holkar,
        Rohit~Dilip}, title={Topological construction of
        {$C^*$}-correspondences for groupoid {$C^*$}-algebras},
      date={2017}, journal={Journal of {O}perator {T}heory},
      volume={77:1}, number={23-24}, pages={217\ndash 241}, }
		
    \bib{Kadison-Ringrose1997-Vol-II}{book}{ author={Kadison,
        Richard~V.}, author={Ringrose, John~R.}, title={Fundamentals
        of the theory of operator algebras. {V}ol. {II}},
      series={Graduate Studies in Mathematics}, publisher={American
        Mathematical Society, Providence, RI}, date={1997},
      volume={16}, ISBN={0-8218-0820-6},
      url={https://doi.org/10.1090/gsm/016}, note={Advanced theory,
        Corrected reprint of the 1986 original},
      review={\MR{1468230}}, }
		
    \bib{Kumjian1998Fell-bundles-over-gpd}{article}{ author={Kumjian,
        Alex}, title={Fell bundles over groupoids}, date={1998},
      ISSN={0002-9939}, journal={Proc. Amer. Math. Soc.},
      volume={126}, number={4}, pages={1115\ndash 1125},
      url={http://dx.doi.org/10.1090/S0002-9939-98-04240-3},
      review={\MR{1443836 (98i:46055)}}, }
		
    \bib{Kumjian-Pask_Sims2015twisted-higher-rank-graph-alg}{article}{
      author={Kumjian, Alex}, author={Pask, David}, author={Sims,
        Aidan}, title={On twisted higher-rank graph {$C^*$}-algebras},
      date={2015}, ISSN={0002-9947},
      journal={Trans. Amer. Math. Soc.}, volume={367}, number={7},
      pages={5177\ndash 5216},
      url={https://doi.org/10.1090/S0002-9947-2015-06209-6},
      review={\MR{3335414}}, }
		
    \bib{Lance1995Hilbert-modules}{book}{ author={Lance, E.~C.},
      title={Hilbert {$C^*$}-modules}, series={London Mathematical
        Society Lecture Note Series}, publisher={Cambridge University
        Press, Cambridge}, date={1995}, volume={210},
      ISBN={0-521-47910-X},
      url={http://dx.doi.org/10.1017/CBO9780511526206}, note={A
        toolkit for operator algebraists}, review={\MR{1325694
          (96k:46100)}}, }
		
    \bib{Muhly1999Coordinates}{inproceedings}{ author={Muhly, Paul},
      title={Coordinates in operator algebra}, date={1999},
      booktitle={Cmbs conference lecture notes (texas christian
        university, 1990)}, }
		
    \bib{Muhly2001Bundle-over-gpd}{incollection}{ author={Muhly,
        Paul~S.}, title={Bundles over groupoids}, date={2001},
      booktitle={Groupoids in analysis, geometry, and physics
        ({B}oulder, {CO}, 1999)}, series={Contemp. Math.},
      volume={282}, publisher={Amer. Math. Soc., Providence, RI},
      pages={67\ndash 82},
      url={https://doi.org/10.1090/conm/282/04679},
      review={\MR{1855243}}, }
		
    \bib{Muhly-Williams2008FellBundle-ME}{article}{ author={Muhly,
        Paul~S.}, author={Williams, Dana~P.}, title={Equivalence and
        disintegration theorems for {F}ell bundles and their
        {$C^*$}-algebras}, date={2008}, ISSN={0012-3862},
      journal={Dissertationes Math. (Rozprawy Mat.)}, volume={456},
      pages={1\ndash 57}, url={http://dx.doi.org/10.4064/dm456-0-1},
      review={\MR{2446021 (2010b:46146)}}, }
		
    \bib{Muhly-Williams2008Renaults-equivalence-thm-for-gpd-cros-prd}{book}{
      author={Muhly, Paul~S.}, author={Williams, Dana~P.},
      title={Renault's equivalence theorem for groupoid crossed
        products}, series={New York Journal of Mathematics. NYJM
        Monographs}, publisher={State University of New York,
        University at Albany, Albany, NY}, date={2008}, volume={3},
      review={\MR{2547343}}, }
		
    \bib{Munkress1975Topology-book}{book}{ author={Munkres, James~R.},
      title={Topology: a first course}, publisher={Prentice-Hall,
        Inc., Englewood Cliffs, N.J.}, date={1975},
      review={\MR{0464128 (57 \#4063)}}, }
		
    \bib{Murphy-Book}{book}{ author={Murphy, Gerard~J.},
      title={{$C^*$}-algebras and operator theory},
      publisher={Academic Press, Inc., Boston, MA}, date={1990},
      ISBN={0-12-511360-9}, review={\MR{1074574}}, }
		
    \bib{Neshveyev2010Traces-on-crossed-product}{article}{
      author={Neshveyev, S.}, title={Traces on crossed product},
      date={2010}, eprint={arXiv-1010.0600}, }
		
    \bib{Neshveyev2013KMS-states}{article}{ author={Neshveyev,
        Sergey}, title={K{MS} states on the {$C^\ast$}-algebras of
        non-principal groupoids}, date={2013}, ISSN={0379-4024},
      journal={J. Operator Theory}, volume={70}, number={2},
      pages={513\ndash 530},
      url={https://doi.org/10.7900/jot.2011sep20.1915},
      review={\MR{3138368}}, }
		
    \bib{Raeburn-Williams1998ME-book}{book}{ author={Raeburn, Iain},
      author={Williams, Dana~P.}, title={Morita equivalence and
        continuous-trace {$C^*$}-algebras}, series={Mathematical
        Surveys and Monographs}, publisher={American Mathematical
        Society, Providence, RI}, date={1998}, volume={60},
      ISBN={0-8218-0860-5}, url={http://dx.doi.org/10.1090/surv/060},
      review={\MR{1634408 (2000c:46108)}}, }
		
    \bib{Renault1980Gpd-Cst-Alg}{book}{ author={Renault, Jean},
      title={A groupoid approach to {$C^{\ast} $}-algebras},
      series={Lecture Notes in Mathematics}, publisher={Springer,
        Berlin}, date={1980}, volume={793}, ISBN={3-540-09977-8},
      review={\MR{584266 (82h:46075)}}, }
		
    \bib{Renault1985Representations-of-crossed-product-of-gpd-Cst-Alg}{article}{
      author={Renault, Jean}, title={Repr\'esentation des produits
        crois\'es d'alg\`ebres de groupo\"\i des}, date={1987},
      ISSN={0379-4024}, journal={J. Operator Theory}, volume={18},
      number={1}, pages={67\ndash 97}, review={\MR{912813
          (89g:46108)}}, }
		
    \bib{Renault2001AF-equiv-rela-and-cocycle}{incollection}{
      author={Renault, Jean}, title={A{F} equivalence relations and
        their cocycles}, date={2003}, booktitle={Operator algebras and
        mathematical physics ({C}onstan\c{t}a, 2001)},
      publisher={Theta, Bucharest}, pages={365\ndash 377},
      review={\MR{2018241}}, }
		
    \bib{Renault2005The-R-N-problem-for-appx-proper-equiv-relation}{article}{
      author={Renault, Jean}, title={The {R}adon-{N}ikod\'{y}m problem
        for appoximately proper equivalence relations}, date={2005},
      ISSN={0143-3857,1469-4417}, journal={Ergodic Theory
        Dynam. Systems}, volume={25}, number={5}, pages={1643\ndash
        1672}, url={https://doi.org/10.1017/S0143385705000131},
      review={\MR{2173437}}, }
		
    \bib{Rieffel1974Induced-rep}{article}{ author={Rieffel, Marc~A.},
      title={Induced representations of {$C^{\ast} $}-algebras},
      date={1974}, ISSN={0001-8708}, journal={Advances in Math.},
      volume={13}, pages={176\ndash 257}, review={\MR{0353003 (50
          \#5489)}}, }
		
    \bib{Sims-Szabo-Williams2020Operator-alg-book}{book}{
      author={Sims, Aidan}, author={Szab\'{o}, G\'{a}bor},
      author={Williams, Dana}, title={Operator algebras and dynamics:
        groupoids, crossed products, and {R}okhlin dimension},
      series={Advanced Courses in Mathematics. CRM Barcelona},
      publisher={Birkh\"{a}user/Springer, Cham}, date={[2020]
        \copyright 2020}, ISBN={978-3-030-39712-8; 978-3-030-39713-5},
      url={https://doi.org/10.1007/978-3-030-39713-5}, note={Lecture
        notes from the Advanced Course held at Centre de Recerca
        Matem\`atica (CRM) Barcelona, March 13--17, 2017, Edited by
        Francesc Perera}, review={\MR{4321941}}, }
		
    \bib{Sims-Williams2013Amenability-Fell-bundle-over-gpd}{article}{
      author={Sims, Aidan}, author={Williams, Dana~P.},
      title={Amenability for {F}ell bundles over groupoids},
      date={2013}, ISSN={0019-2082}, journal={Illinois J. Math.},
      volume={57}, number={2}, pages={429\ndash 444},
      url={http://projecteuclid.org/euclid.ijm/1408453589},
      review={\MR{3263040}}, }
		
    \bib{Williams2007Crossed-product-Cst-Alg}{book}{ author={Williams,
        Dana~P.}, title={Crossed products of {$C{^\ast}$}-algebras},
      series={Mathematical Surveys and Monographs},
      publisher={American Mathematical Society, Providence, RI},
      date={2007}, volume={134}, ISBN={978-0-8218-4242-3;
        0-8218-4242-0}, url={http://dx.doi.org/10.1090/surv/134},
      review={\MR{2288954 (2007m:46003)}}, }
		
    \bib{Williams2019A-Toolkit-Gpd-algebra}{book}{author={Williams,
        Dana~P.}, title={A tool kit for groupoid {$C^*$}-algebras},
      series={Mathematical Surveys and Monographs},
      publisher={American Mathematical Society, Providence, RI},
      date={2019}, volume={241}, ISBN={978-1-4704-5133-2},
      url={https://doi.org/10.1016/j.physletb.2019.06.021},
      review={\MR{3969970}}, }
		
  \end{biblist}
\end{bibdiv}

\end{document}